\documentclass[11pt, a4paper]{amsart}

\usepackage{amssymb, textcomp, dsfont}
\usepackage[all]{xy}
\usepackage{hyperref,aliascnt}
\usepackage{mathtools,array}
\usepackage{tikz-cd}
\usepackage{lmodern}
\usepackage[inline]{enumitem}

\newcommand{\llcurly}{{\mathrlap{\prec}\hspace{.2em}{\prec}}}

\numberwithin{equation}{section}

\newtheorem{lma}{Lemma}[section]

\newaliascnt{thmCt}{lma}
\newtheorem{thm}[thmCt]{Theorem}
\aliascntresetthe{thmCt}

\newaliascnt{corCt}{lma}
\newtheorem{cor}[corCt]{Corollary}
\aliascntresetthe{corCt}

\newaliascnt{prpCt}{lma}
\newtheorem{prp}[prpCt]{Proposition}
\aliascntresetthe{prpCt}

\newtheorem*{thm*}{Theorem}
\newtheorem*{cor*}{Corollary}
\newtheorem*{prp*}{Proposition}

\theoremstyle{definition}

\newaliascnt{pgrCt}{lma}
\newtheorem{pgr}[pgrCt]{}
\aliascntresetthe{pgrCt}

\newaliascnt{dfnCt}{lma}

\aliascntresetthe{dfnCt}

\newaliascnt{rmkCt}{lma}
\newtheorem{rmk}[rmkCt]{Remark}
\aliascntresetthe{rmkCt}

\newaliascnt{rmksCt}{lma}

\aliascntresetthe{rmksCt}

\newaliascnt{prblCt}{lma}

\aliascntresetthe{prblCt}

\newaliascnt{exaCt}{lma}
\newtheorem{exa}[exaCt]{Example}
\aliascntresetthe{exaCt}

\newaliascnt{exasCt}{lma}

\aliascntresetthe{exasCt}

\newaliascnt{conjCt}{lma}

\aliascntresetthe{conjCt}


\newcommand{\W}{\CatW}

\newcommand{\N}{\mathbb{N}}

\newcommand{\Z}{\mathbb{Z}}

\newcommand{\K}{\mathrm{K}}



\newcommand{\CatCu}{\ensuremath{\mathrm{Cu}}}

\newcommand{\CatW}{\mathrm{W}}

\newcommand{\ca}{$C^*$-al\-ge\-bra}

\newcommand{\axiomO}[1]{(O#1)}

\newcommand{\axiomW}[1]{(W#1)}

\DeclareMathOperator{\Lat}{Lat}

\DeclareMathOperator{\Cu}{Cu}
\DeclareMathOperator{\Lsc}{Lsc}

\DeclareMathOperator{\supp}{supp}

\DeclareMathOperator{\QT}{QT}

\makeatletter

\newcommand{\Rmnum}[1]{\expandafter\@slowromancap\romannumeral #1@}
\makeatother

\SetLabelAlign{Center}{\makebox[2em]{#1}}
\newcommand{\beginEnumStatements}{\begin{enumerate}[label=(\arabic*),align=Center,leftmargin=*, widest=iiii]}
	\newcommand{\beginEnumConditions}{\begin{enumerate}[label={\rm (\roman*)},align=Center, leftmargin=*, widest=iiii]}
		
		\usepackage{stmaryrd}



\theoremstyle{plain}
\newcounter{thmintroctr}

\newtheorem{thmintro}[thmintroctr]{Theorem}

\theoremstyle{definition}
\newcounter{goalintroctr}

\newtheorem{goalintro}[goalintroctr]{}

\usepackage{xcolor}

\newcommand{\dyn}{\leqr{G}}

\newcommand{\Dyn}{\W_G}
\newcommand{\CuDyn}{\mathrm{Cu}_G}
\newcommand{\CatGW}{\mathrm{G}\text{-}\CatW}
\newcommand{\CatGCu}{\mathrm{G}\text{-}\CatCu}
\newcommand{\her}{\mathrm{her}}

\makeatletter
 \newcommand{\miniscule}{\@setfontsize\miniscule{5}{6}}
\makeatother

\newcommand{\leqr}[1]{\leq_\text{\miniscule $#1$}}
\newcommand{\precr}[1]{\prec_\text{\miniscule $#1$}}

\newcommand{\lesssimupr}[1]{\lesssim^\text{\miniscule $#1$}}
\newcommand{\lequpr}[1]{\leq^\text{\miniscule $#1$}}

\newcommand{\fun}[2]{\operatorname{F}_\text{\miniscule $#1$}^\text{\miniscule $#2$}}

\newcommand{\precsimr}[1]{\precsim_\text{\miniscule $#1$}}
\newcommand{\simr}[1]{\sim_\text{\miniscule $#1$}}

\newcommand{\precuprr}[2]{(\prec^\text{\miniscule $#1$})^\text{\miniscule $#2$}}

\newcommand{\precupr}[1]{\prec^\text{\miniscule $#1$}}

\newcommand{\lequprr}[2]{(\leq^\text{\miniscule $#1$})^\text{\miniscule $#2$}}

\newcommand{\alphar}[1]{\alpha_\text{\miniscule $#1$}}
\newcommand{\alphaupr}[1]{\alpha^\text{\miniscule $#1$}}
\newcommand{\talphar}[1]{\tilde{\alpha}_\text{\miniscule $#1$}}
\newcommand{\talphaupr}[1]{\tilde{\alpha}^\text{\miniscule $#1$}}

\newcommand{\alphauprr}[2]{(\alpha^\text{\miniscule $#1$})^\text{\miniscule $#2$}}

\newcommand{\app}{\approx_\text{\miniscule $G$}}

\hfuzz=3pt \vfuzz=3pt

\addtolength{\textwidth}{2cm} \addtolength{\oddsidemargin}{-1cm}
\addtolength{\evensidemargin}{-1cm} \textheight=22.15truecm

\begin{document}

\title[Dynamical Cuntz semigroup and ideal-free quotients]{The Dynamical Cuntz semigroup and ideal-free quotients of Cuntz semigroups}

\author{Joan Bosa}
\author{Francesc Perera}
\author{Jianchao Wu}
\author{Joachim Zacharias}

\date{\today}

\address{JB, Departamento de Matematicas,
	Universidad de Zaragoza, C/Pedro Cerbuna 12,
	50009 Zaragoza, Spain}
\email[]{jbosa@unizar.es}
\urladdr{https://personal.unizar.es/jbosa/}

\address{FP, Departament de Matem\`{a}tiques,
	Universitat Aut\`{o}noma de Barcelona,
	\linebreak 08193 Bellaterra, Barcelona, Spain, and
	Centre de Recerca Matem\`atica, Edifici Cc, Campus de Bellaterra,  08193 Cerdanyola del Vall\`es, Barcelona, Spain}
\email[]{francesc.perera@uab.cat}
\urladdr{https://mat.uab.cat/web/perera}

\address{JW, Shanghai Center for Mathematical Sciences, Fudan University, 
	2005 Songhu Rd., Shanghai 200438, China}
\email[]{jianchao\_wu@fudan.edu.cn}

\address{JZ, School of Mathematics and Statistics 	
	University of Glasgow , University Place
	Glasgow G12 8QQ
	United Kingdom}
\email[]{joachim.zacharias@glasgow.ac.uk}
\urladdr{https://www.maths.gla.ac.uk/~jzacharias/}

\subjclass[2020]
{Primary
06B35, 
06F05, 
46L05. 
Secondary
06B30, 
18B35, 
19K14, 
46M15, 
}

\thanks{
JB was partially supported by the Spanish State Research Agency through Consolidación Investigadora program No.\ CNS2022-135340 and the grant No.\  PID2020-113047GB-I00/AEI/10.13039/501100011033, and by the Dep. ~Ciencia, Universidad y sociedad del conocimiento del Gobierno de Arag\'on (grupo E22-23R).
FP was partially supported by the Spanish Research State Agency (grant No.\  PID2020-113047GB-I00/AEI/10.13039/501100011033), by the Comissionat per Universitats i Recerca de la Ge\-ne\-ralitat de Ca\-ta\-lu\-nya (grant No.\ 2021-SGR-01015), and by the Spanish State Research Agency through the Severo Ochoa and María de Maeztu Program for Centers and Units of Excellence in R\&D (CEX2020-001084-M).
JW was partially supported by NSFC Key Program No.~12231005 and National
Key R\&D Program of China 2022YFA100700. 
JZ was partially supported by the Engineering and Physical Sciences Research Council Grant EP/I019227/2.
}

\begin{abstract}
	We develop a theory of general quotients for $\CatW$- and $\CatCu$-semigroups beyond the case of quotients by ideals. To this end,  we introduce the notion of a normal pair, which allows us to take quotients of $\CatW$-semigroups in a similar way as normal subgroups arise as kernels of group homomorphisms. 
	
	We use this to define the dynamical Cuntz semigroup as the universal object induced from an action of a group $G$ on a $\CatW$-semigroup. In the C*-algebraic framework, under mild assumptions, the universality of this dynamical invariant helps us tap into the structure of the Cuntz semigroup of crossed product C*-algebras. 
\end{abstract}

\maketitle



\newcommand{\CatWenr}{\CatW_{\mathrm{enr}}}
\newcommand{\CatGWenr}{\CatGW_{\mathrm{enr}}}

\newcommand{\calU}{\mathcal{U}}
\newcommand{\calV}{\mathcal{V}}
\newcommand{\calW}{\mathcal{W}}

\newcommand{\Lscb}{\Lsc_{\operatorname{b}}}

\newcommand{\phantomtoggle}[1]{}

\newcommand{\compareunder}[2]{\mathrel{\:{#1}_{#2}\:}}

\newcommand{\compareover}[2]{\mathrel{\:{#2}^{#1}\:}}

\newcommand{\underlesssim}[1][R]{\compareunder{\lesssim}{#1}}
\newcommand{\undergtrsim}[1][R]{\compareunder{\gtrsim}{#1}}
\newcommand{\underprec}[1][R]{\compareunder{\prec}{#1}}
\newcommand{\undersucc}[1][R]{\compareunder{\succ}{#1}}
\newcommand{\undersim}[1][R]{\compareunder{\sim}{#1}}
\newcommand{\underlnsim}[1][R]{\compareunder{\lnsim}{#1}}

\newcommand{\underlessapprox}[1][R]{\compareunder{\lessapprox}{#1}}
\newcommand{\undergtrapprox}[1][R]{\compareunder{\gtrapprox}{#1}}
\newcommand{\underpreccurlyeq}[1][R]{\compareunder{\preccurlyeq}{#1}}
\newcommand{\undersucccurlyeq}[1][R]{\compareunder{\succcurlyeq}{#1}}
\newcommand{\underapprox}[1][R]{\compareunder{\approx}{#1}}
\newcommand{\underlnapprox}[1][R]{\compareunder{\lnapprox}{#1}}

\newcommand{\underll}[1][R]{\compareunder{\ll}{#1}}
\newcommand{\underlll}[1][R]{\compareunder{\lll}{#1}}
\newcommand{\undergg}[1][R]{\compareunder{\gg}{#1}}
\newcommand{\underggg}[1][R]{\compareunder{\ggg}{#1}}

\newcommand{\underprecsim}[1][R]{\compareunder{\precsim}{#1}}
\newcommand{\undersuccsim}[1][R]{\compareunder{\succsim}{#1}}

\newcommand{\overlesssim}[1][R]{\compareover{#1}{\lesssim}}
\newcommand{\overgtrsim}[1][R]{\compareover{#1}{\gtrsim}}
\newcommand{\overprec}[1][R]{\compareover{#1}{\prec}}
\newcommand{\oversucc}[1][R]{\compareover{#1}{\succ}}
\newcommand{\oversim}[1][R]{\compareover{#1}{\sim}}
\newcommand{\overlnsim}[1][R]{\compareover{#1}{\lnsim}}

\newcommand{\overlessapprox}[1][R]{\compareover{#1}{\lessapprox}}
\newcommand{\overgtrapprox}[1][R]{\compareover{#1}{\gtrapprox}}
\newcommand{\overpreccurlyeq}[1][R]{\compareover{#1}{\preccurlyeq}}
\newcommand{\oversucccurlyeq}[1][R]{\compareover{#1}{\succcurlyeq}}
\newcommand{\overapprox}[1][R]{\compareover{#1}{\approx}}
\newcommand{\overlnapprox}[1][R]{\compareover{#1}{\lnapprox}}

\newcommand{\overll}[1][R]{\compareover{#1}{\ll}}
\newcommand{\overlll}[1][R]{\compareover{#1}{\lll}}
\newcommand{\overgg}[1][R]{\compareover{#1}{\gg}}
\newcommand{\overggg}[1][R]{\compareover{#1}{\ggg}}

\newcommand{\overprecsim}[1][R]{\compareover{#1}{\precsim}}
\newcommand{\oversuccsim}[1][R]{\compareover{#1}{\succsim}}

\newcommand{\precsimher}{\precsim_{\operatorname{her}}}
\newcommand{\leqher}{\leq_{\operatorname{her}}}
\newcommand{\succsimher}{\succsim_{\operatorname{her}}}
\newcommand{\llher}{\ll_{\operatorname{her}}}
\newcommand{\ggher}{\gg_{\operatorname{her}}}

\newcommand{\relsubseteq}{{\ref{relsubseteq} \;{\Rightarrow}\;}} \label{relsubseteq}
\newcommand{\relsupseteq}{{\ref{relsupseteq} \;{\Leftarrow}\;}} \label{relsupseteq}
\newcommand{\releq}{{\ref{releq} \;{\Leftrightarrow}\;}} \label{releq}
\newcommand{\relcap}{\ref{relcap}\cap} \label{relcap}
\newcommand{\relcup}{\ref{relcup}\cup} \label{relcup}

\newcommand{\relletter}[1]{{\;{\mbox{\scriptsize $\left\langle{#1}\right\rangle$}}\;}}

\newcommand{\doubleslash}{{\ref{doubleslash}\hphantom{.}\mathclap{/}\hphantom{.}\mathclap{/}\hphantom{.}}} \label{doubleslash}

\newcommand{\ContourGeq}[2]{\left( {#1} \geq {#2} \right)}

\newcommand{\osupp}{\supp^{\operatorname{o}}}

\newcommand{\underprecsimmult}[2][\alpha]{ {\overprecsim[#1]}^{_{(#2)}} }
\newcommand{\OpenSets}[1][X]{\mathcal{O}\left(#1\right)}
\newcommand{\FreeMonoidOpenSets}[1][X]{\OpenSets[#1]^*}
\newcommand{\FreeAbelianMonoidOpenSets}[1][X]{\N\OpenSets[#1]}
\newcommand{\PowerSet}[1][X]{\mathcal{P}\left(#1\right)}
\newcommand{\FreeMonoidSubsets}[1][X]{\PowerSet[#1]^*}
\newcommand{\Premeasures}[2][X]{\operatorname{Pr}_{#2}\left(#1\right)}

\section{Introduction}
\label{sec:intro}

In this paper we introduce for a class of \emph{continuous} positively ordered monoids called $\CatW$-semigroups (along with a subclass called $\CatCu$-semigroups) a new concept of quotients, different from classical quotients by ideals, by forming the quotient on the level of a preorder relation. A need for forming such quotients naturally occurs if a group acts on such a semigroup and we want to identify elements in an orbit, thereby obtaining a new semigroup with a different order relation. Such a construction is not very hard to do with ordinary positively ordered monoids, as the additive and order structures both pass to quotients in a natural way, but for \emph{continuous} positively ordered monoids it is much more delicate, since the continuous structure demands extra care. 

Our main motivation to develop this theory lies in the theory of operator algebras, specifically, applications to the theory of
the \emph{Cuntz semigroup}, an invariant for C*-algebras. 
These analytical objects, sometimes dubbed as noncommutative topological spaces, have deep-rooted connections with topological dynamical systems, particularly via the construction of crossed product C*-algebras; hence,
our theory is, at a deeper level, motivated by the need to understand topological (as well as C*-) dynamical systems and their crossed products. 

It is worth pointing out that the Cuntz semigroup was originally introduced in \cite{Cun78DimFct} as a tool to study tracial states on C*-algebras (these are analogous to invariant measures on topological dynamical systems), and that   
the topological nature of C*-algebras gives rise to the \emph{continuous} structures on their Cuntz semigroups, on top of the structure of positively ordered monoids. 
This continuous structure distinguishes Cuntz semigroups from their more classical counterpart, namely the Murray-von Neumann semigroups. 

Renewed interest in the Cuntz semigroups grew\textemdash and continues to this day, with growing momentum\textemdash since the discovery of their role in the celebrated Elliott classification program of simple, amenable C*-algebras. 
Indeed, here the Cuntz semigroup first appeared in an antagonistic role\textemdash as an invariant to help establish a counterexample to the original classification conjecture \cite{Toms}. 
This landmark counterexample was a deciding factor in shaping the revised form of the classification conjecture, which was eventually verified in a series of groundbreaking articles \cite{TiWhWi17,GonLinNiu20ClassifZstable1,GonLinNiu20ClassifZstable2,EllGonLinNiu15arX:classFinDR2}. 
The Cuntz semigroup then plays two important roles in this and later developments: 
First, it provides the framework to formulate the notion of \emph{strict comparison}, a regularity property for C*-algebras that serves as a prerequisite for classifiability in the revised conjecture \cite{TiWhWi17}. 
Second, it has been useful in more recent developments aiming at classifying C*-algebras beyond the revised conjecture (\cite{AntPerRobThiDuke,EllLiNiu24}), and it serves as the framework to design novel invariants \cite{AnLiu23,AnLiu24,AnElliottLiu24}. 

Meanwhile, throughout the development of the Elliott classification program, inte\-rac\-tions with topological dynamical systems have been an integral part of the theory. 
It is often fruitful to analyze C*-algebraic properties and constructions through the lens of dynamical systems. 
In particular, the question of characterizing classifiability of crossed product C*-algebras in dynamical language has been and is under intensive research. 
To this end, there appears to be a close relationship between strict comparison and notions in topological dynamical systems such as the small boundary property and mean dimension zero \cite{KerrSza20}. 
One of the main applications of the theory developed in this paper is a notion of dynamical strict comparison and its close relation to a new regularity condition for topological dynamical systems, almost elementariness, which we will define and study in subsequent articles \cite{BosPerWuZacAE, BosPerWuZacAEDSC}.

To describe what we do in more detail, we need to take a closer look at the theory of Cuntz semigroups. 
It turns out that for each C*-algebra $A$, there are two intimately related constructions, denoted by $\CatW(A)$ and $\CatCu(A)$ respectively, each being useful in its own right and commonly referred to as the Cuntz semigroup. 
To clarify the relationship between the two, it is not enough to just treat these as positively ordered monoids \textemdash\ the continuous structure is crucial. 
To this end, a theory of \emph{continuous} (suitably interpreted) positively ordered monoids has been established \cite{CowEllIva08CuInv, AntBosPer11CompletionsCu, AntPerThi14arX:TensorProdCu}. 
This theory is centered around the category $\CatW$ of (abstract) $\CatW$-semigroups and a subcategory $\CatCu$ of (abstract) $\CatCu$-semigroups, together with a completion functor $\gamma \colon \CatW \to \CatCu$ that is adjoint to the canonical inclusion. In this framework, the assignments $A \mapsto \CatW(A)$ and $A \mapsto \CatCu(A)$ form functors from the category of C*-algebras to the categories $\CatW$ and $\CatCu$. Together with $\gamma$, these functors form a commuting diagram. (See \cite[Chapter 3]{AntPerThi14arX:TensorProdCu}.)
At a more technical level, 
the category $\CatW$ contains a class of abelian monoids equipped with an \emph{auxiliary relation} (it is called so since it is auxiliary to a natural preorder and is compatible with the monoid structure) that satisfy a set of natural axioms. Prototypical examples of $\CatW$-semigroups include:   
\begin{itemize}
	\item $\operatorname{Lsc}(X,\N)$, the lower semicontinuous non-negative integer valued functions on a compact Hausdorff space $X$,  
	\item $\CatW(A)$ for a C*-algebra $A$, 
	\item $[0,\infty)$ and $[0, \infty]$ with the usual order and addition, and with the auxiliary relation given by the usual strict order $<$. 
\end{itemize}
We recall the basics of both categories in the preliminaries and in \autoref{sec:ideals}. A useful perspective in this general theory is to understand how $*$-homomorphisms between C*-algebras give rise to $\CatCu$- and $\CatW$-\emph{morphisms}, i.e., order-preserving monoid homomorphisms compatible with the auxiliary relations. 

In the category $\CatCu$, there is a natural notion of $\CatCu$-ideal, which can be used to define quotients. This was studied in \cite{CiuRobSan10CuIdealsQuot} for C*-algebras, where it is shown that for any $\sigma$-unital ideal $I$ of a C*-algebra $A$, the natural short exact sequence $0\to I\to A\to A/I\to 0$ induces a short exact sequence in the category $\CatCu$. 
In other words, a surjective $*$-homomorphism between C*-algebras induces a surjective $\CatCu$-morphism with its kernel matching the $\CatCu$-semigroup of the C*-kernel. For abstract $\Cu$-semigroups, these notions were analyzed in \cite[5.1]{AntPerThi14arX:TensorProdCu}.

However, the full story about quotients in the categories $\CatW$ and $\CatCu$ is a more delicate topic. 
In particular, an injective $*$-homomorphism between C*-algebras usually does not induce an injective $\CatW$-morphism. This happens in many natural situations, and sometimes the $\CatW$-morphism one gets is surjective but non-injective, and thus it should be understood as a quotient map, but not by ideals. 

As a simple example, consider the natural diagonal $*$-homomorphism $ \mathbb{C} \oplus \mathbb{C} \hookrightarrow M_2(\mathbb{C})$. Applying the functor $\CatW$ (or the functor $\CatCu$) to it, we obtain the morphism $\mathbb{N} \oplus \mathbb{N} \to \mathbb{N}$, defined by $x\oplus y\mapsto x+y$. This is surjective and non-injective. As a quotient map, it is induced not by an ideal (the preimage of 0 is {0}), but merely by an equivalence relation. 
Keeping these sort of examples in mind, our first goal is:

\begin{goalintro}\label{goalintro:quotient}
	Develop a theory of general quotients for $\CatW$-semigroups to include this type of phenomenon.   
\end{goalintro} 

Turning our attention to dynamical systems, a pivotal construction relating topological dynamical systems and C*-algebras is that of a crossed product C*-algebra $C(X) \rtimes G$ associated to a group action $G \curvearrowright X$. 
More generally, a C*-dynamical system $G \curvearrowright A$ also gives rise to a crossed product $A \rtimes G$.
This construction 
forms a major bridge between the theory of dynamical systems and that of C*-algebras. 
Note that the above example of $ \mathbb{C} \oplus \mathbb{C} \hookrightarrow M_2(\mathbb{C})$ can be understood through such a construction: it is a special case of the natural embedding $C(X) \hookrightarrow C(X) \rtimes G$ arising from a topological dynamical system by a discrete group --- just take $G = \mathbb{Z}/2$ with its unique nontrivial action on $X = \mathbb{Z}/2$. 

As indicated above, a central problem of current focus is to transfer the groundbreaking results on classification of amenable C*-algebras to the world of topological dynamical systems.   
This would improve our understanding of the structure theory and classification of the latter, in ways similar to how ergodic theory of amenable groups benefited from Connes' classification theorem of simple amenable von Neumann algebras \cite{Con76}, and how the theory of Cantor minimal systems benefited from Elliott's classification theorem of AF algebras \cite{GioPutSkaCrelle95}. 

In view of the C*-regularity properties appearing in the aforementioned revised form of the classification program, Kerr put forth in \cite{KerrEur20} analogous dynamical properties for actions on compact metric spaces, i.e., abelian C*-algebras, namely \emph{finite tower dimension}, \emph{almost finiteness}, and \emph{dynamical strict comparison} (of open sets), and asks to what extent these conditions are equivalent to each other and to the classical notion of \emph{small boundary property} (or, alternatively, \emph{mean dimension zero}). 
A crucial ingredient in the definitions of dynamical strict comparison and almost finiteness is a notion of dynamical subequivalence\footnote{Such a notion had appeared earlier in lectures of Winter. } between subsets in a topological dynamical system. 
This notion was inspired by the so-called Cuntz subequivalence that appears in the construction of the Cuntz semigroup and can itself be used to formulate concepts of type semigroups (\cite[Section~13]{KerrEur20}, \cite{Ma21GeneralizedType}) that provide a natural framework to study dynamical strict comparison. 
These type semigroups can be considered as analogs of Cuntz semigroups in the setting of topological dynamical systems\footnote{More precisely, the type semigroup in \cite[Section~13]{KerrEur20} is closer in spirit to the Murray-von Neumann semigroup, while the generalized type semigroup in \cite{Ma21GeneralizedType} is closer in spirit to the Cuntz semigroup. }.

All this strongly motivates the definition and study of Cuntz semigroups associated to C*-dynamical systems, in a unifying and systematic fashion.  

\begin{goalintro}\label{goalintro:dynamical-Cuntz-semigroup}
	Construct analogs of Cuntz semigroups for C*-dynamical systems.
\end{goalintro}

Note that there is already a very natural object to study in this context, namely $\CatW(A \rtimes G)$ (or $\CatCu(A \rtimes G)$), the Cuntz semigroup of the crossed product associated to a C*-dynamical system. 
However, despite its significance, this semigroup is often difficult to concretely describe and compute. 
It is hoped that our construction of dynamical Cuntz semigroups, which avoids much of the analytic difficulty, can help us tap into the structure of $\CatCu(A \rtimes G)$. 
Since an action of a group $G$ on a C*-algebra $A$ naturally induces an action of $G$ on $\W(A)$, this motivates the following:

\begin{goalintro}\label{goalintro:crossed-product}
	Study the relationship between $\CatW(A \rtimes G)$ and the dynamical Cuntz semigroup of the action $G \curvearrowright \CatW(A)$.
\end{goalintro}

In the three subsections below, we describe how we fulfill \autoref{goalintro:quotient} and \autoref{goalintro:dynamical-Cuntz-semigroup} and make some progress in the direction of \autoref{goalintro:crossed-product}. 
For the sake of simplicity, we typically state simplified forms of the results that we actually prove. The reader may follow the references to find the complete versions.

\subsection{General (ideal-free) quotients of $\CatW$-semigroups} \label{subsec:intro-quotient}

We establish a solid and systematic foundation for the construction of quotient $\CatW$-semigroups in the first part of the paper going far beyond quotients by ideals and use this to tackle \autoref{goalintro:quotient}.

As shown in the example just before \autoref{goalintro:quotient}, this general notion of quotients is necessary since for the map $\N\oplus\N\to\N$ that takes $x\oplus y$ to $x+y$, the preimage of $0$ consists of only $0$. However, given a $\CatCu$-semigroup $S$ and a nontrivial ideal $I$ of $S$, the preimage of $0$ in $S/I$ is $I$. 

The cornerstones of this generalized theory are summarized below. 
To keep the presentation simple, let us assume that our $\CatW$-semigroups are \emph{faithful} in the sense that they satisfy \axiomW{2} and the induced preorder is a partial order\footnote{We are following the definition of $\CatW$-semigroups in \cite{AntPerThi20:JLMS}, which allows for preorders, as opposed to \cite{AntPerThi14arX:TensorProdCu}, which requires a partial order instead. When we refer to the latter case, we use the adjective \emph{faithful}. }. 
\begin{enumerate}
	\item As argued above, for a $\CatW$-morphism $f \colon S \to T$, the equivalence relation on $S$ induced by $f$ must be the congruence relation of the preorder $f^*(\leq_T)$, that is, the pullback of the partial order $\leq_T$ on $T$. Given the pivotal role the preorder $f^*(\leq_T)$ plays (it contains enough information to recover $\leq_T$, provided that $f$ is surjective), we call it the \emph{kernel} of $f$. 
	\item It is possible to characterize \emph{intrinsically} whether a preorder in a $\CatW$-semigroup $S$ arises as the kernel of some $\CatW$-morphism from $S$ to some other $\CatW$-semigroup. We say these preorders are \emph{normal}. 
	For technical convenience in dealing with the continuous structure in $\CatW$-semigroups, instead of working with preorders alone, we work with \emph{normal pairs}, modelled on the pair consisting of a normal relation and its induced preorder. 
	\item These normal pairs play the role of ideals in our generalized theory: for any $\CatW$-semigroup $S$ and any normal pair $\alpha$, we construct a \emph{quotient $\CatW$-semigroup} $S / \alpha$. 
\end{enumerate}

As suggested by the terminology, there is a strong parallelism with the theory of group homomorphisms and quotients. Indeed, we have the following analog of the fundamental theorem of group homomorphisms. 

\begin{thmintro}[{see \autoref{thm:fundamental-theorem0} and \autoref{rmk:fundamental-theorem}}]\label{thmintroA}
	Let $S$ and $T$ be $\W$-semigroups with $T$ {faithful}. Let $f \colon S \to T$ be a $\W$-morphism. Let $\alpha$ be a normal pair on $S$, and let $\pi_\alpha \colon S \to S/{\alpha}$ be the natural quotient $\W$-morphism (as in \autoref{prp:normal}). Then there is an order-preserving $\W$-morphism $h \colon S_{\alpha} \to T$ such that 
	the diagram
	\[
	\xymatrix{
		S \ar@{->>}[d]_{\pi_\alpha} \ar[rd]^{f} &   \\
		S/{\alpha} \ar@{-->}[r]_{h} & T
	}
	\]
	commutes 
	if and only if $\alpha\leq\ker(f)$. Furthermore, $\alpha=\ker(f)$ if and only if $h$ is an order-embedding.
\end{thmintro}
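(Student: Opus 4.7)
The plan is to mirror the standard proof of the first isomorphism theorem for groups, paying extra care to the auxiliary relation that distinguishes $\W$-semigroups from ordinary positively (pre)ordered monoids. I would treat both directions of the equivalence in turn, and then deduce the ``furthermore'' clause by translating the order-embedding condition through $\pi_\alpha$.

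The direction ($\Rightarrow$) is essentially a diagram chase. Suppose $h$ satisfying $h\circ\pi_\alpha=f$ exists, and let $(s,s')$ belong to the preorder associated with $\alpha$. By the construction of the quotient recorded in \autoref{prp:normal}, this forces $\pi_\alpha(s)\leq\pi_\alpha(s')$ in $S/\alpha$. Applying the order-preserving map $h$ yields $f(s)\leq f(s')$, so $(s,s')\in\ker(f)$. Hence $\alpha\leq\ker(f)$.

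For ($\Leftarrow$), assume $\alpha\leq\ker(f)$ and define $h([s]):=f(s)$, where $[s]$ is the congruence class of $s$ determined by $\alpha$. Well-definedness uses faithfulness of $T$: if $s$ and $s'$ are $\alpha$-equivalent then both $(s,s')$ and $(s',s)$ lie in (the preorder associated with) $\alpha$, and hence in $\ker(f)$, giving $f(s)\leq f(s')$ and $f(s')\leq f(s)$, whence equality by antisymmetry. Additivity is inherited directly from $f$, while order preservation of $h$ is just a restatement of $\alpha\leq\ker(f)$. The delicate point is preservation of the auxiliary relation, because the auxiliary relation on $S/\alpha$ is not merely the push-forward of $\ll_S$ along $\pi_\alpha$ but an intrinsic construction given in \autoref{prp:normal}. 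I would handle this by invoking the universal property of $\pi_\alpha$ provided by that proposition: any $\W$-morphism out of $S$ whose induced preorder contains that of $\alpha$ factors through $\pi_\alpha$ as a $\W$-morphism; this both produces $h$ and automatically ensures $\ll$-preservation.

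For the ``furthermore'' clause, since $\pi_\alpha$ is surjective, $h$ is an order-embedding if and only if $f(s)\leq f(s')$ forces $\pi_\alpha(s)\leq\pi_\alpha(s')$, which unpacks to the reverse inclusion $\ker(f)\leq\alpha$; combined with the already established $\alpha\leq\ker(f)$, this yields equality. The main obstacle in the whole argument is the auxiliary-relation step above: the entire reason for formalising normal pairs, rather than merely normal preorders, is to engineer the quotient so that its auxiliary relation satisfies exactly this universal property. Consequently, the proof hinges on faithfully invoking the construction in \autoref{prp:normal} rather than on any new combinatorics, and everything else is formal.
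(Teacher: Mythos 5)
Your overall skeleton matches the paper's proof of \autoref{thm:fundamental-theorem0}: define $h$ on classes by $h([s])=f(s)$, get well-definedness from antisymmetry in the faithful $T$, read off order preservation from $\alpha\leq\ker(f)$, and recover $\alpha=\ker(f)$ from the order-embedding condition via surjectivity of $\pi_\alpha$. The forward direction and the ``furthermore'' clause are essentially the paper's arguments (with the small caveat that reducing the comparison of the pairs $\alpha=(\prec_S,\leq)$ and $\ker(f)=(\prec_S,\leq_f)$ to the single containment ${\leq}\subseteq{\leq_f}$ uses that both pairs are $\prec_S$-normal, via \autoref{lma:auxiliary}(i)).

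The genuine gap is in the step you yourself flag as delicate. You dispose of monotonicity for the auxiliary relation and of continuity of $h$ by ``invoking the universal property of $\pi_\alpha$ provided by [\autoref{prp:normal}]: any $\W$-morphism out of $S$ whose induced preorder contains that of $\alpha$ factors through $\pi_\alpha$ as a $\W$-morphism.'' \autoref{prp:normal} states no such universal property: it only establishes that $S_\alpha$ and $S/\alpha$ are $\W$-semigroups and that $\pi_\alpha$ is a $\W$-morphism. The factorization statement you invoke \emph{is} the content of \autoref{thmintroA}, so the argument is circular exactly where the work lies. (The universal property sketched in \autoref{pgr:prenormal_closed_preorder} concerns prenormal relations on sets and has the hypothesis that $\preccurlyeq$ is contained in $f^*(\prec')$, i.e.\ a condition on the auxiliary-type relation rather than the hypothesis $\alpha\leq\ker(f)$ on the preorder, so it does not apply directly either.) What must be done --- and what the paper does --- is the direct verification: if $a\prec_\alpha b$, i.e.\ $a\leq a'\prec_S b$ for some $a'$, then $f(a)\leq_T f(a')\prec_T f(b)$, whence $f(a)\prec_T f(b)$ because $\prec_T$ is auxiliary for $\leq_T$ in the faithful $T$; and for continuity, given $c\prec_T f(a)$ one uses continuity of $f\colon S\to T$ to produce $a'\prec_S a$ with $c\prec_T f(a')$, noting that $a'\prec_S a$ implies $a'\prec_\alpha a$. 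These are short arguments, but they are the substance of the theorem and cannot be replaced by an appeal to its conclusion.
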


This theory perfectly models the behaviour of the functor $\Cu$ from the category of C*-algebras to the category $\CatCu$. In this situation, we get the following:
\begin{thmintro}[{see \autoref{thm:Cu-quotients-from-star-homomorphism}}] \label{thmintroB}
	
	Let $\varphi \colon A \to B$ be an injective $*$-homomorphism between C*-algebras. 
	Then the resulting $\operatorname{Cu}$-morphism can be decomposed as follows: 
	\[
	\xymatrix{
		\operatorname{Cu}(A) \ar[rrr]^{\operatorname{Cu}(\varphi)} \ar@{->>}[rd] & & & \operatorname{Cu}(B) \\
		& \operatorname{Cu} (A) / \ker(\operatorname{Cu}(\varphi))  \ar[r]^-{\cong} &   \left\{ \left[ \varphi(a) \right] \in \operatorname{Cu}(B) \colon a \in (A \otimes K)_+ \right\}  \ar@{^{(}->}[ru] & 
	}
	\]
	where the arrows $\twoheadrightarrow$ and $\hookrightarrow$ indicate surjectivity and injectivity, respectively. 
\end{thmintro}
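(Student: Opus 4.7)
The plan is to apply the fundamental theorem (Theorem A) directly to the $\CatCu$-morphism $\operatorname{Cu}(\varphi)\colon \operatorname{Cu}(A)\to\operatorname{Cu}(B)$. Concretely, I would set $\alpha := \ker(\operatorname{Cu}(\varphi))$, i.e.\ the preorder on $\operatorname{Cu}(A)$ obtained by pulling back the order of $\operatorname{Cu}(B)$ along $\operatorname{Cu}(\varphi)$. Since $\alpha$ is, by definition, the kernel of a $\CatW$-morphism, it is automatically a normal pair on $\operatorname{Cu}(A)$ (this is built into the notion of normality discussed before Theorem A), so that the quotient $\operatorname{Cu}(A)/\alpha$ and the quotient map $\pi_\alpha$ are well defined.

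I would then apply Theorem A with $S = \operatorname{Cu}(A)$, $T = \operatorname{Cu}(B)$, $f = \operatorname{Cu}(\varphi)$, and this particular $\alpha$. Since $\alpha = \ker(\operatorname{Cu}(\varphi))$ trivially satisfies $\alpha \le \ker(\operatorname{Cu}(\varphi))$, the theorem yields a $\CatW$-morphism $h \colon \operatorname{Cu}(A)/\alpha \to \operatorname{Cu}(B)$ with $\operatorname{Cu}(\varphi) = h\circ\pi_\alpha$. The equality part of the theorem moreover guarantees that $h$ is an order-embedding, which, combined with faithfulness of $\operatorname{Cu}(B)$, makes $h$ injective.

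The next step is to identify the image of $h$. By construction of the Cuntz semigroup, every element of $\operatorname{Cu}(A)$ has the form $[a]$ for some $a \in (A\otimes K)_+$, and one has $\operatorname{Cu}(\varphi)([a]) = [\varphi(a)]$ (with $\varphi$ tacitly extended to $A\otimes K$). Since $\pi_\alpha$ is surjective, the image of $h$ equals the image of $\operatorname{Cu}(\varphi)$, namely $\{[\varphi(a)] : a\in(A\otimes K)_+\}$. Being an injective $\CatW$-morphism onto this set, $h$ corestricts to an isomorphism, and composition with the inclusion into $\operatorname{Cu}(B)$ gives the displayed decomposition.

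The main subtlety I expect is verifying that the corestriction of $h$ to its image is not merely a $\CatW$-isomorphism but an honest $\CatCu$-isomorphism: one must check that $\{[\varphi(a)] : a\in(A\otimes K)_+\}$, equipped with the structure induced from $\operatorname{Cu}(B)$, is closed under suprema of increasing sequences and that the inverse of $h$ respects the way-below relation. This however follows from the fact that $h$ is already a $\CatW$-morphism and an order-embedding, together with the explicit description of the image — indeed, compatibility of this sort is exactly what the normal-pair framework is designed to deliver. Note that injectivity of $\varphi$ is not needed for the factorization itself, but it is what renders the target description $\{[\varphi(a)] : a\in(A\otimes K)_+\}$ clean and intrinsic.
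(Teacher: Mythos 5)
Your proposal is correct and follows essentially the same route as the paper: the proof of \autoref{thm:Cu-quotients-from-star-homomorphism} likewise obtains the order-embedding $\operatorname{Cu}(A)/\ker(\operatorname{Cu}(\varphi)) \hookrightarrow \operatorname{Cu}(B)$ from the fundamental theorem (\autoref{thm:fundamental-theorem0} and \autoref{rmk:fundamental-theorem}) applied to the normal kernel pair $\ker(\operatorname{Cu}(\varphi))$ (normality being \autoref{lma:kernelnormal}), with the image identified as $\{[\varphi(a)]:a\in(A\otimes K)_+\}$; the extra machinery in the paper's proof (the Ciuperca--Robert--Santiago short exact sequence) is only needed for the non-injective case treated there. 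The one caveat is your final remark: the description of the image does not actually depend on injectivity of $\varphi$ either --- injectivity only collapses the ideal-quotient part of the general diagram.
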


In fact, we deal with the general case of possibly non-injective $*$-homomorphisms in \autoref{thm:Cu-quotients-from-star-homomorphism}, where ones sees how our notion of generalized quotients contrasts with the traditional kind of quotients by ideals.

To apply the theory developed above to the dynamical setting (to be discussed in Subsection~\ref{subsec:intro-dynamical}), we explain a mechanism to generate normal pairs from an arbitrary relation  that satisfies a continuity assumption. 
Again, the idea here mimics what happens in group theory. One may obtain a quotient group by modding out certain relations, which is achieved technically by first generating a normal subgroup using these relations. As a pedagogical byproduct, this mechanism allows us to give an abstract characterisation of the Cuntz subequivalence in \autoref{cor:precsimher-axiomatic}. 
Namely, for a C*-algebra $A$, the Cuntz subequivalence $\precsim_{\operatorname{Cu}}$ is the smallest preorder $\leq$ on the set $A_+$ of positive elements such that for all $a, b \in A_+$, any of the following conditions implies $a \leq b$:
\beginEnumConditions
\item $\mathrm{Her}(a) \subseteq \mathrm{Her}(b)$, where we write $\mathrm{Her}(a) = \overline{aAa}$, the hereditary subalgebra generated by $a$; 
\item there exists $x \in A$ such that $a = x b x^*$; 
\item for any $c \in \bigcup_{\varepsilon > 0} \mathrm{Her}( (a - \varepsilon)_+ )$, 
we have $c \leq b$, where $(a - \varepsilon)_+$ stands for the positive part of the self-adjoint element $(a - \varepsilon \cdot 1)$. 
\end{enumerate}
This answers a question raised by G.~A.~Elliott. 
Note that the last condition above points to an essential feature of the continuous structure in Cuntz semigroups and is analogous to (inner) regularity in measure theory. 

Said mechanism is developed in \autoref{sec:GenNormalPreorders} and \autoref{sec:extendingnormal}, where we begin with an axiomatic approach to define the normal pair generated by a given relation, in the spirit of the above characterization of $\precsim_{\operatorname{Cu}}$, but then also give more explicit characterisations (see \autoref{thm:concrete-merging} and \autoref{cor:one-step}), in the spirit of the classical definition of the Cuntz subequivalence $\precsim_{\operatorname{Cu}}$, i.e., $a \precsim_{\operatorname{Cu}} b$ if and only if $a = \lim_{n} x_n b x_n^*$ for some sequence $x_n \in A$. 
In fact, it is via the explicit constructions that we show the pair given by the axiomatic approach is indeed normal. 

The reason we divide this task into two sections is the following: 
In the former section, we only work with the continuous order structure in $\CatW$-semigroups, disregarding the additive structure (in more technical terms, we work with $\CatW$-sets). The additive structure is blended in only in the latter section, since this adds an extra layer of complexity related to transitivity (see \autoref{pgr:almost-refinement}). 

These results provide us with a suitable framework to define the dynamical Cuntz semigroup. 
This, as we will subsequently explain, is the largest (general) quotient $\CatW$-se\-mi\-group in which each element is identified with all its translates.

\subsection{Dynamical Cuntz semigroups} \label{subsec:intro-dynamical}

As mentioned in the first part of the introduction, our approach to \ref{goalintro:dynamical-Cuntz-semigroup} is to work in the framework of $\CatW$-\emph{semigroups}. In this setting, recall that an action of a group $G$ on a C*-algebra $A$ induces an action of $G$ on $\CatW(A)$ by automorphisms in $\CatW$. These are examples of what we call \emph{$\CatGW$-semigroups}.

Given a faithful $\CatGW$-semigroup $S$ equipped with a preorder $\leq$ and an auxiliary relation $\prec$, we use our results to determine the strongest preorder  $\lessapprox$ on $S$ satisfying that, for all $a, b \in S$, any of the following conditions implies $a \lessapprox b$ 
(note the similarity between the first three condition below and the three conditions above characterising $\precsim_{\operatorname{Cu}}$): 
\label{listintro:axiomatic-dynamical-subequivalence}
\beginEnumConditions
\item $a \leq b$; 
\item $a = g \cdot b$ for some $g \in G$; 
\item for any $c \in S$ with $c \prec a$, we have $c \lessapprox b$;
\item there are $a_1, a_2, b_1, b_2$ in $S$ such that $a = a_1 + a_2$, $b = b_1 + b_2$, and $a_k \lessapprox b_k$ for $k = 1, 2$.
\end{enumerate}
This preorder is denoted by $\dyn$ and it fits into a normal pair $(\prec, \dyn)$. 
We define $S/G$ to be the quotient $\CatW$-semigroup by this normal pair, and refer to this as the \emph{dynamical Cuntz semigroup}. In other words, 
$S/G$ is the quotient set of $S$ by the symmetrization of $\dyn$, equipped with 
an auxiliary relation induced from~$\prec$. 

Note that restricting to abelian C*-algebras $A$, our construction agrees with the dynamical subequivalence defined by Kerr (see \cite{KerrEur20}). Further, our theory can be used to give a somewhat different construction of Ma's generalized type semigroup (\cite{Ma21GeneralizedType}). This will be explored in a follow-up paper \cite{BosPerWuZacTypeSemigroup}.

The semigroup $S/G$ may also be characterized by the following universal property:

\begin{thmintro}[{see \autoref{thm:dynamical-universal-property}}]\label{thmIntroC}
Let $S$ be a $\CatGW$-semigroup and let $T$ be a faithful $\W$-semigroup. Then for any $G$-invariant $\W$-morphism $f \colon S \to T$, there is a unique order-preserving $\W$-morphism $f_G \colon S / G \to T$ such that the following diagram commutes:  
\[
\xymatrix{
S \ar[d]_{\pi_G} \ar[rd]^{f} &  \\
S / G \ar@{-->}[r]_{f_G} & T
}
\]
Moreover, the pair $(S / G, \pi_G)$ is the unique pair of a $\W$-semigroup and a $\W$-morphism that satisfies the above universal property. 
\end{thmintro}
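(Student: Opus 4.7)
The plan is to apply the fundamental theorem (\autoref{thmintroA}) to the $G$-invariant $\W$-morphism $f \colon S \to T$. For this, I need to exhibit a factorization through the quotient $\pi_G \colon S \to S/G$ by showing that $\alpha := (\prec, \dyn) \leq \ker(f)$, where by definition of the kernel this amounts to verifying that $a \dyn b$ implies $f(a) \leq_T f(b)$.

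The key step is to show that the pullback preorder $f^*(\leq_T)$ on $S$, defined by $a \mathrel{R} b$ iff $f(a) \leq_T f(b)$, itself satisfies all four conditions (i)--(iv) that characterize $\dyn$ as the strongest such preorder (see the axiomatic characterization on page \pageref{listintro:axiomatic-dynamical-subequivalence} above). Once this is established, the minimality of $\dyn$ among preorders satisfying (i)--(iv) forces $\dyn\,\subseteq f^*(\leq_T)$, hence $\alpha \leq \ker(f)$, and \autoref{thmintroA} delivers the unique order-preserving $\W$-morphism $f_G \colon S/G \to T$ with $f_G \circ \pi_G = f$. Uniqueness of $f_G$ as a set-function (and hence as a $\W$-morphism) is automatic from surjectivity of $\pi_G$.

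Verifying that $f^*(\leq_T)$ satisfies (i)--(iv) is straightforward for three of the four conditions: (i) holds because $f$ preserves the order; (ii) holds because $f$ is $G$-invariant, so $f(g \cdot b) = f(b)$; (iv) holds because $f$ is additive. The main obstacle, as I expect, is condition (iii), a continuity/density requirement: if $f(c) \leq_T f(b)$ for every $c \prec a$, one must conclude $f(a) \leq_T f(b)$. This is precisely where the fact that $f$ is a $\W$-morphism (not merely an ordered-monoid morphism) becomes essential. Using axiom \axiomW{4}, pick a $\prec$-increasing sequence $(c_n)$ in $S$ with supremum $a$; since $f$ preserves such suprema, $f(a) = \sup_n f(c_n)$, and the hypothesis gives $f(c_n) \leq_T f(b)$ for every $n$, whence $f(a) \leq_T f(b)$.

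Having obtained the factorization, the uniqueness of the pair $(S/G, \pi_G)$ up to isomorphism is a standard categorical argument: if $(U, \pi_U)$ is any pair with the same universal property, then since $\pi_G$ is a $G$-invariant $\W$-morphism from $S$ into a faithful $\W$-semigroup (here one needs to check that $S/G$ is faithful, which will be built into the construction of normal-pair quotients earlier in the paper), the universal property of $(U, \pi_U)$ produces a morphism $U \to S/G$, and symmetrically one gets $S/G \to U$; both composites are identities by the uniqueness half of the universal property applied to $\pi_G$ and $\pi_U$ themselves. The only small care point here is that $\pi_G$ itself qualifies as a $G$-invariant $\W$-morphism into a faithful target, which follows directly from the construction since condition (ii) guarantees $g \cdot a \dyn a \dyn g \cdot a$ and the quotient by $\dyn$ therefore identifies every orbit.
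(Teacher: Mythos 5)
Your overall strategy coincides with the paper's: the paper proves \autoref{thm:dynamical-universal-property} by reducing to \autoref{thm:quotient-universal-property}, which in turn shows $\alphar{G}\leq\ker(f)$ via the minimality statement of \autoref{thm:generating-normal-preorders} and then invokes the fundamental theorem (\autoref{thm:fundamental-theorem0} and \autoref{rmk:fundamental-theorem}). Your direct verification that the pullback preorder $f^*(\leq_T)=\leq_f$ satisfies the four defining conditions of $\dyn$ is exactly the content that the paper packages into \autoref{lma:kernelnormal} (the kernel pair is a normal closed admissible pair) plus the ``smallest such pair'' characterization, so this is the same argument unpacked rather than a different route. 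The uniqueness arguments (surjectivity of $\pi_G$ for uniqueness of $f_G$; the standard two-sided universal-property argument for uniqueness of the pair) also match.

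There is, however, one concrete error in your verification of condition (iii). You justify it by choosing a $\prec$-increasing sequence $(c_n)$ with supremum $a$ and asserting that $f$ preserves this supremum. A $\W$-morphism is only monotone and continuous; it is \emph{not} required to preserve suprema of increasing sequences, and in a general $\W$-semigroup such suprema need not even exist (that is axiom \axiomO{1}, a $\CatCu$-axiom, and the cofinal sequence comes from \axiomW{1}, not \axiomW{4} as you cite). The correct mechanism is continuity of $f$ alone: given $x\prec_T f(a)$, continuity produces $c\prec_S a$ with $x\prec_T f(c)$; the hypothesis $f(c)\leq_T f(b)$ then yields $x\prec_T f(b)$, so $f(a)^{\prec_T}\subseteq f(b)^{\prec_T}$, i.e.\ $f(a)\leq_T f(b)$. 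This is precisely how the paper establishes left $\prec_S$-closedness of $\ker(f)$ in \autoref{lma:kernelnormal}. The repair is one line and does not affect the architecture of your proof, but as written the step appeals to a property that $\W$-morphisms do not have.
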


This universal property leads to many nice consequences, mainly formulated in catego\-ry-theoretical language, and we will summarize them briefly:  Firstly, considering the category $\CatGW$ of $\CatGW$-semigroups and G-equivariant $\CatW$-morphisms, and using the embedding $\CatW$ into $\CatGW$ using trivial actions, the universal property yields a \emph{reflector}, i.e., a functor left adjoint to the inclusion functor $\CatW \hookrightarrow \CatGW$ (see \autoref{thm:categories-W-full-reflective-in-W-G}). 
Secondly, Theorem \ref{thmIntroC} can be used to verify that the construction of the dynamical Cuntz semigroup is compatible with the $\gamma$-completion functor from $\CatW$ to $\CatCu$ studied in \cite{AntPerThi14arX:TensorProdCu}, in the sense that the completion of the dynamical Cuntz semigroup of a $\CatGW$-semigroup $S$ and the one corresponding to $\gamma(S)$ with the induced action yield the same object in $\CatCu$ (see \autoref{cor:categories-Cu-full-reflective-in-W-G}). 
Finally, we also use Theorem \ref{thmIntroC} to show that there is a bijection between the set of G-invariant functionals on $S$ and the set of functionals on $S/G$. In particular, there are natural topologies on both sets, under which this bijection becomes a homeomorphism (see Proposition \ref{prp:WfunCufun}).

If the group $G$ acts on a C*-algebra $A$, then we write $\W_G(A)=\W(A)/G$ and $\Cu_G(A)=\gamma(\W(A)/G)=\gamma(\Cu(A)/G)$, where the last equality is an instance of the second point mentioned in the paragraph above.~Note that if $I$ is a $G$-invariant closed ideal, the action passes to the quotient C*-algebra  $A/I$.~We prove that the dynamical Cuntz semigroup construction is compatible with quotients, as follows:
\begin{thmintro}[{see \autoref{thm:dynquotients}}]\label{thmIntroD}	
Let $A$ be a C*-algebra, and let $G$ be a discrete group acting on $A$. For any $G$-invariant ideal $I$ of $A$, we have that $\Dyn(I)$ and $\CuDyn(I)$ are (isomorphic to) ideals of $\Dyn(A)$ and $\CuDyn(A)$, respectively. Moreover,
\[
\Dyn(A)/\Dyn(I)\cong\Dyn(A/I) \text{ and }\CuDyn(A)/\CuDyn(I)=\CuDyn(A/I).
\]
\end{thmintro}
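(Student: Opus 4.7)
The plan is to functorially transport the short exact sequence $0 \to \Cu(I) \to \Cu(A) \to \Cu(A/I) \to 0$ of \cite{CiuRobSan10CuIdealsQuot} (and its $\W$-level analogue) through the dynamical-quotient construction supplied by Theorem~\ref{thmIntroC}. Because $I$ is $G$-invariant, the three semigroups are $\CatGW$-semigroups and the two maps are $G$-equivariant. Applying the universal property to the $G$-invariant compositions $\CatW(I) \hookrightarrow \CatW(A) \twoheadrightarrow \Dyn(A)$ and $\CatW(A) \twoheadrightarrow \CatW(A/I) \twoheadrightarrow \Dyn(A/I)$ yields canonical $\W$-morphisms $\varphi \colon \Dyn(I) \to \Dyn(A)$ and $\psi \colon \Dyn(A) \to \Dyn(A/I)$. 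Surjectivity of $\psi$ is immediate: each step in the composition $\CatW(A) \twoheadrightarrow \CatW(A/I) \twoheadrightarrow \Dyn(A/I)$ is surjective, and this composition factors as $\CatW(A) \twoheadrightarrow \Dyn(A) \xrightarrow{\psi} \Dyn(A/I)$.

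The central step is to show that $\varphi$ is an order embedding whose image is an ideal of $\Dyn(A)$, i.e.\ downward hereditary and closed under suprema of increasing sequences. The latter two properties will descend from the corresponding facts for $\Cu(I) \subseteq \Cu(A)$: a representative in $\Dyn(A)$ dominated by the image of some $\Dyn(I)$-class can be lifted to $\CatW(A)$, relocated inside $\CatW(I)$ by hereditariness, and pushed back down. For the order embedding itself I would argue by induction on the four-clause axiomatic definition of $\dyn$ introduced before Theorem~\ref{thmIntroC}: given $a,b \in \CatW(I)$ with $a \dyn b$ holding in $\CatW(A)$, each generating clause should restrict to $\CatW(I)$. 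The translation clause $a = g \cdot b$ holds in $\CatW(I)$ because $G$ stabilises $I$; the comparison clause $a \leq b$ transfers because $\CatW(I) \hookrightarrow \CatW(A)$ is an order embedding; the inner-regularity clause reduces inductively to $a' \prec a$, which still lies in $\CatW(I)$. The main obstacle is the refinement clause: given decompositions $a = a_1 + a_2$ and $b = b_1 + b_2$ in $\CatW(A)$ with $a_k \dyn b_k$, the intermediate summands need not lie in $\CatW(I)$, and one must replace them by elements of $\CatW(I)$ via the almost-refinement apparatus referenced in the introduction, combined with hereditariness.

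Granted the ideal embedding, the isomorphism $\Dyn(A)/\Dyn(I) \cong \Dyn(A/I)$ would follow from two complementary applications of universality. Since $\psi$ kills $\varphi(\Dyn(I))$ — because the composition $\CatW(A) \to \CatW(A/I) \to \Dyn(A/I)$ already sends $\CatW(I)$ to $0$ — Theorem~\ref{thmintroA} yields a $\W$-morphism $\overline{\psi} \colon \Dyn(A)/\Dyn(I) \to \Dyn(A/I)$ that is surjective and, by the kernel clause of that theorem, an order embedding. Conversely, the $G$-invariant composition $\CatW(A) \to \Dyn(A) \to \Dyn(A)/\Dyn(I)$ vanishes on $\CatW(I)$, so it descends by \cite{CiuRobSan10CuIdealsQuot} to a $\W$-morphism out of $\CatW(A/I)$ and then, via Theorem~\ref{thmIntroC}, to a $\W$-morphism $\Dyn(A/I) \to \Dyn(A)/\Dyn(I)$ inverse to $\overline{\psi}$. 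The $\CatCu$-statement would follow by applying the completion functor $\gamma$, which commutes both with the dynamical quotient (by the corollary following Theorem~\ref{thmIntroC}) and with ideal quotients.
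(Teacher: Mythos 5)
Your overall architecture is a legitimate alternative to the paper's: the paper first proves a purely order-theoretic commutation $(S/G)/(I/G)\cong(S/I)/G$ for abstract $\CatGW$-semigroups (\autoref{prp:Gideals}(iii)) by a direct computation with the concrete chain description of $\leqr{G}$, then chains it with $\W(A)/\W(I)\cong\W(A/I)$ and the fact that $\gamma$ respects ideals and quotients (\autoref{prp:WidealsvsCuideals}); you instead stay at the C*-level and try to manufacture the isomorphism from a double application of the universal properties. That top-level plan can be made to work, and the mutual-inverse trick does spare you the explicit kernel computation. However, two of your key steps have genuine problems.

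First, the claim that $\Dyn(I)\hookrightarrow\Dyn(A)$ is an order embedding with ideal image cannot be proved by ``induction on the four-clause axiomatic definition'': $\dyn$ is defined as the \emph{smallest} preorder closed under those clauses (clause (iii) is an infinitary closure condition), so there is no well-founded derivation to induct on. The paper gets around this by first establishing the concrete chain characterization (\autoref{cor:dynamical-subequivalence}) and then arguing recursively along a finite chain; alternatively one can run a minimality argument by exhibiting a candidate preorder in $\mathcal{E}^+$. Worse, the obstacle you identify in the additive clause is not the real one, and your proposed fix would sink the proof: if $b\in\W(I)$ and $b=b_1+b_2$, then $b_1,b_2\in\W(I)$ automatically because ideals are hereditary under summands ($b_k\leq_\prec b$), so no refinement of the decomposition is needed; what must be shown is that the $a$-side data can be pulled into $\W(I)$, which the paper does via $G$-invariance of $I$ applied to the terms $g_{ij}d_{ij}$ of the chain. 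Invoking ``the almost-refinement apparatus'' is not an option here: almost refinement is only available for special classes (e.g.\ real rank zero and stable rank one), whereas \autoref{thm:dynquotients} is stated for arbitrary $A$.

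Second, your assertion that $\overline{\psi}$ is an order embedding ``by the kernel clause'' of \autoref{thmintroA} presupposes exactly the identity $\ker(\psi)=\alpha_{\Dyn(I)}$, i.e.\ that $\psi([a])\leq\psi([b])$ forces $[a]\leq_{\Dyn(I)}[b]$; this is the hard direction of \autoref{prp:Gideals}(iii) and is where the paper spends most of its effort (lifting $\leqr{G}$-chains in $S/I$ back to $S$ modulo elements of $I$). As written you have not supplied it. Your construction of the reverse map $\Dyn(A/I)\to\Dyn(A)/\Dyn(I)$ can rescue the argument—two mutually inverse order-preserving surjections are automatically order isomorphisms, and the compositions are identities by uniqueness in the universal properties—but then the order-embedding statement should be \emph{deduced} from mutual inversity rather than asserted independently, and the descent of $\W(A)\to\Dyn(A)/\Dyn(I)$ through the ideal quotient still needs the (easy but nontrivial) verification that $a\leq_{\W(I)}b$ is sent to an inequality in $\Dyn(A)/\Dyn(I)$, not merely that the map kills $\W(I)$.
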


\subsection{Applications: dynamical strict comparison and the Cuntz semigroup of a crossed product} \label{subsec:intro-DSC}

Recall that a simple C*-algebra $A$ has \emph{strict comparison of positive elements} if, for any $a,b\in \W(A)$, we have $a\leq b$ whenever $\lambda(a)<\lambda(b)$ for any functionals $\lambda$ on $\W(A)$.~This can be suitably generalized to non-simple C*-algebras, and it is known to be equivalent to the so-called property of almost unperforation in $\W(A)$.

Now, if a discrete group $G$ acts on $A$,  there is a natural concept of dynamical strict comparison, this time using $G$-invariant functionals on $\CatW(A)$,~which we introduce in \autoref{sec:comparison}.~We show that, as in the case of no group action, ~this property can be characterized by the property of almost unperforation in $\W_G(A)$.

This allows us to make progress regarding  \autoref{goalintro:crossed-product}. If $A\rtimes G$ denotes the crossed product by the action of $G$ and $\iota\colon A\to A\rtimes G$ is the natural embedding, the universal property of the dynamical Cuntz semigroup 
yields the following commutative diagram:
\[
\xymatrix{
\CatCu(A) \ar[rd]^-{\iota} \ar[d]_{\pi_G} &   \\
\CuDyn(A) \ar@{-->}[r]_-{\kappa} &	\CatCu(\iota(A))\subseteq\CatCu(A\rtimes G)
}
\] 
where $\kappa$ is induced by $\iota$.

A natural problem is to understand how close is the $\CatCu$-morphism $\kappa$ to being an isomorphism. By a direct computation, one can show that $\kappa$ is an isomorphism  in the aforementioned example $\mathbb{C} \oplus \mathbb{C}\cong C(\Z/2)\hookrightarrow C(\Z/2)\rtimes\Z/2\cong M_2(\mathbb{C})$ and, more generally, for $G$-C*-algebras of the form $C_0(G) \otimes F$, where $F$ is a finite dimensional C*-algebra and $G$ acts by translation on the first factor and trivially on the second factor. 
The latter kind of C*-dynamical systems may be termed \emph{elementary}: in particular, they involve elementary C*-algebras and give rise to elementary crossed products. 
In the follow-up papers (\cite{BosPerWuZacAE,BosPerWuZacAEDSC}), we introduce a C*-dynamical regularity property termed \emph{almost elementariness} requiring the existence of an approximation of the action by \emph{elementary} systems as above, up to a dynamical small remainder. We will show in there that almost elementariness implies dynamical strict comparison.

At the current stage, it appears difficult to give general solutions to the natural problem above, e.g., characterizing when $\kappa$ is an order embedding or when it has a dense image. 
However, we expect answers in the positive direction under suitable regularity properties on the C*-dynamical systems. 
We prove one such result, using dynamical strict comparison:

\begin{thmintro}[{see \autoref{thm:DynamicalOrderEmbedding}}]\label{thmintroE} 
Let $G$ be a discrete group acting minimally on a C*-algebra $A$. If $A$ has dynamical strict comparison, then the restriction of $\kappa \colon \CuDyn(A) \to \CuDyn(A \rtimes G)$ to the soft elements of $\CuDyn(A)$ is an order embedding.
\end{thmintro}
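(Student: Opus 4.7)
The plan is to reduce the order-embedding statement to a comparison of $\Cu$-functionals: from the hypothesis $\kappa(x)\le\kappa(y)$ in $\Cu(A\rtimes G)$, extract inequalities $\mu(x)\le\mu(y)$ for every $\Cu$-functional $\mu$ on $\CuDyn(A)$, and then use softness of $x$ together with dynamical strict comparison (i.e.\ almost unperforation of $\CuDyn(A)$ detected by its functionals) to upgrade these to the order inequality $x\le y$.

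For the first half, by \autoref{prp:WfunCufun} the $\Cu$-functionals on $\CuDyn(A)$ correspond bijectively to the $G$-invariant $\Cu$-functionals on $\Cu(A)$. Since $G$ is discrete, the canonical faithful conditional expectation $E\colon A\rtimes G\to A$ (ampliated to $(A\rtimes G)\otimes\mathcal{K}$) allows one to lift any $G$-invariant lower semicontinuous tracial weight $\lambda$ on $(A\otimes\mathcal{K})_+$ to the weight $\widehat{\lambda}=\lambda\circ E$ on $((A\rtimes G)\otimes\mathcal{K})_+$, and hence to a $\Cu$-functional on $\Cu(A\rtimes G)$ satisfying $\widehat{\lambda}\circ\Cu(\iota)=\lambda$ (using $E\circ\iota=\id$). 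Choosing positive representatives $a,b\in(A\otimes\mathcal{K})_+$ of $x$ and $y$, the assumption $[\iota(a)]\le[\iota(b)]$ in $\Cu(A\rtimes G)$ then yields $\lambda(a)\le\lambda(b)$ for every such $\lambda$, which, translated through \autoref{prp:WfunCufun}, gives $\mu(x)\le\mu(y)$ for every $\Cu$-functional $\mu$ on $\CuDyn(A)$.

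For the upgrade step, pick $x'\ll x$ in $\CuDyn(A)$. Softness of $x$ provides $k\in\N$ with $(k+1)x'\le kx$, so for any functional $\mu$ on $\CuDyn(A)$ with $\mu(y)<\infty$,
\[
\mu(x')\le\tfrac{k}{k+1}\mu(x)\le\tfrac{k}{k+1}\mu(y)<\mu(y),
\]
while the case $\mu(y)=\infty$ is immediate. Dynamical strict comparison then forces $x'\le y$ in $\CuDyn(A)$, and passing to the supremum $x=\sup_{x'\ll x}x'$ yields $x\le y$, as required.

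The main obstacle I anticipate is making the $\widehat{\lambda}$ construction fully rigorous at the level of $\Cu$-functionals: verifying traciality and lower semicontinuity of $\lambda\circ E$ across $\ll$-increasing suprema, ensuring that \emph{every} $G$-invariant $\Cu$-functional on $\Cu(A)$ arises this way, and reconciling the argument between $\W(A)/G$ and its $\gamma$-completion $\CuDyn(A)$ by means of \autoref{cor:categories-Cu-full-reflective-in-W-G}. Minimality of the action should enter precisely here: the absence of nontrivial $G$-invariant ideals of $A$ rules out pathological extensions of invariant tracial weights through $E$ and guarantees that the $G$-invariant $\Cu$-functionals on $\Cu(A)$ separate the soft elements of $\CuDyn(A)$ finely enough to carry out the comparison step.
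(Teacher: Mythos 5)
Your overall strategy coincides with the paper's: reduce $\kappa(x)\leq\kappa(y)$ to $\mu(x)\leq\mu(y)$ for every functional $\mu$ on $\CuDyn(A)$ by showing that the pullback $\fun{}{}(\Cu(A\rtimes G))\to\fun{}{G}(\Cu(A))$ is surjective via the conditional expectation $E$, then use softness of $x$ to upgrade to strict inequalities and conclude by almost unperforation; this is exactly \autoref{prp:soft} combined with \autoref{prp:dynstrictcomp} and \autoref{prp:WfunCufun}. However, your final step, ``dynamical strict comparison then forces $x'\leq y$,'' has a genuine gap. Dynamical strict comparison (\autoref{pgr:dynstrictcomp}) only converts the strict functional inequalities into $x'\leq y$ under the additional hypothesis that $x'$ lies in the ideal generated by $y$ (this is the hypothesis $a\in I(b)$ in \autoref{prp:soft}, and it is indispensable for the R{\o}rdam-type argument: almost unperforation is applied to a relation of the form $x'\leq ky$, which must be secured first). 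This is precisely where minimality enters: by \autoref{prp:Gideals} and \autoref{prp:WidealsvsCuideals}, minimality of the action makes $\CuDyn(A)$ simple, so $I(y)=\CuDyn(A)$ for $y\neq 0$ and the ideal-membership condition is automatic. Your closing paragraph instead locates the role of minimality in controlling the lifting of invariant functionals through $E$; but the lifting $\tau\mapsto\tau\circ E$ works for an arbitrary action and uses no minimality whatsoever, so as written your argument both omits the hypothesis needed to invoke strict comparison and misassigns the one assumption that would supply it.

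A secondary imprecision: the functionals on $\Cu(A)$ correspond to lower semicontinuous $2$-quasitraces, not to tracial weights (\autoref{pgr:quasitraces}); for non-exact $A$ these classes may differ. The surjectivity argument must therefore be carried out for $G$-invariant $2$-quasitraces $\tau$, for which $\tau\circ E$ is again a $2$-quasitrace on $A\rtimes G$ restricting to $\tau$ along $\iota$ --- the formula is the one you propose, but phrased in terms of traces it would not exhaust $\fun{}{G}(\Cu(A))$, and the separation of elements needed for the comparison step could fail.
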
	
See \autoref{sec:comparison} for the definition of soft elements.

\section{Preliminaries}

We begin our discussion in a generality greater than semigroups, by allowing preorders instead of partial orders, and by disregarding the additive structure.

\begin{pgr}[Relations]\label{dfn:relations}
	Recall that a \emph{relation} on a set $X$ is a subset of $X \times X$. Given a relation $R$ on $X$ and $a, b \in X$, we sometimes write $a R b$ for the expression $(a,b) \in R$. For the sake of clarity, given two relations $R$ and $R'$ on $X$, we write $R \relsubseteq R'$ for the containment $R \subseteq R'$ as subsets of $X \times X$, and $R \releq R'$ for $R = R'$, i.e., the two relations are identical.
	
	Let $R_1, \ldots, R_n$ be relations on a set $X$. Then the \emph{composition} $R_1 \circ \cdots \circ R_n$ is the relation on $X$ defined so that for any $a,b \in X$, we have $(a,b) \in R_1 \circ \cdots \circ R_n$ if and only if there are $c_1, \ldots, c_{n-1} \in X$ such that $(a,c_1) \in R_1$, $(c_{n-1}, b) \in R_{n}$ and $(c_{k-1}, c_{k}) \in R_k$ for $k = 2, \ldots, n-1$.
	
	When all the relations in the above are equal to a single relation $R$, we may use the abbreviation $R^{\circ n}$. The notation $R^{\circ 0}$ stands for the identity relation on $X$. 
	
	We also define the \emph{preorder closure} of $R$ to be the following relation: 
	\[
	R^{\circ \mathbb{N}} \releq \bigcup_{n=0}^{\infty} R^{\circ n} \; .
	\]
	Let $A$ and $B$ be two subsets of $X$. Then $A$ is \emph{cofinal in $B$ with regard to $R$} (or simply \emph{$R$-cofinal} for $B$) if, for any $b \in B$, there is $a \in A$ such that $(b,a) \in R$. Note that we do not require $A \subseteq B$.
\end{pgr}


\begin{rmk}\label{rmk:composition-basics}
	It is clear that $(R_1 \circ R_2) \circ R_3 \releq R_1 \circ R_2 \circ R_3 \releq R_1 \circ (R_2 \circ R_3)$. In particular, we have $R^{\circ m} \circ R^{\circ n} \releq R^{\circ (m + n)}$. Also, if $R$ is a preorder, then $R \circ R \releq R$. In fact, a relation $R$ is transitive if and only if we have $R \circ R \relsubseteq R$. It follows that the preorder closure is the smallest preorder that contains $R$. 
\end{rmk}


The following accounts for compatibility of relations with a possibly existing additive structure.

\begin{pgr}\label{dfn:additive-closure}
	Let $(S, +, 0)$ be an abelian monoid. All monoids in this paper will be abelian, written additively, with neutral element denoted by $0$. Let $R$ and $R'$ be two relations on $S$. Then we write $R + R'$ for the relation on $S$ given by 
	\[
	\left\{ (a + a', b + b') \in S \times S \colon (a, b) \in R \text{ and } (a', b') \in R' \right\} \; .
	\] 
	
	A relation $R$ on $S$ is said to be \emph{additively closed} if $R + R \relsubseteq R$, or equivalently, for any $a, a', b, b' \in S$ satisfying $a' R a$ and $b' R b$, we have $(a' + b') R (a + b)$. 
	
	The \emph{additive closure} of a relation $R$, denoted by $R_+$, is the relation on $S$ equal to the union of all relations of the form $R + \cdots + R$, or equivalently, it is defined so that for any $a, b \in S$, we have $a R_+ b$ if and only if there are $a_1, \ldots, a_n$ and $b_1, \ldots, b_n$ in $S$ such that $a = a_1 + \cdots + a_n$, $b = b_1 + \cdots + b_n$, and $a_k R b_k$ for $k = 1, \ldots, n$.
	
	It is clear that $R_+$ is the smallest additively closed relation that contains $R$. 
\end{pgr}

We now recall the axioms used to define the category $\W$. This category was originally introduced in \cite{AntPerThi14arX:TensorProdCu} in order to model the classical Cuntz semigroup $\W(A)$ of a local C*-algebra $A$. It will be our basic framework in which we will develop our construction of ideal free quotients and, in particular, of the dynamical Cuntz semigroup.

For this we first need to recall the notion of auxiliary relation:
\begin{pgr}[Auxiliary relations {\cite[Definition I-1.11]{GieHof+03Domains}}]
	\label{pgr:auxiliary}
	Let $(X,\leq)$ be a preordered set. A relation $\prec$ on $X$ is said to be \emph{auxiliary} for $\leq$ if it satisfies the following conditions:
	\beginEnumConditions
	\item If $a\prec b$, then  $a\leq b$, for $a,b\in X$.
	\item If $a\leq b\prec c\leq d$, then $a\prec d$ for any $a,b,c,d\in X$.
\end{enumerate}
It is easy to check that $\prec$ is automatically transitive. In case $X$ is furthermore a preordered monoid, then $\prec$ is \emph{additive} in case it is additively closed and $0\prec a$ for all $a\in X$.

Now, given a relation $\prec$ on a set $X$ and $a\in X$, let us write 
\[
	a^\prec=\{b\in X\colon b\prec a\}
\]
and define a preorder $\leq_\prec$ on $X$ by stating $a\leq_\prec b$ if and only if $a^\prec\subseteq b^\prec$. We shall refer to $\leq_\prec$ as the \emph{induced preorder} on $X$ by the relation $\prec$. 
\end{pgr}

\begin{pgr}[Compact containment]
\label{dfn:preorder-way-below}
A standard way to produce an auxiliary relation from a preorder is the following.

Let $X$ be a set, let $\leq$ be a preorder on $X$, and suppose that there is a smallest element of $(X,\leq)$, that we denote by $0$. For any $a, b \in X$, we say $a$ is \emph{compactly contained in} (or \emph{way below}) $b$, and write $a \ll b$, if whenever $(b_n)_{n \in \mathbb{N}}$ is an increasing sequence in $(X, \leq)$ whose  supremum $\sup_n^{\leq} b_n$ with regard to $\leq$ exists and $b \leq \sup_n^{\leq} b_n$, we have $a \leq b_{n_0}$ for some $n_0 \in \mathbb{N}$.

It is routine to verify that the compact containment relation $\ll$ is auxiliary for $\leq$ and that moreover $0\ll a$ for any $a\in X$.
\end{pgr}


\begin{pgr}[$\CatW$-axioms]
\label{pgr:W-axioms} 
Let $X$ be a set and let $\prec$ be a transitive relation on $X$. We recall four properties that $\prec$ may satisfy, that were already considered in \cite{AntPerThi14arX:TensorProdCu}:
\begin{itemize}
	\item[\axiomW{1}] For any $a \in X$, there exists a sequence $(a_k)_k$ in $a^{\prec_{\phantomtoggle{P}}}$ such that $a_k\prec a_{k+1}$ for any $k\in \N$, and $(a_k)_k$ is cofinal in $a^{\prec_{\phantomtoggle{P}}}$ with regard to $\prec_{\phantomtoggle{P}}$, i.e., for any $b \in a^{\prec_{\phantomtoggle{P}}}$, there is $k \in \mathbb{N}$ such that $b \prec_{\phantomtoggle{P}} a_k$. 
	\item[\axiomW{2}](Suppose that $X$ has a preorder $\leq$ for which $\prec$ is auxiliary.) For any $a \in X$, we have $a = \sup^{\leq_{\phantomtoggle{P}}} a^{\prec_{\phantomtoggle{P}}}$, i.e., for any $c \in X$ satisfying $b \leq_{\phantomtoggle{P}} c$ for any $b \in a^{\prec_{\phantomtoggle{P}}}$, we have $a \leq_{\phantomtoggle{P}} c$.
	\item[\axiomW{3}] (Suppose that $X$ is a monoid.) For any $a, a', b, b' \in X$ satisfying $a' \prec a$ and $b' \prec b$, we have $a' + b' \prec a + b$.
	\item[\axiomW{4}] (Suppose that $X$ is a monoid.) For any $a, b, c \in X$ satisfying $a \prec b + c$, then there exist $b', c' \in X$ such that $a \prec b' + c'$, $b' \prec b$ and $c' \prec c$. 
\end{itemize}
Notice that \axiomW{1} above implies, in particular, that the relation $\prec$ satisfies $\mathrel{{\prec}\relsubseteq{\prec\circ\prec}}$; see \autoref{dfn:dense}. Since it is also transitive, we get ${\prec_{\phantomtoggle{P}}}  \releq   {\prec_{\phantomtoggle{P}}}  \circ   {\prec_{\phantomtoggle{P}}}$.

Notice also that \axiomW{2} implies that the preorder $\leq_\prec$ induced by $\prec$ is stronger than $\leq$. Indeed, if $a^\prec\subset b^\prec$, then $c\leq b$ for any $c\prec a$, hence $a\leq b$.
\end{pgr}


\begin{pgr}[$\W$-sets and $\W$-semigroups]\label{pgr:W-preorder}
In this paper we shall also work with pointed sets, where the distinguished element is denoted by $0$.

A pointed set $(X,0)$ is a \emph{$\W$-set} provided there is a transitive relation $\prec$ on $X$ such that $X$ satisfies \axiomW{1} and $0\prec a$ for any $a\in X$. We shall write $(X,0,\prec)$ to refer to a $\W$-set. 

A \emph{morphism} $f\colon(X,\prec)\to (Y,\preccurlyeq)$ between sets equipped with relations (hence, in particular, of $\W$-sets) is a \emph{monotone} map (that is, $f(a)\preccurlyeq f(b)$ in $Y$ whenever $a\prec b$ in $X$) which is also \emph{continuous} in the following sense:
	\[
	\text{For any } a\in X\text{ and } b\in f(a)^\preccurlyeq \text{ there is }a'\in a^\prec \text{ such that } b\preccurlyeq f(a').
	\]
(We also require that $f(0)=0$ in case $X,Y$ are pointed with distinguished element $0$.)

In case our set has a monoid structure, then we shall require compatibility properties between addition and $\prec$. A \emph{$\W$-semigroup} is an abelian monoid $S$ together with an additive, transitive relation $\prec$, such that $(S,0,\prec)$ is a $\W$-set that satisfies \axiomW{4}. A $\W$-morphism between $\W$-semigroups is morphism as $\W$-sets which is moreover additive. We will usually write $(S,\prec)$ to refer to a $\W$-semigroup.
\end{pgr}

Our prototypical examples of $\W$-preorders will be given on the set of positive elements of (local) C*-algebras. Among these examples, the following is in a sense the most fundamental.

\begin{exa}\label{exa:precsimher}
Let $A$ be a C*-algebra. For any $a\in A_+$, denote $\her(a)=\overline{aAa}$. We define a preorder on $A_+$, the set of positive elements, as follows:  $a \leqher b$ if and only if $a \in \her (b)$ or, equivalently, if $\her(a)\subseteq \her (b)$. In other words, $\leqher$ is induced from the set containment relation on singly generated hereditary subalgebras. Let $\llher$ be the associated compact containment relation. Then $(A_+,\leqher, \llher)$ is a $\W$-set. Indeed, for any positive element $a$ and any increasing sequence $(a_n)_{n \in \mathbb{N}}$ in $A_+$, we make the following observations:
\beginEnumConditions
\item\label{exa:precsimher-enum:union} if $\overline{a A a} = \overline{ \bigcup_{k=1}^{\infty} \overline{a_n A a_n}}$, then $a$ is a supremum of $(a_n)_{n \in \mathbb{N}}$;
\item\label{exa:precsimher-enum:sum} the element $\sum_{k=1}^{\infty} \frac{a_n}{2^n \|a_n\|}$ satisfies the above condition, and thus suprema of increasing sequences always exist;
\item\label{exa:precsimher-enum:cutdowns} the increasing sequence $\left( \left(a-\frac{1}{n}\right)_+ \right)_{n \in \mathbb{N}}$ has $a$ as a supremum;
\item\label{exa:precsimher-enum:way-below} for any positive element $b$, we have $a \ll_{\operatorname{her}} b$ if and only if there exists $\delta > 0$ such that $a \leqher (b - \delta)_+$;
\end{enumerate}
Hence increasing sequences as constructed in \ref{exa:precsimher-enum:cutdowns} witness Axioms~(W1) and~(W2).
\end{exa}

The prototypical example of $\W$-semigroup will be given by the Cuntz semigroup of a (local) C*-algebra. We recall the construction below; see \cite{Cun78DimFct}.
\begin{exa}
Let $A$ be a C*-algebra. For positive elements $a,b\in A_+$, write $a\precsimr{Cu}b$ if, given $\varepsilon>0$, there is $x\in A$ such that $a\approx_\varepsilon xbx^*$ (meaning that $\| a - xbx^* \| < \varepsilon$). We say in this case that $a$ is \emph{Cuntz subequivalent} to $b$. Write $a\simr{Cu} b$ provided $a\precsimr{Cu}b$ and $b\precsimr{Cu}a$.

Set $\W(A)=M_\infty(A)_+/{\simr{Cu}}$. Then $\W(A)$ is a $\W$-semigroup with addition given by $[a]+[b]=[\left(\begin{smallmatrix}
a & 0 \\ 0 & b
\end{smallmatrix}\right)]$, order induced by $\precsimr{Cu}$, and auxiliary relation defined by $[a]\precr{Cu}[b]$ provided $a\precsimr{Cu} (b-\delta)_+$ for some $\delta>0$.
\end{exa}	


\begin{rmk}
As observed in \cite[Remark 2.6]{AntPerThi20:JLMS}, the general theory of $\W$-se\-mi\-groups can be carried out without assuming \axiomW{2}, as this can always be enforced. We shall use this approach, which is why we haven't required said axiom in our definition.
\end{rmk}	

\begin{pgr}[Antisymmetrizations]
\label{pgr:anti}	
In various situations below we will work with partial orders, mostly in connection with semigroups arising from C*-algebras. Observe that the use of preorders instead of partial orders is only a matter of technical convenience, rather than any essential mathematical content. Indeed, given a preorder $\leq$ on a pointed set $(X, 0)$, it is clear from the definition that $\leq$ induces a partial order on the quotient $(X /{\leq}_{\phantomtoggle{P}}, [0])$, where $a\equiv b$ if and only if $a\leq b$ and $b\leq a$. We shall denote the equivalence classes by $[a]$ for $a\in X$. 
\end{pgr}

\section{Normality}\label{sec:Prenormality}

In this section we introduce the notion of normality for relations on sets equipped with a so-called dense transitive relation, and we show that these are the suitable relations for quotients in our categories to work. We term these relations \emph{prenormal}, as the terminology \emph{normal} is reserved for relations that are moreover additively closed. We first recall the notion of density.

\begin{pgr}[Dense and continuous relations]
	\label{dfn:dense}
	A relation $\prec$ on a set $X$ is \emph{dense} provided that $\mathrel{{\prec}\relsubseteq{\prec}\circ{\prec}}$. In other words, given $a,b\in X$, there is $c\in X$ such that $a\prec c\prec b$. Dense, transitive relations are sometimes called \emph{idempotent}.

Now, let $f\colon(X,\prec)\to (Y,\preccurlyeq)$  be a morphism (in the sense of \autoref{pgr:W-preorder}). The pull-back relation of $\preccurlyeq$ through $f$ is a relation on $X$, denoted by $f^*(\preccurlyeq)$, and defined by $af^*(\preccurlyeq)b$ if $f(a)\preccurlyeq f(b)$. Using monotonicity and continuity, one may check that the pull-back relation $f^*(\preccurlyeq)$ is weaker than $\prec$ and dense. Further, continuity also implies that, if we write $\llcurly=f^*(\preccurlyeq)$, then for any $a\in X$, the set $a^\prec$ is $\llcurly$-cofinal in $a^\llcurly$.
	
	If $(X,\prec)$ is a set equipped with a relation, we say that another relation $\preccurlyeq$ on $X$ is \emph{left $\prec$-continuous} provided $\mathrel{{\prec}\circ{\preccurlyeq}\relsubseteq{\prec}\circ{\preccurlyeq}\circ{\prec}}$. This is equivalent to saying that the identity map $(X,\prec)\to (X,\mathrel{{\prec}\circ{\preccurlyeq}})$ is continuous. If $\preccurlyeq$ is moreover weaker than $\prec$, then being left $\prec$-continuous is equivalent to the fact that the identity map is a morphism as defined above.
\end{pgr}


The following lemma characterizes when a dense transitive relation is a pull-back relation. We only provide a few details of its proof.
\begin{lma}
	\label{lma:pullback_char}
	Let $\prec$ be a dense transitive relation on a set $X$ and let $\preccurlyeq$ be another relation on $X$. Then, the following conditions are equivalent:
	\beginEnumConditions
	\item The relation $\preccurlyeq$ is the pull-back of a dense transitive relation $\prec'$ on a set $Y$ through a morphism $f\colon (X,\prec)\to (Y,\prec')$.
	\item The relation $\preccurlyeq$ is dense, transitive,  and the identity map $(X,\prec)\to (X,\preccurlyeq)$ is a morphism.
	\item The relation $\preccurlyeq$ is transitive, weaker than $\prec$, and satisfies that $a^\prec$ is $\preccurlyeq$-cofinal in $a^\preccurlyeq$ for any $a\in X$.
	\item The relation $\preccurlyeq$ is transitive, weaker than $\prec$, and satisfies $\mathrel{{\preccurlyeq}={\preccurlyeq}\circ{\prec}}$.
	\item The relation $\preccurlyeq$ is transitive, weaker than $\prec$, satisfies $\mathrel{{\preccurlyeq}={\preccurlyeq}\circ{\prec}\circ{\preccurlyeq}}$, and is left $\prec$-continuous.
\end{enumerate}
\end{lma}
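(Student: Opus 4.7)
I would prove the five conditions equivalent via the cycle $(i)\Rightarrow(ii)\Rightarrow(iii)\Rightarrow(iv)\Rightarrow(v)\Rightarrow(i)$, noting at the outset that $(ii)\Rightarrow(i)$ is essentially trivial: taking $Y=X$, $\prec'\, := \,\preccurlyeq$, and $f=\mathrm{id}_X$, one has $f^*(\preccurlyeq)=\preccurlyeq$, and the three requirements in (i) are precisely the three clauses of (ii). Thus the real work consists of (i)$\Rightarrow$(ii), the chain (ii)$\Rightarrow$(iii)$\Rightarrow$(iv)$\Rightarrow$(v), and a return route (v)$\Rightarrow$(ii), which I will get as (v)$\Rightarrow$(iv)$\Rightarrow$(ii).

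For (i)$\Rightarrow$(ii), transitivity of $\preccurlyeq$ is inherited from $\prec'$, and monotonicity of $f$ immediately gives $\prec\,\subseteq\,\preccurlyeq$, i.e., $\preccurlyeq$ is weaker than $\prec$ and $\mathrm{id}\colon(X,\prec)\to(X,\preccurlyeq)$ is monotone. Both density of $\preccurlyeq$ and continuity of this identity follow from a single application of continuity of $f$: given $a\preccurlyeq b$, i.e., $f(a)\prec' f(b)$, continuity of $f$ at $b$ with $f(a)\in f(b)^{\prec'}$ furnishes $b'\in b^\prec$ with $f(a)\prec' f(b')$; that is, $a\preccurlyeq b'\prec b$, which witnesses both cofinality of $a^\prec$ in $a^\preccurlyeq$ and the density chain $a\preccurlyeq b'\preccurlyeq b$.

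The chain (ii)$\Rightarrow$(iii)$\Rightarrow$(iv)$\Rightarrow$(v) is a sequence of direct rephrasings. For (ii)$\Rightarrow$(iii), monotonicity of $\mathrm{id}$ is precisely weakness of $\preccurlyeq$, and its continuity is precisely the cofinality clause. For (iii)$\Rightarrow$(iv), the inclusion $\preccurlyeq\circ\prec\,\subseteq\,\preccurlyeq$ follows from $\prec\,\subseteq\,\preccurlyeq$ together with transitivity, while the reverse inclusion is a verbatim restatement of cofinality. For (iv)$\Rightarrow$(v), two applications of $\preccurlyeq=\preccurlyeq\circ\prec$ starting from $a\preccurlyeq b$ yield $a\preccurlyeq b''\prec b'$ with $b'\prec b$, giving $a\preccurlyeq b''\prec b'\preccurlyeq b$, the required $\preccurlyeq\circ\prec\circ\preccurlyeq$-factorization; the reverse inclusion uses $\prec\,\subseteq\,\preccurlyeq$ and transitivity; left $\prec$-continuity then follows by applying $\preccurlyeq=\preccurlyeq\circ\prec$ to the right end of any chain $a\prec c\preccurlyeq b$.

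The only conceptually nontrivial step is (v)$\Rightarrow$(iv), where left $\prec$-continuity is used essentially to prove the nontrivial inclusion $\preccurlyeq\,\subseteq\,\preccurlyeq\circ\prec$: given $a\preccurlyeq b$, the factorization $\preccurlyeq=\preccurlyeq\circ\prec\circ\preccurlyeq$ supplied by (v) yields $c,d$ with $a\preccurlyeq c\prec d\preccurlyeq b$; left $\prec$-continuity applied to the chain $c\prec d\preccurlyeq b$ refines $d\preccurlyeq b$ to produce $x,y$ with $c\prec x\preccurlyeq y\prec b$; transitivity together with $\prec\,\subseteq\,\preccurlyeq$ then collapses $a\preccurlyeq c\prec x\preccurlyeq y\prec b$ to $a\preccurlyeq y\prec b$, as required. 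Once (iv) is in hand, (iv)$\Rightarrow$(ii) is immediate: density comes from $\preccurlyeq=\preccurlyeq\circ\prec\,\subseteq\,\preccurlyeq\circ\preccurlyeq$, and continuity of $\mathrm{id}$ is literally the identity $\preccurlyeq=\preccurlyeq\circ\prec$. I expect the main obstacle to be keeping track that $\preccurlyeq$ is not assumed reflexive, so that all chain splittings must route through genuine $\prec$-approximations rather than through trivial self-insertions of $\preccurlyeq$.
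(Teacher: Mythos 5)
Your proposal is correct and follows essentially the same route as the paper: the same cycle \implStatements{i}{ii}$\Rightarrow$\dots$\Rightarrow$\enumStatement{v}, with the same key computation in which left $\prec$-continuity refines the factorization $\mathrel{{\preccurlyeq}={\preccurlyeq}\circ{\prec}\circ{\preccurlyeq}}$ to recover $\mathrel{{\preccurlyeq}={\preccurlyeq}\circ{\prec}}$. The only cosmetic difference is that you close the loop via \implStatements{v}{iv} and then back to \enumStatement{i} through \enumStatement{ii}, whereas the paper goes from \enumStatement{v} to \enumStatement{i} directly by observing the identity map is a morphism with pullback $\preccurlyeq$; the underlying argument is identical.
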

\begin{proof}
(i)$\implies$(ii): If there is a set $(Y,\prec')$ with a dense relation $\prec'$ and a morphism $f\colon(X,\prec)\to (Y,\prec')$ such that $\preccurlyeq=f^*(\prec')$, then clearly $a\prec b$ implies that $f(a)\prec' f(b)$, that is, $a\preccurlyeq b$. Thus the identity map $(X,\prec)\to (X,\preccurlyeq)$ is monotone. If now $a\preccurlyeq b$, then $f(a)\prec'f(b)$ and, since $f$ is continuous, there is $b_0\prec b$ such that $f(a)\prec' f(b_0)$. This means that $a\preccurlyeq b_0$ and $b_0\prec b$, that is, the identity map is continuous.

(ii)$\implies$(iii): That $\preccurlyeq$ is weaker than $\prec$ follows from monotonicity of the identity, whilst the fact that $a^\prec$ is $\preccurlyeq$-cofinal in $a^\preccurlyeq$ is a consequence of continuity.

(iii)$\implies$(iv): If $a\preccurlyeq b$, then by cofinality $a\preccurlyeq b'$ with $b'\prec b$, and thus $a\mathrel{{\preccurlyeq}\circ{\prec}}b$. The rest of assertions follow trivially.

(iv)$\implies$(v): This is a routine check.

(v)$\implies$(i): The assumptions imply that the identity map $(X,\prec)\to (X,\preccurlyeq)$ is a morphism, whose pull-back relation is clearly $\preccurlyeq$. We further have, using also that $\preccurlyeq$ is weaker than $\prec$, that $\mathrel{{\preccurlyeq}={\preccurlyeq}\circ{\prec}\circ{\preccurlyeq}\relsubseteq{\preccurlyeq}\circ{\preccurlyeq}}$. Thus $\preccurlyeq$ is dense, as desired.
\end{proof}

\begin{pgr}[Prenormality]
\label{pgr:prenormal_closed_preorder}
Let $(X,\prec_{\phantomtoggle{P}})$ be a set equipped with a dense transitive relation. We say that another transitive relation $\preccurlyeq$ on $X$ is \emph{$\prec$-prenormal}, provided it satisfies any of the equivalent conditions of \autoref{lma:pullback_char}. We will mostly use condition (iv), that is, $\preccurlyeq$ is weaker than $\prec$ and also $\mathrel{{\preccurlyeq}={\preccurlyeq\circ\prec}}$.  Notice that $\prec$ is clearly $\prec$-prenormal. It is not difficult to verify that prenormal relations as just defined satisfy the following universal property, whose proof we omit:

If $(X,\prec_{\phantomtoggle{P}})$ is a set equipped with a dense transitive relation and $\preccurlyeq$ is a $\prec$-prenormal relation on $X$, then given any other set $(Y,\prec'_{\phantomtoggle{P}})$ with a dense transitive relation $\prec'$ and a monotone continuous map $f\colon (X,\prec)\to (Y,\prec')$, there is a unique monotone continuous map $\overline{f}\colon (X,\preccurlyeq)\to (Y,\prec')$ such that the diagram
\[
\xymatrix{
	(X,\prec) \ar[d]_{\mathrm{id}} \ar[rd]^{f} &  \\
	(X,\preccurlyeq) \ar@{-->}[r]_{\overline{f}} & (Y,\prec')
}
\]
commutes (that is, $f=\overline{f}\circ\mathrm{id}$ as morphisms) if and only if $\mathrel{{\preccurlyeq}\relsubseteq{f^*(\prec')}}$. (Indeed, the map $\overline{f}$ is defined by $\overline{f}(a)=f(a)$.)
\end{pgr}

\begin{pgr}[Admissible pairs]
\label{pgr:admissible}
Let $(X,\prec)$ be a set equipped with a dense transitive relation $\prec$. Given a preorder $\leq$ on $X$, we say that the pair $\alpha=(\prec,\leq)$ is \emph{admissible} provided that $a\leq b$ whenever $a^{\leq\circ\prec}\subseteq b^{\leq\circ\prec}$. Notice that $a^\prec\subseteq b^\prec$ always implies that $a^{\leq\circ \prec}\subseteq b^{\leq\circ\prec}$.

Perhaps the most natural example of an admissible pair consists of the pair $\alpha_\prec:=(\prec, \leq_{\prec})$, which we call \emph{minimal admissible}. To see this, we use temporarily $\leq$ for $\leq_\prec$, and we need to check that $a^{\leq\circ\prec}\subseteq  b^{\leq\circ\prec}$ implies $a\leq b$. Thus, let $x\prec a$, and choose $y$ such that $x\prec y\prec a$. As $y\leq y\prec a$, there is $z$ with $y\leq z\prec b$ and since $x\prec y$ we have $x\prec z\prec b$, hence $x\prec b$. Since $x$ was arbitrary, we conclude that $a\leq b$.

On the other hand, if $\prec$ is auxiliary for $\leq$, then $\alpha=(\prec,\leq)$ is admissible if and only if $a^\prec \subseteq b^\prec$ implies $a\leq b$. In this case, we say that $\alpha$ is an \emph{admissible auxiliary} pair.

Let $\alpha_1=(\prec_1,\leq_1)$ and $\alpha_2=(\prec_2,\leq_2)$ be admissible pairs, where $\prec_i$ are other dense transitive relations on $X$. We write $\alpha_1\leq\alpha_2$ whenever
\beginEnumConditions
\item $\mathrel{{\prec_1}\relsubseteq{\prec_2}}$, 
\item $a^{\leq_1\circ\prec_1}\subseteq b^{\leq_1\circ\prec_1}\implies a^{\leq_2\circ\prec_2}\subseteq b^{\leq_2\circ\prec_2}$,
and 
\item $\mathrel{{\leq_1}\relsubseteq{\leq_2}}$.
\end{enumerate}
This is clearly a partial order amongst admissible pairs. By definition the pair $\alpha_\prec$ is the smallest amongst all admissible pairs of the form $(\prec,\leq)$. Indeed, $(\prec,\leq)$ is an admissible pair and $a,b\in X$, then since as noted above $a^\prec\subseteq b^\prec$ implies that $a^{\leq\circ \prec}\subseteq b^{\leq\circ\prec}$, we see that $\mathrel{{\leq_\prec}\relsubseteq{\leq}}$. From this it follows easily that $\alpha_\prec\leq (\prec,\leq)$.

More is true: It can be shown that $\alpha_\prec$ is the smallest amongst all those admissible pairs $\alpha=(\preccurlyeq,\leq)$ such that $\preccurlyeq$ is $\prec$-prenormal.

We say that an admissible pair $\alpha=(\preccurlyeq,\leq)$ is \emph{prenormal} (or $\prec$-prenormal) provided $\preccurlyeq$ is $\prec$-prenormal and $\leq$ is left $\prec$-continuous. In other words, $\mathrel{{\prec}\relsubseteq {\preccurlyeq}}$, $\mathrel{{\preccurlyeq}={\preccurlyeq}\circ{\prec}}$, and $\mathrel{{\prec}\circ{\leq}\relsubseteq{\prec}\circ{\leq}\circ{\prec}}$. Note that  an admissible pair of the form $(\prec,\leq)$ is prenormal  if and only if $\leq$ is $\prec$-continuous. This is the case for the minimal admissible pair $\alpha_\prec=(\prec,\leq_\prec)$.

An admissible pair $\alpha=(\prec,\leq)$ is \emph{left closed} (or more precisely, \emph{left $\prec$-closed}) if, for any $a,b\in X$,  whenever $(c, b) \in \mathrel{{\prec}  \circ {\leq}}$ for all $c \in a^{\prec_{\phantomtoggle{P}}}$, we have $(a, b) \in {\leq_\prec}  \circ  \leq$.

It follows from the definition, and using the fact that $\mathrel{{\prec}\circ{\leq_\prec}={\prec}}$, that the minimal admissible pair $\alpha_\prec=(\prec,\leq_\prec)$ is always left $\prec$-closed.
\end{pgr}
We single out the following useful observation:
\begin{lma}
\label{lma:auxiliary}
Let $(X,\prec)$ be a set equipped with a dense transitive relation.
\beginEnumConditions
\item If $\alpha_1=(\prec_1,\leq_1)$ and $\alpha_2=(\prec_2,\leq_2)$ are admissible pairs and $\prec_1,\prec_2$ are $\prec$-prenormal, then $\alpha_1\leq\alpha_2$ if and only if $\mathrel{{\prec_1}\relsubseteq{\prec_2}}$ and $\mathrel{{\leq_1}\relsubseteq{\leq_2}}$.
\item Let $\alpha=(\preccurlyeq,\leq)$ be a $\prec$-prenormal (admissible) pair. If $\mathrel{{\leq}\circ{\preccurlyeq}\relsubseteq{\preccurlyeq}}$, then $\alpha$ is auxiliary.
\end{enumerate}
\end{lma}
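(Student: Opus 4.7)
The plan is to verify both parts by direct chases through the relations, sharing one preliminary observation. The preliminary, used in both parts, is that in any admissible pair $(\prec', \leq')$ one has $\prec' \subseteq \leq'$: if $a \prec' b$, then for any $x \leq' y \prec' a$ transitivity of $\prec'$ gives $y \prec' b$, so $a^{\leq' \circ \prec'} \subseteq b^{\leq' \circ \prec'}$ and admissibility yields $a \leq' b$. The forward implication in (i) is immediate from the definition of $\alpha_1 \leq \alpha_2$, so only the converse needs real work.

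For that converse, assume $\prec_1 \subseteq \prec_2$ and $\leq_1 \subseteq \leq_2$, and suppose $a^{\leq_1 \circ \prec_1} \subseteq b^{\leq_1 \circ \prec_1}$. Given $x \leq_2 y$ with $y \prec_2 a$, apply prenormality of $\prec_2$ in the form $\prec_2 = \prec_2 \circ \prec$ (condition (iv) of \autoref{lma:pullback_char}) to produce $y_\ast$ with $y \prec_2 y_\ast \prec a$. Prenormality of $\prec_1$ delivers $\prec \subseteq \prec_1$, hence $y_\ast \prec_1 a$; reflexivity of $\leq_1$ then places $y_\ast$ in $a^{\leq_1 \circ \prec_1}$, so by hypothesis $y_\ast$ also lies in $b^{\leq_1 \circ \prec_1}$, producing $t$ with $y_\ast \leq_1 t$ and $t \prec_1 b$. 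Combining the two containments $\prec_1 \subseteq \prec_2$ and $\leq_1 \subseteq \leq_2$ with the preliminary $\prec_2 \subseteq \leq_2$, one reads off $x \leq_2 y \leq_2 y_\ast \leq_2 t$ together with $t \prec_2 b$, which gives $x \in b^{\leq_2 \circ \prec_2}$ as required.

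For (ii), condition (1) of being auxiliary is exactly the preliminary $\preccurlyeq \subseteq \leq$. For condition (2), take $a \leq b \preccurlyeq c \leq d$. The hypothesis $\leq \circ \preccurlyeq \subseteq \preccurlyeq$ applied to $a \leq b \preccurlyeq c$ at once delivers $a \preccurlyeq c$, so it suffices to show that $a \preccurlyeq c$ and $c \leq d$ imply $a \preccurlyeq d$. Factor using prenormality, $a \preccurlyeq c' \prec c$ for some $c'$, and apply left $\prec$-continuity of $\leq$ (built into the definition of $\prec$-prenormal pair in \autoref{pgr:admissible}) to produce $u, v$ with $c' \prec u \leq v \prec d$. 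Since $\prec \subseteq \preccurlyeq$, $v \prec d$ gives $v \preccurlyeq d$; the hypothesis applied to $u \leq v \preccurlyeq d$ then yields $u \preccurlyeq d$; finally, transitivity with $a \preccurlyeq c' \prec u$ produces $a \preccurlyeq d$. The main obstacle here is the asymmetry in (ii): the hypothesis closes $\preccurlyeq$ under left composition with $\leq$, whereas the auxiliary condition forces closure on the right as well. That gap is bridged precisely by using prenormality to insert a factor of $\prec$ on the right of $\preccurlyeq$, and then using left $\prec$-continuity of $\leq$ to swap $\prec \circ \leq$ into $\prec \circ \leq \circ \prec$, so that the freshly produced $\prec$ can be re-absorbed into $\preccurlyeq$ via $\prec \subseteq \preccurlyeq$.
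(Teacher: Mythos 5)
Your proof is correct and follows essentially the same route as the paper: in (i) you interpolate an element of $a^{\prec}$ between $\prec_2$ and $a$ via $\prec_2 = \prec_2\circ\prec$ and $\prec\subseteq\prec_1$, exactly as the paper does, and in (ii) your element-level chase is precisely the paper's composition chain ${\preccurlyeq}\circ{\leq}={\preccurlyeq}\circ{\prec}\circ{\leq}\subseteq{\preccurlyeq}\circ{\prec}\circ{\leq}\circ{\prec}\subseteq{\preccurlyeq}\circ{\leq}\circ{\preccurlyeq}\subseteq{\preccurlyeq}$ unpacked. Your preliminary observation that admissibility forces $\prec'\subseteq\leq'$ is a nice explicit justification of the ``clearly implies'' step the paper leaves implicit at the end of (i).
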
	
\begin{proof}
(i): Following the definition, we need to verify that, if $\mathrel{{\prec_1}\relsubseteq{\prec_2}}$, $\mathrel{{\leq_1}\relsubseteq{\leq_2}}$, and $a^{\leq_1\circ\prec_1}\subseteq b^{\leq_1\circ\prec_1}$, then $a^{\leq_2\circ\prec_2}\subseteq b^{\leq_2\circ\prec_2}$. Let $x\leq_2x'\prec_2a$. Since $\prec_1$ and $\prec_2$ are $\prec$-prenormal, we have $\mathrel{{\prec_2}\relsubseteq{\prec_2}\circ{\prec_1}}$, and thus there is $x''$ such that $x'\prec_2 x''\prec_1a$. Therefore $x''\leq_1 x''\prec_1 a$, which by assumption implies that there is $z$ with $x''\leq_1 z\prec_1 b$. Now, this clearly implies that $x\leq_2z\prec_2 b$.

(ii): We only need to verify that $\mathrel{{\preccurlyeq}\circ{\leq}\relsubseteq{\preccurlyeq}}$. This follows by applying that $\preccurlyeq$ is $\prec$-prenormal at the first step, that $\leq$ is $\prec$-continuous at the second step, that $\preccurlyeq$ is weaker than $\prec$ at the third step, and our assumption at the last step below:
\[
\mathrel{{\preccurlyeq}\circ{\leq}={\preccurlyeq}\circ{\prec}\circ{\leq}\relsubseteq{\preccurlyeq}\circ{\prec}\circ{\leq}\circ{\prec}\relsubseteq{\preccurlyeq}\circ{\leq}\circ{\preccurlyeq}\relsubseteq{\preccurlyeq}}.\qedhere
\]
\end{proof}	
The following connects prenormality for preorders and dense relations.
\begin{prp}
\label{prp:pre_dense_implies_pre_order}
Let $(X,\prec)$ be a set equipped with  a dense transitive relation. Let $(\prec,\leq)$ be an admissible pair. Then, the following conditions are equivalent:
\beginEnumConditions
\item $(\prec,\leq)$ is $\prec$-prenormal and left $\prec$-closed.
\item With  $\mathrel{{\preccurlyeq}}:=\mathrel{{\leq}\circ{\prec}\circ{\leq}}$, the pair $(\preccurlyeq,\leq)$ is $\prec$-prenormal, minimal admissible, and auxiliary.
\item There is a dense relation $\preccurlyeq$ on $X$ such that the pair $(\preccurlyeq,\leq)$ is $\prec$-prenormal, minimal admissible and auxiliary.
\item There is a dense relation $\preccurlyeq$ on $X$ such that the pair $(\preccurlyeq,\leq)$ is $\prec$-prenormal, auxiliary and such that $(X,\preccurlyeq,\leq)$ satisfies \axiomW{2}.
\end{enumerate}
\end{prp}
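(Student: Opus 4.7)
The plan is to prove the cyclic implications $(\rmnum{1}) \Rightarrow (\rmnum{2}) \Rightarrow (\rmnum{3}) \Rightarrow (\rmnum{4}) \Rightarrow (\rmnum{1})$. A preliminary observation that I will use throughout is that admissibility of $(\prec, \leq)$ already forces $\prec \subseteq \leq$: given $a \prec b$, for any $c \leq c' \prec a$ transitivity of $\prec$ yields $c \leq c' \prec b$, so $a^{\leq \circ \prec} \subseteq b^{\leq \circ \prec}$ and hence $a \leq b$ by admissibility.

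For $(\rmnum{1}) \Rightarrow (\rmnum{2})$ I would set $\preccurlyeq := {\leq} \circ {\prec} \circ {\leq}$. The inclusion $\preccurlyeq \subseteq \leq$ and the auxiliary identity $\leq \circ \preccurlyeq \circ \leq \subseteq \preccurlyeq$ are then immediate. To verify the prenormality conditions from \autoref{lma:pullback_char}(iv), I need transitivity of $\preccurlyeq$ and $\preccurlyeq = \preccurlyeq \circ \prec$; both follow by unwinding compositions and inserting left $\prec$-continuity of $\leq$ to convert occurrences of $\prec \circ \leq$ into $\prec \circ \leq \circ \prec$, then collapsing redundant pieces via $\prec \subseteq \leq$. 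For minimal admissibility, the nontrivial direction is $\leq_\preccurlyeq \subseteq \leq$: assuming $a^\preccurlyeq \subseteq b^\preccurlyeq$, for each $c \prec a$ density of $\prec$ produces $c \prec c' \prec a$; then $c' \preccurlyeq a \preccurlyeq b$ unpacks as $c' \leq e_1 \prec e_2 \leq b$, yielding $c \prec c' \leq e_1 \leq b$ (using $\prec \subseteq \leq$) and hence $(c, b) \in {\prec} \circ {\leq}$. Left $\prec$-closedness then delivers $(a, b) \in {\leq_\prec} \circ {\leq} \subseteq {\leq}$, as wanted.

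The step $(\rmnum{2}) \Rightarrow (\rmnum{3})$ is tautological. For $(\rmnum{3}) \Rightarrow (\rmnum{4})$, given $c$ with $b \leq c$ for every $b \preccurlyeq a$, I would use density of $\preccurlyeq$ to factor each such $b$ as $b \preccurlyeq b' \preccurlyeq a$; then $b' \leq c$ and the auxiliary identity applied to $b \preccurlyeq b' \leq c$ yield $b \preccurlyeq c$, so $a^\preccurlyeq \subseteq c^\preccurlyeq$, and minimal admissibility $\leq = \leq_\preccurlyeq$ forces $a \leq c$. For $(\rmnum{4}) \Rightarrow (\rmnum{1})$, left $\prec$-continuity of $\leq$ follows by starting from $a \prec b \leq c$, using density of $\prec$ to get $a \prec b' \prec b$, invoking auxiliarity on $b' \prec b \leq c$ to obtain $b' \preccurlyeq c$, factoring $b' \preccurlyeq c'' \prec c$ by prenormality of $\preccurlyeq$, and noting $b' \leq c''$ via $\preccurlyeq \subseteq \leq$, so $a \prec b' \leq c'' \prec c$. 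For left $\prec$-closedness, the hypothesis together with auxiliarity gives $c \preccurlyeq b$ for every $c \prec a$; for any $d \preccurlyeq a$, prenormality factors $d \preccurlyeq d' \prec a$, so $d' \preccurlyeq b$, hence $d \preccurlyeq b$ by transitivity and $d \leq b$ via $\preccurlyeq \subseteq \leq$. Then \axiomW{2} concludes $a \leq b$, which trivially belongs to $\leq_\prec \circ \leq$.

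I expect the main obstacle to be the bookkeeping in $(\rmnum{1}) \Rightarrow (\rmnum{2})$, particularly the proof that $\leq = \leq_\preccurlyeq$: this is the single place where left $\prec$-closedness is invoked, and one must carefully orchestrate density of $\prec$ to upgrade the naturally available $\leq \circ \prec \circ \leq$-information coming from $a^\preccurlyeq \subseteq b^\preccurlyeq$ into the cleaner $\prec \circ \leq$-form that triggers the closedness hypothesis. Elsewhere the reasoning is largely symbol manipulation combining density, $\prec \subseteq \leq$, the auxiliary identity $\leq \circ \preccurlyeq \circ \leq \subseteq \preccurlyeq$, and the prenormality identity $\preccurlyeq = \preccurlyeq \circ \prec$.
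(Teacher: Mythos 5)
Your proposal is correct and follows essentially the same route as the paper: the same relation $\preccurlyeq \,=\, {\leq}\circ{\prec}\circ{\leq}$ in (i)$\Rightarrow$(ii), the same pivotal use of left $\prec$-closedness (after a density step converting $\preccurlyeq$-information into ${\prec}\circ{\leq}$-form) to obtain minimal admissibility, and the same density/auxiliarity manipulations elsewhere. The only immaterial difference is organizational: you close the cycle directly via (iv)$\Rightarrow$(i) using \axiomW{2}, whereas the paper proves (iv)$\Rightarrow$(iii) and then (iii)$\Rightarrow$(i).
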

\begin{proof}
(i)$\implies$(ii): By definition $\mathrel{{\preccurlyeq}}=\mathrel{{\leq}\circ{\prec}\circ{\leq}}$, and we have that $\mathrel{{\prec}\relsubseteq{\preccurlyeq}\relsubseteq{\leq}}$. It follows from the definition of $\preccurlyeq$ that it is a dense transitive relation. To see it is also $\prec$-prenormal we verify condition (iii) in \autoref{lma:pullback_char}. To this end we need to show that, for any $a\in X$, we have that $a^\prec$ is $\preccurlyeq$-cofinal in $a^\preccurlyeq$. If $x\preccurlyeq a$, then $x\mathrel{\leq\circ\prec\circ\leq} a$ and, since $\leq$ is left $\prec$-continuous, we have $x\mathrel{\leq\circ\prec\circ\leq\circ\prec} a$. Thus there is $y\prec a$ such that $x\preccurlyeq y$.

Let us now check that $a\leq b$ if and only if $a^\preccurlyeq\subseteq b^\preccurlyeq$. The forward implication follows from the fact that $\preccurlyeq$ is, by construction, auxiliary for $\leq$. For the converse, since $(\prec,\leq)$ is admissible and left $\prec$-closed, it is enough to check that $(c,b)\in {\prec}\circ{\leq}$ for any $c\in a^\prec$. This will imply that $(a,b)\in\leq_\prec\circ\leq$, and thus $a\leq b$.

Thus, if $c\prec a$, use density to find $c'\in X$ with $c\prec c'\prec a$. Then, since $\preccurlyeq$ is weaker than $\prec$, we have $c'\in a^\preccurlyeq$ and thus $c'\in b^\preccurlyeq$. Using again that $\leq$ is weaker than $\preccurlyeq$ we obtain $c\prec c'\leq b$, as required. Thus, since $\leq$ is induced by $\preccurlyeq$, we have that $(\preccurlyeq,\leq)$ is minimal admissible.

(ii)$\implies$(iii) is trivial.

(iii)$\implies$(iv):

To verify \axiomW{2} on $(X,\preccurlyeq,\leq)$, first notice that $c\leq a$ for any $c\in a^\preccurlyeq$. Next, let $x\in X$ be such that $c\leq x$ for any $c\preccurlyeq a$. Using density for $\preccurlyeq$, we have for any such $c$ an element $c'$ such that $c\preccurlyeq c'\preccurlyeq a$. Then $c'\leq x$ and thus, since $\preccurlyeq$ is auxiliary for $\leq$, we conclude $c\preccurlyeq x$. Thus $a^\preccurlyeq \subseteq x^\preccurlyeq$ which, since the pair $(\preccurlyeq, \leq)$ is minimal admissible by assumption, already implies that $a\leq x$.

(iv)$\implies$(iii): Let us check that $\preccurlyeq$ as in the statement induces $\leq$. If $a^\preccurlyeq\subseteq b^\preccurlyeq$, then for any $x\in X$ such that $x\preccurlyeq a$, we have $x\preccurlyeq b$ and thus $x\leq b$. Using \axiomW{2} we obtain $a\leq b$. The converse is clear since $\preccurlyeq$ is auxiliary for $\leq$.

(iii)$\implies$(i): Since $\leq$ is weaker than $\preccurlyeq$, and the latter is weaker than $\prec$, we have that $\leq$ is also weaker than $\prec$. Let us check that $\leq$ is left $\prec$-continuous, hence assume that $a\prec b\leq c$, for some $a,b,c\in X$. By density, find $c'\in X$ such that $a\prec c'\prec b$, and thus $c'\preccurlyeq b$. Since $\preccurlyeq$ is $\prec$-prenormal, we have from condition (iv) in \autoref{lma:pullback_char} that there is $b'\prec b$ with $c'\preccurlyeq b'$. Therefore $a\prec c'\leq b'\prec b$, which proves that $\leq$ is left $\prec$-continuous.

Finally, we show that $(\prec,\leq)$ is left $\prec$-closed. Suppose that $(c,b)\in\mathrel{{\prec}\circ{\leq}}$ for all $c\in a^\prec$. It suffices to show that $a\leq b$, that is, $a^\preccurlyeq\subseteq b^\preccurlyeq$. Let $c\in X$ be such that $c\preccurlyeq a$. Then, since $\preccurlyeq$ is $\prec$-prenormal, there is $a'\prec a$ such that $c\preccurlyeq a'$, and thus by our assumption $(a',b)\in \mathrel{{\prec}\circ{\leq}}$, that is, $a'\prec b'\leq b$, for some $b'\in X$. Since $c\preccurlyeq a'$ and $\preccurlyeq$ is weaker than $\prec$, we have $c\preccurlyeq b$, as required.
\end{proof}

\begin{prp}
\label{cor:prenormalequivalence} Let $(X,\prec)$ be a set equipped with a dense transitive relation, let $\preccurlyeq$ be a $\prec$-prenormal relation, and let $\leq$ be a preorder. If $(\preccurlyeq,\leq)$ is admissible, then so is $(\prec,\leq)$. Moreover, in this case the following are equivalent:
\beginEnumConditions
\item $(\prec,\leq)$ is $\prec$-prenormal.
\item $(\preccurlyeq,\leq)$ is $\preccurlyeq$-prenormal.
\end{enumerate}
Further, $(\prec,\leq)$ is $\prec$-prenormal and $\prec$-closed if and only if $(\preccurlyeq,\leq)$ is $\preccurlyeq$-prenormal and $\preccurlyeq$-closed.
\end{prp}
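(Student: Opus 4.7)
The plan rests on two elementary consequences of admissibility that I establish up front. For any admissible pair $(\prec', \leq)$: (a) $a \leq_{\prec'} b$ (i.e.\ $a^{\prec'} \subseteq b^{\prec'}$) always implies $a^{\leq\circ\prec'} \subseteq b^{\leq\circ\prec'}$ and hence $a \leq b$, so $\leq_{\prec'} \subseteq \leq$; (b) consequently $\prec' \subseteq \leq$, by transitivity of $\prec'$; and (c) the relation $\leq_{\prec'} \circ \leq$ coincides with $\leq$, so left $\prec'$-closedness simplifies to the statement \emph{``if for every $c \prec' a$ there is $d$ with $c \prec' d \leq b$, then $a \leq b$''}. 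In particular, admissibility of $(\preccurlyeq, \leq)$ gives $\preccurlyeq \subseteq \leq$ in our setting, which will be the key tool for collapsing mixed chains throughout.

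For the first assertion, given $a^{\leq\circ\prec} \subseteq b^{\leq\circ\prec}$ I verify the analogous containment with $\preccurlyeq$ and then invoke admissibility of $(\preccurlyeq, \leq)$. Starting from $x \leq y \preccurlyeq a$, the prenormality identity $\preccurlyeq = \preccurlyeq \circ \prec$ produces $y \preccurlyeq y_1 \prec a$; the hypothesis places $y_1$ in $b^{\leq\circ\prec}$, giving $y_1 \leq y_2 \prec b$; finally $\preccurlyeq \subseteq \leq$ collapses $y \preccurlyeq y_1 \leq y_2$ to $y \leq y_2$, so $x \leq y_2 \preccurlyeq b$, as required. For the equivalence (i)$\Leftrightarrow$(ii), note that $\preccurlyeq$ is automatically $\preccurlyeq$-prenormal (density of $\preccurlyeq$ follows from $\preccurlyeq = \preccurlyeq \circ \prec$ and $\prec \subseteq \preccurlyeq$), so the equivalence reduces to showing left $\prec$-continuity of $\leq$ equivalent to left $\preccurlyeq$-continuity. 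The forward direction is a direct composition calculation using $\preccurlyeq = \preccurlyeq \circ \prec$ and $\prec \subseteq \preccurlyeq$: $\preccurlyeq \circ \leq = \preccurlyeq \circ \prec \circ \leq \subseteq \preccurlyeq \circ \prec \circ \leq \circ \prec = \preccurlyeq \circ \leq \circ \prec \subseteq \preccurlyeq \circ \leq \circ \preccurlyeq$. I expect the reverse direction to be the main technical step: given $a \prec b \leq c$, density furnishes $a \prec a_1 \prec b$ so that $a_1 \preccurlyeq b \leq c$; left $\preccurlyeq$-continuity yields $a_1 \preccurlyeq d \leq e \preccurlyeq c$; prenormality refines $e \preccurlyeq e' \prec c$; and the crucial collapse $a_1 \leq e'$ (needed to land in $\prec \circ \leq \circ \prec$) is produced by applying $\preccurlyeq \subseteq \leq$ and transitivity of $\leq$ to the mixed chain $a_1 \preccurlyeq d \leq e \preccurlyeq e'$, yielding $a \prec a_1 \leq e' \prec c$.

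The last statement follows cleanly from the simplified formulation of closedness. If $(\prec, \leq)$ is $\prec$-closed and every $c \preccurlyeq a$ admits $d$ with $c \preccurlyeq d \leq b$, then for each $c \prec a$ density produces $c \prec c' \prec a$ with $c' \preccurlyeq a$, the hypothesis gives $c' \preccurlyeq e \leq b$, and prenormality combined with $\preccurlyeq \subseteq \leq$ forces $c' \leq b$; thus $c \prec c' \leq b$, and $\prec$-closedness delivers $a \leq b$. Conversely, if $(\preccurlyeq, \leq)$ is $\preccurlyeq$-closed and every $c \prec a$ admits $d$ with $c \prec d \leq b$, then for each $c \preccurlyeq a$ prenormality gives $c \preccurlyeq c' \prec a$, the hypothesis gives $c' \prec d \leq b$, and $\prec \subseteq \preccurlyeq$ yields $c \preccurlyeq d \leq b$; $\preccurlyeq$-closedness then delivers $a \leq b$. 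The unifying theme across all three parts is the systematic interplay of $\preccurlyeq \subseteq \leq$ (from admissibility) and $\preccurlyeq = \preccurlyeq \circ \prec$ (from prenormality), which lets us freely translate between $\prec$-approximations and $\preccurlyeq$-approximations.
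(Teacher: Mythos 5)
Your proof is correct. The preliminary observations (admissibility forces $\leq_{\prec'}\subseteq\leq$, hence $\prec'\subseteq\leq$ and $\leq_{\prec'}\circ\leq\,=\,\leq$) are precisely the collapses the paper uses implicitly, and your arguments for the admissibility transfer and for the equivalence (i)$\Leftrightarrow$(ii) follow essentially the same route as the paper's, down to the same composition identities $\preccurlyeq\,=\,\preccurlyeq\circ\prec$ and $\prec\subseteq\preccurlyeq$ and the same containment chains for left continuity (you merely write the reverse direction element-wise rather than as a single relational computation). The one place you genuinely diverge is the final closedness equivalence: the paper obtains it by routing through \autoref{prp:pre_dense_implies_pre_order}, first recognizing $\leq$ as induced by $\leq\circ\prec\circ\leq\,=\,\leq\circ\preccurlyeq\circ\leq$ and then transporting closedness via that auxiliary relation, whereas you use the simplification $\leq_{\prec}\circ\leq\,=\,\leq$ to restate left closedness as a plain implication and prove both directions by a short, direct element chase. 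Your route is more elementary and self-contained for that part; the paper's buys brevity by reusing machinery already established for the earlier implications. Both are valid.
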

\begin{proof}
Assume that $(\preccurlyeq,\leq)$ is admissible. We must prove that $a^{\leq\circ\prec}\subseteq b^{\leq\circ\prec}$ implies $a\leq b$. To see this, let $x,\tilde{x}\in X$ satisfy $x\leq \tilde{x}\preccurlyeq a$. Since $\preccurlyeq$ is $\prec$-prenormal,  we have from condition (iv) in \autoref{lma:pullback_char} that $\preccurlyeq$ is weaker than $\prec$ and that $\mathrel{{\preccurlyeq}={\preccurlyeq}\circ{\prec}}$. Therefore, there is $x'\in X$ such that $\tilde{x}\preccurlyeq x'\prec a$. Therefore $x\leq x'\prec a$ and so there is $y\in X$ such that $x\leq y\prec b$. This implies that $x\leq y\preccurlyeq b$. We have shown that $a^{\leq\circ\preccurlyeq}\subseteq b^{\leq\circ\preccurlyeq}$, hence $a\leq b$ since $(\preccurlyeq,\leq)$ is admissible.

A second usage of the fact that $\preccurlyeq$ is $\prec$-prenormal yields
\[
\mathrel{{\leq}\circ{\prec}\circ{\leq}}=\mathrel{{\leq}\circ{\preccurlyeq}\circ{\leq}}.
\]

(i)$\implies$(ii): By assumption we know that $\leq$ is left $\prec$-continuous. Hence we only need to check that $\leq$ is $\preccurlyeq$-continuous. Since
$\preccurlyeq$ is $\prec$-prenormal, condition (iv) in \autoref{lma:pullback_char} yields again that $\mathrel{{\preccurlyeq}={\preccurlyeq}\circ{\prec}}$. Using this at the first step, that $\leq$ is $\prec$-continuous (so $\mathrel{{\prec}\circ{\leq}\relsubseteq {\prec}\circ{\leq}\circ{\prec}}$) at the second step, and that $\leq$ and $\preccurlyeq$ are weaker than $\prec$ at the third step, we obtain
\[
\mathrel{{\preccurlyeq}\circ{\leq}={\preccurlyeq}\circ{\prec}\circ{\leq}\relsubseteq{\preccurlyeq}\circ{\prec}\circ{\leq}\circ{\prec}={\preccurlyeq}\circ{\leq}\circ{\preccurlyeq}},
\]
and thus $(\preccurlyeq,\leq)$ is $\preccurlyeq$-prenormal.

If, further, $(\prec,\leq)$ is $\prec$-closed, we know from (i)$\implies$(ii) in \autoref{prp:pre_dense_implies_pre_order} that $\leq$ is induced by $\mathrel{{\leq}\circ{\prec}\circ{\leq}}$ which by our observation at the beginning of the proof equals $\mathrel{{\leq}\circ{\preccurlyeq}\circ{\leq}}$. Now, we have proved that $\leq$ is $\preccurlyeq$-continuous, and then it easily follows that
$\mathrel{{\leq}\circ{\preccurlyeq}\circ{\leq}}$ is  $\preccurlyeq$-prenormal (verifying condition (iv) in \autoref{lma:pullback_char}). Since this relation induces (and is clearly auxiliary for) $\leq$, we obtain from (iii)$\implies$(i) of \autoref{prp:pre_dense_implies_pre_order} that $(\preccurlyeq,\leq)$ is left $\preccurlyeq$-closed.

(ii)$\implies$(i): Suppose that $(\preccurlyeq,\leq)$ is $\preccurlyeq$-prenormal. By our assumptions we easily obtain that $\leq$ is weaker than $\prec$, hence in order to prove (i) we only need to show that $\leq$ is left $\prec$-continuous. Using density of $\prec$ at the first step, that $\preccurlyeq$ is weaker than $\prec$ at the second step, that $\leq$ is left $\preccurlyeq$-continuous at the third step, and that $\preccurlyeq$ is $\prec$-prenormal at the fourth step, we obtain
\[
\mathrel{{\prec}\circ{\leq}={\prec}\circ{\prec}\circ{\leq}\relsubseteq{\prec}\circ{\preccurlyeq}\circ{\leq}\relsubseteq{\prec}\circ{\preccurlyeq}\circ{\leq}\circ{\preccurlyeq}={\prec}\circ{\preccurlyeq}\circ{\leq}\circ{\preccurlyeq}\circ{\prec}},
\]
which clearly yields $\mathrel{{\prec}\circ{\leq}\relsubseteq{\prec}\circ{\leq}\circ{\prec}}$, as desired.

The argument for closedness is similar to the one in the implication (i)$\implies$(ii).
\end{proof}


For the rest of the section we focus on $\CatW$-semigroups and we extend the notions and results just developed above to allow for compatibility with addition.
\begin{pgr}[Normal relations]
\label{pgr:normal}
Let $(S,\prec)$ be a $\W$-semigroup, that is, $(S,\prec)$ is a $\W$-set such that $\prec$ is additive and satisfies \axiomW{4}. We say that a relation $\preccurlyeq$ is $\prec$-normal if if is additively closed and $\prec$-prenormal. An admissible pair $(\preccurlyeq,\leq)$ is additively closed if both $\preccurlyeq$ and $\leq$ are additively closed. Finally, an admissible pair $(\preccurlyeq,\leq)$  on $S$ is $\prec$-normal if it is additively closed and $\prec$-prenormal; see \autoref{pgr:admissible}. 

For a relation $\preccurlyeq$, one might ask whether being $\prec$-normal is equivalent to $\preccurlyeq$ being the additive closure of a $\prec$-prenormal relation. We note that this is generally not true, as additive closures of transitive relations may fail to be transitive in general.
\end{pgr}
\begin{lma}
\label{lma:induced_normal}
Let $(S,\prec)$ be a $\W$-semigroup, and let $(\prec,\leq)$ be an admissible pair. Then $(\prec,\leq)$ is $\prec$-normal and left $\prec$-closed if and only $\leq$ is induced by a $\prec$-normal auxiliary relation $\preccurlyeq$.
\end{lma}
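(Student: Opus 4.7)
The plan is to bootstrap from \autoref{prp:pre_dense_implies_pre_order}, which provides the desired equivalence at the level of dense transitive relations and preorders (ignoring addition), and then handle additive closure as a separate layer. The lemma is essentially the ``additive'' upgrade of that proposition in the setting of $\W$-semigroups.

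For the forward direction, assume $(\prec,\leq)$ is $\prec$-normal and left $\prec$-closed. Apply \autoref{prp:pre_dense_implies_pre_order}, implication \enumCondition{1}$\Rightarrow$\enumCondition{2}, to the candidate $\preccurlyeq := \mathrel{{\leq}\circ{\prec}\circ{\leq}}$. This immediately yields that $(\preccurlyeq,\leq)$ is $\prec$-prenormal, minimal admissible (so $\leq$ is induced by $\preccurlyeq$), and auxiliary for $\leq$. The only extra work is to show that $\preccurlyeq$ is additively closed. But a quick check shows that the composition $R\circ R'$ of two additively closed relations is again additively closed: given $(a,b), (a',b')\in R\circ R'$ with witnesses $c,c'$, the element $c+c'$ witnesses $(a+a',b+b')\in R\circ R'$. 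Since $\prec$ is additive by the $\W$-semigroup axioms and $\leq$ is additively closed by hypothesis, $\preccurlyeq = \mathrel{{\leq}\circ{\prec}\circ{\leq}}$ is additively closed. Hence $\preccurlyeq$ is $\prec$-normal and auxiliary for $\leq$, inducing $\leq$.

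For the backward direction, assume $\preccurlyeq$ is $\prec$-normal, auxiliary for $\leq$, and induces $\leq$. Since $\preccurlyeq$ is $\prec$-prenormal and weaker than $\prec$, one has $\mathrel{{\preccurlyeq}={\preccurlyeq}\circ{\prec}\relsubseteq{\preccurlyeq}\circ{\preccurlyeq}}$, so $\preccurlyeq$ is dense; and the fact that $\preccurlyeq$ induces $\leq$ while being auxiliary for it is precisely \axiomW{2} for $(X,\preccurlyeq,\leq)$. Thus condition \enumCondition{4} of \autoref{prp:pre_dense_implies_pre_order} holds, and that proposition gives \enumCondition{1}: $(\prec,\leq)$ is $\prec$-prenormal and left $\prec$-closed. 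Additive closure of $\prec$ is built into the $\W$-semigroup structure, so all that remains is to show $\leq$ is additively closed. Given $a\leq b$ and $a'\leq b'$, I would verify $(a+a')^\preccurlyeq \subseteq (b+b')^\preccurlyeq$: for $c\preccurlyeq a+a'$, use $\mathrel{{\preccurlyeq}={\preccurlyeq}\circ{\prec}}$ to find $d$ with $c\preccurlyeq d\prec a+a'$; apply \axiomW{4} for $\prec$ to split $d\prec e+e'$ with $e\prec a$, $e'\prec a'$; since $\prec$ is weaker than $\preccurlyeq$, the hypothesis that $\preccurlyeq$ induces $\leq$ yields $e\preccurlyeq b$ and $e'\preccurlyeq b'$; additive closure of $\preccurlyeq$ then gives $e+e'\preccurlyeq b+b'$, and transitivity finishes the argument.

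The main obstacle is the additive closure of $\leq$ in the backward direction: unlike the additive closure of $\preccurlyeq$ in the forward direction, which comes essentially for free from composition, the preorder $\leq$ is defined by set containment of $\preccurlyeq$-downsets and additivity is not a purely formal consequence. It requires genuine interaction with \axiomW{4} for $\prec$ together with the interpolation property $\mathrel{{\preccurlyeq}={\preccurlyeq}\circ{\prec}}$ arising from $\prec$-prenormality. Everything else is bookkeeping that cleanly reduces to \autoref{prp:pre_dense_implies_pre_order}.
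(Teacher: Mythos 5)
Your proof is correct and follows essentially the same route as the paper: both directions reduce to \autoref{prp:pre_dense_implies_pre_order}, with the forward direction observing that ${\leq}\circ{\prec}\circ{\leq}$ is additively closed as a composition of additively closed relations, and the backward direction establishing additivity of $\leq$ via $\mathrel{{\preccurlyeq}={\preccurlyeq}\circ{\prec}}$ and \axiomW{4} exactly as in the paper. The only nit is the phrase ``since $\prec$ is weaker than $\preccurlyeq$'', which inverts the paper's convention (it is $\preccurlyeq$ that is weaker than $\prec$, i.e.\ $\mathrel{{\prec}\relsubseteq{\preccurlyeq}}$), though the step it justifies ($e\prec a$ implies $e\preccurlyeq a$) is the correct one.
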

\begin{proof}
Suppose first that $(\prec,\leq)$ is $\prec$-normal and closed. Then we know from (i)$\implies$(ii) in \autoref{prp:pre_dense_implies_pre_order} that $\mathrel{{\preccurlyeq}={\leq}\circ{\prec}\circ{\leq}}$ is a $\prec$-prenormal relation that induces $\leq$. Notice that, since both $\prec$ and $\leq$ are additive, the same holds for $\preccurlyeq$, and thus $\preccurlyeq$ is $\prec$-normal.

Conversely, let $\preccurlyeq$ be an abstract $\prec$-normal relation that induces $\leq$ and is also auxiliary. It follows again from \autoref{prp:pre_dense_implies_pre_order} that $(\prec,\leq)$ is $\prec$-prenormal and closed, so it only remains to check that it is also additive. Suppose that $a_i\leq b_i$ for $i=1,2$ in $S$, and let $x\in S$ be such that $x\preccurlyeq a_1+a_2$. By condition (iv) in \autoref{lma:pullback_char} we have that $\mathrel{{\preccurlyeq}={\preccurlyeq}\circ{\prec}}$, hence we can find $y\in S$ such that $x\preccurlyeq y$ and $y\prec a_1+a_2$. Using \axiomW{4}, we have elements $a_1',a_2'\in S$ such that $a_i'\prec a_i$ for each $i$ and $y\prec a_1'+a_2'$. Since $\preccurlyeq$ is weaker than $\prec$ we have $a_i'\preccurlyeq a_i$ for each $i$, and since $a_i\leq b_i$ and $\preccurlyeq$ is auxiliary for $\leq$, we obtain that $a_i'\preccurlyeq b_i$ for each $i$. Using that $\preccurlyeq$ is additive, we finally get $x\preccurlyeq y\prec a_1'+a_2'\preccurlyeq b_1+b_2$, which implies that $x\preccurlyeq b_1+b_2$. Since $\preccurlyeq$ induces $\leq$, this shows that $a_1+a_2\leq b_1+b_2$.
\end{proof}
\begin{pgr}[The prequotient and quotient of a set by a pair]
\label{pgr:quotient} 
We define the \emph{prequotient of $X$} by a pair $\alpha=(\preccurlyeq,\leq)$, and we denote this by $X_\alpha$  to be the  set $X$ equipped with 
the relation $\preccurlyeq_\alpha$ defined by $a\preccurlyeq_\alpha b$, for $a,b\in X$, precisely when $a\mathrel{{\leq}\circ {\preccurlyeq}} b$. 
In this way, we obtain an admissible pair in $X_{\alpha}$ given by $(\preccurlyeq_\alpha,\leq)$. Note that if $\alpha$ is auxiliary, then $(\preccurlyeq_\alpha,\leq)=\alpha$. Observe also that, if $\prec$ is another relation such that $\alpha$ is $\prec$-normal, then so is $(\preccurlyeq_\alpha, \leq)$, as is easily verified from the definitions.

We denote by $\pi_\alpha\colon X\to X_{\alpha}$ the map which on elements is the identity and keeps track of the prequotient relation, and refer to it as \emph{the natural map}.

The \emph{quotient of $X$ by $\alpha$} is denoted by $X/{\alpha}$ and by definition is the antisymmetrization of $X_\alpha$, namely, $X/{\alpha}:=(X_\alpha)/{\leq}$, as in \autoref{pgr:anti}. The quotient map will also be referred to as the natural map and denoted by $\pi_\alpha\colon X\to X/{\alpha}$.

We observe here that our constructions remain unaffected after antisymmetrizing. For example, let $(S,\prec)$ be a $\W$-semigroup, and let $\alpha=(\preccurlyeq,\leq)$ be a $\prec$-normal admissible pair. Then we may equip $S/{\alpha}$ with the relation $\preccurlyeq_\alpha$ defined by: $[a]\preccurlyeq_\alpha [b]$ if $a\mathrel{{\leq}\circ {\preccurlyeq}} b$. Also, the pair $(\preccurlyeq_\alpha,\leq)$ is $\preccurlyeq_\alpha$-normal.

Let us verify that this definition does not depend on the representatives chosen. Use the implication (i)$\implies$(ii) in \autoref{cor:prenormalequivalence} to conclude that  $(\preccurlyeq,\leq)$ is $\preccurlyeq$-normal.  Now suppose that $a'\leq a$, $b\leq b'$ and $a\mathrel{{\leq}\circ{\preccurlyeq}}b$ in $S$. Thus, there is $a_0\in S$ with $a\leq a_0\preccurlyeq b\leq b'$. Since  $\leq$ is  $\preccurlyeq$-continuous, we have that $\mathrel{{\preccurlyeq}\circ{\leq}\relsubseteq{\preccurlyeq}\circ{\leq}\circ{\preccurlyeq}}$. Therefore, there are elements $a_1,a_2\in S$ such that $a_0\preccurlyeq a_1\leq a_2\preccurlyeq b'$. This implies that $a'\leq a_2\preccurlyeq b'$, as required.
\end{pgr}		
We will see now that normal admissible pairs parametrize the possible $\W$-structures on a $\W$-semigroup $S$ and its possible quotients. More precisely:
\begin{prp}
\label{prp:normal}
Let $(S,\prec)$ be a $\W$-semigroup, let $\preccurlyeq$ be another additive relation, and let $\alpha=(\preccurlyeq,\leq)$ be an admissible pair. Then
\beginEnumConditions
\item $\preccurlyeq$ is a $\prec$-normal relation if and only if $(S,\preccurlyeq)$ is a $\W$-semigroup and the identity map $(S,\prec)\to (S,0,\preccurlyeq)$ is a $\W$-morphism.
\item If ${\alpha}$ is $\prec$-normal, then $(S_\alpha,\preccurlyeq_\alpha)$ is a $\W$-semigroup and the natural map $\pi_\alpha\colon S\to S_{\alpha}$ is a $\W$-morphism. Also, $(S/{\alpha},\preccurlyeq_\alpha)$ is a $\W$-semigroup and $\pi_\alpha\colon S\to S/{\alpha}$ is a $\W$-morphism.
\end{enumerate}

\end{prp}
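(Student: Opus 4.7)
The plan is to verify the $\W$-semigroup axioms for the relations produced by the construction, using as the key technical tool the characterization from \autoref{lma:pullback_char}(iv): a $\prec$-prenormal relation $\preccurlyeq$ is transitive, contains $\prec$, and satisfies $\preccurlyeq = \preccurlyeq\circ\prec$.

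For (i), the forward direction reduces the claim that $(S,\preccurlyeq)$ is a $\W$-semigroup to translating these three properties (plus additive closure, which is the extra requirement of normality) into the $\W$-semigroup axioms. Concretely, $0\preccurlyeq a$ follows from $0\prec a$ and $\prec\subseteq\preccurlyeq$; axiom \axiomW{1} is obtained by showing that a $\prec$-increasing, $\prec$-cofinal sequence from \axiomW{1} for $\prec$ is also $\preccurlyeq$-increasing and $\preccurlyeq$-cofinal, where the latter reduces a hypothesis $x\preccurlyeq a$ to $x\preccurlyeq a'\prec a$ (via $\preccurlyeq = \preccurlyeq\circ\prec$) and then traps $a'$ below some $a_k$ using cofinality of $(a_k)$ for $\prec$; axiom \axiomW{4} similarly lifts $a\preccurlyeq b+c$ to $a\preccurlyeq d\prec b+c$, applies \axiomW{4} for $\prec$, and restores the two parts via $\prec\subseteq\preccurlyeq$. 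The identity map is a $\W$-morphism because monotonicity is exactly $\prec\subseteq\preccurlyeq$ and continuity is exactly $\preccurlyeq\subseteq\preccurlyeq\circ\prec$. For the converse direction, each ingredient is read off from the hypothesis: monotonicity of the identity gives $\prec\subseteq\preccurlyeq$; continuity gives $\preccurlyeq\subseteq\preccurlyeq\circ\prec$, and combining with transitivity of $\preccurlyeq$ (part of the $\W$-semigroup data) recovers $\preccurlyeq = \preccurlyeq\circ\prec$; additive closure is built into the $\W$-semigroup structure of $(S,\preccurlyeq)$.

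For (ii), I first record two preliminary observations. Admissibility of $(\preccurlyeq,\leq)$ combined with transitivity of $\preccurlyeq$ forces $\preccurlyeq\subseteq\leq$: if $a\preccurlyeq b$ then for any $x\leq y\preccurlyeq a$ we have $y\preccurlyeq b$ by transitivity, so $a^{\leq\circ\preccurlyeq}\subseteq b^{\leq\circ\preccurlyeq}$, and admissibility yields $a\leq b$. Also, by \autoref{cor:prenormalequivalence} applied to our normal pair, $(\preccurlyeq,\leq)$ is $\preccurlyeq$-prenormal, so $\leq$ is left $\preccurlyeq$-continuous. Writing $\preccurlyeq_\alpha = \leq\circ\preccurlyeq$, transitivity of $\preccurlyeq_\alpha$ then follows from the chain $\leq\circ\preccurlyeq\circ\leq\circ\preccurlyeq \subseteq \leq\circ\leq\circ\leq\circ\preccurlyeq = \leq\circ\preccurlyeq$ using $\preccurlyeq\subseteq\leq$ and transitivity of $\leq$; additivity is immediate, and $0\preccurlyeq_\alpha a$ follows from $0\preccurlyeq a$ via $0\leq 0\preccurlyeq a$. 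For \axiomW{1} on $(S,\preccurlyeq_\alpha)$, the sequence from \axiomW{1} for $\prec$ works again since $\prec\subseteq\preccurlyeq\subseteq\preccurlyeq_\alpha$ yields the increasing chain, and $\preccurlyeq_\alpha$-cofinality follows by reducing $x\leq y\preccurlyeq a$ to $x\leq y\preccurlyeq a'\prec a$ via $\preccurlyeq = \preccurlyeq\circ\prec$, then trapping $a'$ below some $a_k$. Axiom \axiomW{4} is handled by the same lifting trick as in (i).

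The morphism property of $\pi_\alpha\colon S\to S_\alpha$ is then essentially immediate: monotonicity is $\prec\subseteq\preccurlyeq_\alpha$ (from $\prec\subseteq\preccurlyeq$) and continuity is yet another application of $\preccurlyeq = \preccurlyeq\circ\prec$. Passing to the quotient $S/\alpha$, the paragraph at the end of \autoref{pgr:quotient} already verifies that $\preccurlyeq_\alpha$ descends consistently to the antisymmetrization (this is where left $\preccurlyeq$-continuity of $\leq$ is used), after which every $\W$-semigroup axiom transfers automatically. I anticipate the main subtlety to be the careful bookkeeping among the three relations $\prec$, $\preccurlyeq$, and $\leq$; in particular, the observation that admissibility plus transitivity gives $\preccurlyeq\subseteq\leq$ is what makes the transitivity calculation for $\preccurlyeq_\alpha$ clean and short, and without it one would have to resort to a more roundabout argument using left $\preccurlyeq$-continuity of $\leq$.
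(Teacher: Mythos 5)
Your proof is correct and follows essentially the same route as the paper: both arguments hinge on the characterization $\mathrel{{\preccurlyeq}={\preccurlyeq}\circ{\prec}}$ from \autoref{lma:pullback_char} to lift $\preccurlyeq$- and $\preccurlyeq_\alpha$-relations back to $\prec$, and then transfer \axiomW{1}, \axiomW{3} and \axiomW{4} along with monotonicity and continuity of the identity and of $\pi_\alpha$. The only (welcome) difference is cosmetic: you verify the axioms for $S_\alpha$ directly against $\prec$ rather than via the intermediate $\W$-semigroup $(S,\preccurlyeq)$ as the paper does, and your explicit observation that admissibility plus transitivity forces $\mathrel{{\preccurlyeq}\relsubseteq{\leq}}$ makes the transitivity of $\preccurlyeq_\alpha$ (which the paper leaves implicit) a one-line check.
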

\begin{proof}
(i): Suppose that $\preccurlyeq$ is $\prec$-normal. Since, in particular, $\preccurlyeq$ is weaker than $\prec$, it is clear that $0\preccurlyeq a$ for all $a\in S$. By condition (iii) in \autoref{lma:pullback_char}, we also have that $a^\prec$ is $\preccurlyeq$-cofinal in $a^\preccurlyeq$ for each $a\in S$. Now, given $a\in S$, let $(a_k)$ be a $\prec$-increasing sequence, cofinal in $a^\prec$. If $x\preccurlyeq a$, then by cofinality of $a^\prec$ in $a^\preccurlyeq$ there is $y\in X$ with $y\prec a$ such that $x\preccurlyeq y$. Therefore, we may find $k$ with $y\prec a_k$, and thus $x\preccurlyeq y\prec a_k$. This implies that $(a_k)$ is also cofinal for $a^\preccurlyeq$ with regard to $\preccurlyeq$ and so $(S,\preccurlyeq)$ satisfies \axiomW{1}.

It is clear that \axiomW{3} is satisfied. As for \axiomW{4}, let $c\in X$ be such that $c\preccurlyeq a_1+a_2$. By condition (iv) in \autoref{lma:pullback_char}, $\preccurlyeq$ satisfies $\mathrel{{\preccurlyeq}={\preccurlyeq}\circ{\prec}}$ and thus there is $c'\prec a_1+a_2$ such that $c\preccurlyeq c'$. Now, using \axiomW{4} for $(S,\prec)$, we obtain elements $a_i'\prec a_i$ for $i=1,2$, and so $c'\prec a_1'+a_2'$. It clearly follows that $c\preccurlyeq a_1'+a_2'$ with also $a_i'\preccurlyeq a_i$ for each $i$. Thus $(S,\preccurlyeq)$ is also a $\W$-semigroup. Notice that the identity map $(S,\prec)\to (S,\preccurlyeq)$ is clearly monotone, and it is continuous using again that $\preccurlyeq$ satisfies $\mathrel{{\preccurlyeq}={\preccurlyeq}\circ{\prec}}$.

The converse implication is trivial.

(ii): Assume that $(\preccurlyeq, \leq)$ is $\prec$-normal.  Then, by \autoref{cor:prenormalequivalence}, we obtain that $(\prec,\leq)$ is admissible.

By (i), we know that $(S,\preccurlyeq)$ is a $\W$-semigroup and it follows easily that $S_{\alpha}$ (equipped with $\mathrel{{\preccurlyeq_\alpha}={\leq}\circ{\prec}}$) is also $\W$-semigroup. Indeed, to check \axiomW{1}, it suffices to take, for each $a\in S$ a $\preccurlyeq$-increasing sequence $(a_k)$ in $a^\preccurlyeq$ which is cofinal, and then since $a^\preccurlyeq\subseteq a^{\leq\circ\preccurlyeq}$ it follows that $(a_k)$ is also cofinal for $a^{\preccurlyeq_\alpha}$. That \axiomW{3} holds is immediate since all relations involved are additive. To see that \axiomW{4} also holds, let $c,a_1,a_2\in S$ be such that $c\preccurlyeq_\alpha a_1+a_2$. Then $c\leq c'\preccurlyeq a_1+a_2$ for some $c'\in S$, and thus, using \axiomW{4} in $(S,\preccurlyeq)$, we find elements $a_1',a_2'\in S$ with $a_i'\preccurlyeq a_i$ for $i=1,2$ and such that $c'\preccurlyeq a_1'+a_2'$. It follows that $c\preccurlyeq_\alpha a_1'+a_2'$ with $a_i'\preccurlyeq_\alpha a_i$ for each $i$.

That $\pi_\alpha$ is monotone is immediate since $\prec$ is idempotent and both $\leq$ and $\preccurlyeq$ are weaker relations. Continuity of $\pi_\alpha$ follows since, as observed in (i), we have $\mathrel{{\preccurlyeq}={\preccurlyeq}\circ{\prec}}$.

The final part of the statement follows easily.
\end{proof}	

\section{The fundamental theorems}
\label{sec:normalfundamental}

In this short section we establish the fundamental theorems of quotients of relations, which parallel those for groups, thus justifying our terminology for normal relations.
\begin{pgr}[Kernels]
	\label{pgr:kernel}
	Let $(S,\prec_S)$ and $(T,\prec_T)$ be $\W$-semigroups. Given a $\W$-morphism $f\colon S\to T$, we define its \emph{kernel} as the preorder $\leq_f=\{(a,b)\colon f(a)^{\prec_T}\subseteq f(b)^{\prec_T}\}$.  (This is sometimes denoted by $\ker(f)$, but we choose here a different notation for reasons that will be clear below.) To ease the notation, let us denote by $\leq_S$ and $\leq_T$ the preorders induced by $\prec_S$ and $\prec_T$ respectively (so, for example, $\mathrel{{\leq_S}={\leq_{\prec_S}}}$). It follows from the definitions that $\mathrel{{\leq_f}={f^*(\leq_T)}}$ and $\mathrel{{\leq_S}\relsubseteq{\leq_f}}$. For example, to check the second assertion, suppose that $a\leq_Sb$ and $x\prec_T f(a)$. Then one has by continuity that there is $a'\in S$ with $a'\prec_S a$ and $x\prec_Tf(a')$. Since $a\leq_Sb$, we have $a'\prec_Sb$ and thus by monotonicity we get $x\prec_Tf(a')\prec_Tf(b)$. Hence $a\leq_f b$. 
	We denote by $\ker(f)=(\prec_S,\leq_f)$, and refer to it as the \emph{kernel pair} of $f$.
\end{pgr}
\begin{prp}
	\label{lma:kernelnormal} Let $(S,\prec_S)$ and $(T,\prec_T)$ be $\W$-semigroups, and let $f\colon S\to T$ be a $\W$-morphism. Then:
	\beginEnumConditions
	\item $\ker(f)=(\prec_S,\leq_f)$ is an admissible $\prec_S$-normal closed pair.
	\item $(S_{\ker(f)},\prec_{\ker(f)})$ and $(S/{\ker(f)},\prec_{\ker(f)})$ are $\W$-semigroups.
\end{enumerate}
\end{prp}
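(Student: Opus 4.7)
The plan is to split the task cleanly: part (ii) will follow at once from part (i) together with \autoref{prp:normal}(ii) applied to $\alpha=\ker(f)$, so the entire substance lies in part (i). To establish that $\ker(f)=(\prec_S,\leq_f)$ is admissible, $\prec_S$-normal, and left $\prec_S$-closed, I would simply verify each clause, exploiting in every step the identity $\leq_f=f^*(\leq_T)$ together with three inputs: monotonicity of $f$, continuity of $f$ (if $y\prec_T f(a)$, then $y\prec_T f(a')$ for some $a'\prec_S a$), and the fact that $\prec_T$ is auxiliary for $\leq_T$ (so $\leq_T\circ\prec_T\subseteq\prec_T$ and $\prec_T\circ\leq_T\subseteq\prec_T$).

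For admissibility, I would assume $a^{\leq_f\circ\prec_S}\subseteq b^{\leq_f\circ\prec_S}$ and pick any $y\prec_T f(a)$; continuity supplies $a'\prec_S a$ with $y\prec_T f(a')$, and since $a'\leq_f a'\prec_S a$ the element $a'$ lies in $a^{\leq_f\circ\prec_S}$, so by hypothesis some $b'$ satisfies $a'\leq_f b'\prec_S b$. Monotonicity yields $f(a')\leq_T f(b')\prec_T f(b)$, and the auxiliary property collapses this to $y\prec_T f(b)$, hence $a\leq_f b$. Additivity of $\prec_S$ is given; additivity of $\leq_f$ reduces to additivity of $\leq_T$, which one checks by feeding any $z\prec_T a+a'$ into \axiomW{4} to get $z\prec_T a_1+a_1'$ with $a_1\prec_T a$ and $a_1'\prec_T a'$, then using $a\leq_T b$, $a'\leq_T b'$, and \axiomW{3} to conclude $z\prec_T b+b'$.

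For $\prec_S$-prenormality of the pair, $\prec_S$ is trivially $\prec_S$-prenormal, so only the left $\prec_S$-continuity of $\leq_f$ needs attention: given $a\prec_S b\leq_f c$, I would use density of $\prec_S$ (from \axiomW{1}) to interpolate $a\prec_S b_0\prec_S b$, so that $f(b_0)\prec_T f(b)\leq_T f(c)$ collapses to $f(b_0)\prec_T f(c)$; continuity of $f$ then yields $c'\prec_S c$ with $f(b_0)\prec_T f(c')$, i.e.\ $b_0\leq_f c'$, producing $a\prec_S b_0\leq_f c'\prec_S c$. For left $\prec_S$-closedness, assume every $c\in a^{\prec_S}$ admits some $c'$ with $c\prec_S c'\leq_f b$; for any $y\prec_T f(a)$, continuity gives $c\prec_S a$ with $y\prec_T f(c)$, and the chosen $c'$ satisfies $f(c)\prec_T f(c')\leq_T f(b)$, so $y\prec_T f(b)$ by transitivity and the auxiliary property. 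Hence $a\leq_f b$ directly, and reflexivity of $\leq_{\prec_S}$ then gives $(a,b)\in\leq_{\prec_S}\circ\leq_f$ as required.

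Having established (i), part (ii) is immediate: \autoref{prp:normal}(ii) applied to the $\prec_S$-normal admissible pair $\alpha=\ker(f)$ yields that both $(S_{\ker(f)},\prec_{\ker(f)})$ and $(S/\ker(f),\prec_{\ker(f)})$ are $\W$-semigroups. Of the four verifications above, the one requiring the most care is the left $\prec_S$-continuity of $\leq_f$, because it is the single step in which density of $\prec_S$, continuity of $f$, and the auxiliary nature of $\prec_T$ must all be deployed in combination; nonetheless, none of the verifications poses a real obstacle.
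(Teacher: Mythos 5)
Your proof is correct and follows essentially the same route as the paper's: admissibility, left $\prec_S$-continuity, and left $\prec_S$-closedness of $\ker(f)$ are each verified by pulling back along $f$ using monotonicity and continuity, and part (ii) is then deduced from \autoref{prp:normal}(ii) exactly as in the paper (you even supply the additivity check for $\leq_f$ that the paper leaves implicit). One small caution: of the two containments you attribute to ``$\prec_T$ auxiliary for $\leq_T$'', only $\mathrel{{\prec_T}\circ{\leq_T}}\subseteq{\prec_T}$ holds for the induced preorder in general (absent \axiomW{2}), but that is the only direction your steps actually use, so nothing breaks.
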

\begin{proof}
(i): Let us first show that $\ker(f)$ is admissible. Thus assume that $a^{\leq_f\circ\prec_S}\subseteq b^{\leq_f\circ\prec_S}$, and we are to show that $a\leq_f b$, that is, $f(a)^{\prec_T}\subseteq f(b)^{\prec_T}$. Let $x\in T$ be such that $x\prec_T f(a)$. By continuity of $f$, there is $a'\prec_Sa$ with $x\prec_Tf(a')$. Choose $a''$ such that $a'\prec_Sa''\prec_Sa$. Then we have $f(a')\prec_Tf(a'')$, whence $a'\leq_fa''\prec_Sa$, that is, $a'\in a^{\leq_f\circ \prec_S}$. By assumption, this implies that $a'\in b^{\leq_f\circ\prec_S}$, and thus there is $b'\prec_S b$ such that $a'\leq_fb'$. Therefore, since $x\prec_T f(a')$ we obtain that $x\prec_Tf(b')\prec_Tf(b)$.

To see that $\ker(f)$ is $\prec_S$-normal, we only need to check that $\leq_f$ is left $\prec_S$-continuous. Hence, let $a,b,c\in S$ and assume that $a\prec_S b$ and that $b\leq_fc$, that is, $f(b)^{\prec_T}\subset f(c)^{\prec_T}$. Choose $a'\in S$ such that $a\prec_S a'\prec_S b$. Since $f(a')\prec_T f(c)$, there is by continuity an element $c'\in S$ with $c'\prec_S c$ and $f(a')\prec_T f(c')$. Thus $a\prec_S a'$, $a'\leq_fc'$, and $c'\prec_Tc$, as required.

Finally, to check that $\ker(f)$ is $\prec_S$-closed, assume that $(c,b)\in{\prec_S}\circ{\leq_f}$ for any $c\prec_Sa$. It is enough to show that $a\leq_f b$, that is, $f(a)^{\prec_T}\subset f(b)^{\prec_T}$. Let $d\in T$ be such that  $d\prec_T f(a)$, and choose $c'\prec_Sa$ with $d\prec_T f(c')$, again by continuity. Since by assumption $(c',b)\in{\prec_S}\circ{\leq_f}$, there is $c''\in S$ such that $c'\prec_S c''$ and $c''\leq_fb$, and hence $f(c')\prec_T f(b)$. It follows that $d\prec_T f(c')\prec_T f(b)$, as desired.

(ii) follows from (i) and condition (ii) in \autoref{prp:normal}.
\end{proof}
\begin{rmk}
In the context of \autoref{lma:kernelnormal}, and in order to ease the notation, we will write $S_{\ker(f)}$ and $S/{\ker(f)}$, omitting the corresponding relations.	
\end{rmk}

\begin{thm}
\label{thm:fundamental-theorem0}

Let $(S, \prec_S)$ and $(T, \prec_T)$ be $\W$-semigroups and let $f \colon S \to T$ be a $\W$-morphism. Let $\alpha=(\prec_S,\leq)$ be a $\prec_S$-normal pair and write $\alpha_T=(\prec_T,\leq_{\prec_T})$.
Then $f$ constitutes a $\W$-morphism $S_{\alpha} \to T_{\alpha_T}$ if and only if $\alpha\leq\ker(f)$. Furthermore, $\alpha=\ker(f)$ if and only if $f$ constitutes an order-embedding $S_{\alpha} \to T_{\alpha_T}$.

\end{thm}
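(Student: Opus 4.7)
The plan is to reduce both equivalences in the theorem to a single set-theoretic comparison of the preorders $\leq$ and $\leq_f$, using that $\alpha=(\prec_S,\leq)$ and $\ker(f)=(\prec_S,\leq_f)$ share their first component. Indeed, $\ker(f)$ is a $\prec_S$-normal admissible pair by \autoref{lma:kernelnormal}, so \autoref{lma:auxiliary}(i) identifies the inequality $\alpha \leq \ker(f)$ with the single containment ${\leq}\subseteq {\leq_f}$. Moreover, unfolding the definition of $\leq_{\prec_T}$ shows that $f(a)\leq_{\prec_T}f(b)$ is literally the same as $a\leq_f b$, so the order-embedding part of the theorem will follow immediately from the first: $f$ reflects the preorders iff $\leq_f\subseteq\leq$, and combined with monotonicity ($\leq\subseteq\leq_f$) this yields $\leq = \leq_f$, i.e.\ $\alpha=\ker(f)$.

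For the first equivalence, I would begin by noting that $S_\alpha$ and $T_{\alpha_T}$ are genuine $\W$-semigroups by \autoref{prp:normal}(ii), carrying the auxiliary relations $\prec_\alpha = {\leq}\circ{\prec_S}$ and $\prec_{\alpha_T}={\leq_{\prec_T}}\circ{\prec_T}$. Since the underlying set map does not change, additivity of $f$ is automatic, and continuity of $f$ between the new relations follows from continuity of the original $f\colon(S,\prec_S)\to(T,\prec_T)$ with no assumption on $\leq$: given $y\leq_{\prec_T} z\prec_T f(a)$, one pulls back $z\prec_T f(a')$ for some $a'\prec_S a$, and $a'\leq a'\prec_S a$ already places $a'$ below $a$ in $\prec_\alpha$. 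Hence the entire substance of the first equivalence sits in monotonicity $a\prec_\alpha b\Rightarrow f(a)\prec_{\alpha_T} f(b)$. The easy direction ``${\leq}\subseteq{\leq_f}\Rightarrow$ monotonicity'' is a one-line composition: writing $a\leq a'\prec_S b$, the hypothesis upgrades $a\leq a'$ to $f(a)\leq_{\prec_T} f(a')$, which composes with $f(a')\prec_T f(b)$ to yield $f(a)\prec_{\alpha_T} f(b)$.

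The harder direction---recovering ${\leq}\subseteq{\leq_f}$ from monotonicity---is the expected main obstacle, since monotonicity concerns the auxiliary relations rather than the preorders. Given $a\leq b$ and $x\prec_T f(a)$, continuity of the original $f$ furnishes $a'\prec_S a$ with $x\prec_T f(a')$, and density of $\prec_S$ inserts $a''$ with $a'\prec_S a''\prec_S a$. To apply monotonicity to $(a'',b)$ I need $a''\prec_\alpha b$, i.e.\ a factorization through ${\leq}\circ{\prec_S}$. Here I combine the chain $\prec_S\subseteq \leq_{\prec_S}\subseteq \leq$ (the second inclusion by minimality of $\alpha_{\prec_S}$ among admissible pairs on $\prec_S$, see \autoref{pgr:admissible}) with left $\prec_S$-continuity of $\leq$ (part of $\prec_S$-normality of $\alpha$) to rewrite $a''\prec_S a\leq b$ as $a''\prec_S c \leq d \prec_S b$, whence $a''\leq d\prec_S b$ and so $a''\prec_\alpha b$. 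Monotonicity then supplies $w$ with $f(a'')\leq_{\prec_T} w\prec_T f(b)$, and the chain $x\prec_T f(a')\prec_T f(a'')\leq_{\prec_T} w\prec_T f(b)$ (the middle step by monotonicity of the original $f$) delivers $x\prec_T f(b)$. This proves $a\leq_f b$, completing the reduction and hence the whole theorem.
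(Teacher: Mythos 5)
Your proof is correct and follows essentially the same route as the paper: both reduce the condition $\alpha\leq\ker(f)$ to the single containment ${\leq}\subseteq{\leq_f}$ (you via \autoref{lma:auxiliary}(i), the paper implicitly), verify monotonicity and continuity of $f\colon S_\alpha\to T_{\alpha_T}$ by the same element chases, and read off the order-embedding clause from ${\leq}={\leq_f}$. The only difference is that you spell out in full the converse step that the paper dispatches with ``clearly $a\leq b$ in $S_\alpha$ and thus $f(a)\leq_T f(b)$'', using left $\prec_S$-continuity of $\leq$ and the definition of the induced preorder; this is an expansion of the same argument, not a different one.
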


\begin{proof}
We have from \autoref{prp:normal} (ii) that $S_{\alpha}$ is a $\W$-semigroup (with $a\prec_\alpha b$ if and only if there is $a'\in S$ with $a\leq a'\prec_S b$) and the map $\pi_\alpha\colon S\to S_{\alpha}$ is a $\W$-morphism. As we did before, let us denote $\leq_T=\leq_{\prec_T}$. We have that $T_{\alpha_T}$ is a $\W$-semigroup (with $\prec_{\alpha_T}=\leq_T\circ\prec_T$).

Now assume that $\alpha\leq\ker(f)$. The first part is clear since, if $a\leq b$ in $S_{\alpha}$, that is, $a\leq b$ in $S$, then using that $\mathrel{{\leq}\relsubseteq{\leq_f}}$ we have $f(a)^{\prec_T}\subseteq f(b)^{\prec_T}$, that is, $f(a)\leq_T f(b)$. 

Let us show that $f$ also constitutes a $\W$-morphism $S_{\alpha} \to T_{\alpha_T}$. If $a\prec_\alpha b$, then there is $a'$ such that $a\leq a'\prec_S b$. By the paragraph above, and since $f$ is a $\W$-morphism $S \to T$, we obtain $f(a)\leq_Tf(a')\prec_T f(b)$. Therefore $f(a) \prec_{\alpha_T} f(b)$ in $T_{\alpha_T}$. Next, if $c\prec_{\alpha_T} f(a)$, then there is $c'$ such that $c\leq_T c'\prec_T f(a)$ in $T$. By continuity of $f$, there is $a'\prec_S a$ such that $c'\prec_T f(a')$, and this implies that $c\prec_{\alpha_T} f(a')$, hence $f$ is continuous $S_{\alpha} \to T_{\alpha_T}$.

Conversely, suppose that $f$ is a $\W$-morphism $S_{\alpha}\to T_{\alpha_T}$. If $a\leq b$ in $S$, then clearly $a\leq b$ in $S_{\alpha}$ and thus $f(a) \leq_T f(b)$ in $T_{\alpha_T}$, that is, $f(a)\leq_T f(b)$ in $T$, which means that $a\leq_f b$. Therefore $\alpha\leq\ker(f)$.

Finally, assume that $\ker(f)=\alpha$ and that $h(a)=f(a)\leq_Tf(b)=h(b)$. This means that $a\leq_f b$, which by assumption translates into $a\leq b$. The converse is similar.
\end{proof}
\begin{rmk}
\label{rmk:fundamental-theorem}
Using arguments similar to the ones in \autoref{pgr:quotient}, the reader may check that \autoref{thm:fundamental-theorem0} remains valid after antisymmetrizing. Since it will be used later repeatedly, we state it here for convenience: Given $\W$-semigroups $(S,\prec_S)$, $(T,\prec_T)$, a $\W$-morphism $f\colon S\to T$, and a $\prec_S$-normal pair $\alpha=(\prec_S,\leq)$, then there is a unique order-preserving $\W$-morphism $h\colon S/{\alpha}\to T/{\alpha_T}$ such that 
\[
\xymatrix{
	S \ar[d]_{\pi_\alpha} \ar[r]^{f} & T\ar[d]^{\pi_{\alpha_T}}  \\
	S/{\alpha} \ar@{-->}[r]_{h} & T/{\alpha_T},
}
\]
is commutative if and only if $\alpha\leq\ker(f)$, and $h$ is an order-embedding precisely when $\alpha=\ker(f)$.	
Note that $h$ is uniquely determined by $f$. 
\end{rmk}

\begin{lma}
\label{lma:quopair}
Let $(S,\prec)$ be a $\W$-semigroup. Let $\alpha=(\preccurlyeq,\leq)$ be a $\prec$-normal admissible pair. Denote by $\preccurlyeq_\alpha$ the relation induced by $\preccurlyeq$ in $S_{\alpha}$, that is, $\mathrel{{\preccurlyeq_\alpha}={\leq}\circ{\preccurlyeq}}$ (see \autoref{pgr:quotient}). For any other $\prec$-normal pair $\alpha'=(\preccurlyeq',\leq')$ in $S$ such that $\alpha\leq\alpha'$, let $\alpha'_{\alpha}=(\preccurlyeq_{\alpha'_{\alpha}},\leq_{\alpha'_{\alpha}})$ be the pair in $S_{\alpha}$ defined by
\beginEnumConditions
\item $a\leq_{\alpha'_{\alpha}}b$ if $a\leq' b$;
\item $a\preccurlyeq_{\alpha'_{\alpha}}b$ if $a\mathrel{{\leq'}\circ{\preccurlyeq'}}b$.
\end{enumerate}
Then ${\alpha'}_{\alpha}$ is a $\preccurlyeq_\alpha$-normal (admissible) auxiliary pair in $S_{\alpha}$. In the case that $\alpha'=\alpha$, we have $\alpha_{\alpha}=(\preccurlyeq_\alpha,\leq)$. In particular, $\alpha_\alpha\leq\alpha'_{\alpha}$.
\end{lma}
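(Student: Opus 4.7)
The starting observation, easily verified from the definitions, is that any admissible pair $(\preccurlyeq, \leq)$ with $\preccurlyeq$ transitive automatically satisfies $\mathrel{{\preccurlyeq}\relsubseteq{\leq}}$: indeed, if $a \preccurlyeq b$, then for any $x \leq x' \preccurlyeq a$, transitivity of $\preccurlyeq$ gives $x' \preccurlyeq b$, so $x \in b^{\leq \circ \preccurlyeq}$, and admissibility yields $a \leq b$. Applied to $\alpha'$ this gives $\mathrel{{\preccurlyeq'}\relsubseteq{\leq'}}$, hence the crucial absorption identities $\mathrel{{\leq'}\circ{\preccurlyeq'}\relsubseteq{\leq'}}$ and $\mathrel{{\preccurlyeq'}\circ{\leq'}\relsubseteq{\leq'}}$ follow from transitivity of $\leq'$.

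Next I would establish transitivity and the auxiliary property of $\preccurlyeq_{\alpha'_\alpha} = \mathrel{{\leq'}\circ{\preccurlyeq'}}$ with respect to $\leq_{\alpha'_\alpha} = \leq'$. Transitivity follows from $(\leq' \circ \preccurlyeq') \circ (\leq' \circ \preccurlyeq') = \leq' \circ (\preccurlyeq' \circ \leq') \circ \preccurlyeq' \relsubseteq \leq' \circ \preccurlyeq'$ via the absorption identity. The inclusion $\preccurlyeq_{\alpha'_\alpha} \relsubseteq \leq_{\alpha'_\alpha}$ is exactly one of those identities. For the full auxiliary property, a chain $a \leq' b \leq' b'' \preccurlyeq' c \leq' d$ is handled by applying left $\preccurlyeq'$-continuity of $\leq'$ (available since $\alpha'$ is $\preccurlyeq'$-normal by \autoref{cor:prenormalequivalence}) to $b'' \preccurlyeq' c \leq' d$, yielding $b'' \preccurlyeq' c_1 \leq' c_2 \preccurlyeq' d$; then $\preccurlyeq' \relsubseteq \leq'$ collapses the middle segment to give $a \leq' c_2 \preccurlyeq' d$, as desired. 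Admissibility of $\alpha'_\alpha$ in $S_\alpha$ reduces directly to admissibility of $\alpha'$ in $S$, since by transitivity of $\leq'$ we have $\mathrel{{\leq'}\circ{(\leq'\circ\preccurlyeq')}={\leq'}\circ{\preccurlyeq'}}$.

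To get $\preccurlyeq_\alpha$-normality, I check the conditions of \autoref{lma:pullback_char}(iv) together with left $\preccurlyeq_\alpha$-continuity of $\leq'$. The inclusion ``weaker than $\preccurlyeq_\alpha$'', i.e.\ $\preccurlyeq_\alpha \relsubseteq \preccurlyeq_{\alpha'_\alpha}$, follows from $\alpha \leq \alpha'$ (which by \autoref{lma:auxiliary}(i) amounts to $\preccurlyeq \relsubseteq \preccurlyeq'$ and $\leq \relsubseteq \leq'$). One direction of $\mathrel{{\preccurlyeq_{\alpha'_\alpha}}={\preccurlyeq_{\alpha'_\alpha}}\circ{\preccurlyeq_\alpha}}$ is immediate from transitivity of $\preccurlyeq_{\alpha'_\alpha}$ and the previous inclusion; the other uses $\preccurlyeq' = \preccurlyeq' \circ \prec$ ($\prec$-prenormality of $\preccurlyeq'$) to split $c \preccurlyeq' b$ as $c \preccurlyeq' b_0 \prec b$, and then the chain $\prec \relsubseteq \preccurlyeq \relsubseteq \preccurlyeq_\alpha$ gives $b_0 \preccurlyeq_\alpha b$. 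Left $\preccurlyeq_\alpha$-continuity of $\leq'$ is the most delicate step: given $a \preccurlyeq_\alpha b \leq' c$, I first split $a \leq b_1 \preccurlyeq b_2 \prec b$ (using $\preccurlyeq = \preccurlyeq \circ \prec$), then apply left $\prec$-continuity of $\leq'$ (from $\prec$-normality of $\alpha'$) to $b_2 \prec b \leq' c$ to obtain $b_2 \prec b' \leq' c' \prec c$, and finally absorb the two $\prec$-links into $\preccurlyeq_\alpha$. Additive closure of all relations involved is immediate from the additive closure of $\prec, \preccurlyeq, \preccurlyeq', \leq, \leq'$.

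Finally, when $\alpha' = \alpha$ the definition gives $\preccurlyeq_{\alpha_\alpha} = \mathrel{{\leq}\circ{\preccurlyeq}} = \preccurlyeq_\alpha$, so $\alpha_\alpha = (\preccurlyeq_\alpha, \leq)$; the comparison $\alpha_\alpha \leq \alpha'_\alpha$ then follows from \autoref{lma:auxiliary}(i), since both pairs are $\preccurlyeq_\alpha$-prenormal in $S_\alpha$ and the two component inclusions $\preccurlyeq_\alpha \relsubseteq \preccurlyeq_{\alpha'_\alpha}$ and $\leq \relsubseteq \leq'$ are already established. The main obstacle I anticipate is bookkeeping: one must juggle five interrelated relations $\prec, \preccurlyeq, \preccurlyeq', \leq, \leq'$ and the various prenormality/continuity identities that hold among them. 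In particular, the auxiliary conclusion is not visibly available from the hypotheses and only emerges once one notices the admissibility-plus-transitivity argument forcing $\preccurlyeq' \relsubseteq \leq'$; without this, the composition $\leq' \circ \preccurlyeq'$ does not obviously sit inside $\leq'$.
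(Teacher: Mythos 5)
Your proposal is correct and follows essentially the same route as the paper's proof: verify admissibility via the identity $\mathrel{{\leq'}\circ{\leq'}\circ{\preccurlyeq'}={\leq'}\circ{\preccurlyeq'}}$, establish $\preccurlyeq_\alpha$-prenormality and left $\preccurlyeq_\alpha$-continuity from the identities $\mathrel{{\preccurlyeq}={\preccurlyeq}\circ{\prec}}$, $\mathrel{{\preccurlyeq'}={\preccurlyeq'}\circ{\prec}}$ and the left $\prec$-continuity of $\leq'$, and deduce auxiliarity from the absorption $\mathrel{{\leq'}\circ{\preccurlyeq'}\relsubseteq{\leq'}}$ (which the paper routes through \autoref{lma:auxiliary}(ii) while you verify it directly). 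Your explicit derivation of $\mathrel{{\preccurlyeq'}\relsubseteq{\leq'}}$ from admissibility plus transitivity supplies a fact the paper uses without comment, so the argument is complete.
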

\begin{proof}
Using the definitions, it is easy to verify that $\alpha'_{\alpha}$ is an admissible pair in $S_{\alpha}$. Indeed, this follows from the equality  $\mathrel{{\leq_{\alpha'_{\alpha}}}\circ{\preccurlyeq_{\alpha'_{\alpha}}}={\leq'}\circ{\leq'}\circ{\preccurlyeq'}={\leq'}\circ{\preccurlyeq'}}$.

We now show that $\leq'$ is also $\preccurlyeq$-continuous. To do so, we use below that $\preccurlyeq$ is $\prec$-normal at the first step, that $\preccurlyeq'$ is weaker than $\prec$ at the second step, that $\leq'$ is  $\preccurlyeq'$-continuous at the third step, that $\preccurlyeq'$ is $\prec$-normal and $\leq'$ weaker than $\preccurlyeq'$ at the fourth step, and that $\preccurlyeq$ is weaker than $\prec$ at the final step, hence we get:
\[
\mathrel{{\preccurlyeq}\circ{\leq'}={\preccurlyeq}\circ{\prec}\circ{\leq'}\relsubseteq{\preccurlyeq}\circ{\preccurlyeq'}\circ{\leq'}\relsubseteq{\preccurlyeq}\circ{\preccurlyeq'}\circ{\leq'}\circ{\preccurlyeq'}\relsubseteq{\preccurlyeq}\circ{\leq'}\circ{\prec}\relsubseteq{\preccurlyeq}\circ{\leq'}\circ{\preccurlyeq}}.
\]

Using this fact at the first step, that $\preccurlyeq$ is dense at the second step, and that $\preccurlyeq$ is weaker than $\leq$ at the final step, we get
\[
\mathrel{{\leq}\circ{\preccurlyeq}\circ{\leq'}\relsubseteq{\leq}\circ{\preccurlyeq}\circ{\leq'}\circ{\preccurlyeq}\relsubseteq{\leq}\circ{\preccurlyeq}\circ{\leq'}\circ{\preccurlyeq}\circ{\preccurlyeq}\relsubseteq{\leq}\circ{\preccurlyeq}\circ{\leq'}\circ{\leq}\circ{\preccurlyeq}},
\]
and thus $\leq_{\alpha'_{\alpha}}$ is $\preccurlyeq_\alpha$-continuous.

Finally, to check that $\preccurlyeq_{\alpha'_{\alpha}}$ is $\preccurlyeq_\alpha$-normal, this amounts to checking that \[\mathrel{{\leq'}\circ{\preccurlyeq'}={\leq'}\circ{\preccurlyeq'}\circ{\leq}\circ{\preccurlyeq}}.\]
Since $\preccurlyeq'$ is  $\prec$-normal and $\preccurlyeq,\leq$ are weaker than $\prec$ we obtain 
\[
\mathrel{{\leq'}\circ{\preccurlyeq'}={\leq'}\circ{\preccurlyeq'}\circ{\prec}\circ{\prec}\relsubseteq{\leq'}\circ{\preccurlyeq'}\circ{\leq}\circ{\preccurlyeq}},
\]
and the other inclusion is similar. That $\alpha'_{\alpha}$ is auxiliary follows from \autoref{lma:auxiliary}, and the last part of the statement is trivial.
\end{proof}

\begin{thm}\label{thm:fundamental-theorem}
Let $(S,\prec)$ be a $\W$-semigroup, and let $\alpha=(\preccurlyeq,\leq)$ be a $\prec$-normal admissible pair. Retain the notation in \autoref{lma:quopair}. Then, the natural $\W$-morphism $\pi_\alpha\colon S\to S_{\alpha}$ induces a one-to-one correspondence
\[
\tilde{\pi}\colon\{\alpha'\colon\alpha\leq\alpha'\text{ is a }\prec\text{-normal pair}\}\to\{\beta\colon \alpha_{\alpha}\leq\beta\text{ is a }\preccurlyeq_\alpha\text{-normal pair in }S_{\alpha}\}
\]
given by $\tilde{\pi}(\alpha)=\alpha'_{\alpha}$, which is bijective on admissible auxiliary pairs. Furthermore, we have a commutative diagram of $\W$-morphisms, of which the bottom row is a $\W$-isomorphism:
\[
\xymatrix{
S \ar[r]^{\pi_\alpha} \ar[d]_{\pi_{\alpha'}} & S_{\alpha} \ar[d]^{\pi_{\alpha'_{\alpha}}}  \\
S_{ \alpha'} \ar[r]^{\!\!\!\!\!\!\!\!\!\!\!\varphi} 	& ( S_{\alpha})_{(\alpha'_{\alpha})}.
} 
\]
\end{thm}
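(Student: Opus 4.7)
The plan is to establish well-definedness of $\tilde\pi$ using \autoref{lma:quopair}, construct an inverse $\sigma$ on admissible auxiliary pairs, and then observe that the commutative diagram collapses because all four $\W$-morphisms are identities on the common underlying set $S$. Well-definedness for the forward map is immediate: by \autoref{lma:quopair}, for any $\prec$-normal pair $\alpha'=(\preccurlyeq',\leq')$ with $\alpha\leq\alpha'$, the image $\alpha'_\alpha = (\leq'\circ\preccurlyeq',\leq')$ is a $\preccurlyeq_\alpha$-normal admissible auxiliary pair on $S_\alpha$ with $\alpha_\alpha\leq\alpha'_\alpha$. For the candidate inverse, given an admissible auxiliary $\preccurlyeq_\alpha$-normal pair $\beta=(\preccurlyeq_\beta,\leq_\beta)$ on $S_\alpha$ with $\alpha_\alpha\leq\beta$, I define $\sigma(\beta):=(\preccurlyeq_\beta,\leq_\beta)$, regarded now as a pair on $S$ (whose underlying set coincides with that of $S_\alpha$).

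The main technical step is to verify that $\sigma(\beta)$ is a $\prec$-normal admissible auxiliary pair on $S$ satisfying $\alpha\leq\sigma(\beta)$. Admissibility, the auxiliary property, and additive closure are intrinsic to the relations themselves and transfer for free. The inequality $\alpha\leq\sigma(\beta)$ follows from $\alpha_\alpha\leq\beta$ combined with the chain $\prec\subseteq\preccurlyeq\subseteq\preccurlyeq_\alpha\subseteq\preccurlyeq_\beta$. The core is $\prec$-normality of $\preccurlyeq_\beta$ over $S$: starting from $\preccurlyeq_\beta = \preccurlyeq_\beta\circ\preccurlyeq_\alpha$ (the $\preccurlyeq_\alpha$-normality of $\beta$) and $\preccurlyeq_\alpha = \leq\circ\preccurlyeq = \leq\circ\preccurlyeq\circ\prec$ (the $\prec$-normality of $\preccurlyeq$), one obtains
\[
\preccurlyeq_\beta \;=\; \preccurlyeq_\beta\circ\leq\circ\preccurlyeq\circ\prec \;\subseteq\; \preccurlyeq_\beta\circ\preccurlyeq_\beta\circ\prec \;\subseteq\; \preccurlyeq_\beta\circ\prec,
\]
invoking $\leq\circ\preccurlyeq=\preccurlyeq_\alpha\subseteq\preccurlyeq_\beta$ and transitivity of $\preccurlyeq_\beta$. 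The reverse inclusion is automatic from $\prec\subseteq\preccurlyeq_\beta$. Left $\prec$-continuity of $\leq_\beta$ on $S$ is obtained analogously, upgrading the left $\preccurlyeq_\alpha$-continuity of $\leq_\beta$ on $S_\alpha$ via the same factorization of $\preccurlyeq_\alpha$. I expect this relational bookkeeping to be the chief obstacle.

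The bijection on auxiliary pairs hinges on the identity $\leq\circ\preccurlyeq = \preccurlyeq$ valid for any admissible auxiliary pair $(\preccurlyeq,\leq)$: the inclusion $\preccurlyeq\subseteq\leq\circ\preccurlyeq$ uses reflexivity of $\leq$, while $\leq\circ\preccurlyeq\subseteq\leq\circ\preccurlyeq\circ\leq\subseteq\preccurlyeq$ uses reflexivity of $\leq$ followed by the auxiliary condition. Consequently $\sigma(\tilde\pi(\alpha')) = \sigma((\leq'\circ\preccurlyeq',\leq')) = (\preccurlyeq',\leq')=\alpha'$, and symmetrically $\tilde\pi(\sigma(\beta))=\beta$. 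For the commutative diagram, all four maps act as the identity on elements. By \autoref{prp:normal}, the auxiliary relation on $S_{\alpha'}$ is $\preccurlyeq_{\alpha'}=\leq'\circ\preccurlyeq'$, while the one on $(S_\alpha)_{\alpha'_\alpha}$ is $\leq_{\alpha'_\alpha}\circ\preccurlyeq_{\alpha'_\alpha} = \leq'\circ(\leq'\circ\preccurlyeq')=\leq'\circ\preccurlyeq'$ by transitivity of $\leq'$. These two relations coincide (and both induce the same preorder $\leq'$), so taking $\varphi$ to be the identity on $S$ produces a $\W$-isomorphism and commutativity of the diagram is tautological.
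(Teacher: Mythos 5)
Your proof is correct and, despite appearances, lands on essentially the same construction as the paper's. The only real difference is in how the inverse is packaged: you transport $\beta=(\tilde\preccurlyeq,\tilde\leq)$ to $S$ along the identity of underlying sets, whereas the paper defines $\leq'=\leq\circ\tilde\leq$ and $\preccurlyeq'=\leq'\circ\tilde\preccurlyeq$; but since $\alpha_\alpha\leq\beta$ forces $\leq\circ\tilde\leq=\tilde\leq$ and auxiliarity forces $\tilde\leq\circ\tilde\preccurlyeq=\tilde\preccurlyeq$, the two preimages coincide on the admissible auxiliary pairs where bijectivity is actually asserted. What your version buys is transparency: the chain $\preccurlyeq_\beta=\preccurlyeq_\beta\circ\preccurlyeq_\alpha=\preccurlyeq_\beta\circ\leq\circ\preccurlyeq\circ\prec\relsubseteq\preccurlyeq_\beta\circ\prec$, together with the analogous upgrade of left $\preccurlyeq_\alpha$-continuity of $\leq_\beta$ to left $\prec$-continuity, is exactly the content the paper waves off as ``a tedious routine check,'' and your observation that admissibility and auxiliarity are intrinsic to the pair (hence transfer for free) is the reason that check reduces to those two composition identities. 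Your treatment of the diagram \textemdash{} identifying $\preccurlyeq_{\alpha'}=\leq'\circ\preccurlyeq'$ with $\leq_{\alpha'_\alpha}\circ\preccurlyeq_{\alpha'_\alpha}=\leq'\circ\leq'\circ\preccurlyeq'$ so that $\varphi$ is the identity \textemdash{} is the same appeal to \autoref{lma:quopair} the paper makes. One small caveat, which applies equally to the paper's own proof: neither argument addresses injectivity of $\tilde{\pi}$ on normal pairs that are not auxiliary (where $\leq'\circ\preccurlyeq'$ need not determine $\preccurlyeq'$), so the substantive claim in both cases is the bijection on admissible auxiliary pairs, which you establish completely.
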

\begin{proof}
We already know from \autoref{lma:quopair} that $\tilde{\pi}(\alpha)$ is a $\preccurlyeq_\alpha$-normal pair in $S_{\alpha}$ and that the correspondence $\tilde{\pi}$ is one-to-one. 

Next, let $\beta$ be a $\preccurlyeq_\alpha$-normal auxiliary pair in $S_{\alpha}$ that contains $(\preccurlyeq_\alpha,\leq_\alpha)$, say $\beta=(\tilde{\preccurlyeq},\tilde{\leq})$.
Define a preorder $\leq'$ on $S$ by $a\leq' b$ if there is $b_0$ such that $a\leq b_0$ and $b_0 {\tilde{\leq}} b$. (It is easy to verify that $\leq'$ is a preorder since $\mathrel{{\leq_\alpha}\relsubseteq{\tilde{\leq}}}$.)

Define a relation $\preccurlyeq'$ on $S$ by $a\preccurlyeq'b$ if $a\leq' b_0$ and $b_0\tilde{\preccurlyeq}b$, which is easily seen to be dense and transitive. We claim that the pair $\alpha'=(\preccurlyeq',\leq')$ is $\prec$-normal (admissible) auxiliary and $\tilde{\pi}(\alpha')=\beta$.

To see that $\tilde{\pi}(\alpha')=\beta$ we use \autoref{lma:quopair} together with the fact that $\beta$ is auxiliary. For example, for $a,b\in S$ we have that $a\preccurlyeq_{\alpha'_{\alpha}}b$ if and only if there is $\tilde{b}\in S$ such that $a\leq'\tilde{b}\preccurlyeq' b$. Then, by definition of $\leq'$ and $\preccurlyeq'$,  we can find elements $b_0,c_0\in S$ such that $a\leq b_0$, $\tilde{b}\leq' c_0$, and $b_0\tilde{\leq}\tilde{b} $, $c_0\tilde{\preccurlyeq}b$. Therefore $a\tilde{\leq}b_0\tilde{\leq}c_0$ and $c_0\tilde{\preccurlyeq}b$, which implies that $a\tilde{\preccurlyeq}b$ since $\beta$ is auxiliary. With similar (and easier) arguments we see that $a\tilde{\preccurlyeq}b$ implies $a\preccurlyeq_{\alpha'_{\alpha}}b$ and also that $a\leq_{\alpha'_{\alpha}}b$ precisely when $a\tilde{\leq}b$.

Note that, by definition, $\mathrel{{\leq'}\circ{\preccurlyeq'}={\preccurlyeq'}}$. Also, if $a^{\preccurlyeq'}\subseteq b^{\preccurlyeq'}$, we have that $a^{\preccurlyeq_{\alpha'_{\alpha}}}\subseteq b^{\preccurlyeq_{\alpha'_{\alpha}}}$ and, using that $\tilde{\pi}(\alpha')=\beta$ and that $\beta$ is admissible, we get $a\tilde{\leq}b$, that is, $a\leq' b$. Therefore $\alpha'$ is also admissible.

That $\alpha'$ is $\prec$-normal auxiliary is a tedious routine check, using that $\beta$ is $\preccurlyeq_\alpha$-normal auxiliary and that $\alpha$ is $\prec$-normal, hence we omit the details. 

The last part of the statement follows directly using \autoref{lma:quopair}.
\end{proof}

\section{Ideals}
\label{sec:ideals}

In this section we introduce the notion of ideal of a $\W$-semigroup and relate it to the corresponding concept for the category of abstract $\Cu$-semigroups. We also show how ideals constitute a particular situation of normal admissible pairs.

\begin{pgr}[Ideals]
	\label{pgr:normalideals}
	Let $(S,\prec)$ be a $\W$-semigroup. A subset $I$ of $S$ is an \emph{ideal} (or a $\W$-ideal) provided that $I$ is a subsemigroup of $S$ such that $0\in I$ and $a\prec b$ with $b\in I$ implies that $a\in I$. We say that an ideal $I$ is \emph{closed} provided that $a\in I$ whenever $a^\prec\subseteq I$.
	
	We may also characterize closed ideals in terms of the induced order $\leq_\prec$, as follows. A subsemigroup $I$ containing $0$ of $S$ is a closed ideal provided that $a\leq_\prec b$ with $b\in I$ implies that $a\in I$, and also that $a\in I$ whenever $a^\prec\subseteq I$. In particular, if $a+b\in I$, then since $a\leq_\prec a+b$, we obtain that $a\in I$.
	
	Given an ideal $I$ of $S$, we define its \emph{closure} is $\overline{I}=\{a\in S\colon a^\prec \subset I\}$. By using axioms \axiomW{1} and \axiomW{3}, one may check that $\overline{I}$ is a closed ideal containing $I$, and that $I=\overline{I}$ precisely when $I$ is closed.
	
	We shall denote the lattice of ideals by $\mathrm{IdLat}_\W(S)$ and the lattice of closed ideals by $\Lat_\W(S)$. 
	
	We have that $\Lat_\W(S)\subseteq\mathrm{IdLat}_\W(S)$, and the closure operation just described yields a retract $\mathrm{cl}\colon\mathrm{IdLat}_\W(S)\to\overline{\Lat}_\W(S)$, by taking $\mathrm{cl}(I)=\overline{I}$.
\end{pgr}

\begin{lma}
	\label{lma:Galois}	
	Let $(S,\prec)$ be  a $\W$-semigroup, and let $\alpha_S=(\prec,\leq_\prec)$. Then, $I$ is a closed ideal of $S$ if, and only if, $I$ is a closed ideal of $S_{\alpha_S}$. Therefore, there is a natural identification
	\[
	\Lat_\W(S)=\Lat_\W(S_{\alpha_S})=\Lat_\W(S/{\alpha_S}).
	\]
\end{lma}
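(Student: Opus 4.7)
The key observation is that the prequotient $S_{\alpha_S}$ has the same underlying monoid as $S$ but is equipped with the auxiliary relation $\preccurlyeq_{\alpha_S} = {\leq_\prec} \circ {\prec}$, which is weaker than $\prec$ yet close enough to it that the two relations carve out the same notion of closed ideal. My plan is to verify this invariance directly using the alternative characterization of closed ideals given in \autoref{pgr:normalideals}, and then reduce the statement about $S/\alpha_S$ to the standard identification between ideals in a preordered monoid and in its antisymmetrization.

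First I would record two simple inclusions to use throughout: (a) ${\prec} \relsubseteq {\preccurlyeq_{\alpha_S}}$, since $a\prec b$ yields $a \leq_\prec a \prec b$; and (b) ${\leq_\prec} \relsubseteq {\leq_{\preccurlyeq_{\alpha_S}}}$, which is a one-line chase: if $a \leq_\prec b$ and $x \preccurlyeq_{\alpha_S} a$, factor $x \leq_\prec c \prec a$, note that $c \in a^\prec \subseteq b^\prec$ by hypothesis, whence $x \preccurlyeq_{\alpha_S} b$. These two inclusions let me convert between ``closure under $\prec$'' and ``closure under $\preccurlyeq_{\alpha_S}$'' provided one has closure under the respective induced preorder.

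With those in hand, I would verify the two directions symmetrically. If $I$ is a closed ideal of $(S,\prec)$, then $I$ is a subsemigroup containing $0$, closed under $\leq_\prec$, and satisfying $a^\prec \subseteq I \Rightarrow a \in I$. Closure of $I$ under $\preccurlyeq_{\alpha_S}$ follows directly: if $a \preccurlyeq_{\alpha_S} b \in I$, write $a \leq_\prec c \prec b$, get $c \in I$ from the ideal property, and then $a \in I$ from closure under $\leq_\prec$. Closedness in $S_{\alpha_S}$ is immediate from inclusion (a). Conversely, if $I$ is a closed ideal of $S_{\alpha_S}$, then closure under $\prec$ is immediate from (a); for the closedness condition in $(S,\prec)$, suppose $a^\prec \subseteq I$ and reduce to showing $a^{\preccurlyeq_{\alpha_S}} \subseteq I$: for $x \preccurlyeq_{\alpha_S} a$, factor $x \leq_\prec c \prec a$, get $c \in I$, and apply closure of $I$ under $\leq_\prec$, which by inclusion (b) is implied by closure under the induced order $\leq_{\preccurlyeq_{\alpha_S}}$ of $S_{\alpha_S}$.

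The final equality $\Lat_\W(S_{\alpha_S}) = \Lat_\W(S/\alpha_S)$ is the standard observation that any closed ideal is saturated under the equivalence relation identifying $a$ and $b$ when $a \leq_{\preccurlyeq_{\alpha_S}} b \leq_{\preccurlyeq_{\alpha_S}} a$, so it descends to an ideal of the antisymmetrization, and conversely the preimage of a closed ideal under the natural map is again a closed ideal; both operations are mutually inverse. The main (mild) obstacle throughout is purely bookkeeping, namely keeping track of which auxiliary relation and which induced preorder one is using at each step, and in particular establishing inclusion (b) so that closure hypotheses can be transported between the two structures.
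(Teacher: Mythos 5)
Your proof is correct and follows essentially the same direct element-chasing verification as the paper: both directions reduce to transporting the ideal and closedness conditions between $\prec$ and $\preccurlyeq_{\alpha_S}={\leq_\prec}\circ{\prec}$ using the containment ${\prec}\relsubseteq{\preccurlyeq_{\alpha_S}}$ and the fact that closed ideals are downward closed under the induced preorder. The only cosmetic difference is that in the converse direction the paper interpolates $y\prec z\prec a$ via \axiomW{1} where you instead invoke your inclusion (b) to transfer $\leq_\prec$-downward-closure from $S_{\alpha_S}$; both steps accomplish the same thing.
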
	
\begin{proof}
	Suppose that $I$ is a closed ideal of $S$. Let $a\in I$ and $x,z\in S$ be such that $x\leq_\prec z\prec a$. Then $z\in I$ since $I$ is an ideal of $S$, whence $z^\prec\subset I$. This implies that $x^\prec\subset I$ and, since $I$ is closed, we obtain $x\in I$. This shows that $I$ is also an ideal of $S_{\alpha_S}$. That $I$ is also closed follows from the observation that, for any $x\in S$, we have $x^\prec\subset x^{\mathrel{{\leq_\prec}\circ{\prec}}}$.
	
	Conversely, suppose that $I$ is a closed ideal of $S_{\alpha_S}$. Let $a\in I$ and $x\in S$ be such that $x\prec a$. Then $x\leq_\prec x\leq a$, and thus $x\in I$, hence $I$ is an ideal of $S$. To verify it is also a closed ideal, suppose that $a^\prec\subset I$ for some $a\in S$. Then $a\in I$ once we show that also $a^{\mathrel{{\leq_\prec}\circ{\prec}}}\subset I$ and use that $I$ is closed in $(S,\mathrel{{\leq_\prec}\circ{\prec}})$. To see this, let $x,y\in S$ be such that $x\leq_\prec y\prec a$, and find $z\in S$ such that $y\prec z\prec a$. Then $z\in I$ since $a^\prec\subset I$ and, as $x\leq_\prec y\prec z$, we have $x\in I$.
\end{proof}

We now recall the axioms used to define the category $\Cu$ of abstract Cuntz semigroups as introduced in \cite{CowEllIva08CuInv}. These model the completion $\Cu(A)$ of the classical Cuntz semigroup $\W(A)$ for a (local) C*-algebra $A$; see \cite[Theorem 3.2.8]{AntPerThi14arX:TensorProdCu}.

\begin{pgr}[The category $\Cu$]
	\label{dfn:Cuntz-semigroup}
	An \emph{abstract Cuntz semigroup}, or also a $\CatCu$-semigroup, is a positively ordered monoid $S$ satisfying the following axioms, where $\ll$ is the compact containment relation, as defined in \autoref{dfn:preorder-way-below}:
	\begin{itemize}
		\item[\axiomO{1}] Every increasing sequence $(a_k)_k$ in $S$ has a supremum $\sup_k a_k$ in $S$.
		\item[\axiomO{2}] Every element $a \in S$ is the supremum of a sequence $(a_k)_k$ such that $a_k \ll a_{k+1}$ for all $k$.
		\item[\axiomO{3}] If $a', a, b', b \in S$ satisfy $a' \ll a$ and $b' \ll b$, then $a' + b' \ll a + b$.
		\item[\axiomO{4}] If $(a_k)_k$ and $(b_k)_k$ are increasing sequences in $S$, then $\sup_k (a_k + b_k) = \sup_k a_k + \sup_k b_k)$.
	\end{itemize}
	Given $\Cu$-semigroups $S$ and $T$, a $\CatCu$-\emph{morphism} $f \colon S \to T$ is a map that preserves addition, order, compact containment and suprema of increasing sequences. We let $\operatorname{Cu}$ be the category whose objects are $\CatCu$-semigroups and whose morphisms are $\CatCu$-morphisms. Note that, for a $\Cu$-semigroup $S$ and $a,b\in S$, we have $a\leq b$ if and only if $a^\ll\subset b^\ll$.
	
	A $\CatCu$-semigroup is a $\CatW$-semigroup, and a $\CatCu$-morphism is a $\CatW$-morphism (\cite[Lemma~3.1.4]{AntPerThi14arX:TensorProdCu}). In fact, the category $\operatorname{Cu}$ is a full reflective subcategory of $\operatorname{W}$ (\cite[3.1.5]{AntPerThi14arX:TensorProdCu}). 
	
	An ideal of a $\Cu$-semigroup $S$ is by definition an order-hereditary subsemigroup $I$ of $S$ which is also closed under suprema of increasing sequences (see \cite[5.1.1]{AntPerThi14arX:TensorProdCu}). We shall denote by $\Lat_{\Cu}(S)$ the lattice of ideals of $S$. The relation between these categories, also in the light of group actions, is revisited in \autoref{sec:categorical}, where we will relate the lattice of ideals of a $\W$-semigroup $S$ with that of its completion.
	
	We say that a $\W$-semigroup $S$ satisfies \axiomO{1} if $S$ is closed under suprema of increasing sequences with respect to the preorder $\leq_\prec$ induced by $\prec$.
	
	Notice that, if $S$ satisfies \axiomO{1} and the relation $\prec$ is stronger than the way-below relation, then any closed ideal is also closed under suprema of increasing sequences. Indeed, if $I$ is such a closed ideal and $(a_n)$ is an increasing sequence in $I$, let $a=\sup a_n$. For $c\prec a$, find using \axiomW{1} an element $c'\in S$ such that $c\prec c'\prec a$. By our assumption on $\prec$ there is $n$ such that $c'\leq_\prec a_n$, and thus $c\prec a_n$. This implies that $c\in I$ and, since $c$ is arbitrary in $a^\prec$ and $I$ is closed, we obtain $a\in I$.
	
	The converse of the above statement, in other words, that an ideal closed under suprema of increasing sequences must be closed, holds if $S$ satisfies \axiomW{2}. Thus, in particular these concepts are equivalent for $\Cu$-semigroups.
\end{pgr}


\begin{pgr}[Ideals and normal pairs]
	\label{pgr:idealspairs}
	Let $(S,\prec)$ be a $\W$-semigroup, and let $I$ be an ideal of $S$. Given $a,b\in I$, we write $a\leq_I b$ provided that, for any $x\prec a$, there is $y\in I$ such that $x\prec b+y$. We write $\alpha_I=(\prec,\leq_I)$, $S_I=S_{\alpha_I}$, and $S/I=S/{\alpha_I}$.
	
	In the converse direction, given an admissible pair $\alpha=(\prec,\leq)$ on a $\W$-semigroup $(S,\prec)$, write $I_\alpha=\{a\in S\colon a\leq 0\}$. 
\end{pgr}
We now explore the relationship between these two notions.

\begin{lma}
	\label{prp:idealsnormal} 
	Let $(S,\prec)$ be a $\W$-semigroup.
	Then:
	\beginEnumConditions
	\item If $\alpha=(\prec,\leq)$ is an admissible pair  on $S$, then $I_\alpha$ is an ideal of $S$, which is closed if $\alpha$ is left $\prec$-closed.
	\item If $I$ is an ideal, then $\leq_I$ is a preorder and $\alpha_I$ is an admissible $\prec$-normal pair. 
	\item If $I$ is an ideal and $\alpha=(\prec,\leq)$ an admissible pair such that $\mathrel{{\prec}\circ{\leq}\relsubseteq{\prec}}$, then $I\subseteq I_\alpha$ if and only if $\alpha_I\leq\alpha$. In particular, $I\subseteq I_{\alpha_I}$ and $\alpha_{I_\alpha}\leq \alpha$. Finally, If $I$ is a closed ideal, then $I=I_{\alpha_I}$.	
\end{enumerate}
\end{lma}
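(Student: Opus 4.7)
The plan is to work through the three parts in sequence, relying throughout on one preliminary observation: for any admissible pair $\alpha=(\prec,\leq)$, we have $\prec\subseteq\leq$ and in fact $\leq_\prec\subseteq\leq$. Indeed, as noted in \autoref{pgr:admissible}, $a^\prec\subseteq b^\prec$ implies $a^{\leq\circ\prec}\subseteq b^{\leq\circ\prec}$, so admissibility gives $a\leq b$; similarly $a\prec b$ implies $a^\prec\subseteq b^\prec$ by transitivity of $\prec$, yielding $a\leq b$.

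For (i), the element $0$ lies in $I_\alpha$ by reflexivity, and the hereditary property is immediate from $\prec\subseteq\leq$; closure under addition will follow from additivity of $\leq$ inherited from the $\W$-semigroup structure. For the closedness claim under the extra hypothesis that $\alpha$ is left $\prec$-closed, I will argue as follows: given $a$ with $a^\prec\subseteq I_\alpha$, for each $c\prec a$ use density of $\prec$ to find $d$ with $c\prec d\prec a$, so $d\in I_\alpha$, that is $d\leq 0$; this places $(c,0)$ in ${\prec}\circ{\leq}$. Left $\prec$-closedness then gives $(a,0)\in{\leq_\prec}\circ{\leq}\subseteq{\leq}$, so $a\in I_\alpha$.

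For (ii), reflexivity of $\leq_I$ is trivial and transitivity is a standard \axiomW{4}-refinement: given $a\leq_I b\leq_I c$ and $x\prec a$, obtain $y_1\in I$ with $x\prec b+y_1$, refine via \axiomW{4} to $x\prec b'+y_1'$ with $b'\prec b$ and $y_1'\prec y_1\in I$ (hence $y_1'\in I$), then apply $b\leq_I c$ to $b'$ to get $y_2\in I$ with $b'\prec c+y_2$, combining to $x\prec c+(y_1'+y_2)$. Admissibility and additive closure of $\leq_I$ follow in the same spirit. The main step is left $\prec$-continuity of $\leq_I$: given $a\prec b\leq_I c$, apply $b\leq_I c$ directly to $a$ to obtain $y\in I$ with $a\prec c+y$; \axiomW{4} yields $a\prec c'+y'$ with $c'\prec c$ and $y'\prec y\in I$; setting $b':=c'+y'$, a further \axiomW{4}-refinement combined with additivity of $\prec$ verifies $b'\leq_I c'$, giving the desired decomposition $a\prec b'\leq_I c'\prec c$.

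For (iii), the central equivalence $I\subseteq I_\alpha\Leftrightarrow\alpha_I\leq\alpha$ proceeds as follows. The converse is immediate: any $a\in I$ trivially satisfies $a\leq_I 0$ (witness $y=a$), so $\alpha_I\leq\alpha$ forces $a\leq 0$. For the forward direction, given $a\leq_I b$ and $x\prec a$, find $y\in I\subseteq I_\alpha$ with $x\prec b+y$ (so $y\leq 0$); by \axiomW{4}, $x\prec b'+y'$ with $b'\prec b$ and $y'\prec y\leq 0$; the hypothesis ${\prec}\circ{\leq}\subseteq{\prec}$ yields $y'\prec 0$, so additivity of $\prec$ (\axiomW{3}) gives $b'+y'\prec b$, hence $x\prec b$. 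This shows $a\leq_\prec b$, and then $\leq_\prec\subseteq\leq$ concludes. The specializations $I\subseteq I_{\alpha_I}$ and $\alpha_{I_\alpha}\leq\alpha$ are immediate consequences. For the final statement: if $I$ is closed and $a\in I_{\alpha_I}$, then $a\leq_I 0$ means every $x\prec a$ admits $y\in I$ with $x\prec y\in I$, so $x\in I$ by the ideal property; hence $a^\prec\subseteq I$, and closedness forces $a\in I$. The main obstacle lies in (ii), where the $\prec$-normality of $\alpha_I$ requires carefully tracking the interplay between \axiomW{4}, stability of $I$ under $\prec$, and additivity; the verification $b'\leq_I c'$ with $b'=c'+y'$ is the most delicate bookkeeping in the entire proof.
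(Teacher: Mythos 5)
Your proposal is correct and follows essentially the same route as the paper's proof: the left-$\prec$-closedness argument in (i), the \axiomW{4}-interpolation arguments in (ii), and the ``$y'\prec 0$'' trick in (iii) all match what the paper does. The one step you leave implicit is that in (iii) you establish only the containment ${\leq_I}\subseteq{\leq}$; to upgrade this to $\alpha_I\leq\alpha$ (whose definition involves two further conditions) one invokes \autoref{lma:auxiliary}(i), applicable here since both pairs have the $\prec$-prenormal relation $\prec$ as first component --- which is exactly how the paper closes that step.
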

\begin{proof}
(i): It is clear that $I_\alpha$ is a subsemigroup since $0$ is an idempotent element. If now $a\prec b $ and $b\in I_\alpha$, we have $b\leq 0$ and as $\alpha$ is admissible we have $a\leq b\leq 0$, that is, $a\leq 0$. 

Now assume that $\alpha$ is left $\prec$-closed and that $a^\prec\subseteq I_\alpha$. If $c\prec a$, choose $c'$ such that $c\prec c'\prec a$. Since by assumption $c'\leq 0$, we have $(c,0)\in\mathrel{{\prec}\circ{\leq}}$, and since $\alpha$ is closed and $c\prec a$ is arbitrary, this implies that $(a,0)\in\mathrel{{\leq_\prec}\circ{\leq}}
$. Using again that $\alpha$ is admissible, we have $\mathrel{{\leq_\prec}\relsubseteq{\leq}}$, and thus $a\leq 0$, that is, $a\in I_\alpha$.

(ii):  The fact that $\leq_I$ is a preorder and additively closed follows from the fact that $I$ is a subsemigroup. To show that $\alpha_I$ is an admissible pair, suppose that $a^{\leq_I\circ\prec}\subseteq b^{\leq_I\circ\prec}$. We must show that $a\leq_I b$. To this end, let $x\prec a$ and choose $z$ such that $x\prec z\prec a$. Since $z\leq_Iz\prec a$, there is by assumption an element $w$ with $z\leq_I w\prec b$. Now, using that $x\prec z\leq_I w$ we find $y'\prec y\in I$ with $x\prec w+y'\prec b+y$.

Finally, to see that $\alpha_I$ is $\prec$-normal, we need to check that $\leq_I$ is $\prec$-continuous. Suppose that $a\prec b\leq_Ic$. Choose $b'$ such that $a\prec b'\prec b$. Then there is $y\in I$ such that $b'\prec c+y$. Apply \axiomW{4} to obtain $c'\prec c$ and $y'\prec y$ (hence $y'\in I$) such that $b'\prec c'+y'$. Now, we have that $a\prec b'\leq_I c'\prec c$.

(iii): Suppose first that $I\subseteq I_\alpha$. To prove that $\alpha_I\leq\alpha$ it suffices to show that $\mathrel{{\leq_I}\relsubseteq{\leq}}$; see \autoref{lma:auxiliary} (i). Assume that $a\leq_I b$. Since $\alpha$ is admissible, it is enough to show that $a^\prec\subseteq b^\prec$. Let $x\prec a$. Then there is $y\in I$ such that $x\prec b+y$. Using \axiomW{4}, we find $b'\prec b$ and $y'\prec y$ such that $x\prec b'+y'$. Since $y\in I\subseteq I_\alpha$ and $y'\prec y$, we conclude from the assumptions that $y'\prec 0$. Thus $x\prec b$.

Conversely, suppose that $\alpha_I\leq \alpha$, and let $a\in I$. If $x\prec a$, clearly $x\prec 0+a$, that is, $a\leq_I 0$. Thus $a\leq 0$, hence $a\in I_\alpha$.

In particular, since $\alpha_I=\alpha_I$ we get $I\subseteq I_{\alpha_I}$; and since $I_\alpha=I_\alpha$, we get $\alpha_{I_\alpha}\leq \alpha$.

Assume finally that $I$ is closed. We already have that $I\subseteq I_{\alpha_I}$. Now let $a\in I_{\alpha_I}$ and let $x\prec a$. Since $a\leq_I 0$, there is $b\in I$ such that $x\prec 0+b=b$, and since $I$ is an ideal we have $x\in I$. Thus $a^\prec\subseteq I$ and as $I$ is closed, this implies $a\in I$.
\end{proof}
\begin{pgr}[Galois connections]
Recall that, given posets $S$ and $T$, a \emph{Galois connection} between $S$ and $T$ consists of a pair of order-preserving functions $f\colon S\to T$ and $g\colon T\to S$ such that $g(f(a))\leq a$ and 	$f(g(b))\leq b$ for any $a\in S$ and $b\in T$. 

Let $(S,\prec)$ be a $\W$-semigroup. Let us denote by $\mathrm{Ad}(S)$ the poset of admissible pairs, by $\mathrm{Ad}_{n}(S)$ the poset of $\prec$-normal admissible pairs, and by $\mathrm{Ad}_c(S)$ the poset of $\prec$-closed admissible pairs. We write $\mathrm{Ad}_{nc}(S)=\mathrm{Ad}_n(S)\cap\mathrm{Ad}_c(S)$.
\end{pgr}	

\begin{prp}\label{cor:closed-ideal-vs-normal-preorder-subcategory}
Let $(S,\prec)$ be a $\W$-semigroup. Consider the assignments 
\[\varphi\colon \mathrm{IdLat}_\W(S)\to \mathrm{Ad}(S) \text{ and }\psi\colon \mathrm{Ad}(S)\to \mathrm{IdLat}_\W(S)
\] 
given by $I \stackrel{\varphi}{\longmapsto} {\alpha_I}$ and ${\alpha} \stackrel{\psi}{\longmapsto} I_\alpha$. If $S$ satisfies \axiomO{1}, $\mathrel{{\prec}\circ{\leq}\relsubseteq{\prec}}$, and $\mathrel{{\prec}\relsubseteq{\ll}}$, then $\varphi$ and $\psi$ yield a Galois connection between $\Lat_\W(S)$ and $\mathrm{Ad}_{nc}(S)$.
\end{prp}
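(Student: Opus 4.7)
The plan is to verify the four pieces required for a Galois connection: well-definedness of $\varphi$ and $\psi$, their monotonicity, and the two inequalities $\psi(\varphi(I))\leq I$ and $\varphi(\psi(\alpha))\leq\alpha$. The bulk of the structural work is already in \autoref{prp:idealsnormal}; the task is to stitch those ingredients together and carry out one new computation. For well-definedness of $\varphi$, let $I\in\Lat_\W(S)$. \autoref{prp:idealsnormal}(ii) tells us that $\alpha_I=(\prec,\leq_I)$ is admissible and $\prec$-normal, so the only missing piece is left $\prec$-closedness. This is direct: assume $(c,b)\in{\prec}\circ{\leq_I}$ for every $c\in a^\prec$. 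For each such $c$ choose $c'$ with $c\prec c'\leq_I b$; applying the definition of $\leq_I$ to the witness $c\prec c'$ produces $y\in I$ with $c\prec b+y$. Since $c\prec a$ was arbitrary, this is exactly $a\leq_I b$, whence $(a,b)\in{\leq_\prec}\circ{\leq_I}$ by reflexivity of $\leq_\prec$. Well-definedness of $\psi$ is immediate from \autoref{prp:idealsnormal}(i): any pair in $\mathrm{Ad}_{nc}(S)$ is left $\prec$-closed, so $I_\alpha$ is automatically a closed ideal.

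Monotonicity of both maps is routine. If $I\subseteq I'$, any witness $y\in I$ establishing $a\leq_I b$ also lies in $I'$, so $\leq_I\,\subseteq\,\leq_{I'}$; since the common auxiliary relation $\prec$ is itself $\prec$-prenormal, \autoref{lma:auxiliary}(i) upgrades this inclusion to $\alpha_I\leq\alpha_{I'}$. In the other direction, $\alpha\leq\alpha'$ forces $\leq\,\subseteq\,\leq'$ by the very definition of the order on admissible pairs, and $a\leq 0$ then forces $a\leq' 0$, giving $I_\alpha\subseteq I_{\alpha'}$.

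Finally, both Galois inequalities are read off from \autoref{prp:idealsnormal}(iii). The closing clause gives $I=I_{\alpha_I}$ for closed $I$, so $\psi(\varphi(I))=I$, while the ``in particular'' assertion $\alpha_{I_\alpha}\leq\alpha$ delivers $\varphi(\psi(\alpha))\leq\alpha$; here we invoke the standing hypothesis $\prec\circ\leq\,\subseteq\,\prec$ on the pair $\alpha$. The remaining hypotheses, \axiomO{1} and $\prec\,\subseteq\,\ll$, enter only indirectly: by the observation in \autoref{dfn:Cuntz-semigroup} they together ensure that closed $\W$-ideals coincide with ideals that are closed under suprema of increasing sequences, so the lattice $\Lat_\W(S)$ under discussion is the one intended in the $\Cu$-theoretic framework. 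I foresee no serious obstacle---the only genuinely new check is the left $\prec$-closedness of $\alpha_I$, and that is the short unwinding recorded above.
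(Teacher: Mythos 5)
Your proof is correct, and the bookkeeping (well-definedness of $\psi$, monotonicity of both maps, and the two Galois inequalities read off from \autoref{prp:idealsnormal}) matches the paper's; but you take a genuinely different, and shorter, route at the single nontrivial point, namely the left $\prec$-closedness of $\alpha_I$. The paper produces a \emph{uniform} witness: it takes a cofinal $\prec$-increasing sequence $(a_k)$ in $a^{\prec}$, extracts $c_k\in I$ with $a_k\prec b+c_k$, and forms $c=\sup_k(c_1+\dots+c_k)$ inside $I$ so that $a'\prec b+c$ for every $a'\prec a$ with one and the same $c$. This is exactly where \axiomO{1}, $\mathrel{{\prec}\subseteq{\ll}}$ and closedness of $I$ enter, since together they guarantee $I$ is closed under suprema of increasing sequences (see the remark in \autoref{dfn:Cuntz-semigroup}). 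You instead observe that the definition of $\leq_I$ in \autoref{pgr:idealspairs} already allows the witness $y\in I$ to depend on the element $x\prec a$, so left closedness follows by merely unfolding that definition --- no supremum, and hence no use of \axiomO{1} or $\mathrel{{\prec}\subseteq{\ll}}$. This is legitimate under the paper's literal ``for any $x\prec a$ there is $y\in I$'' reading of $\leq_I$, which is the same reading the paper itself uses when proving admissibility of $\alpha_I$ in \autoref{prp:idealsnormal}; your argument therefore shows these two hypotheses are not actually needed for this proposition (they become essential only when one wants a uniform witness, e.g.\ to match the standard $\Cu$-quotient as in \autoref{cor:closed-ideal-quotients-generalized}). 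What each approach buys: yours is more elementary and weakens the hypotheses; the paper's supremum construction yields the stronger, uniform-witness form of $a\leq_I b$ that is the one needed later when comparing with ideal quotients of $\Cu$-semigroups. You correctly invoke the remaining hypothesis $\mathrel{{\prec}\circ{\leq}\subseteq{\prec}}$ where it is genuinely used, namely for $\alpha_{I_\alpha}\leq\alpha$ via \autoref{prp:idealsnormal}(iii), so nothing is missing.
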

\begin{proof}
In general, the assignments in the statement are given by \autoref{prp:idealsnormal}.
By (ii) and (iii) in said lemma, if $I$ is a closed ideal, then $\alpha_I$ is an admissible $\prec$-normal pair and $\psi(\varphi(I))=I_{\alpha_I}= I$. Again by \autoref{prp:idealsnormal} (i), (iii), if $\alpha$ is a $\prec$-normal closed pair, then $I_\alpha$ is a closed ideal and $\varphi(\psi(\alpha))=\alpha_{I_\alpha}\leq \alpha$.	

We need to prove that if $I$ satisfies \axiomO{1}, then $\alpha_I$ is left $\prec$-closed. This will be  the case, in particular, if $I$ is closed, $S$ satisfies \axiomO{1}, and $\mathrel{{\prec}\relsubseteq{\ll}}$ as in the assumption; see the comments in \autoref{pgr:normalideals}.

Suppose now that $I$ is closed under suprema of increasing sequences with respect to the preorder $\leq_\prec$. Fix any $a,b \in S$ such that $a'  \mathrel{{\prec}  \circ  {\leq_I} }b$ for any $a' \in a^{\prec}$. Let be $(a_k)_k$ be an upward directed and cofinal sequence in $a^{\prec}$ as given by \axiomW{1}. By our assumption, there is a sequence $(c_k)_k$ in $I$ such that $a_k \prec b + c_k$ for every $k$. Since the sequence $(c_1 + \dots + c_k)_k$ is increasing in $I$ with respect to $\leq_\prec$, we may find a supremum $c$ in $I$. It follows that, if $a'\prec a$, then $a'\prec b+c$, and thus $a\leq_I b$, as desired.
\end{proof}

\begin{cor}
\label{cor:refl} Let $S$ be a $\Cu$-semigroup. Then the poset category of closed ideals of $S$ embeds as a full coreflective subcategory of the poset category of normal closed pairs on $S$, via the assignment $I \mapsto \alpha_I=(\ll,\leq_I)$.   
\end{cor}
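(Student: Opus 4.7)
The plan is to reduce this corollary directly to \autoref{cor:closed-ideal-vs-normal-preorder-subcategory} and then upgrade the resulting Galois connection to a coreflective embedding.

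First I would verify that the hypotheses of \autoref{cor:closed-ideal-vs-normal-preorder-subcategory} are satisfied when $\prec$ is taken to be the way-below relation $\ll$ on a $\Cu$-semigroup $S$. The axiom \axiomO{1} holds by definition of $\Cu$, the containment $\mathrel{{\ll}\relsubseteq{\ll}}$ is trivial, and the standard property that $a \ll b \leq c$ implies $a \ll c$ gives $\mathrel{{\ll}\circ{\leq}\relsubseteq{\ll}}$. Moreover, since $\Cu$-semigroups satisfy \axiomW{2}, a subsemigroup containing $0$ is a $\Cu$-ideal (in the sense of \autoref{dfn:Cuntz-semigroup}) if and only if it is a closed $\W$-ideal in the sense of \autoref{pgr:normalideals}, so $\Lat_\Cu(S) = \Lat_\W(S)$. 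Applying \autoref{cor:closed-ideal-vs-normal-preorder-subcategory} then yields a Galois connection between $\Lat_\Cu(S)$ and $\mathrm{Ad}_{nc}(S)$ via $\varphi(I) = \alpha_I$ and $\psi(\alpha) = I_\alpha$, with the key identities $\psi(\varphi(I)) = I$ and $\varphi(\psi(\alpha)) \leq \alpha$ already established in the proof of that proposition (via \autoref{prp:idealsnormal}).

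Next I would translate this into the categorical language of the corollary. Since both $\Lat_\Cu(S)$ and $\mathrm{Ad}_{nc}(S)$ are poset categories, a functor is simply an order-preserving map, and $\varphi \dashv \psi$ as functors amounts to the biconditional $\varphi(I) \leq \alpha \iff I \leq \psi(\alpha)$. The forward direction follows by applying $\psi$ and invoking $\psi(\varphi(I)) = I$; the reverse follows by applying $\varphi$ and invoking $\varphi(\psi(\alpha)) \leq \alpha$. Hence $\psi$ is a right adjoint to the embedding $\varphi$, which is by definition what it means for the image to be a coreflective subcategory.

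Finally I would observe that the embedding is full and faithful. In the poset setting, this just requires that $\varphi$ be order-reflecting: if $\alpha_I \leq \alpha_J$ in $\mathrm{Ad}_{nc}(S)$, then applying the order-preserving $\psi$ and using $\psi \circ \varphi = \mathrm{id}$ yields $I = \psi(\alpha_I) \leq \psi(\alpha_J) = J$. Combined with the evident monotonicity of $\varphi$ and the identity $\psi \circ \varphi = \mathrm{id}$, this shows that $\varphi$ is an order-embedding, completing the proof.

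I do not expect any genuine obstacle here: all the substantive work has already been done in \autoref{prp:idealsnormal} and \autoref{cor:closed-ideal-vs-normal-preorder-subcategory}; the corollary is essentially a bookkeeping statement packaging these facts in categorical language for a $\Cu$-semigroup, where the distinction between $\prec$ and $\ll$ disappears.
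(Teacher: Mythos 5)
Your proposal is correct and follows essentially the same route as the paper: the paper's proof likewise reduces everything to \autoref{prp:idealsnormal} and \autoref{cor:closed-ideal-vs-normal-preorder-subcategory}, deriving the order-embedding from $\psi\circ\varphi=\mathrm{id}$ and the coreflection from the biconditional $\varphi(I)\leq\alpha\iff I\leq\psi(\alpha)$. Your write-up is merely more explicit about verifying the hypotheses ($\axiomO{1}$, $\mathrel{{\ll}\circ{\leq}\relsubseteq{\ll}}$, and the identification of $\Cu$-ideals with closed $\W$-ideals via \axiomW{2}), which the paper leaves implicit.
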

\begin{proof}
It follows from \autoref{prp:idealsnormal} that the assignment $\varphi\colon I\to \alpha_I$ is an injective order-embedding (by using $\psi\colon \alpha\to I_\alpha$)) so one can view the poset category of closed ideals as a full subcategory of the poset category of normal closed pairs. To see that it is a correflective subcategory, we need to check that, if $I$ is a closed ideal and $\alpha$ is a closed normal pair, we have $\varphi(I)\leq \alpha$ if and only if $I\leq \psi(\alpha)$, which is immediate from the arguments in \autoref{cor:closed-ideal-vs-normal-preorder-subcategory}.
\end{proof}

\begin{cor}\label{cor:closed-ideal-quotients-generalized}
If $S$ is a $\Cu$-semigroup, and $I$ is a closed ideal of $S$, then the quotient $S/I$ as defined in \cite[Section~5.1]{AntPerThi14arX:TensorProdCu} agrees with the quotient $S/{\alpha_I}$.
\end{cor}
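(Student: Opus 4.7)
The plan is to identify the two quotients by showing that the preorder $\leq_I$ on $S$ introduced in \autoref{pgr:idealspairs} coincides with the classical preorder used in \cite[Section~5.1]{AntPerThi14arX:TensorProdCu} to build $S/I$, namely the relation $a \leq_I' b$ defined by the existence of some $c \in I$ with $a \leq b + c$. Once this identification is made, both quotient maps $S \twoheadrightarrow S/{\alpha_I}$ and $S \twoheadrightarrow S/I$ induce the same equivalence relation (the antisymmetrization of this common preorder), so the underlying partially ordered monoids coincide, and matching the auxiliary structure will be routine.

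For $\leq_I' \;\Rightarrow\; \leq_I$: assume $a \leq b + c$ with $c \in I$. Given $x \prec a$, we have $x \prec b + c$ since $\prec$ is auxiliary for $\leq$. Axiom \axiomW{4} yields $b' \prec b$ and $c' \prec c$ with $x \prec b' + c'$; since $I$ is an ideal and $c' \prec c \in I$, we get $c' \in I$. Hence $x \prec b + c'$ with $c' \in I$, proving $a \leq_I b$.

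The converse $\leq_I \;\Rightarrow\; \leq_I'$ will be the main obstacle, because we must manufacture a \emph{single} element $c \in I$ from the family of witnesses $y_x \in I$ (one for each $x \prec a$). Using \axiomW{1}, choose a $\prec$-increasing sequence $(a_n)$ cofinal in $a^\prec$; for each $n$, the hypothesis $a \leq_I b$ supplies $y_n \in I$ with $a_n \prec b + y_n$. Set $z_n = y_1 + \cdots + y_n \in I$, an increasing sequence. By \axiomO{1}, $c := \sup_n z_n$ exists in $S$, and since $S$ is a $\Cu$-semigroup with $\prec \,=\, \ll$ and $I$ is a closed ideal, $I$ is closed under suprema of increasing sequences (as noted in \autoref{dfn:Cuntz-semigroup}), so $c \in I$. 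Then $a_n \prec b + z_n \leq b + c$ for every $n$, and since $a = \sup_n a_n$, we conclude $a \leq b + c$, as required.

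With the two preorders identified, the underlying sets, the partial orders, and the monoid operations of $S/{\alpha_I}$ and $S/I$ all agree. To finish, one verifies that the auxiliary relation $\preccurlyeq_{\alpha_I} = {\leq_I} \circ {\prec}$ on $S/{\alpha_I}$ agrees with the compact containment relation used in $S/I$: this is a short check using \axiomW{1}, \axiomW{4}, and the already-established agreement of orders to pass between representatives. It follows that $S/{\alpha_I}$ inherits a full $\Cu$-semigroup structure and is canonically isomorphic to $S/I$, as claimed.
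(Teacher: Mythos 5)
Your proof is correct, and it follows essentially the route the paper intends: the corollary is stated without proof as a consequence of the preceding development, and the substance is exactly your identification of $\leq_I$ with the classical relation ``$a\leq b+c$ for some $c\in I$''. In particular, your key step of collapsing the family of witnesses $y_n$ into a single $c=\sup_n(y_1+\cdots+y_n)\in I$ is the same device the paper uses in the proof of \autoref{cor:closed-ideal-vs-normal-preorder-subcategory} to show that $\alpha_I$ is left $\prec$-closed, so nothing genuinely new is needed.
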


We conclude this section with a discussion on how taking quotients by pairs or by ideals are related in the context of $\Cu$-morphisms, in particular those induced by *-homomorphisms between C*-algebras.

\begin{pgr}[Kernels of $\Cu$-morphisms]
\label{lma:closed-ideal-Cu-morphism}	
Let $S,T$ be $\Cu$-semigroups and let $f\colon S\to T$ be a $\Cu$-morphism. We defined in \autoref{pgr:kernel} the kernel of $f$ as $\leq_f=\{(a,b)\colon f(a)^\ll\subseteq f(b)^\ll\}$ which, by virtue of \axiomO{2}, agrees with $\{(a,b)\colon f(a)\leq f(b)\}=f^*(\leq)$. Recall that the kernel pair of $f$ is, by definition, $\ker(f)=(\ll, \leq_f)$, which is a closed pair by \autoref{lma:kernelnormal}.

Let $I_{\ker(f)}$ be the closed ideal corresponding to $\ker(f)$, and notice that $I_{\ker(f)}=f^{-1}(\{0\})$. Indeed, we have  $I_{\ker(f)} = \left\{ a \in S \colon a\leq_f 0 \right\} = \left\{ a \in S \colon f(a) \leq 0 \right\} = f^{-1}(\{0\})$. 
\end{pgr}

As far as *-homomorphisms between C*-algebras are concerned, the case of surjective $*$-homomorphisms is well understood. In this case it suffices to consider ideals of $\CatCu$-semigroups, as opposed to more general normal pairs. We shall revisit the following, in the presence of group actions; see \autoref{thm:dynquotients}.

\begin{thm}[{\cite[Theorem~4.1]{CiuRobSan10CuIdealsQuot}}] \label{thm:Cu-short-exact-sequence}
Let $I$ be a $\sigma$-unital ideal of a C*-algebra $A$. Then the short exact sequence 
\[
0 \to I \xrightarrow{\iota} A \xrightarrow{\varphi} A/I \to 0
\]
induces a short exact sequence of $\Cu$-semigroups and $\Cu$-morphisms
\[
0 \to \operatorname{Cu}(I) \xrightarrow{\operatorname{Cu}(\iota)} \operatorname{Cu}(A) \xrightarrow{\operatorname{Cu}(\varphi)} \operatorname{Cu}(A/I) \to 0 \; ,
\]
i.e., the $\Cu$-morphism $\operatorname{Cu}(\iota)$ is injective, the $\Cu$-morphism $\operatorname{Cu}(\varphi)$ is surjective, and $\operatorname{Cu}(\iota) \big(\operatorname{Cu}(I) \big) = \operatorname{Cu}(\varphi)^{-1} (0)$. \qed

Moreover, one has that $\CatCu(A/I)\cong \CatCu(A)/\CatCu(I)$.
\end{thm}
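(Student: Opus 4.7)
The plan is to deduce the moreover claim from the first part by applying the fundamental theorem of quotients (\autoref{thm:fundamental-theorem0} together with \autoref{rmk:fundamental-theorem}) to the surjective $\CatCu$-morphism $\CatCu(\varphi) \colon \CatCu(A) \to \CatCu(A/I)$.

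First, I identify $\CatCu(A)/\CatCu(I)$, in the sense of \cite[Section~5.1]{AntPerThi14arX:TensorProdCu}, with the quotient $\CatCu(A)/\alpha_{\CatCu(I)}$ by the $\ll$-normal closed pair $\alpha_{\CatCu(I)} = (\ll, \leq_{\CatCu(I)})$; this identification is exactly \autoref{cor:closed-ideal-quotients-generalized}. Throughout, $\CatCu(I)$ is viewed as a closed ideal of $\CatCu(A)$ via the injective $\CatCu$-morphism $\CatCu(\iota)$ provided by the first part.

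The key step is the equality of $\ll$-normal closed pairs
\[
\alpha_{\CatCu(I)} = \ker(\CatCu(\varphi)).
\]
The inequality $\alpha_{\CatCu(I)} \leq \ker(\CatCu(\varphi))$ is formal: exactness gives $\CatCu(I) = \CatCu(\iota)(\CatCu(I)) = \CatCu(\varphi)^{-1}(0) = I_{\ker(\CatCu(\varphi))}$ (cf.\ \autoref{lma:closed-ideal-Cu-morphism}), and \autoref{prp:idealsnormal}(iii) then yields the inequality, in agreement with the Galois connection \autoref{cor:closed-ideal-vs-normal-preorder-subcategory}. For the reverse inequality, I need to show that whenever $\CatCu(\varphi)(a) \leq \CatCu(\varphi)(b)$ and $a' \ll a$ in $\CatCu(A)$, there exists $c \in \CatCu(I)$ with $a' \ll b + c$. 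This is a reformulation of the classical lifting of Cuntz subequivalence modulo a $\sigma$-unital ideal, and is the analytical content underlying \cite[Theorem~4.1]{CiuRobSan10CuIdealsQuot}.

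With this equality in hand, \autoref{rmk:fundamental-theorem} produces a $\CatCu$-morphism $h \colon \CatCu(A)/\alpha_{\CatCu(I)} \to \CatCu(A/I)/\alpha_{\CatCu(A/I)}$ through which $\CatCu(\varphi)$ factors, and which is an order-embedding. Since $\CatCu(A/I)$ is antisymmetric and $\alpha_{\CatCu(A/I)} = (\ll, \leq)$ is its minimal auxiliary pair, the target simplifies to $\CatCu(A/I)$ itself. Surjectivity of $h$ then follows from the surjectivity of $\CatCu(\varphi)$, so $h$ is a $\CatCu$-isomorphism, establishing $\CatCu(A/I) \cong \CatCu(A)/\CatCu(I)$. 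The main obstacle is the reverse inequality in the key step: it is not formal and rests on the analytical lifting property above, which is precisely where the $\sigma$-unitality hypothesis on $I$ is used.
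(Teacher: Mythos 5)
The paper offers no proof of this statement at all: both the exact sequence and the ``Moreover'' clause are imported wholesale from \cite[Theorem~4.1]{CiuRobSan10CuIdealsQuot} (note the \textit{qed} placed inside the statement), with \autoref{rmk:quotients} later observing that the $\CatW$-level analogue can be adapted from \cite[Proposition 1]{CiuRobSan10CuIdealsQuot}. Your proposal is therefore not comparable to an internal argument, but it is a correct and worthwhile reconstruction: you reduce the ``Moreover'' clause to the single identity $\alpha_{\CatCu(I)}=\ker(\CatCu(\varphi))$ of normal closed pairs and then let \autoref{rmk:fundamental-theorem} do the rest, which is exactly the pattern the paper itself deploys later in \autoref{thm:Cu-quotients-from-star-homomorphism}. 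You are also right to isolate the reverse inequality $\ker(\CatCu(\varphi))\leq\alpha_{\CatCu(I)}$ as the genuine analytical content (the displayed exactness statement, which only controls $\CatCu(\varphi)^{-1}(0)$, is strictly weaker than the order-theoretic lifting $\CatCu(\varphi)(a)\leq\CatCu(\varphi)(b)\Rightarrow a\leq_{\CatCu(I)}b$), and this is indeed where $\sigma$-unitality enters in \cite{CiuRobSan10CuIdealsQuot}; since that step is precisely the cited theorem, deferring it is legitimate here. One small caveat: for the easy inequality you invoke \autoref{prp:idealsnormal}(iii), whose hypothesis $\mathrel{{\prec}\circ{\leq}\relsubseteq{\prec}}$ is not literally satisfied by $\ker(\CatCu(\varphi))$ (take $\CatCu(\mathbb{C}\oplus\mathbb{C})\to\CatCu(M_2)$); the paper makes the same move in \autoref{thm:Cu-quotients-from-star-homomorphism}, and in any case the containment ${\leq_{\CatCu(I)}}\relsubseteq{\leq_{\CatCu(\varphi)}}$ follows directly by applying $\CatCu(\varphi)$ to $x\ll b+y$ with $y\in\CatCu(I)$ and using \axiomW{2}, so nothing is lost.
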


On the other hand, for injective *-homomorphisms, the case when the image is a hereditary C*-subalgebra (e.g., an ideal), is also well understood. It does not involve the use of quotients. 

\begin{prp}[c.f., e.g., \cite{AntPerThi20:Absbivariant}] \label{prp:Cu-hereditary-subalg}
Let $\varphi \colon A \to B$ be a *-homomorphism between C*-algebras. If $\varphi$ is injective and $\varphi(A)$ is a hereditary C*-subalgebra of $B$, then the induced $\Cu$-morphism $\operatorname{Cu}(\varphi) \colon \operatorname{Cu}(A) \to \operatorname{Cu}(B)$ is injective.  \qed
\end{prp}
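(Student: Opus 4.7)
The strategy is to reduce to showing that if $A \subseteq B$ is a hereditary C*-subalgebra, then $a \precsim_{\mathrm{Cu}}^B b$ implies $a \precsim_{\mathrm{Cu}}^A b$ for $a, b \in A_+$, since the converse is trivial. Using injectivity of $\varphi$ one identifies $A$ with $\varphi(A) \subseteq B$; further, $A \otimes \mathcal{K}$ remains hereditary in $B \otimes \mathcal{K}$, so after stabilising it suffices to compare $\precsim_{\mathrm{Cu}}^A$ and $\precsim_{\mathrm{Cu}}^B$ on elements of $A_+$ directly.

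The crucial technical point is the following lemma: if $y \in B$ satisfies $yy^* \in A$ and $y^*y \in A$, then $y \in A$. Indeed, in any C*-algebra one has the norm-convergent identities $y = \lim_n y(y^*y)^{1/n} = \lim_n (yy^*)^{1/n} y$ (a routine functional-calculus estimate $\sup_t t(1-t^{1/n})^2 \to 0$); combining them gives
\[
y \;=\; \lim_{n,m}\,(yy^*)^{1/n}\, y\, (y^*y)^{1/m} \;\in\; \overline{A \cdot B \cdot A} \;\subseteq\; A,
\]
the last inclusion holding because, for $a_1,a_2\in A$ with $p$ the open projection of $A$ in $B^{**}$, one has $p a_1 = a_1$ and $a_2 p = a_2$, hence $a_1 b a_2 \in pBp = A$.

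Now, given $\epsilon>0$ and $a \precsim_{\mathrm{Cu}}^B b$, R\o{}rdam's lemma furnishes $x \in B$ with $(a-\epsilon)_+ = xbx^*$. Let $(e_\lambda)$ be an approximate unit of $A$ and set $d_\lambda := e_\lambda\, x\, b^{1/2} \in B$. Then
\[
d_\lambda d_\lambda^* \;=\; e_\lambda (a-\epsilon)_+ e_\lambda \;\in\; A, \qquad d_\lambda^* d_\lambda \;=\; b^{1/2}(x^* e_\lambda^2 x) b^{1/2} \;\in\; \overline{bBb} \;\subseteq\; A,
\]
where the last inclusion uses that $b\in A$ is hereditary. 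The key lemma therefore gives $d_\lambda \in A$. Polar-decomposition Cuntz equivalence in $A$ yields $d_\lambda d_\lambda^* \sim_{\mathrm{Cu}}^A d_\lambda^* d_\lambda$, and an approximate-unit argument shows $\overline{bBb} = \overline{bAb}$, so $d_\lambda^* d_\lambda \precsim_{\mathrm{Cu}}^A b$. Since $d_\lambda d_\lambda^* \to (a-\epsilon)_+$ in norm, R\o{}rdam's lemma yields $(a - \epsilon - \delta)_+ \precsim_{\mathrm{Cu}}^A d_\lambda d_\lambda^* \precsim_{\mathrm{Cu}}^A b$ for any $\delta>0$ and sufficiently large $\lambda$. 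Letting $\epsilon,\delta \to 0$ concludes $a \precsim_{\mathrm{Cu}}^A b$.

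The principal obstacle is the verification that $d_\lambda \in A$. Note that $A \subseteq B$ hereditary does \emph{not} imply $A\cdot B \subseteq A$, so the naive candidate $e_\lambda x$ need not lie in $A$; it is precisely the right-multiplication by $b^{1/2}$ that forces $d_\lambda^* d_\lambda$ into $\overline{bBb}\subseteq A$, thereby activating the key lemma. Once $d_\lambda \in A$ is secured, the remaining manipulations with R\o{}rdam's lemma and polar-decomposition Cuntz equivalence are standard.
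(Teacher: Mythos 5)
The paper offers no proof of this proposition: it is stated without argument and attributed to \cite{AntPerThi20:Absbivariant}, so there is nothing internal to compare against, and your proposal should be judged on its own. It is correct, and it is essentially the standard proof of the underlying fact that Cuntz comparison of positive elements of a hereditary C*-subalgebra $A \subseteq B$ is already witnessed inside $A$: reduce via R\o{}rdam's lemma to an exact identity $(a-\epsilon)_+ = xbx^*$, manufacture $d_\lambda = e_\lambda x b^{1/2}$ with $d_\lambda d_\lambda^* = e_\lambda (a-\epsilon)_+ e_\lambda \in A$ and $d_\lambda^* d_\lambda \in \overline{bBb} \subseteq A$, and invoke the lemma that $yy^*, y^*y \in A$ forces $y \in A$. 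All the steps check: the two-sided limit $y = \lim_{n,m} (yy^*)^{1/n} y (y^*y)^{1/m}$ is valid since the factors $(yy^*)^{1/n}$ are uniformly bounded in norm; $\overline{bBb} \subseteq A$ and $\overline{bBb} = \overline{bAb}$ both follow from $ABA \subseteq A$; and the final passage from $\|d_\lambda d_\lambda^* - (a-\epsilon)_+\| < \delta$ to $(a-\epsilon-\delta)_+ \precsim_{\mathrm{Cu}}^A b$ is the usual application of \cite[Lemma~2.2]{KirRor02InfNonSimpleCalgAbsOInfty}. Two cosmetic points, neither a gap: the identity ``$pBp = A$'' is an abuse, since the open projection $p$ lives in $B^{**}$ (write $A = pB^{**}p \cap B$, or avoid $B^{**}$ entirely by noting that $c^* b c \leq \|b\|\, c^* c \in A$ for $c \in A$ and $b \in B_+$, so heredity plus polarization gives $ABA \subseteq A$ directly); and you should record explicitly that what you prove is that $\Cu(\varphi)$ is an order-embedding on Cuntz classes of $(A \otimes \mathcal{K})_+$ (using, as you note, that $A \otimes \mathcal{K}$ remains hereditary in $B \otimes \mathcal{K}$), which in particular yields the injectivity asserted in the proposition.
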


It is general *-homomorphisms (whose images may not be hereditary C*-subalgebras) that call for the use of normal pairs. 

\begin{thm} \label{thm:Cu-quotients-from-star-homomorphism}
Let $\varphi \colon A \to B$ be a *-homomorphism between C*-algebras. Assume the kernel $\ker(\varphi)$ is a $\sigma$-unital ideal of $A$. We decompose $\varphi$ as 
\[
\xymatrix{
	A \ar[rr]^{\varphi} \ar@{->>}[rd]_{\pi_{\ker(\varphi)}} & & B \\
	& A / \ker(\varphi) \ar@{^{(}->}[ru]_{\overline{\varphi}} &
}
\]
where the arrows $\twoheadrightarrow$ and $\hookrightarrow$ indicate surjectivity and injectivity, respectively. Then, after applying the functor $\operatorname{Cu}$, we obtain following commutative diagram of $\Cu$-morphisms: 
\[
\xymatrix{
	\operatorname{Cu}(A) \ar[rrr]^{\operatorname{Cu}(\varphi)} \ar@{->>}[rd]^{\operatorname{Cu} \left( \pi_{\ker(\varphi)} \right)} \ar@{->>}@/_2pc/[rdd]_{\pi_{\operatorname{Cu} (\ker (\varphi))}} & & & \operatorname{Cu}(B) \\
	& \operatorname{Cu} (A / \ker(\varphi)) \ar[rru]^{\operatorname{Cu} \left( \overline{\varphi} \right)} &   \left\{ \left[ \varphi(a) \right] \in \operatorname{Cu}(B) \colon a \in A \otimes K \right\}  \ar@{^{(}->}[ru] & \\
	& \operatorname{Cu} (A) / \operatorname{Cu}(\ker(\varphi)) \ar[u]_{\cong} \ar[r] &  \operatorname{Cu} (A) / \ker(\operatorname{Cu}(\varphi))  \ar[u]_{\cong} \ar@{^{(}->}@/_2pc/[ruu] & \\
	& \operatorname{Cu} (A) / I_{\ker(\operatorname{Cu}(\varphi))} \ar@{=}[u] & &
}
\]
where again the arrows $\twoheadrightarrow$ and $\hookrightarrow$ indicate surjectivity and injectivity, respectively. 
\end{thm}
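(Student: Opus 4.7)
The plan is to read off the full diagram from three results already established in the text: the short exact sequence theorem for $\operatorname{Cu}$ (\autoref{thm:Cu-short-exact-sequence}), the identification of closed-ideal quotients with closed-normal-pair quotients (\autoref{cor:closed-ideal-quotients-generalized}), and the fundamental theorem for $\W$-morphisms (\autoref{thm:fundamental-theorem0} together with \autoref{rmk:fundamental-theorem}). The canonical C*-algebraic decomposition $\varphi = \overline{\varphi} \circ \pi_{\ker(\varphi)}$ is standard; applying functoriality of $\operatorname{Cu}$ immediately produces the upper triangle of the diagram.

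For the left-hand column, I invoke \autoref{thm:Cu-short-exact-sequence}, which applies because $\ker(\varphi)$ is $\sigma$-unital by hypothesis: its last clause identifies $\operatorname{Cu}(A/\ker(\varphi))$ with $\operatorname{Cu}(A)/\operatorname{Cu}(\ker(\varphi))$ via $\operatorname{Cu}(\pi_{\ker(\varphi)})$, while its exactness clause says $\operatorname{Cu}(\iota)\bigl(\operatorname{Cu}(\ker(\varphi))\bigr) = \operatorname{Cu}(\varphi)^{-1}(0)$. By the observation of \autoref{lma:closed-ideal-Cu-morphism}, we also have $I_{\ker(\operatorname{Cu}(\varphi))} = \operatorname{Cu}(\varphi)^{-1}(0)$, so $\operatorname{Cu}(\ker(\varphi)) = I_{\ker(\operatorname{Cu}(\varphi))}$ as closed ideals of $\operatorname{Cu}(A)$. \autoref{cor:closed-ideal-quotients-generalized} then supplies the chain of equalities
\[
\operatorname{Cu}(A) / \operatorname{Cu}(\ker(\varphi)) \;=\; \operatorname{Cu}(A) / I_{\ker(\operatorname{Cu}(\varphi))} \;=\; \operatorname{Cu}(A) / \alpha_{I_{\ker(\operatorname{Cu}(\varphi))}},
\]
reconciling the $\operatorname{Cu}$-theoretic ideal quotient with the $\W$-theoretic pair quotient in the bottom two rows.

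For the right-hand column, I apply \autoref{thm:fundamental-theorem0} (via \autoref{rmk:fundamental-theorem}) to $f = \operatorname{Cu}(\varphi)$ and $\alpha = \ker(\operatorname{Cu}(\varphi))$, which is $\ll$-normal by \autoref{lma:kernelnormal}. Since $\alpha$ is literally the kernel pair of $f$ and $\operatorname{Cu}(B)$ is faithful, the theorem delivers an order-embedding $\operatorname{Cu}(A)/\ker(\operatorname{Cu}(\varphi)) \hookrightarrow \operatorname{Cu}(B)$ whose image is, by construction, $\operatorname{Cu}(\varphi)(\operatorname{Cu}(A)) = \{[\varphi(a)] : a \in (A \otimes K)_+\}$. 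The horizontal arrow $\operatorname{Cu}(A)/\operatorname{Cu}(\ker(\varphi)) \to \operatorname{Cu}(A)/\ker(\operatorname{Cu}(\varphi))$ is then the further quotient furnished by \autoref{thm:fundamental-theorem}, applied to the containment $\alpha_{I_{\ker(\operatorname{Cu}(\varphi))}} \leq \ker(\operatorname{Cu}(\varphi))$; this containment is automatic from the Galois connection of \autoref{cor:closed-ideal-vs-normal-preorder-subcategory} (both pairs have the same associated ideal $I_{\ker(\operatorname{Cu}(\varphi))}$), so the further quotient map exists and is well-defined.

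The main subtlety I expect is notational rather than mathematical: $\operatorname{Cu}(\ker(\varphi))$ must be read consistently both as an abstract $\operatorname{Cu}$-semigroup and as its image ideal inside $\operatorname{Cu}(A)$, and the arrows in the diagram freely mix the $\operatorname{Cu}$-categorical notion of ideal quotient with the $\W$-categorical notion of normal-pair quotient. \autoref{cor:closed-ideal-quotients-generalized} is precisely what allows the two viewpoints to be used interchangeably, so once this bookkeeping is settled, commutativity of every remaining cell of the diagram follows automatically from the universal properties that produce each map.
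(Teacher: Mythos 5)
Your proposal is correct and follows essentially the same route as the paper's proof: the upper triangle by functoriality, the left column via \autoref{thm:Cu-short-exact-sequence} together with the identification $\operatorname{Cu}(\ker(\varphi)) = \operatorname{Cu}(\varphi)^{-1}(0) = I_{\ker(\operatorname{Cu}(\varphi))}$ from \autoref{lma:closed-ideal-Cu-morphism}, and the right column from the fundamental theorem applied to the kernel pair. The only cosmetic difference is that you derive the containment $\alpha_{I_{\ker(\operatorname{Cu}(\varphi))}} \leq \ker(\operatorname{Cu}(\varphi))$ from the Galois connection of \autoref{cor:closed-ideal-vs-normal-preorder-subcategory}, where the paper cites \autoref{prp:idealsnormal}(iii) directly; these are the same fact.
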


\begin{proof}
On the left half of the diagram, the isomorphism 
\[
\operatorname{Cu} (A) / \operatorname{Cu}(\ker(\varphi))  \xrightarrow{\cong} \operatorname{Cu} (A / \ker(\varphi)) \; ,
\]
as well as the commutative triangle involving it and $\operatorname{Cu}(A)$, comes from applying Theorem~\ref{thm:Cu-short-exact-sequence} to the short exact sequence $0 \to \ker(\varphi) \to A \to A / \ker(\varphi) \to 0$ of C*-algebras. This theorem also gives $\operatorname{Cu}(\ker(\varphi)) = \operatorname{Cu}(\varphi)^{-1} (0)$, which is equal to $I_{\ker(\operatorname{Cu}(\varphi))}$ by the observations in \autoref{lma:closed-ideal-Cu-morphism}; hence the equality 
\[
\operatorname{Cu} (A) / \operatorname{Cu}(\ker(\varphi))  = \operatorname{Cu} (A) / 0_{\ker(\operatorname{Cu}(\varphi))}  \; . 
\]
By (iii) in \autoref{prp:idealsnormal}, it follows that $\alpha_{I_{\ker(Cu(\varphi))}}\leq \ker(\Cu(\varphi))$ and note that 
\[
\Cu(A)/{\alpha_{I_{\ker(Cu(\varphi))}}}=\Cu(A)/{\Cu(\ker(\varphi))},
\]	
hence there exists the map $\operatorname{Cu} (A) / \operatorname{Cu}(\ker(\varphi))\to \operatorname{Cu} (A) / \ker(\operatorname{Cu}(\varphi))$ as appears in the diagram. The left-hand part of diagram follows due to the universal property displayed in \autoref{thm:fundamental-theorem0} and \autoref{rmk:fundamental-theorem}.
\end{proof}

\section{Generating prenormal pairs}\label{sec:GenNormalPreorders}

In this section and the next, we discuss how to generate normal pairs from a given relation, not necessarily a preorder, on a $\W$-semigroup, beyond the ones given by ideals. In preparation for this, we first discuss how to extend a $\W$-preorder on a pointed set $(X,0)$ by a given relation $R$ on $X$, provided that $R$ satisfies the appropriate continuity requirement. 
\begin{pgr}[The prenormal pair generated by a relation]
	\label{dfn:axiomatic-merging}
	Let $(X, 0 , \prec_{\phantomtoggle{P}})$ be a $\W$-set and let $R$ be a left $\prec$-continuous relation on $X$, that is, a relation satisfying $\mathrel{{\prec}\circ {R}\relsubseteq{\prec}\circ{R}\circ{\prec}}$. We write $\mathcal{E}_X(\prec,R)$ for the collection of all preorders $\leq$ on $X$ satisfying that, for all $a, b \in X$, any of the following conditions implies $a \leq b$:
	\beginEnumConditions
	\item\label{dfn:axiomatic-merging::P} $a^\prec\subseteq b^\prec$;
	\item\label{dfn:axiomatic-merging::R} $(a,b) \in R$; 
	\item\label{dfn:axiomatic-merging::closure} for any $c \in a^{\prec_{\phantomtoggle{P}}}$, we have $c \leq b$. 
\end{enumerate}
Observing that $\mathcal{E}_X(\prec,R)$ is closed under taking arbitrary intersections, we define $\lequpr{R}$ to be the smallest element in $\mathcal{E}_X(\prec,R)$ under set containment, i.e., the strongest preorder satisfying the above conditions. We also define $\precupr{R}$  to be the composition $ {\lequpr{R}} \circ {\prec_{\phantomtoggle{P}}} \circ  {\lequpr{R}}$. 

Let us point out some simple consequences of our definition above. Notice first that, for each preorder ${\leq}\in\mathcal{E}_X(\prec, R)$, the corresponding pair $(\prec,\leq)$ is admissible. Indeed, suppose that $a^{\leq\circ\prec}\subseteq b^{\leq\circ\prec}$ and let $c\prec a$. Then $c\leq c\prec a$ and thus there is $d$ such that $c\leq d\prec b$. By (i) above $d\leq b$, hence $c\leq b$. Thus, (iii) implies that $a\leq b$. 

Next, observe that, if $R$ and $R'$ are two left $\prec$-continuous relation on $X$ and $R \relsubseteq R'$, then we have $\mathcal{E}_X(\prec,R) \supseteq \mathcal{E}_X(\prec,R')$, and thus ${\lequpr{R}} \relsubseteq {\lequpr{R'}}$ and $ {\precupr{R}}  \relsubseteq   {\precupr{R'}}$. 

The same conclusions as in the above paragraph hold if we weaken the hypothesis $R \relsubseteq R'$ to $R \relsubseteq (\operatorname{id} \relcup R') \circ  {{\leq_\prec}_{\phantomtoggle{P}}}$, where recall that $\leq_\prec$ is the preorder defined by $\prec$, that is, $a\leq_\prec b$ whenever $a^\prec\subseteq b^\prec$. Here we exploit the fact that $\mathcal{E}_X(\prec,R')$ consists of preorders.

Finally, the requirement that condition (iii) above 
implies $a \leq b$ can be replaced by the requirement that $(\prec,\leq)$ is left $\prec$-closed (see \autoref{pgr:prenormal_closed_preorder}).  This is because the other conditions already imply that $ {\leq}  \releq   {\leq_\prec}  \circ  {\leq}$. 

Write $\alphaupr{R}=(\prec,\lequpr{R})$ as well as $\talphaupr{R}=(\precupr{R},\lequpr{R})$. We refer to $\alphaupr{R}$ as the $\prec$-\emph{prenormal closed pair generated by} $R$ and to $\talphaupr{R}$ as the \emph{prenormal extension from $\prec$ by} $R$.  Our terminology is justified by \autoref{cor:merging-continuous} below.
\end{pgr}

We will show that both $\alphaupr{R}$ and $\talphaupr{R}$ are $\prec$-prenormal admissible pairs, but in order to do that, we need to give a more concrete description of $\lequpr{R}$. 

\begin{thm}\label{thm:concrete-merging}
Let $(X, 0 , \prec_{\phantomtoggle{P}})$ be a $\W$-set and let $R$ be a left $\prec$-continuous relation on $X$. For any $a, b \in X$, the following statements are equivalent: 
\beginEnumConditions
\item\label{thm:concrete-merging::axiomatic} $a \lequpr{R} b$; 
\item\label{thm:concrete-merging::prec} for any $c \in a^{\prec_{\phantomtoggle{P}}}$, we have either $c \prec_{\phantomtoggle{P}} b$ or else there are $d_1, \ldots, d_n$ and $e_1, \ldots, e_n$ in $X$ such that 
\[
c \prec_{\phantomtoggle{P}} d_1 R e_1 \prec_{\phantomtoggle{P}} d_2 R e_2 \prec_{\phantomtoggle{P}} \ldots  \prec_{\phantomtoggle{P}} d_n R e_n \prec_{\phantomtoggle{P}} b \; . 
\]
\end{enumerate}
\end{thm}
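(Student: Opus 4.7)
The plan is to prove the two implications separately: \ref{thm:concrete-merging::prec}$\Rightarrow$\ref{thm:concrete-merging::axiomatic} is direct verification against an arbitrary preorder in $\mathcal{E}_X(\prec,R)$, while \ref{thm:concrete-merging::axiomatic}$\Rightarrow$\ref{thm:concrete-merging::prec} proceeds by showing that the relation $\leq'$ defined by the right-hand side of \ref{thm:concrete-merging::prec} itself belongs to $\mathcal{E}_X(\prec,R)$, so that minimality of $\lequpr{R}$ forces $\lequpr{R}\relsubseteq\leq'$.

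For \ref{thm:concrete-merging::prec}$\Rightarrow$\ref{thm:concrete-merging::axiomatic}, fix any ${\leq}\in\mathcal{E}_X(\prec,R)$. Observe first that $\prec$ is contained in $\leq$: if $c\prec a$, then by transitivity of $\prec$ we get $c^\prec\subseteq a^\prec$, and condition \ref{dfn:axiomatic-merging::P} in the definition of $\mathcal{E}_X(\prec,R)$ yields $c\leq a$. Also $R$ is contained in $\leq$ by condition \ref{dfn:axiomatic-merging::R}. Therefore, assuming \ref{thm:concrete-merging::prec}, every $c\in a^\prec$ satisfies either $c\prec b$ or a chain that collapses under transitivity of $\leq$ to $c\leq b$; condition \ref{dfn:axiomatic-merging::closure} then gives $a\leq b$. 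Taking $\leq$ to be $\lequpr{R}$ itself yields \ref{thm:concrete-merging::axiomatic}.

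For \ref{thm:concrete-merging::axiomatic}$\Rightarrow$\ref{thm:concrete-merging::prec}, let $\leq'$ denote the relation on $X$ defined by \ref{thm:concrete-merging::prec}. Reflexivity of $\leq'$ is immediate from the trivial case $c\prec a$, $c\prec a$. For transitivity, suppose $a\leq' b\leq' c$ and let $x\prec a$. The conclusion of $a\leq' b$ applied to $x$ gives either $x\prec b$ (take it as a length-zero chain ending at an element way-below $b$, namely $x$ itself) or a chain of positive length ending with $e_n\prec b$. In either case, we end up with some $e\prec b$; applying $b\leq' c$ to this $e\in b^\prec$ either gives $e\prec c$ (extend the previous chain by one step) or a new chain from $e$ to $c$, which concatenates with the first chain to produce the required chain from $x$ to $c$. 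Condition \ref{dfn:axiomatic-merging::P} of $\mathcal{E}_X$ for $\leq'$ is immediate: if $a^\prec\subseteq b^\prec$, then every $c\in a^\prec$ already satisfies $c\prec b$. Condition \ref{dfn:axiomatic-merging::R} of $\mathcal{E}_X$ requires $aRb\Rightarrow a\leq' b$; this is exactly where left $\prec$-continuity of $R$ is used: for any $c\prec a$, the relation ${\prec}\circ R\relsubseteq{\prec}\circ R\circ{\prec}$ produces $d_1,e_1\in X$ with $c\prec d_1 R e_1\prec b$.

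The only delicate step is condition \ref{dfn:axiomatic-merging::closure} of $\mathcal{E}_X$: assume every $c\in a^\prec$ satisfies $c\leq' b$, and fix $c'\prec a$. Apply \axiomW{1} in $(X,\prec)$ to interpolate $c'\prec c''\prec a$; then $c''\leq' b$ by hypothesis, and applying this to $c'\in (c'')^\prec$ gives either $c'\prec b$ or the desired chain. This interpolation is the main (but minor) subtlety: the right-hand side of \ref{thm:concrete-merging::prec} is phrased quantifying over elements of $a^\prec$, so one must upgrade ``$c'\leq' b$ for every $c'\prec a$'' to a statement starting from $c'$ itself rather than from elements strictly below $c'$, and \axiomW{1} is exactly what allows us to do so. With all four conditions verified, $\leq'\in\mathcal{E}_X(\prec,R)$, so $\lequpr{R}\relsubseteq\leq'$, which concludes the proof.
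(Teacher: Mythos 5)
Your proposal is correct and follows essentially the same route as the paper: both directions are handled by showing that the relation defined by condition (2) is a preorder belonging to $\mathcal{E}_X(\prec,R)$ (using left $\prec$-continuity for the $R$-condition and \axiomW{1}-interpolation for the closure condition) and, conversely, that any member of $\mathcal{E}_X(\prec,R)$ absorbs the chains in (2) by transitivity. The subtlety you flag at the closure step is exactly the one the paper's proof addresses in the same way.
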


\begin{proof}
We begin by observing that the condition in \autoref{thm:concrete-merging::prec} may be conveniently written in a more compact form as: for any $c\in a^\prec$, we have $(c,b) \in  {\prec_{\phantomtoggle{P}}}  \circ  (R  \circ  {\prec_{\phantomtoggle{P}}})^{\circ \mathbb{N}}$.

Let us now temporarily write $a \lesssimupr{R} b$ for the relation defined by \autoref{thm:concrete-merging::prec}. We first claim that $\lesssimupr{R}$ is a preorder. Indeed, it is clear that $\lesssimupr{R}$ is reflexive. In order to prove transitivity, it suffices to show that for any $a$, $b$, and $b'$ in $X$, if $a \lesssimupr{R} b$ and $b \lesssimupr{R} b'$, then $a \lesssimupr{R} b'$. To this end, the hypothesis $a \lesssimupr{R} b$ implies that, for any $c \in a^{\prec_{\phantomtoggle{P}}}$, we have $(c,b) \in  {\prec_{\phantomtoggle{P}}}  \circ  (R  \circ {\prec_{\phantomtoggle{P}}} )^{\circ \mathbb{N}}$. Since this composite relation is equal to $( {\prec_{\phantomtoggle{P}}}  \circ  R)^{\circ \mathbb{N}}  \circ  {\prec_{\phantomtoggle{P}}}$, there is $c' \in b^{\prec_{\phantomtoggle{P}}}$ such that $(c,c') \in ( {\prec_{\phantomtoggle{P}}}  \circ  R)^{\circ \mathbb{N}}$ and $c' \prec_{\phantomtoggle{P}} b$. The hypothesis $b \lesssimupr{R} b'$ then implies that $(c',b') \in  {\prec_{\phantomtoggle{P}}}  \circ (R  \circ  {\prec_{\phantomtoggle{P}}})^{\circ \mathbb{N}}$. Merging these two, we have $(c, b') \in ( {\prec_{\phantomtoggle{P}}}  \circ   R)^{\circ \mathbb{N}}  \circ  {\prec_{\phantomtoggle{P}}}  \circ   (R  \circ  {\prec_{\phantomtoggle{P}}})^{\circ \mathbb{N}} \releq  {\prec_{\phantomtoggle{P}}}  \circ   (R  \circ  {\prec_{\phantomtoggle{P}}})^{\circ \mathbb{N}}$. Since $c$ is arbitrary in $a^{\prec_{\phantomtoggle{P}}}$, we have $a \lesssimupr{R} b'$, as desired. 

Thus, in order to prove the theorem it suffices to show that $\lequpr{R}\releq\lesssimupr{R}$.

($\lequpr{R}\relsubseteq\lesssimupr{R}$): By definition, $\lequpr{R}$ is the strongest preorder satisfying the three conditions in \autoref{dfn:axiomatic-merging} to define $\mathcal{E}_X(\prec, R)$, and thus we need to verify that $\lesssimupr{R}$ also satisfies them. To this end, note first that obviously $a^\prec\subseteq b^\prec$ implies $a\lesssimupr{R}b$. Next assume that $a, b \in X$ satisfy $(a,b) \in R$. If $c\prec a$, then we have $(c,b)\in\prec\circ R$ and, using that $R$ is $\prec$-continuous by assumption, this yields $(c,b)\in\mathrel{{\prec}\circ {R}\circ{\prec}}$, and thus $a\lesssimupr{R}b$. Finally, suppose that $c\lesssimupr{R}b$ for any $c\in a^\prec$, and we have to show that $a\lesssimupr{R}b$. Take $c\in a^\prec$, and find by \axiomW{1} an element $c'\in X$ such that $c\prec c'\prec a$. Now by assumption $c'\lesssimupr{R}b$ and since $c\prec c'$, we get $(c,b)\in {\prec_{\phantomtoggle{P}}}  \circ  (R  \circ  {\prec_{\phantomtoggle{P}}})^{\circ \mathbb{N}}$, as desired.

($\lesssimupr{R}\relsubseteq\lequpr{R}$): It suffices to show from \autoref{dfn:axiomatic-merging::closure} in \autoref{dfn:axiomatic-merging} that $c\lequpr{R}b$ for any $c\in a^\prec$. For any such $c$, we have $c\lesssimupr{R}b$, that is,  $(c,b) \in  {\prec_{\phantomtoggle{P}}}  \circ  (R  \circ  {\prec_{\phantomtoggle{P}}})^{\circ \mathbb{N}}$. But $\lequpr{R}$ contains both $\prec$ and $R$, again by construction. Therefore $c\lequpr{R}b$, as required.
\end{proof}

We recall here that a pair $\alpha=(\preccurlyeq,\leq)$ is admissible if $a\leq b$ whenever $a^{\leq\circ\preccurlyeq}\subseteq b^{\leq\circ\preccurlyeq}$, and that $\alpha$ is $\prec$-prenormal provided $\preccurlyeq$ is $\prec$-prenormal and $\leq$ is left $\prec$-continuous; see Paragraphs~\ref{pgr:prenormal_closed_preorder} and \ref{pgr:admissible}.
\begin{cor}\label{cor:merging-continuous}
Let $(X, 0 , \prec_{\phantomtoggle{P}})$ be a $\W$-set and let $R$ be a left $\prec$-continuous relation on $X$. Then 
\beginEnumConditions
\item $\alphaupr{R}$ and $\talphaupr{R}$ are $\prec$-prenormal admissible pairs.
\item $\alphaupr{R}\leq\talphaupr{R}$.
\end{enumerate}
\end{cor}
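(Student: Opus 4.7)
The plan is first to show that $\alphaupr{R}=(\prec,\lequpr{R})$ is a $\prec$-prenormal, left $\prec$-closed admissible pair, and then to deduce the statement about $\talphaupr{R}$ from \autoref{prp:pre_dense_implies_pre_order}. Admissibility of $\alphaupr{R}$ and left $\prec$-closedness are both built into the fact that $\lequpr{R}\in\mathcal{E}_X(\prec,R)$, as observed in the remarks following \autoref{dfn:axiomatic-merging}. Since $\prec$ is trivially $\prec$-prenormal (being dense and transitive), the only nontrivial point to establish for $\prec$-prenormality of $\alphaupr{R}$ is the left $\prec$-continuity of $\lequpr{R}$.

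For this, I would use the concrete description from \autoref{thm:concrete-merging}. Given $a \prec b \lequpr{R} c$, I would apply \axiomW{1} to pick $a'$ with $a\prec a'\prec b$, so that $a'\in b^\prec$. Applying \autoref{thm:concrete-merging} to $b \lequpr{R} c$ at this $a'$ yields either $a'\prec c$ or a chain
\[
a' \prec d_1\, R\, e_1 \prec \cdots \prec d_n\, R\, e_n \prec c.
\]
Density of $\prec$ then allows me to interpose $c'$ with $c'\prec c$ (either $a'\prec c'\prec c$, or $e_n\prec c'\prec c$). A second application of \autoref{thm:concrete-merging}, prepending any $c_0\in(a')^\prec$ via transitivity of $\prec$ at the front of the truncated chain, gives $a'\lequpr{R} c'$. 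Hence $a\prec a'\lequpr{R} c'\prec c$ witnesses left $\prec$-continuity.

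With $\alphaupr{R}$ thus shown to be $\prec$-prenormal and left $\prec$-closed, \autoref{prp:pre_dense_implies_pre_order} (the implication (i)$\Rightarrow$(ii)) immediately yields that $\talphaupr{R}=(\precupr{R},\lequpr{R})$ is a $\prec$-prenormal admissible auxiliary pair, since $\precupr{R}=\lequpr{R}\circ\prec\circ\lequpr{R}$ by construction; this completes part (i). For part (ii), I would invoke \autoref{lma:auxiliary}(i): both $\prec$ and $\precupr{R}$ are $\prec$-prenormal, so $\alphaupr{R}\leq\talphaupr{R}$ reduces to the inclusions $\prec\relsubseteq\precupr{R}$ and $\lequpr{R}\relsubseteq\lequpr{R}$. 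The latter is trivial; for the former, condition \enumCondition{1} of \autoref{dfn:axiomatic-merging} together with the transitivity of $\prec$ gives $\prec\relsubseteq\lequpr{R}$, so $a\prec b$ implies $a\lequpr{R} a\prec b\lequpr{R} b$, i.e., $a\precupr{R} b$.

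The main obstacle I foresee is the careful bookkeeping required in the second application of \autoref{thm:concrete-merging}, where I must re-verify the chain condition for an arbitrary $c_0\in(a')^\prec$; this should, however, reduce to a routine use of transitivity at the front of the existing chain. Everything else is essentially citation of previously established results.
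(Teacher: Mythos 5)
Your proposal is correct and follows essentially the same route as the paper: the one genuinely non-trivial step in both arguments is the left $\prec$-continuity of $\lequpr{R}$, and your chain-splitting argument via \autoref{thm:concrete-merging} and density of $\prec$ is exactly the paper's auxiliary statement ${\prec}\circ{\lequpr{R}}\relsubseteq{\prec}\circ(R\circ{\prec})^{\circ\mathbb{N}}\circ{\prec}$ in concrete form. The only organizational difference is in how the remaining properties are obtained: the paper verifies admissibility of both $\alphaupr{R}$ and $\talphaupr{R}$, and the prenormality identity ${\precupr{R}}\releq{\precupr{R}}\circ{\prec}$, by direct computation inside the proof, whereas you extract admissibility and left $\prec$-closedness of $\alphaupr{R}$ from the remarks following \autoref{dfn:axiomatic-merging} and then obtain everything about $\talphaupr{R}$ in one stroke from the implication (i)$\implies$(ii) of \autoref{prp:pre_dense_implies_pre_order}. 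This is legitimate (those results precede the corollary, and the paper itself invokes \autoref{prp:pre_dense_implies_pre_order} for the same purpose in \autoref{thm:merging-W-preorder}); your version is shorter, at the cost of resting more weight on the terse justification in \autoref{dfn:axiomatic-merging} that membership in $\mathcal{E}_X(\prec,R)$ already yields left $\prec$-closedness. Your treatment of part (ii) via \autoref{lma:auxiliary}(i) and ${\prec}\relsubseteq{\precupr{R}}$ coincides with the paper's.
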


\begin{proof}
We first establish the following somewhat technical statement (*) that will be helpful to prove the rest of the conditions: $ {\prec_{\phantomtoggle{P}}}  \circ   {\lequpr{R}}  \relsubseteq   {\prec_{\phantomtoggle{P}}}  \circ   (R  \circ  {\prec_{\phantomtoggle{P}}})^{\circ \mathbb{N}} \circ  {\prec_{\phantomtoggle{P}}}$.

To see this, let $a, b, c \in X$ be satisfying $a \prec_{\phantomtoggle{P}} b \lequpr{R} c$. We use the formulation of \autoref{thm:concrete-merging::prec} at the beginning of the proof of \autoref{thm:concrete-merging} to obtain $(a, c) \in  {\prec_{\phantomtoggle{P}}}  \circ   (R  \circ  {\prec_{\phantomtoggle{P}}})^{\circ \mathbb{N}}$. That is to say, $(a,c)\in ( {\prec_{\phantomtoggle{P}}}  \circ   R)^{\circ \mathbb{N}}  \circ  {\prec_{\phantomtoggle{P}}}$. By \axiomW{1}, we have $ {\prec_{\phantomtoggle{P}}}  \relsubseteq   {\prec_{\phantomtoggle{P}}}  \circ   {\prec_{\phantomtoggle{P}}}$, which implies that $(a, c) \in ( {\prec_{\phantomtoggle{P}}}  \circ   R)^{\circ \mathbb{N}}  \circ  {\prec_{\phantomtoggle{P}}}  \circ   {\prec_{\phantomtoggle{P}}}$, i.e., there is $c' \in X$ such that $a \left(  {\prec_{\phantomtoggle{P}}}  \circ   (R  \circ  {\prec_{\phantomtoggle{P}}})^{\circ \mathbb{N}} \right) c' \prec_{\phantomtoggle{P}} c$, as desired. 

Next, to show that $\alphaupr{R}$ and $\talphaupr{R}$ are $\prec$-prenormal, we must prove that ${\precupr{R}}={\precupr{R}}\circ{\prec}$  
and that $\lequpr{R}$ is left $\prec$-continuous. We start with the latter: by definition (see \autoref{dfn:axiomatic-merging}), the relation $ {\prec_{\phantomtoggle{P}}}  \circ   (R  \circ  {\prec_{\phantomtoggle{P}}})^{\circ \mathbb{N}}$ is stronger than $\precupr{R}$, which at the same time is stronger than $\lequpr{R}$. Using (*), the conclusion follows.

We now show that ${\precupr{R}}={\precupr{R}}\circ{\prec}={\lequpr{R}}\circ{\prec}$. We know that, by definition, $ {\precupr{R}}  \releq   {\lequpr{R}}  \circ   {\prec_{\phantomtoggle{P}}}  \circ   {\lequpr{R}}$. Thus, using (*) and the fact that $ {\prec_{\phantomtoggle{P}}}  \circ   (R  \circ  {\prec_{\phantomtoggle{P}}})^{\circ \mathbb{N}}$ is stronger than $\lequpr{R}$, we obtain $\precupr{R}\relsubseteq \lequpr{R}\circ\prec$. That $\lequpr{R}\circ\prec\relsubseteq \precupr{R} $ follows from the fact that $\prec$ is idempotent, by \axiomW{1}, and stronger than $\lequpr{R}$. The implication $\precupr{R}\circ\prec\relsubseteq \lequpr{R}\circ \prec$ holds because $\precupr{R}$ is stronger than $\lequpr{R}$. The converse implication is a direct consequence of \axiomW{1}.

We verify that $a^{\lequpr{R}\circ\prec}\subseteq b^{\lequpr{R}\circ\prec}$ implies $a\lequpr{R}b$. From this it will follow that $\alphaupr{R}$ is an admissible pair. We use the description of $\lequpr{R}$ in \autoref{thm:concrete-merging}. Thus, take $c\prec a$ and choose by \axiomW{1} $c'$ with $c\prec c'\prec a$. Then $c'\lequpr{R}c'\prec a$ and by assumption there is $d$ such that $c\lequpr{R} d\prec b$. Therefore, either $c\prec d$, or else there are elements $d_1,e_1,\dots, d_n,e_n$ such that $c \prec_{\phantomtoggle{P}} d_1 R e_1 \prec_{\phantomtoggle{P}} d_2 R e_2 \prec_{\phantomtoggle{P}} \ldots  \prec_{\phantomtoggle{P}} d_n R e_n \prec_{\phantomtoggle{P}} d\prec b$. Thus $a\lequpr{R}b$.

To show that $\talphaupr{R}$ is also admissible, it suffices to prove that $a^{\lequpr{R}\circ\precupr{R}}\subseteq b^{\lequpr{R}\circ\precupr{R}}$ implies $a^{\lequpr{R}\circ\prec}\subseteq b^{\lequpr{R}\circ\prec}$ and apply the conclusion of the paragraph above. Thus, suppose that $c,d\in X$ satisfy $c\lequpr{R}d\prec a$. By definition of $\precupr{R}$ this implies that $c\lequpr{R}d\precupr{R}a$, hence by assumption there is $e\in X$ such that $c\lequpr{R}e\precupr{R}b$. Again by definition of $\lequpr{R}$, we may find $e',b'\in X$ such that $c\lequpr{R}e\lequpr{R}e'\prec b'\lequpr{R}b$. By continuity of $\lequpr{R}$, there are elements $e'', b''\in X$ such that $e'\prec e''\lequpr{R}b''\prec b$. Putting all this together we obtain $c\lequpr{R}b''\prec b$, as desired. This concludes the proof of (i).

(ii) follows from \autoref{lma:auxiliary} (i) and the fact that, by definition, we have $\prec \relsubseteq\precupr{R}$.
\end{proof}
We say that an admissible pair $\alpha=(\prec,\leq)$ \emph{contains} a relation $R$ provided ${R}\relsubseteq{\leq}$.

\begin{thm}\label{thm:merging-W-preorder}
Let $(X, 0 , \prec_{\phantomtoggle{P}})$ be a $\W$-set and let $R$ be a left $\prec$-continuous relation on $X$. Then $\alphaupr{R}$ is the smallest $\prec$-prenormal closed admissible pair containing $R$.~Furthermore, $(X,0,\precupr{R},\lequpr{R})$ satisfies \axiomW{2}.
\end{thm}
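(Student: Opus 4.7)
The plan is to decompose the theorem into two parts: the universal characterization of $\alphaupr{R}$ among $\prec$-prenormal closed admissible pairs containing $R$, and the verification that $(X, 0, \precupr{R}, \lequpr{R})$ satisfies \axiomW{2}.

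For the first part, I would start by collecting what is already known. By \autoref{cor:merging-continuous}(i), the pair $\alphaupr{R} = (\prec, \lequpr{R})$ is a $\prec$-prenormal admissible pair, and $R \subseteq \lequpr{R}$ holds by condition (ii) in the definition of $\mathcal{E}_X(\prec, R)$ (\autoref{dfn:axiomatic-merging}). Left $\prec$-closedness then follows from the observation already made in \autoref{dfn:axiomatic-merging}: in the presence of conditions (i) and (ii) there, condition (iii) is equivalent to the pair being left $\prec$-closed. The nontrivial direction of that equivalence uses \axiomW{1} to upgrade each $c \leq b$ (for $c \in a^\prec$) to a statement of the form $c \prec c' \leq b$, and then applies admissibility.

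For the minimality, let $(\prec, \leq')$ be any $\prec$-prenormal left $\prec$-closed admissible pair satisfying $R \subseteq \leq'$. By \autoref{lma:auxiliary}(i), since both pairs share the first component $\prec$, it suffices to verify $\lequpr{R} \subseteq \leq'$, which in turn reduces to showing that $\leq'$ lies in $\mathcal{E}_X(\prec, R)$. Condition (i) there holds because admissibility of $(\prec, \leq')$ implies $\leq_\prec \subseteq \leq'$. Condition (ii) is the hypothesis $R \subseteq \leq'$. Condition (iii) follows from left $\prec$-closedness of $(\prec, \leq')$ combined with \axiomW{1} and admissibility, exactly as in the previous paragraph.

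For the second part, I would first note that $\precupr{R}$ is auxiliary for $\lequpr{R}$: the inclusion $\precupr{R} \subseteq \lequpr{R}$ holds because $\prec \subseteq \lequpr{R}$ (from admissibility of $\alphaupr{R}$) and $\precupr{R} = \lequpr{R} \circ \prec \circ \lequpr{R}$, and the two-sided compatibility with $\lequpr{R}$ is immediate from the same formula. To verify \axiomW{2}, suppose $c \in X$ satisfies $b \lequpr{R} c$ for every $b \in a^{\precupr{R}}$; one must show $a \lequpr{R} c$. For any $b \in a^\prec$, density of $\prec$ gives $b'$ with $b \prec b' \prec a$, whence $b \lequpr{R} b' \prec a$, i.e., $b \precupr{R} a$; hence $a^\prec \subseteq a^{\precupr{R}}$. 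In particular $b \lequpr{R} c$ for every $b \in a^\prec$, and applying condition (iii) of \autoref{dfn:axiomatic-merging} to $\lequpr{R}$ itself (which lies in $\mathcal{E}_X(\prec, R)$ by construction) yields $a \lequpr{R} c$. The main subtle point throughout is the interplay between axiomatic condition (iii) and left $\prec$-closedness; the equivalence asserted in \autoref{dfn:axiomatic-merging} does the bulk of the work, both for closedness of $\alphaupr{R}$ and for minimality, while the rest is bookkeeping built on the characterizations collected in \autoref{thm:concrete-merging} and \autoref{cor:merging-continuous}.
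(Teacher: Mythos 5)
Your overall structure matches the paper's: prenormality and admissibility of $\alphaupr{R}$ come from \autoref{cor:merging-continuous}, closedness comes from the equivalence between condition (iii) of \autoref{dfn:axiomatic-merging} and left $\prec$-closedness, and minimality is obtained by showing that the second component of any competing pair lies in $\mathcal{E}_X(\prec,R)$. Your direct verification of \axiomW{2} (via $a^{\prec}\subseteq a^{\precupr{R}}$ and condition (iii)) is a legitimate shortcut compared to the paper, which routes both closedness and \axiomW{2} through \autoref{prp:pre_dense_implies_pre_order} by first checking that $\talphaupr{R}=(\precupr{R},\lequpr{R})$ is minimal admissible and auxiliary (i.e.\ that $\precupr{R}$ induces $\lequpr{R}$); both arguments are sound.

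There is, however, one genuine gap in your minimality step: you only consider competing pairs of the form $(\prec,\leq')$, i.e.\ pairs whose auxiliary component is $\prec$ itself. A $\prec$-prenormal closed admissible pair is in general of the form $\alpha=(\preccurlyeq,\leq)$ with $\preccurlyeq$ any $\prec$-prenormal relation, and this generality is actually used later (e.g.\ in \autoref{thm:generating-normal-preorders}, where the competing pair has first component $\preccurlyeq\neq\prec$). For such $\alpha$, admissibility is relative to $\preccurlyeq$, so your justification of condition (i) of $\mathcal{E}_X(\prec,R)$ (``admissibility implies ${\leq_\prec}\relsubseteq{\leq'}$'') no longer applies verbatim: one must first upgrade $a^{\prec}\subseteq b^{\prec}$ to $a^{\preccurlyeq}\subseteq b^{\preccurlyeq}$ using the prenormality identity $\mathrel{{\preccurlyeq}={\preccurlyeq}\circ{\prec}}$, and only then invoke admissibility of $(\preccurlyeq,\leq)$. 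Having shown ${\lequpr{R}}\relsubseteq{\leq}$, you must also compare the first components, which is where \autoref{lma:auxiliary}(i) enters: since $\prec$ and $\preccurlyeq$ are both $\prec$-prenormal and $\mathrel{{\prec}\relsubseteq{\preccurlyeq}}$, the two containments together give $\alphaupr{R}\leq\alpha$. With these two additions your argument closes the gap and recovers the full statement.
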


\begin{proof}
We have already proved in \autoref{cor:merging-continuous}  that  $\alphaupr{R}$ is $\prec$-prenormal. To see that it is also closed, we apply \autoref{prp:pre_dense_implies_pre_order}. Thus we have to check that $\talphaupr{R}=(\precupr{R},\lequpr{R})$ is $\prec$-prenormal, minimal admissible, and auxiliary.

That $\talphaupr{R}$ is $\prec$-prenormal follows again from  \autoref{cor:merging-continuous}, and is clearly auxiliary. Hence, we only need to show that $\precupr{R}$ induces $\lequpr{R}$, that is,
for $a,b\in X$, we have $a\lequpr{R}b$ if and only if $a^{\precupr{R}}\subseteq b^{\precupr{R}}$. The forward implication is clear. Suppose, conversely, that $c\prec a$. Then  $c\precupr{R}a$ which by assumption implies $c\precupr{R}b$. Thus $c\lequpr{R}b$. We have then verified condition (iii) in \autoref{dfn:axiomatic-merging}, and so $a\lequpr{R}b$. 

By \autoref{prp:pre_dense_implies_pre_order} and its proof, $(X,0,\precupr{R},\lequpr{R})$ satisfies \axiomW{2}.

Finally, let $\alpha=(\preccurlyeq,\leq)$ be a $\prec$-prenormal closed admissible pair containing $R$. We first verify that $\leq$ belongs to $\mathcal{E}_X(\prec,R)$ and thus ${\lequpr{R}}\relsubseteq{\leq}$. To see this, we have to check that $\leq$ satisfies the conditions in \autoref{dfn:axiomatic-merging}, of which (ii) is automatic. Now, suppose that $a^\prec\subseteq b^\prec$. Then, by prenormality we have ${\preccurlyeq}={\preccurlyeq}\circ{\prec}$, which implies $a^\preccurlyeq\subseteq b^\preccurlyeq$ and, since $\alpha$ is admissible, we obtain $a\leq b$. Thus (i) is also satisfied. Next, if $c\leq b$ for any $c\prec a$, then for any such $c$ choose $c'\in X$ such that $c\prec c'\prec a$ and we have $c\prec c'\leq b$. Since $\alpha$ is closed, there is $d\in X$ such that $a^\prec\subseteq d^\prec$ and $d\leq b$. By what we just proved, this implies that $a\leq d\leq b$. By \autoref{lma:auxiliary} (i), we have $\alphaupr{R}\leq \alpha$, as desired. 
\end{proof}

The following corollary shows that repeated generations of prenormal pairs can be contracted into a single one. 

\begin{cor}\label{cor:merging-associative}
Let $(X, 0 , \prec_{\phantomtoggle{P}})$ be a $\W$-set and let $R$ and $R'$ be left $\prec$-continuous relations on $X$. Let $\alphauprr{R}{R'}=(\prec,\lequprr{R}{R'})$ be the $\prec$-prenormal pair generated by $R'$ and write $\precuprr{R}{R'}$ to mean ${\lequprr{R}{R'}} \circ  {\precupr{R}}  \circ {\lequprr{R}{R'}}$. Then $\alphauprr{R}{R'}=\alphaupr{R\cup R'}$ and $(\widetilde{\alphaupr{R}})^\text{\miniscule $R'$}=\talphaupr{R\cup R'}$.
\end{cor}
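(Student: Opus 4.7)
The plan is to reduce both claimed equalities to the single preorder identity $\lequprr{R}{R'} = \lequpr{R \cup R'}$, which I will then prove by double inclusion using the universal characterization of \autoref{dfn:axiomatic-merging}. Granting the preorder identity, the $\prec$-components of the pairs match by direct composition:
\[
\precuprr{R}{R'} = \lequprr{R}{R'} \circ \precupr{R} \circ \lequprr{R}{R'} = \lequpr{R \cup R'} \circ \prec \circ \lequpr{R \cup R'} = \precupr{R \cup R'},
\]
where the middle absorption uses $\lequpr{R} \subseteq \lequpr{R \cup R'}$ together with the fact that a preorder absorbs a weaker preorder under composition on either side. This produces $(\widetilde{\alphaupr{R}})^{R'} = \talphaupr{R \cup R'}$, and the first equality $\alphauprr{R}{R'} = \alphaupr{R \cup R'}$ is just the preorder identity applied to the order components of the pairs.

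For the inclusion $\lequprr{R}{R'} \subseteq \lequpr{R \cup R'}$, I would verify that $\lequpr{R \cup R'}$ satisfies the three generating conditions characterizing $\lequprr{R}{R'}$ (namely those of \autoref{dfn:axiomatic-merging} with starting relation $\precupr{R}$ and generator $R'$). It contains $\lequpr{R}$ since $R \subseteq R \cup R'$ implies this by the monotonicity recorded in \autoref{dfn:axiomatic-merging}; it contains $R'$ trivially; and its right-cofinal closure with respect to $\precupr{R}$ is implied by the corresponding closure with respect to $\prec$, since $\prec \subseteq \precupr{R}$ gives $a^\prec \subseteq a^{\precupr{R}}$, so any hypothesis running over $c \in a^{\precupr{R}}$ automatically covers all $c \in a^\prec$.

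The reverse inclusion $\lequpr{R \cup R'} \subseteq \lequprr{R}{R'}$ is the crux. Two of the three generating conditions for $\lequpr{R \cup R'}$ are immediate for $\lequprr{R}{R'}$: the hypothesis $a^\prec \subseteq b^\prec$ gives $a \lequpr{R} b$ and hence $a \lequprr{R}{R'} b$, while $(a,b) \in R \cup R'$ routes through either $\lequpr{R}$ or $R'$. The main obstacle, and the heart of the argument, is the right-cofinal closure with respect to $\prec$: given $c \lequprr{R}{R'} b$ for every $c \prec a$, I must deduce $a \lequprr{R}{R'} b$, yet $\lequprr{R}{R'}$ only comes equipped with right-cofinal closure for the strictly larger relation $\precupr{R}$. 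The bootstrap is to show $c \lequprr{R}{R'} b$ for every $c \in a^{\precupr{R}}$: unfolding such a $c$ as $c \lequpr{R} x \prec y \lequpr{R} a$ and invoking the left $\prec$-continuity of $\lequpr{R}$ that is part of the $\prec$-prenormality of $\alphaupr{R}$ from \autoref{cor:merging-continuous}, I obtain $y'$ and $a'$ with $x \prec y' \lequpr{R} a' \prec a$; then $c \lequpr{R} a'$, and applying the hypothesis to $a' \prec a$ gives $c \lequprr{R}{R'} b$ by transitivity of $\lequprr{R}{R'}$. Invoking the right-cofinal closure of $\lequprr{R}{R'}$ with respect to $\precupr{R}$ then delivers $a \lequprr{R}{R'} b$, completing the proof.
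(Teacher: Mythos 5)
Your proposal is correct and follows essentially the same route as the paper: both reduce everything to the identity $\lequprr{R}{R'}\releq\lequpr{R\cup R'}$, prove it by checking each preorder satisfies the generating conditions of the other's defining family $\mathcal{E}$, and then obtain the auxiliary-relation identity by the same absorption computation. The only cosmetic difference is in the closure condition for the harder inclusion, where you re-derive the needed decomposition of $\precupr{R}$ from left $\prec$-continuity while the paper simply invokes the identity $\precupr{R}\releq\lequpr{R}\circ\prec$ already established in \autoref{cor:merging-continuous}.
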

\begin{proof}
We must prove that 	${\lequprr{R}{R'}}  \releq {\lequpr{R \relcup R'}}$ and ${\precuprr{R}{R'}} \releq  {\precupr{R \relcup R'}}$.	

We first show that $\lequprr{R}{R'}$ and $\lequpr{R\cup R'}$ agree. To show the former is stronger than the latter, by definition of $\lequprr{R}{R'}$ we need to verify that $\lequpr{R \relcup R'}$ satisfies conditions (i)-(iii) defining $\mathcal{E}_X(\precupr{R}, R')$ as in~\autoref{dfn:axiomatic-merging}. 

To verify condition (i), assume $a^{\precupr{R}}\subseteq b^{\precupr{R}}$. Then $a\lequpr{R}b$. Since we clearly have $\mathcal{E}_X(\prec , R) \supseteq \mathcal{E}_X(\prec , R \relcup R')$, it follows that $a\lequpr{R\cup R'}b$. Condition (ii) is obvious since $R' \relsubseteq R \relcup R'$. For condition (iii), assume that $c\lequpr{R\cup R'}$ for any $c\in a^{\precupr{R}}$. If $c\prec a$, then in particular $c\precupr{R}a$, and thus by definition of $\lequpr{R\cup R'}$ we obtain $a\lequpr{R\cup R'}b$.

Next, to show that $\lequpr{R\cup R'}$ is stronger than $\lequprr{R}{R'}$, we proceed similarly to verify that $\lequprr{R}{R'}$ satisfies conditions (i)-(iii) defining $\mathcal{E}_X(\prec, R\cup R')$ as in~\autoref{dfn:axiomatic-merging}. 

To check condition (i), observe that if $a^\prec\subseteq b^\prec$, then $a\lequpr{R}b$ which, as we have proved in \autoref{thm:merging-W-preorder}, means that $a^{\precupr{R}}\subseteq b^{\precupr{R}}$. This implies that $a\lequprr{R}{R'}b$. Condition (ii) follows from the clear containment $R'\relsubseteq \lequprr{R}{R'}$, and also from $R\relsubseteq \lequpr{R}\relsubseteq \lequprr{R}{R'}$. Finally, to verify condition (iii), assume that $c\lequprr{R}{R'}b$ for any $c\prec a$, and we need to show that $a\lequprr{R}{R'}b$. To this end, let $d\precupr{R}a$. We know from the proof of \autoref{cor:merging-continuous} that ${\precupr{R}}={\lequpr{R}\circ{\prec}}$. Hence, there is $c\in X$ such that $d\lequpr{R}c\prec a$. By assumption this implies that $d\lequpr{R}c\,\,\lequprr{R}{R'}\,\,b$, and thus $d\,\,\lequprr{R}{R'}\,\,b$. Since $d$ is arbitrary in $a^{\precupr{R}}$, we have $a\lequprr{R}{R'}b$, as desired.

Using the definitions of $\precuprr{R}{R'}$, $\precupr{R}$, and $\precupr{R\cup R'}$ at the first, second, and last steps respectively, the simple fact that $\lequprr{R}{R'}\circ \lequpr{R}\releq \lequprr{R}{R'}$ at the third step, and the equivalence $\lequprr{R}{R'}\releq\lequpr{R\relcup R'}$ at the fourth step, we obtain
\begin{align*}
{\precuprr{R}{R'}} & \releq  {\lequprr{R}{R'}} \circ  {\precupr{R}}  \circ  {\lequprr{R}{R'}} 
\releq  {\lequprr{R}{R'}} \circ  {\lequpr{R}}  \circ   {\prec_{\phantomtoggle{P}}}  \circ   {\lequpr{R}}   \circ {\lequprr{R}{R'}} \\
& \releq  {\lequprr{R}{R'}} \circ  {\prec_{\phantomtoggle{P}}}  \circ  {\lequprr{R}{R'}}
\releq   {\lequpr{R \relcup R'}} \circ  {\prec_{\phantomtoggle{P}}}  \circ   {\lequpr{R \relcup R'}} \\
& \releq   {\precupr{R \relcup R'}} \; .\qedhere
\end{align*}	
\end{proof}


Since it is cumbersome to deal with compositions of indefinite lengths, we discuss when this can be avoided. 

\begin{pgr}[Almost transitive relations]\label{dfn:almost-transitive}
Let $(X, 0 , \prec_{\phantomtoggle{P}})$ be a $\W$-set. A relation $R$ on $X$ is \emph{almost transitive} (or more precisely, \emph{$\prec_{\phantomtoggle{P}}$-almost transitive}) if 
\[
{\prec_{\phantomtoggle{P}}}	\circ R  \circ  {\prec_{\phantomtoggle{P}}}  \circ   R \circ {\prec_{\phantomtoggle{P}}} \relsubseteq  {\prec_{\phantomtoggle{P}}}  \circ   R  \circ  {\prec_{\phantomtoggle{P}}} \; .
\]
It is clear that a transitive relation $R$ is $\prec_{\phantomtoggle{P}}$-almost transitive provided that it satisfies 
${\prec_{\phantomtoggle{P}}}  \circ   R \relsubseteq     R  \circ  {\prec_{\phantomtoggle{P}}}$ or $R \circ {\prec_{\phantomtoggle{P}}}   \relsubseteq     {\prec_{\phantomtoggle{P}}} \circ   R $. 
\end{pgr}

\begin{prp}\label{prp:one-step}
Let $(X, 0 , \prec_{\phantomtoggle{P}})$ be a $\W$-set and let $R$ be a left $\prec$-continuous relation on $X$. If $R$ is $\prec$-almost transitive, then for any $a, b \in X$, the following statements are equivalent: 
\beginEnumConditions
\item\label{prp:one-step::axiomatic} $a \lequpr{R} b$; 
\item\label{prp:one-step::prec} for any $c \in a^{\prec_{\phantomtoggle{P}}}$, we have either that $c\prec b$ or else $(c,b) \in  {\prec_{\phantomtoggle{P}}}  \circ   R  \circ  {\prec_{\phantomtoggle{P}}}$. 
\end{enumerate}
Moreover, if, in addition, we have $\operatorname{id} \relsubseteq R$, then we may replace 
${\prec_{\phantomtoggle{P}}} \relcup ( {\prec_{\phantomtoggle{P}}}  \circ   R  \circ  {\prec_{\phantomtoggle{P}}})$ by $ {\prec_{\phantomtoggle{P}}}  \circ   R  \circ  {\prec_{\phantomtoggle{P}}}$ in the above. 
\end{prp}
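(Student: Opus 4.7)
The plan is to reduce everything to \autoref{thm:concrete-merging}, which already characterizes $a \lequpr{R} b$ by saying that for every $c \in a^{\prec}$ one has either $c \prec b$ or there is a finite chain
\[
c \prec d_1 R e_1 \prec d_2 R e_2 \prec \ldots \prec d_n R e_n \prec b \, .
\]
What needs to be added under $\prec$-almost transitivity is that such a chain can always be collapsed to length $n = 1$, which is precisely the statement $(c,b) \in {\prec} \circ R \circ {\prec}$. The converse direction is automatic: any $(c,b) \in {\prec} \circ R \circ {\prec}$ already fits the chain characterization with $n = 1$, so the hypothesis in \enumCondition{2} immediately implies the one in \autoref{thm:concrete-merging}\enumCondition{2}, hence \enumCondition{1}.

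To carry out the collapsing, I would induct on the length $n$ of the chain. The base case $n = 1$ is trivial. For $n \geq 2$, I would isolate the initial segment $c \prec d_1 R e_1 \prec d_2 R e_2 \prec d_3$, observe that it is an element of ${\prec} \circ R \circ {\prec} \circ R \circ {\prec}$, and apply $\prec$-almost transitivity (as defined in \autoref{dfn:almost-transitive}) to conclude that it already lies in ${\prec} \circ R \circ {\prec}$. This yields elements $d, e \in X$ with $c \prec d R e \prec d_3$, producing a shortened chain $c \prec d R e \prec d_3 R e_3 \prec \ldots \prec d_n R e_n \prec b$ of length $n - 1$ on which the induction hypothesis applies.

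For the final assertion, assuming $\operatorname{id} \relsubseteq R$, I would show that the alternative $c \prec b$ is automatically absorbed into ${\prec} \circ R \circ {\prec}$: by \axiomW{1} one picks $b' \in X$ with $c \prec b' \prec b$, and since $(b', b') \in R$ one has $c \prec b' R b' \prec b$, i.e., $(c,b) \in {\prec} \circ R \circ {\prec}$. This makes the disjunction in \enumCondition{2} collapse to a single condition.

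No real obstacle is anticipated; the only nontrivial step is correctly parsing the chain so that almost transitivity can be applied to the \emph{initial} segment, which requires paying attention to the fact that the relation is only $\prec$-almost transitive rather than transitive, so one must always keep the flanking $\prec$'s in place when shortening.
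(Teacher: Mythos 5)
Your proposal is correct and follows essentially the same route as the paper: both reduce to \autoref{thm:concrete-merging} and use $\prec$-almost transitivity inductively to collapse the chain ${\prec}\circ(R\circ{\prec})^{\circ n}$ down to ${\prec}\circ R\circ{\prec}$ by absorbing the initial segment, and both handle the ``moreover'' clause by inserting $\operatorname{id}\relsubseteq R$ between two copies of $\prec$ obtained from density. The paper merely phrases the induction as the containment ${\prec}\circ(R\circ{\prec})^{\circ 2}\relsubseteq{\prec}\circ(R\circ{\prec})$ applied repeatedly, which is your chain-shortening argument in compressed form.
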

\begin{proof}
Since $R$ is almost transitive, we have $ {\prec_{\phantomtoggle{P}}}  \circ   (R  \circ  {\prec_{\phantomtoggle{P}}})^{\circ 2} \relsubseteq  {\prec_{\phantomtoggle{P}}}  \circ   (R  \circ  {\prec_{\phantomtoggle{P}}})^{\circ 1}$. Applying this inductively, we see that 
\[	
{\prec_{\phantomtoggle{P}}}  \circ   (R  \circ  {\prec_{\phantomtoggle{P}}})^{\circ \mathbb{N}} \releq  {\prec_{\phantomtoggle{P}}} \relcup ( {\prec_{\phantomtoggle{P}}}  \circ  R  \circ  {\prec_{\phantomtoggle{P}}}).
\]	 
In light of Theorem~\ref{thm:concrete-merging}, this yields the desired equivalence.	

For the statement after ``moreover'', we simply use the equivalence
$ {\prec_{\phantomtoggle{P}}}  \releq   {\prec_{\phantomtoggle{P}}}  \circ   {\prec_{\phantomtoggle{P}}}  \releq   {\prec_{\phantomtoggle{P}}}  \circ  \operatorname{id} \circ  {\prec_{\phantomtoggle{P}}}$. 
\end{proof}


The following corollary shows how the usual Cuntz subequivalence on positive elements in a C*-algebra arises naturally from extending the $\W$-preorder in Example~\ref{exa:precsimher} by a kind of generalized Murray-von Neumann (sub-)equivalence. This answers a question raised by G.~A.~Elliott.

\begin{cor}\label{cor:precsimher-axiomatic}
Let $A$ be a C*-algebra. Let $\alpha = (\llher, \leqher)$ be the admissible pair on $(A_+, 0)$ as in Example~\ref{exa:precsimher}. Then the Cuntz subequivalence $\precsim_{\operatorname{Cu}}$ is the smallest preorder $\leq$ on $A_+$ such that for all $a, b \in A_+$, any of the following conditions implies $a \leq b$:
\beginEnumConditions
\item\label{cor:precsimher-axiomatic::P} $a \leqher b$; 
\item\label{cor:precsimher-axiomatic::R} there exists $x \in A$ such that $a = x b x^*$; 
\item\label{cor:precsimher-axiomatic::closure} for any $c \in a^{\llher}$, we have $c \leq b$. 
\end{enumerate}
\end{cor}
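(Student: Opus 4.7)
The plan is to apply the axiomatic construction of \autoref{dfn:axiomatic-merging} with the ``generalized Murray--von Neumann'' relation $R \releq \{(a,b)\in A_+\times A_+\colon a = xbx^*\ \text{for some}\ x\in A\}$. Conditions (i)--(iii) of the statement match exactly the conditions defining a preorder to belong to $\mathcal{E}_{A_+}(\llher, R)$, so $\lequpr{R}$ is by construction the smallest preorder on $A_+$ satisfying them, and the corollary reduces to the identification $\lequpr{R}\releq\precsim_{\operatorname{Cu}}$.

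First I would verify that $R$ is left $\llher$-continuous, so that \autoref{dfn:axiomatic-merging} applies: given $a\llher b$ with $b=xcx^*$, one has $a\leqher (b-\delta)_+$ for some $\delta>0$, and choosing $\epsilon>0$ with $\epsilon\|x\|^2<\delta/2$, the norm estimate $\|b-x(c-\epsilon)_+x^*\|\leq\epsilon\|x\|^2$ together with the perturbation form of R\o rdam's lemma yields $a\llher x(c-\epsilon)_+x^*$, while $(c-\epsilon)_+\llher c$. Next, to obtain $\lequpr{R}\relsubseteq\precsim_{\operatorname{Cu}}$, I would check that $\precsim_{\operatorname{Cu}}$ itself satisfies (i)--(iii): (i) is the classical fact that $a\in\overline{bAb}$ implies $a\precsim_{\operatorname{Cu}} b$; (ii) is immediate from the definition of Cuntz subequivalence; (iii) follows because $(a-1/n)_+\llher a$ by \autoref{exa:precsimher}\ref{exa:precsimher-enum:cutdowns},\ref{exa:precsimher-enum:way-below}, so the hypothesis gives $(a-1/n)_+\precsim_{\operatorname{Cu}} b$ for all $n$, and the norm convergence $(a-1/n)_+\to a$ together with the definition of $\precsim_{\operatorname{Cu}}$ delivers $a\precsim_{\operatorname{Cu}} b$.

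For the reverse inclusion $\precsim_{\operatorname{Cu}}\relsubseteq\lequpr{R}$, let $a\precsim_{\operatorname{Cu}} b$. Invoking condition (iii) for $\lequpr{R}$ reduces the task to showing $c\lequpr{R} b$ for every $c\in a^{\llher}$. Such a $c$ satisfies $c\leqher (a-\delta)_+$ for some $\delta>0$ by \autoref{exa:precsimher}\ref{exa:precsimher-enum:way-below}, so (i) gives $c\lequpr{R}(a-\delta)_+$. Choosing $y\in A$ with $\|a-yby^*\|<\delta$ (which exists since $a\precsim_{\operatorname{Cu}} b$), R\o rdam's lemma produces $z\in A$ with $(a-\delta)_+=z(yby^*)z^*=(zy)b(zy)^*$, whence (ii) gives $(a-\delta)_+\lequpr{R} b$; transitivity of $\lequpr{R}$ closes the argument. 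The main technical ingredient throughout is R\o rdam's ``$(\cdot-\epsilon)_+$'' lemma; everything else is routine bookkeeping with the axiomatic framework.
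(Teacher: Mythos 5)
Your core argument is correct and is in fact leaner than the paper's. The paper proves that $R$ is $\llher$-almost transitive so that \autoref{prp:one-step} collapses the chains appearing in \autoref{thm:concrete-merging} to a single step ${\llher}\circ R\circ{\llher}$, and then matches that one-step description against the classical definition of $\precsim_{\operatorname{Cu}}$ via R\o rdam's lemma. You avoid the almost-transitivity computation entirely: for ${\lequpr{R}}\subseteq{\precsim_{\operatorname{Cu}}}$ you only need that $\precsim_{\operatorname{Cu}}$ belongs to $\mathcal{E}_{A_+}(\llher,R)$ (your verifications of (i)--(iii) are fine, with (iii) witnessed by the cut-downs $(a-1/n)_+$), and for the reverse inclusion you use conditions (i)--(iii) together with transitivity of $\lequpr{R}$ directly, R\o rdam's lemma supplying $(a-\delta)_+=(zy)b(zy)^*$. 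Arbitrary-length chains are absorbed by transitivity of the preorders involved, so nothing more is needed. What the paper's extra work buys is the explicit one-step normal form of the generated preorder, which is of independent use.

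One step of yours is not justified as written: the verification that $R$ is left $\llher$-continuous. From $a\leqher(b-\delta)_+$ and $\|b-x(c-\epsilon)_+x^*\|\leq\epsilon\|x\|^2<\delta/2$, R\o rdam's lemma produces a contraction $d$ with $(b-\delta)_+=d\,(x(c-\epsilon)_+x^*)\,d^*$, which is a Cuntz subequivalence; it does \emph{not} place $(b-\delta)_+$, let alone $a$, inside $\her(x(c-\epsilon)_+x^*)$, and that hereditary containment is exactly what $a\llher x(c-\epsilon)_+x^*$ demands. The paper instead derives the operator inequality $(x^*cx-\varepsilon)_+\leq x^*(c-\varepsilon/L)_+x$ with $L=\max\{1,\|x^*x\|\}$, and an inequality between positive elements does yield containment of the hereditary subalgebras they generate. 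This slip does not affect your two inclusions, which never invoke left continuity, but it must be repaired if you want $R$ to satisfy the standing hypothesis of \autoref{dfn:axiomatic-merging} --- as the later applications of this corollary require.
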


\begin{proof}
Let $R$ be the relation on $A_+$ defined so that $(a,b) \in R$ if and only if there exists $x \in A$ such that $a = x b x^*$. Observe that $R$ is left $\llher$-continuous. Indeed, for any $a, b, c \in A_+$ satisfying $a \llher b R c$, we find $\varepsilon > 0$ and $x \in A$ such that $a \llher (b - \varepsilon)_+$ and $b = x^* c x$. Set $L = \max\{1, \|x^* x\| \}$. Then we have 
\[
x^* \left( c - \left(c - \frac{\varepsilon}{L} \right)_+ \right) x \leq x^* \left( c - \left(c - \frac{\varepsilon}{L} \right) \right) x = \frac{\varepsilon}{L} x^* x \leq \varepsilon \; ,
\]
which implies
\begin{align*}
(x^* c x - \varepsilon)_+  & \leq \left(x^* c x - x^* \left( c - \left(c - \frac{\varepsilon}{L} \right)_+ \right) x \right)_+ \\ &  = \left(x^* \left(c - \frac{\varepsilon}{L} \right)_+ x \right)_+ \leq x^* \left(c - \frac{\varepsilon}{L} \right)_+ x \; .
\end{align*}
Hence, setting $b' = x^* \left(c - \frac{\varepsilon}{L} \right)_+ x$ and $c' = \left(c - \frac{\varepsilon}{L} \right)_+$, we have $a \llher b'$, $(b',c') \in R$, and $c' \llher c$, which shows $R$ is left $\llher$-continuous. 

Since the ``smallest preorder'' defined in the statement of the corollary is $\lequpr{R}$ applied to the $\W$-set $(A_+,0,\llher)$
as defined in \autoref{dfn:axiomatic-merging}, we need to show that $\precsimr{\operatorname{Cu}}$ agrees with $\lequpr{R}$. (Notice also that $a\leqher$ if and only if $a^{\llher}\subseteq b^{\llher}$.)

To this end, we observe that $R$ is $\llher$-almost transitive. Suppose first that $a,b,c,d\in A_+$ satisfy $aRb$, $b \llher c$, and $cRd$. We claim that $(a,d)\in {\leqher}\circ {R}\circ {\leqher}$. Choose $x, y\in A$ such that $a=xbx^*$, $c=ydy^*$, and let $\varepsilon>0$ such that $b$ belongs to the hereditary algebra generated by $(c-\varepsilon)_+$. Let $f$ be a continuous function such that $f(c)$ is a unit for $(c-\varepsilon)_+$. Then $f(c)bf(c)=bf(c)=b$ and thus $b\leq Nc$ for some $N$. Therefore $xbx^*\leq Nxcx^*$ and thus it belongs to the hereditary algebra generated by $xcx^*$. This implies that $a\leqher xcx^*$, with $xcx^*=xyd(xy)^*$, that is, $xcx^*Rd$. Thus $(a,d)\in {\leqher}\circ {R}\circ {\leqher}$, as claimed.

Using the above claim at the second step and that $\llher$ is auxiliary for $\leqher$ at the last step, we obtain 
\[
\llher\circ\mathrel{R}\circ\llher\circ\mathrel{R}\circ\llher\relsubseteq\llher\circ {\leqher}\circ \mathrel{R}\circ {\leqher}\circ\llher\relsubseteq\llher\circ\mathrel{R}\circ\llher,
\]	
which shows that $R$ is $\llher$-almost transitive.

Now we verify that 
$\precsimr{\mathrm{Cu}}$ is equivalent to condition (ii) in \autoref{prp:one-step}. To see this, assume that $a\precsimr{\mathrm{Cu}}b$ and let $c\llher a$. Then $c\in A_{(a-\frac{\varepsilon}{2})_+}$ for some $\varepsilon>0$. Choose $x\in A$ and $\delta>0$ such that $\Vert (a-\varepsilon/2)_+-x(b-\delta)_+x^*\Vert<\varepsilon/2$. This implies, using \cite[Lemma 2.2]{KirRor02InfNonSimpleCalgAbsOInfty}, that there is $y\in A$ such that $(a-\varepsilon)_+=y(b-\delta)_+y^*$. Thus $c\llher (a-\varepsilon)_+R(b-\delta)_+\llher b$. Conversely, suppose that $a,b$ satisfy condition (ii) in \autoref{prp:one-step}, and let $\varepsilon>0$. Then $(a-\varepsilon)_+\llher a$ and thus there exist $c, d\in A_+$, $\gamma,\delta>0$, and $x\in A$ such that $a-\varepsilon)_+\in A_{(c-\gamma)_+}$, $c=xdx^*$, and $d\in A_{(b-\delta)_+}$, which clearly implies that $a\precsimr{\mathrm{Cu}}b$.
\end{proof}
\begin{rmk}\label{rmk:precsimher-axiomatic}
In \autoref{cor:precsimher-axiomatic}, we may obtain equivalent formulations by making either or both of the following replacements:
\begin{itemize}
\item Condition (iii)
may be replaced by the following: 
\begin{itemize}
\item[(iii')] there is an increasing sequence $(a_n)_{n \in \mathbb{N}}$ in $A_+$ whose supremum $\sup_n^{\leqher} a_n$ with regard to $\leqher$ exists and is equal to $a$, and $a_n \leq b$ for every $n \in \mathbb{N}$. (Here, $\leq$ is not to be confused with the natural order in $A_+$.)
\end{itemize}
\item Condition (ii)
may be replaced by the following: 
\begin{itemize}
\item[(ii')] there exists $x \in A$ such that $a = x x^*$ and $b = x^* x$.
\end{itemize}
\end{itemize}
\end{rmk}

\section{Generating normal pairs}
\label{sec:extendingnormal}
In this secion we build upon the work in \autoref{sec:GenNormalPreorders} in order to extend pairs to normal pairs on $\W$-semigroups.
\begin{pgr}[The normal pair generated by a relation]\label{dfn:axiomatic-merging-additive}
	Let $(S, \prec)$ be a $\W$-semigroup and let $R$ be a left $\prec$-continuous relation on $S$. We write $\mathcal{E}_S^+ (R)$ for the collection of all preorders $\leq$ on $S$ satisfying that, for all $a, b \in S$, any of the following conditions implies $a \leq b$:
	\beginEnumConditions
	\item\label{dfn:axiomatic-merging-additive::P} $a^\prec \subseteq b^\prec$; 
	\item\label{dfn:axiomatic-merging-additive::R} $(a,b) \in R$; 
	\item\label{dfn:axiomatic-merging-additive::closure} for any $c \in a^{\prec}$, we have $c \leq b$;
	\item\label{dfn:axiomatic-merging-additive::additive} there are $a_1, a_2, b_1, b_2$ in $S$ such that $a = a_1 + a_2$, $b = b_1 + b_2$, and $a_k \leq b_k$ for $k = 1, 2$.
\end{enumerate}
Observing that $\mathcal{E}_S^+ (R)$ is closed under taking arbitrary intersections, we define $\leqr{R}$ to be the smallest element in $\mathcal{E}_S^+ (R)$ under set containment, i.e., the strongest pre-order satisfying the above conditions. We also define $\precr{R}$ to be the composition $ {\leqr{R}}  \circ   {\precr{R}}  \circ  {\leqr{R}}  $.

Write $\alphar{R}=(\prec,\leqr{R})$ as well as $\talphar{R}=(\precr{R},\leqr{R})$. We refer to $\alphar{R}$ as the $\prec$-\emph{normal closed pair generated by} $R$ and to $\talphar{R}$ as the \emph{normal extension from $\prec$ by} $R$. Again our terminology is justified below.

We write $S \doubleslash R$ for the quotient $\W$-semigroup $S / {\alphar{R}}$ (that is, the antisymmetrization of $S_{\alphar{R}}$; see \autoref{pgr:quotient}).  This generalizes the notion of the quotient of a $\Cu$-semigroup by a closed ideal (see \cite[Section 5.1]{AntPerThi14arX:TensorProdCu}).
\end{pgr}
The following lemmas show that the preorder $\leqr{R}$ can equivalently be induced by first taking the additive closure of $R$ (plus the identity relation $\operatorname{id}$) and then applying the construction in \autoref{dfn:axiomatic-merging} without any concern about the additive structure. 

\begin{lma}\label{lma:additive-closure-continuity}
Let $(S, \prec)$ be a $\W$-semigroup and let $R$ and $R'$ be left $\prec$-continuous relations on $S$. Then the sum $R + R'$ and the additive closure $R_+$ are also left $\prec$-continuous. 
\end{lma}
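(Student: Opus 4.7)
The plan is to prove the statement for $R + R'$ directly by invoking axioms \axiomW{3} and \axiomW{4}, and then to deduce the statement for $R_+$ by induction, combined with the observation that an arbitrary union of left $\prec$-continuous relations is itself left $\prec$-continuous.

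For the first claim, suppose $(x, y) \in {\prec} \circ (R + R')$. Unpacking, there exist $a, a', b, b' \in S$ with $x \prec a + a'$, $(a, b) \in R$, $(a', b') \in R'$, and $y = b + b'$. Since $x \prec a + a'$, axiom \axiomW{4} yields $\tilde{a}, \tilde{a}' \in S$ with $\tilde{a} \prec a$, $\tilde{a}' \prec a'$, and $x \prec \tilde{a} + \tilde{a}'$. From $\tilde{a} \prec a$ and $(a, b) \in R$ we get $(\tilde{a}, b) \in {\prec} \circ R$, so left $\prec$-continuity of $R$ supplies $a_1, b_1 \in S$ with $\tilde{a} \prec a_1$, $(a_1, b_1) \in R$, and $b_1 \prec b$. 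Analogously, left $\prec$-continuity of $R'$ supplies $a_1', b_1' \in S$ with $\tilde{a}' \prec a_1'$, $(a_1', b_1') \in R'$, and $b_1' \prec b'$. By \axiomW{3}, we then have $x \prec \tilde{a} + \tilde{a}' \prec a_1 + a_1'$ and $b_1 + b_1' \prec b + b' = y$, and by construction $(a_1 + a_1', b_1 + b_1') \in R + R'$. Hence $(x, y) \in {\prec} \circ (R + R') \circ {\prec}$, as required.

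For the second claim, first note that any union $\bigcup_{i \in I} R_i$ of left $\prec$-continuous relations is again left $\prec$-continuous: if $(x, y) \in {\prec} \circ \bigcup_i R_i$, then $(x, y) \in {\prec} \circ R_i$ for some $i$, whence by hypothesis $(x, y) \in {\prec} \circ R_i \circ {\prec} \subseteq {\prec} \circ \left( \bigcup_i R_i \right) \circ {\prec}$. Recalling that $R_+$ is the union over $n \geq 1$ of the iterated sums $R + \cdots + R$ of $n$ summands, an induction on $n$ using the first claim (with $R'$ being an iterated sum of $n-1$ copies of $R$) shows that each such iterated sum is left $\prec$-continuous, and therefore so is $R_+$.

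The only slightly delicate point is the application of \axiomW{4} to split $x \prec a + a'$ into $x \prec \tilde{a} + \tilde{a}'$ with $\tilde{a} \prec a$ and $\tilde{a}' \prec a'$; without such a refinement one could not feed $R$ and $R'$ separately into their own left $\prec$-continuity hypotheses.
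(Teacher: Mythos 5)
Your argument is correct and is essentially the paper's own proof: both reduce to the case of a single sum $R+R'$, use \axiomW{4} to split $x \prec a + a'$ into summands compactly contained in $a$ and $a'$, apply left $\prec$-continuity of $R$ and $R'$ separately, and recombine with \axiomW{3} and transitivity. The paper leaves the passage from $R+R'$ to $R_+$ implicit ("it suffices to show..."), whereas you spell out the union-plus-induction step; that is the only difference.
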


\begin{proof}
It suffices to show that $R+R'$ is left $\prec$-continuous. To this end, let $a,b,b',c,c'\in S$ be such that $a\prec (b+b')(R+R')(c+c')$. This means in particular that $bRc$ and $b'Rc'$. Using \axiomW{4}, find $b_1, b_1'\in S$ such that $b_1\prec b$, $b_1'\prec b'$, and $a\prec b_1+b_1'$. Since both $R$ and $R'$ are left $\prec$-continuous, there are $b_2, b_2', c_1,c_1'\in S$ with $b_1\prec b_2Rc_1\prec c$ and $b_1'\prec b_2'Rc_1'\prec c'$. This is now easily seen to imply that $a\prec (b_2+b_2')R(c_1+c_1')\prec c+c'$, as desired.
\end{proof}

\begin{lma}\label{lma:axiomatic-merging-additive-closure}
Let $(S, \prec)$ be a $\W$-semigroup and let $R$ be a left $\prec$-continuous relation on $S$. Then the preorders $\leqr{R}$ and $\lequpr{(R + \operatorname{id})_+}$ (see \autoref{dfn:axiomatic-merging} and~\autoref{dfn:additive-closure}) coincide. Therefore $\alphar{R}=\alphaupr{(R+\operatorname{id})_+}$.
\end{lma}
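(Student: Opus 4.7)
Write $R' = (R + \operatorname{id})_+$. I would first record three basic properties of $R'$: it is left $\prec$-continuous by \autoref{lma:additive-closure-continuity} (since both $\operatorname{id}$ and $R$ are), it contains both $R$ and $\operatorname{id}$, and it is additively closed, i.e., $R' + R' \relsubseteq R'$, being an additive closure. Since the pairs $\alphar{R}$ and $\alphaupr{R'}$ share their first coordinate $\prec$, the desired equality of pairs reduces to the equality of preorders $\leqr{R} \releq \lequpr{R'}$, which I establish by mutual containment.

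For the containment $\lequpr{R'} \relsubseteq \leqr{R}$, I would verify that $\leqr{R}$ belongs to the family $\mathcal{E}_S(\prec, R')$ of \autoref{dfn:axiomatic-merging}. Clauses (i) and (iii) there come at once from the analogous clauses of \autoref{dfn:axiomatic-merging-additive}. For clause (ii), the structure of $(R + \operatorname{id})_+$ lets one decompose any $(a,b) \in R'$ as $a = \sum_k a_k$, $b = \sum_k b_k$ with each $(a_k, b_k) \in R \cup \operatorname{id}$; the $R$-terms lie in $\leqr{R}$ by its defining clause (ii), the $\operatorname{id}$-terms do so by reflexivity, and iterating the additivity clause (iv) yields $a \leqr{R} b$.

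For the reverse containment $\leqr{R} \relsubseteq \lequpr{R'}$, I would show that $\lequpr{R'}$ belongs to $\mathcal{E}_S^+(R)$. Clauses (i)--(iii) of \autoref{dfn:axiomatic-merging-additive} are immediate from $R \relsubseteq R'$ together with the defining properties of $\lequpr{R'}$. The substantive task is additivity (clause (iv)): given $a_i \lequpr{R'} b_i$ for $i = 1, 2$ and $c \prec a_1 + a_2$, I invoke \axiomW{4} to split $c \prec c_1 + c_2$ with $c_i \prec a_i$, and apply \autoref{thm:concrete-merging} to obtain, for each $i$, either the direct relation $c_i \prec b_i$ or an $R'$-chain
\[
c_i \prec d_{i,1} \, R' \, e_{i,1} \prec \cdots \prec d_{i,n_i} \, R' \, e_{i,n_i} \prec b_i \; .
\]
The direct case is recast as a length-one chain $c_i \prec b_i' \, R' \, b_i' \prec b_i$ via density of $\prec$ and the inclusion $\operatorname{id} \relsubseteq R'$.

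The main obstacle is then to merge these two chains of possibly different lengths into a single chain for $c_1 + c_2$. I would handle this in two stages. First, pad the shorter chain to length $n = \max(n_1, n_2)$: using iterated density of $\prec$ (from \axiomW{1}), pick $e_{i,n_i} \prec f_{i,1} \prec \cdots \prec f_{i, n - n_i} \prec b_i$ and insert extra steps $f_{i,j} \, R' \, f_{i,j}$, permissible because $\operatorname{id} \relsubseteq R'$. Second, labelling the padded chains uniformly as $d_{i,k}', e_{i,k}'$, add them componentwise: \axiomW{3} combines each $\prec$-step, while $R' + R' \relsubseteq R'$ combines each middle $R'$-step, producing
\[
c_1 + c_2 \prec (d_{1,1}' + d_{2,1}') \, R' \, (e_{1,1}' + e_{2,1}') \prec \cdots \prec (d_{1,n}' + d_{2,n}') \, R' \, (e_{1,n}' + e_{2,n}') \prec b_1 + b_2.
\]
Since $c \prec c_1 + c_2$, a final appeal to \autoref{thm:concrete-merging} yields $a_1 + a_2 \lequpr{R'} b_1 + b_2$. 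It is precisely in this padding-then-componentwise-addition manoeuvre that the three key ingredients---$\operatorname{id} \relsubseteq R'$, $R' + R' \relsubseteq R'$, and \axiomW{3}/\axiomW{4}---come together, which explains the choice of $R'$ as $(R + \operatorname{id})_+$ rather than $R$ or $R + \operatorname{id}$ alone.
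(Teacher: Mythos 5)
Your proposal is correct and follows essentially the same route as the paper: one containment by checking that $\leqr{R}$ lies in $\mathcal{E}_S(\prec,(R+\operatorname{id})_+)$ via the summand decomposition of $(R+\operatorname{id})_+$, and the other by using \autoref{thm:concrete-merging} to verify that $\lequpr{(R+\operatorname{id})_+}$ is additively closed through a componentwise addition of chains. Your padding argument makes explicit a step the paper only asserts (``we can take the same length $n$ for both''), but the underlying mechanism---inserting identity steps via $\operatorname{id}\relsubseteq (R+\operatorname{id})_+$ and density---is the intended one.
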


\begin{proof}
We first show from the definitions that 
\[	
\mathcal{E}_S^+ (R) \subseteq \mathcal{E}_S^+ ((R + \operatorname{id})_+) \subseteq \mathcal{E}_S(\prec, (R + \operatorname{id})_+),
\]
which implies that $\leqr{R}$ is weaker than $\lequpr{(R + \operatorname{id})_+}$. For the first inclusion, one only needs to show that, if ${\leq}\in{\mathcal{E}_S^+ (R)} $ and $a,b\in S$ satisfy $a((R + \operatorname{id})_+)b$, then $a\leq b$. By definition, there is $n$ and $a_{ij}, b_{ij}$, for $i=1,\dots,n$, $j=1,2$ such that $a_{i1}Rb_{i1}$, and $a_{i2}=b_{i2}$, and $a=\sum_i\sum_j a_{ij}$, $b=\sum_i\sum_jb_{ij}$. In particular, we get $a_{ij}\leq b_{ij}$ for all $i, j$ and thus $a\leq b$. The second inclusion is trivial.

We next use the concrete realization of $\lequpr{(R + \operatorname{id})_+}$ (see \autoref{thm:concrete-merging}) to show that $\lequpr{(R + \operatorname{id})_+}$ is additively closed, i.e., satisfies condition~\ref{dfn:axiomatic-merging-additive::additive} in \autoref{dfn:axiomatic-merging-additive}, which implies that $\leqr{R}$ is equal to $\lequpr{(R + \operatorname{id})_+}$. In order to check this, assume that $a_i\lequpr{(R + \operatorname{id})_+}b_i$ for $i=1,2$. If $c\prec a_1+a_2$, then using \axiomW{4} find $c_1,c_2\in S$ such that $c_i\prec a_i$ and $c\prec c_1+c_2$. The only case of interest is when there are elements $d_i^{(j)},e_i^{(j)}\in S$, for $j=1,2$, and $i=1,\dots,n$ such that
\[
c_1 \prec_{\phantomtoggle{P}} d_1^{1}{(R + \operatorname{id})_+}e_1^{(1)} \prec_{\phantomtoggle{P}} d_2^{1}{(R + \operatorname{id})_+}e_2^{(1)} \prec_{\phantomtoggle{P}} \dots \prec_{\phantomtoggle{P}}  d_n^{(1)}{(R + \operatorname{id})_+}e_n^{(1)} \prec_{\phantomtoggle{P}} b_1,
\]
and similarly for $c_2$ (notice that we can take the same length $n$ for both $c_1$ and $c_2$). It is then clear that
\begin{align*}
	c_1+c_2 & \prec_{\phantomtoggle{P}} (d_1^{1}+d_1^{(2)}){(R + \operatorname{id})_+}(e_1^{(1)}+e_1^{(2)}) \\ &  \prec_{\phantomtoggle{P}} (d_2^{1}+d_2^{2}){(R + \operatorname{id})_+}(e_2^{(1)}+e_2^{(2)}) \prec_{\phantomtoggle{P}} \dots \\ & \prec_{\phantomtoggle{P}}  (d_n^{(1)}+d_n^{(2)}){(R + \operatorname{id})_+}(e_n^{(1)}+e_n^{(2)}) \prec_{\phantomtoggle{P}} b_1+b_2,
\end{align*}
as desired.
\end{proof}
\begin{rmk}\label{rmk:plus_identity_relation}
We have $(R + \operatorname{id})_+ \releq R_+ + \operatorname{id}$, since $\operatorname{id}$ is clearly additively closed. Hence if $\operatorname{id} \relsubseteq R$, then $(R + \operatorname{id})_+ \releq R_+$. 
\end{rmk}
\begin{cor}
\label{cor:alphaupr} Let $(S,\prec)$ be a $\W$-semigroup and let $R$ be a left $\prec$-continuous relation on $S$. Then, the pairs $\alphar{R}$ and $\talphar{R}$ are $\prec$-normal admissible and $\alphar{R}\leq\talphar{R}$.
\end{cor}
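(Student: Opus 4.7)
The plan is to reduce everything to the already-established prenormal results by means of the identification furnished by \autoref{lma:axiomatic-merging-additive-closure}. Set $R' := (R + \operatorname{id})_+$. By \autoref{lma:additive-closure-continuity}, $R'$ is left $\prec$-continuous, so the constructions of \autoref{sec:GenNormalPreorders} apply to it. By \autoref{lma:axiomatic-merging-additive-closure} we have $\leqr{R} \releq \lequpr{R'}$, hence also $\precr{R} \releq \precupr{R'}$ (since both are defined as the same triple composition with $\prec$ in the middle). Consequently $\alphar{R} = \alphaupr{R'}$ and $\talphar{R} = \talphaupr{R'}$.

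With this in hand, \autoref{cor:merging-continuous}(i) immediately gives that $\alphar{R}$ and $\talphar{R}$ are $\prec$-prenormal admissible pairs, and \autoref{cor:merging-continuous}(ii) gives $\alphar{R} \leq \talphar{R}$. What remains is to upgrade ``prenormal'' to ``normal'', i.e., to verify that both components of each pair are additively closed. For $\alphar{R} = (\prec, \leqr{R})$: additive closure of $\prec$ is \axiomW{3}, while additive closure of $\leqr{R}$ is built into the very definition (condition~\ref{dfn:axiomatic-merging-additive::additive} of \autoref{dfn:axiomatic-merging-additive}).

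For $\talphar{R} = (\precr{R}, \leqr{R})$ only $\precr{R} \releq \leqr{R} \circ \prec \circ \leqr{R}$ requires a brief check. Given $a_i \precr{R} b_i$ for $i=1,2$, choose witnesses $a_i \leqr{R} c_i \prec d_i \leqr{R} b_i$. Additive closure of $\leqr{R}$ yields $a_1 + a_2 \leqr{R} c_1 + c_2$ and $d_1 + d_2 \leqr{R} b_1 + b_2$, while \axiomW{3} gives $c_1 + c_2 \prec d_1 + d_2$. Concatenating proves $(a_1+a_2)\precr{R}(b_1+b_2)$.

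The substantive obstacles — prenormality, admissibility, and the comparison $\alphar{R} \leq \talphar{R}$ — have all been absorbed into the earlier work on prenormal pairs and into \autoref{lma:axiomatic-merging-additive-closure}, so the argument here is essentially bookkeeping. The only point where one must be a little careful is the additive closure of the composite relation $\precr{R}$, but this is a one-line use of \axiomW{3} combined with additive closure of $\leqr{R}$.
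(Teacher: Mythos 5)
Your proof is correct and follows essentially the same route as the paper: identify $\alphar{R}$ and $\talphar{R}$ with $\alphaupr{(R+\operatorname{id})_+}$ and $\talphaupr{(R+\operatorname{id})_+}$ via \autoref{lma:axiomatic-merging-additive-closure} and \autoref{lma:additive-closure-continuity}, then invoke \autoref{cor:merging-continuous}. Your explicit verification that $\precr{R}$ is additively closed (via \axiomW{3} and the built-in additivity of $\leqr{R}$) is a welcome addition that the paper leaves implicit.
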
	

\begin{proof}
From \autoref{lma:axiomatic-merging-additive-closure} we have \[\alphar{R}=(\prec,\leqr{R})=(\prec, \lequpr{(R + \operatorname{id})_+})
\] 
and 
\[
\talphar{R}=(\precr{R},\leqr{R})=(\leqr{R}\circ\prec\circ\leqr{R},\leqr{R})=(\precupr{(R+\operatorname{id})_+},\lequpr{(R+\operatorname{id})_+}).
\]
Thus the result follows from \autoref{lma:additive-closure-continuity} and \autoref{cor:merging-continuous}.
\end{proof}

\begin{thm}\label{thm:generating-normal-preorders}
Let $(S, \prec)$ be a $\W$-semigroup and let $R$ be a left $\prec$-continuous relation on $S$. Then $\alphar{R}$ is the smallest $\prec$-normal closed admissible pair that contains $R$.
\end{thm}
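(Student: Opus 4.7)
The plan is to reduce this statement to \autoref{thm:merging-W-preorder} via the identification $\alphar{R} = \alphaupr{(R+\operatorname{id})_+}$ from \autoref{lma:axiomatic-merging-additive-closure}. First I would invoke \autoref{lma:additive-closure-continuity} to observe that $(R+\operatorname{id})_+$ is itself a left $\prec$-continuous relation on $S$, so that \autoref{thm:merging-W-preorder} applies to it. That theorem then yields that $\alphaupr{(R+\operatorname{id})_+}$ is the smallest $\prec$-prenormal closed admissible pair containing $(R+\operatorname{id})_+$, and in particular that $\alphar{R}$ is admissible and left $\prec$-closed. Normality (as opposed to only prenormality) is supplied by \autoref{cor:alphaupr}, and containment of $R$ is immediate since $R \relsubseteq (R+\operatorname{id})_+ \relsubseteq {\leqr{R}}$.

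For the minimality claim, suppose $\alpha = (\prec, \leq)$ is any $\prec$-normal left $\prec$-closed admissible pair with $R \relsubseteq {\leq}$. The key observation is that $\leq$ also contains $(R+\operatorname{id})_+$: since $\leq$ is a preorder we have $\operatorname{id} \relsubseteq {\leq}$, hence $R + \operatorname{id} \relsubseteq {\leq} + {\leq}$, which is contained in $\leq$ because $\alpha$ is normal (and so $\leq$ is additively closed by \autoref{pgr:normal}); iterating this uses additive closure once more to yield $(R+\operatorname{id})_+ \relsubseteq {\leq}$. Thus $\alpha$ is a $\prec$-prenormal closed admissible pair that contains $(R+\operatorname{id})_+$, so the minimality clause of \autoref{thm:merging-W-preorder} gives $\alphar{R} = \alphaupr{(R+\operatorname{id})_+} \leq \alpha$, as desired.

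The proof is short because the heavy lifting is already done in the previous section; the only non-trivial step is the containment $(R+\operatorname{id})_+ \relsubseteq {\leq}$, where one must remember to combine reflexivity of $\leq$ with the additive closure that comes packaged with $\prec$-normality. I expect no serious obstacle beyond carefully tracking which axiom (reflexivity, transitivity, or additive closure of $\leq$) is being used at each step.
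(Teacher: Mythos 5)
Your proposal is correct and follows essentially the same route as the paper's own proof: both reduce to \autoref{thm:merging-W-preorder} via the identification $\alphar{R}=\alphaupr{(R+\operatorname{id})_+}$ from \autoref{lma:axiomatic-merging-additive-closure}, obtain normality from additive closedness, and derive minimality by observing that any $\prec$-normal closed admissible pair containing $R$ must contain $(R+\operatorname{id})_+$ because its preorder is reflexive and additively closed. The only cosmetic difference is that the paper phrases the competing pair in the general form $(\preccurlyeq,\leq)$ rather than $(\prec,\leq)$, but your argument applies verbatim in that generality since the minimality clause of \autoref{thm:merging-W-preorder} already covers such pairs.
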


\begin{proof}
It follows from \autoref{lma:axiomatic-merging-additive-closure} and \autoref{thm:merging-W-preorder} that $\alphar{R}$ is the smallest $\prec$-prenormal closed admissible pair that contains $(R+\operatorname{id})_+$. Since $\alphar{R}$ is additively closed by definition, it is also a normal closed pair. Now let $\alpha=(\preccurlyeq,\leq)$ be another $\prec$-normal closed admissible pair that contains $R$. Since $\leq$ is additively closed we have that it must also contains $(R+\operatorname{id})_+$ and thus $\alphar{R}=\alphaupr{(R+\operatorname{id})_+}\leq \alpha$.
\end{proof}


The quotient $\W$-semigroup $S  \doubleslash  R$ enjoys the following universal property. 

\begin{thm}\label{thm:quotient-universal-property}
Let $(S, \prec)$ be a $\W$-semigroup and let $R$ be a left $\prec$-continuous relation on $S$. Write $\pi_R \colon S \to S  \doubleslash  R$ for the canonical surjection. Then for any $\W$-morphism $f \colon S \to T$ such that $\ker(f)$ contains $R$, there is a unique $\W$-morphism $\overline{f} \colon S  \doubleslash  R \to T/{\alpha_T}$ such that $\pi_{\alpha_T}\circ f = \overline{f} \circ \pi_R$. 
\[
\xymatrix{
	S \ar[d]_{\pi_R} \ar[r]^{f} & T\ar[d]^{\pi_{\alpha_T}} \\
	S  \doubleslash  R \ar@{-->}[r]_{\overline{f}} & T/{\alpha_T}
}
\]
Moreover, the pair $(S  \doubleslash  R, \pi_R)$ is the unique pair of a $\CatW$-semigroup and a $\CatW$-morphism that satisfies the above universal property. 
\end{thm}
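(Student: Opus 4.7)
The plan is to derive the factorization $\overline{f}$ directly from the characterization of $\alphar{R}$ given in \autoref{thm:generating-normal-preorders}, combined with the antisymmetrized fundamental theorem of \autoref{rmk:fundamental-theorem}; the uniqueness of $(S\doubleslash R, \pi_R)$ then follows by a standard initial-object argument.

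First, I would note that by \autoref{lma:kernelnormal}, the kernel pair $\ker(f)=(\prec_S,\leq_f)$ of any $\W$-morphism $f\colon S\to T$ is itself a $\prec_S$-normal closed admissible pair. The hypothesis $R\subseteq\ker(f)$ (meaning $R\subseteq {\leq_f}$), together with \autoref{thm:generating-normal-preorders}, which identifies $\alphar{R}$ as the \emph{smallest} $\prec$-normal closed admissible pair containing $R$, yields $\alphar{R}\leq\ker(f)$. Applying \autoref{rmk:fundamental-theorem} with $\alpha=\alphar{R}$ then produces a unique order-preserving $\W$-morphism $\overline{f}\colon S/\alphar{R}\to T/\alpha_T$ satisfying $\pi_{\alpha_T}\circ f=\overline{f}\circ\pi_{\alphar{R}}$. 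Since by definition $S\doubleslash R=S/\alphar{R}$ and $\pi_R=\pi_{\alphar{R}}$, this is exactly the required factorization.

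For the uniqueness of the pair $(S\doubleslash R,\pi_R)$, I would run the usual initial-object argument. Let $(Q,\pi\colon S\to Q)$ be another pair of a $\W$-semigroup and $\W$-morphism satisfying the same universal property. Condition \ref{dfn:axiomatic-merging-additive::R} of \autoref{dfn:axiomatic-merging-additive} ensures $\ker(\pi_R)\supseteq R$, so applying the universal property of $(Q,\pi)$ to $f=\pi_R$ (noting that $S\doubleslash R$ is faithful, hence $T/\alpha_T=S\doubleslash R$) yields a unique $\W$-morphism $g\colon Q\to S\doubleslash R$ with $g\circ\pi=\pi_R$. Running the symmetric construction with the roles of the two pairs swapped produces a $\W$-morphism $h\colon S\doubleslash R\to Q/\alpha_Q$ with $h\circ\pi_R=\pi_{\alpha_Q}\circ\pi$. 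The uniqueness clauses in the two universal properties then force the appropriate compositions of $g$ and $h$ to be identity maps, yielding the desired isomorphism $Q\cong S\doubleslash R$ intertwining $\pi$ and $\pi_R$. The main obstacle is the bookkeeping in this last step caused by the fact that $Q$ need not itself be faithful (so factorizations of test morphisms land in $Q/\alpha_Q$ rather than $Q$), and the need to verify $\ker(\pi)\supseteq R$ in order to apply the universal property of $(S\doubleslash R,\pi_R)$ to $f=\pi$; this containment is extracted from the uniqueness clause of the universal property of $(Q,\pi)$, which forces $\pi$ to be epic into faithful $\W$-semigroups and hence to identify $R$-related elements under a suitable comparison.
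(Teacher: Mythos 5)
Your proposal follows essentially the same route as the paper: identify $\ker(f)$ as a $\prec$-normal closed admissible pair via \autoref{lma:kernelnormal}, deduce $\alphar{R}\leq\ker(f)$ from the minimality statement of \autoref{thm:generating-normal-preorders}, and conclude by the antisymmetrized fundamental theorem of \autoref{rmk:fundamental-theorem}. The paper leaves the uniqueness of the pair $(S\doubleslash R,\pi_R)$ implicit, so your sketch of the standard initial-object argument is a harmless elaboration rather than a divergence.
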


\begin{proof}
Recall that $\ker(f)=(\prec,\leq_f)$, which is an admissible $\prec$-normal closed pair (see \autoref{lma:kernelnormal}). We need to show that $\alphar{R}\leq\ker(f)$ and then apply the fundamental theorem for $\W$-semigroups (see \autoref{thm:fundamental-theorem0} and \autoref{rmk:fundamental-theorem}). Since both $\alphar{R}$ and $\ker(f)$ are $\prec$-normal, by \autoref{lma:auxiliary}(i) we only need to verify that $\leqr{R}\relsubseteq\ker(f)$, which follows from \autoref{thm:generating-normal-preorders} using the assumption that $\ker(f)$ contains $R$.
\end{proof}

\begin{pgr}[Almost refinement] \label{pgr:almost-refinement}
Recall from \autoref{dfn:almost-transitive} that a relation $R$ on a semigroup $S$ is almost transitive if ${\prec_{\phantomtoggle{P}}}	\circ R  \circ  {\prec_{\phantomtoggle{P}}}  \circ   R \circ {\prec_{\phantomtoggle{P}}} \relsubseteq  {\prec_{\phantomtoggle{P}}}  \circ   R  \circ  {\prec_{\phantomtoggle{P}}}$. This was used successfully in \autoref{prp:one-step} to simplify the expression of the relation $\lequpr{R}$.

It is not clear whether almost transitivity passes to additive closures. However, below we give a sufficient condition for this to happen (see Proposition~\ref{prp:almost-refinement-transitivity}). This condition is also present in a number of situations.

We say a $\W$-semigroup $(S,\prec)$ has \emph{almost refinement} if, given $a_1,a_2,a_1',a_2',b_1, b_2 \in S$ such that $a_i' \prec a_i$ for $i = 1,2$ and $a_1 + a_2 \prec b_1 + b_2$, then there are elements $x_{ij}\in S$ for $i,j=1,2$ such that $a_i' \prec x_{i1}+x_{i2}$ for $i = 1,2$ and $x_{1j}+x_{2j}\prec b_j$ for $j = 1,2$. 

It follows by induction that $S$ has almost refinement if and only if for any integers ${m}, {n} \geq 2$ and any $a_1, \ldots, a_{m}, a_1',\ldots, a_{m}', b_1, \ldots,  b_{n} \in S$ satisfying $a_i' \prec a_i$ for $i = 1, \ldots, {m}$ and $a_1 + \ldots + a_{m} \prec b_1 + \ldots + b_n$, there are elements $x_{ij}\in S$ for $i = 1, \ldots, {m}$ and $j = 1, \ldots, {n}$ such that $a_i' \prec x_{i1} + \ldots + x_{in}$ for $i = 1, \ldots, {m}$ and $x_{1j} + \ldots + x_{{m}j}\prec b_j$ for $j = 1, \ldots, {n}$. 

More generally, a relation $R$ on $S$ has \emph{$\prec$-almost $({m},{n})$-refinement} for some positive integers ${m}$ and ${n}$ if for any $a_1, \ldots, a_{m}, a_1',\ldots, a_{m}', b_1, \ldots,  b_{n} \in S$ satisfying $a_i' \prec a_i$ for $i = 1, \ldots, {m}$ and $\left(a_1 + \ldots + a_{m}\right) \mathrel{\prec \circ \mathrel{R} \circ \prec} \left(b_1 + \ldots + b_n\right)$, there are elements $x_{ij}, y_{ij} \in S$ for $i = 1, \ldots, {m}$ and $j = 1, \ldots, {n}$ such that $a_i' \prec x_{i1} + \ldots + x_{in}$, $x_{ij} \mathrel{\prec \circ \mathrel{R} \circ \prec} y_{ij}$, and $y_{1j} + \ldots + y_{{m}j}\prec b_j$   for $i = 1, \ldots, {m}$ and $j = 1, \ldots, {n}$. We also say $R$ has \emph{$\prec$-almost refinement} if it has $\prec$-almost $({m},{n})$-refinement for any positive integers ${m}$ and ${n}$. 

It follows from the definition of a $\W$-semigroup that the relation $\prec$ has $\prec$-almost refinement if and only if $S$ has almost refinement. 

It is also an easy exercise to verify that, if $S$ is a semigroup that has the Riesz decomposition property and the Riesz refinement property, then $S$ has the almost refinement property. In particular, this is the case for $S=\W(A)$, where $A$ is a \ca{} of real rank zero and stable rank one (see, e.g. \cite{Per97StructurePositive}).
\end{pgr}
The proof of the following lemma is routine.

\begin{lma} \label{lma:almost-refinement-basic}
Let $R$ be a relation on a $\W$-semigroup $(S, \prec)$. 
\beginEnumConditions
\item \label{lma:almost-refinement-basic:inherit} If $R$ has $\prec$-almost $({m},{n})$-refinement for some positive integers ${m}$ and ${n}$, then it has $\prec$-almost $(p,q)$-refinement for any positive integers $p \leq {m}$ and $q \leq {n}$. 
\item \label{lma:almost-refinement-basic:product} If $S$ has almost refinement and $R$ has $\prec$-almost $({m},{n})$-refinement and $\prec$-almost $(p,q)$-refinement for some positive integers ${m}$, ${n}$, $p$, and $q$, then $R$ has $\prec$-almost $({m} p,{n} q)$-refinement.  
\item \label{lma:almost-refinement-basic:generate} If $S$ has almost refinement and $R$ has $\prec$-almost $(2,1)$-refinement and $\prec$-almost $(1,2)$-refinement, then $R$ has $\prec$-almost refinement.  
\end{enumerate}
\end{lma}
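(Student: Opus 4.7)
The plan is to prove the three parts in sequence, deriving part (iii) by combining parts (i) and (ii).

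For part (i), given $(p, q)$-data with $p \leq m$ and $q \leq n$, the strategy is to pad by setting the extra $a_i, a_i', b_j$ to zero to form $(m, n)$-data, apply the hypothesised $(m, n)$-refinement to obtain $x_{ij}, y_{ij}$ for $i \leq m$ and $j \leq n$, and then extract a $(p, q)$-refinement by aggregating the excess. Concretely, the excess columns of $x$ (indices $j > q$) will be bundled into $\hat{x}_{i, q}$ and the excess rows of $y$ (indices $i > p$) into $\hat{y}_{p, j}$, while other entries are copied verbatim. The row sums $a_i' \prec \sum_j \hat{x}_{ij}$ and the column sums $\sum_i \hat{y}_{ij} \prec b_j$ then telescope back to the original $(m, n)$-sums (using \axiomW{3} additivity); the per-cell $\mathrel{\prec \circ R \circ \prec}$ relation at the boundary cell is preserved by the elementary absorption identity $\mathrel{\prec \circ R \circ \prec \circ \leq} \subseteq \mathrel{\prec \circ R \circ \prec}$ (valid because $\prec$ is auxiliary for $\leq$), which allows us to enlarge the $y$-side entries freely without breaking the $R$-relation.

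For part (ii), given $(mp, nq)$-data, the approach is to reindex the $a$'s as $a_{i, l}$ for $i \in \{1, \dots, m\}$ and $l \in \{1, \dots, p\}$, and the $b$'s as $b_{k, \ell}$ for $k \in \{1, \dots, n\}$ and $\ell \in \{1, \dots, q\}$. Summing along the inner axes produces aggregates $A_i = \sum_l a_{i, l}$ and $B_k = \sum_\ell b_{k, \ell}$, and the hypothesised $(m, n)$-refinement applied to $(A_i, A_i', B_k)$ yields intermediates $X_{ik}, Y_{ik}$ with $A_i' \prec \sum_k X_{ik}$, $X_{ik} \mathrel{\prec \circ R \circ \prec} Y_{ik}$, and $\sum_i Y_{ik} \prec B_k$. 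Next, invoking the almost refinement of $S$ (extended by induction to its $(p, n)$- and $(m, q)$-dimensional versions), one splits $\sum_l a_{i, l}' \prec \sum_k X_{ik}$ into per-summand contributions $x_{i, l, k}$ and analogously decomposes $\sum_i Y_{ik} \prec \sum_\ell b_{k, \ell}$ into $y_{i, k, \ell}$. For each cell $(i, k)$, the derived relation $\sum_l x_{i, l, k} \mathrel{\prec \circ R \circ \prec} \sum_\ell y_{i, k, \ell}$ is then refined by the hypothesised $(p, q)$-refinement to produce pairs $(z^{(i,k)}_{l, \ell}, w^{(i,k)}_{l, \ell})$; assembling these across cells under the full $(mp) \times (nq)$ index set yields the sought $(mp, nq)$-refinement.

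Part (iii) then follows by iteration: applying (ii) to the hypothesised $(2, 1)$- and $(1, 2)$-refinements yields a $(2, 2)$-refinement, iterating produces $(2^k, 2^k)$-refinement for every $k \geq 0$, and part (i) reduces this to arbitrary $(m, n)$-refinement by choosing $k$ with $2^k \geq \max(m, n)$. The hardest step will be the bookkeeping in part (ii), where invoking almost refinement of $S$ requires a strict predecessor $a'' \prec a'$ that is not directly available from the input $a_{i, l}' \prec a_{i, l}$ of the refinement problem. The remedy will be to use \axiomW{1}-cofinality to interpolate intermediary predecessors, and to choose them close enough to $a_{i, l}'$ (together with density of $\prec$ inside $\mathrel{\prec \circ R \circ \prec}$) that the final sum condition $a_{i, l}' \prec \sum_{k, \ell} z^{(i,k)}_{l, \ell}$ holds for the original primes rather than only for the auxiliary predecessors.
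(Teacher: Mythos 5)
The paper declares this lemma ``routine'' and omits the proof entirely, so there is no argument of the authors' to compare against; judged on its own terms, your part (i) has a genuine gap in the column reduction, and part (iii) inherits it. The problem is the boundary cell $(i,q)$ after bundling: you set $\hat{x}_{i,q}=x_{i,q}+x_{i,q+1}+\cdots+x_{i,n}$ while $\hat{y}_{i,q}$ stays essentially $y_{i,q}$, and you must then verify $\hat{x}_{i,q}\mathrel{\prec\circ R\circ\prec}\hat{y}_{i,q}$. Your absorption identity (which at bottom is just \axiomW{3}: if $v\prec y$ then $v\prec y+z$) only licenses enlarging the \emph{right}-hand entry of a cell; here you have enlarged the \emph{left}-hand entry, and two instances $x_{i,q}\mathrel{\prec\circ R\circ\prec}y_{i,q}$ and $x_{i,j}\mathrel{\prec\circ R\circ\prec}y_{i,j}$ cannot be summed into a single instance unless $R$ is additively closed, which is not assumed. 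Nor does the zero padding control the excess $x_{i,j}$: from $\sum_i y_{i,j}\prec b_j=0$ one only gets that the $R$-witnesses $v_{i,j}$ satisfy $v_{i,j}\prec 0$ (so each junk cell individually satisfies $x_{i,j}\mathrel{\prec\circ R\circ\prec}z$ for every $z$), but this bounds neither $x_{i,j}$ nor $u_{i,j}$, so the extra columns can neither be dropped (that would break $a_i'\prec\sum_j\hat{x}_{i,j}$) nor merged into an existing cell. The row reduction, by contrast, is fine, precisely because there you only ever enlarge $y$-entries. As it stands, the descent in the second index needs a genuinely different idea.

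Part (ii) is essentially correct: the two-stage splitting (the $(m,n)$-refinement of the aggregates $A_i=\sum_l a_{i,l}$ and $B_k=\sum_\ell b_{k,\ell}$, then almost refinement of $S$ to disaggregate both sides, then the $(p,q)$-refinement applied cell by cell) assembles into an $(mp,nq)$-grid, and your proposed remedy for the missing strict predecessors --- interpolating via \axiomW{1} at the outset and pushing predecessors through with \axiomW{4} so that the final row sums still dominate the original $a_{i,l}'$ --- is the right fix. But your part (iii) only produces $(2^k,2^l)$-refinements before invoking part (i) to descend to arbitrary $(m,n)$, so until the column reduction in (i) is repaired or replaced, the proof of (iii) is incomplete as well.
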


\begin{prp}\label{prp:almost-refinement-transitivity}
Let $(S, \prec)$ be a $\W$-semigroup having almost refinement. Let $R$ be a left $\prec$-continuous relation on $S$ that has $\prec$-almost refinement. If $R$ is $\prec$-almost transitive in the sense of \autoref{dfn:almost-transitive}, so are the additive closures $R_+$ and $(R + \operatorname{id})_+$. 
\end{prp}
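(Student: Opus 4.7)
I will prove the statement first for $R_+$, and then reduce the case of $(R+\operatorname{id})_+$ to it.

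For $R_+$, suppose $a \prec u_1 R_+ u_2 \prec u_3 R_+ u_4 \prec b$; the goal is to produce $v, w$ with $a \prec v\, R_+\, w \prec b$. Unpacking $R_+$, write $u_1=\sum_{i=1}^m p_i$, $u_2=\sum_{i=1}^m q_i$ with $p_i R q_i$, and $u_3=\sum_{j=1}^n r_j$, $u_4=\sum_{j=1}^n s_j$ with $r_j R s_j$. By iterated application of \axiomW{4} to $a\prec u_1$ find $p_i'\prec p_i$ with $a\prec\sum_i p_i'$, and then by left $\prec$-continuity of $R$ upgrade $p_i'\prec p_i R q_i$ to $p_i'\prec \tilde p_i R \tilde q_i\prec q_i$. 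Since $\sum_i\tilde q_i\prec u_2\prec u_3=\sum_j r_j$, iterated almost refinement of $S$ produces $y_{ij}\in S$ with $\tilde q_i\prec\sum_j y_{ij}$ and $\sum_i y_{ij}\prec r_j$.

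Now apply $\prec$-almost refinement of $R$ twice. First, for each fixed $i$, the chain $p_i'\prec\tilde p_i R \tilde q_i\prec\sum_j y_{ij}$ (after inserting an auxiliary $\prec$ via \axiomW{1}) places us in the hypothesis of $\prec$-almost $(1,n)$-refinement of $R$, yielding $x_{ij}^{(1)}, z_{ij}^{(1)}\in S$ with $p_i'\prec\sum_j x_{ij}^{(1)}$, $x_{ij}^{(1)}\mathrel{\prec\circ R\circ\prec} z_{ij}^{(1)}$, and $z_{ij}^{(1)}\prec y_{ij}$. Second, for each fixed $j$, we have $\sum_i z_{ij}^{(1)}\prec\sum_i y_{ij}\prec r_j R s_j$; left $\prec$-continuity of $R$ gives $\sum_i z_{ij}^{(1)}\mathrel{\prec\circ R\circ\prec} s_j$, so $\prec$-almost $(m,1)$-refinement of $R$ yields $x_{ij}^{(2)}, z_{ij}^{(2)}\in S$ with $z_{ij}^{(1)}\prec x_{ij}^{(2)}$, $x_{ij}^{(2)}\mathrel{\prec\circ R\circ\prec} z_{ij}^{(2)}$, and $\sum_i z_{ij}^{(2)}\prec s_j$. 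Concatenating, each pair $(i,j)$ satisfies $x_{ij}^{(1)}\mathrel{\prec\circ R\circ\prec\circ R\circ\prec} z_{ij}^{(2)}$, which by $\prec$-almost transitivity of $R$ collapses to $x_{ij}^{(1)}\prec c_{ij} R d_{ij}\prec z_{ij}^{(2)}$ for some $c_{ij}, d_{ij}$. Setting $v=\sum_{i,j} c_{ij}$ and $w=\sum_{i,j} d_{ij}$, \axiomW{3} assembles the chain $a\prec\sum_i p_i'\prec\sum_{i,j} x_{ij}^{(1)}\prec v\, R_+\, w\prec\sum_j\sum_i z_{ij}^{(2)}\prec\sum_j s_j=u_4\prec b$, as desired.

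For $(R+\operatorname{id})_+$, I observe the identity $(R+\operatorname{id})_+=(R\cup\operatorname{id})_+$ (both coincide with $R_+ + \operatorname{id}$, as is direct from the definitions; compare \autoref{rmk:plus_identity_relation}). The relation $R':=R\cup\operatorname{id}$ inherits left $\prec$-continuity (trivially, since $\operatorname{id}$ is), $\prec$-almost transitivity (the four summands of $\prec\circ R'\circ\prec\circ R'\circ\prec$ reduce to $\prec\circ R\circ\prec$ using either the hypothesis on $R$ or the triviality of $\operatorname{id}$), and $\prec$-almost refinement (the summand purely over $R$ uses the hypothesis on $R$, while the summand involving $\prec$ alone is handled by almost refinement of $S$). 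Applying the first part of the proof to $R'$ then gives the desired conclusion for $(R+\operatorname{id})_+=(R')_+$.

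The main obstacle is the bookkeeping in the first part: one must repeatedly use \axiomW{1}, \axiomW{4}, and left $\prec$-continuity to produce a $\prec$-symbol on the left wherever it is needed so that $\prec$-almost refinement of $R$ (whose hypothesis requires a $\prec\circ R\circ\prec$ relation) can be invoked, and the decompositions must be laid out so that the $\prec$-almost transitivity of $R$ can be applied cleanly within each $(i,j)$-coordinate at the end.
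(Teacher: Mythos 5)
Your argument for $R_+$ is correct and follows the same route as the paper's proof: decompose both $R_+$-steps into $R$-pairs, use \axiomW{4} and left $\prec$-continuity to insert the needed $\prec$-buffers, apply almost refinement of $S$ to the middle inequality to produce the matrix $(y_{ij})$, apply $\prec$-almost $(1,n)$- and $(m,1)$-refinement of $R$ to the two $R$-layers, collapse each $(i,j)$-coordinate by $\prec$-almost transitivity, and reassemble with \axiomW{3}. Up to renaming and the exact points at which left $\prec$-continuity is invoked, this is the paper's argument.

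The reduction for $(R+\operatorname{id})_+$ contains a genuine flaw: the identity $(R+\operatorname{id})_+=(R\cup\operatorname{id})_+$ is false in general. By \autoref{rmk:plus_identity_relation} one has $(R+\operatorname{id})_+=R_++\operatorname{id}$, whereas a sum of pairs each lying in $R\cup\operatorname{id}$ may consist \emph{entirely} of identity pairs, so $(R\cup\operatorname{id})_+=(R_++\operatorname{id})\cup\operatorname{id}$; and the diagonal need not be contained in $R_++\operatorname{id}$ (membership of $(c,c)$ there forces some $(a,b)\in R_+$ and $c'$ with $a+c'=b+c'=c$, which already fails for $(0,0)$ when, say, $R=\{(1,2)\}$ on $\mathbb{N}$). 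Proving almost transitivity of the strictly larger relation $(R\cup\operatorname{id})_+$ therefore only yields
\[
{\prec}\circ(R+\operatorname{id})_+\circ{\prec}\circ(R+\operatorname{id})_+\circ{\prec}\ \subseteq\ \bigl({\prec}\circ(R+\operatorname{id})_+\circ{\prec}\bigr)\cup{\prec},
\]
and ${\prec}$ need not be contained in ${\prec}\circ(R+\operatorname{id})_+\circ{\prec}$, so the desired conclusion for $(R+\operatorname{id})_+$ does not formally follow. The problematic branch is real at the level of your argument: the $\prec$-almost refinement of $R\cup\operatorname{id}$ is only an existence statement and may resolve every $(i,j)$-coordinate through the $\operatorname{id}$-branch even when the input chain contains genuine $R$-pairs. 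This is repairable, e.g.\ by exploiting $(R+\operatorname{id})_+=R_++\operatorname{id}$ to treat each step as one $R_+$-block plus one $\operatorname{id}$-block and running your refinement scheme on that two-block decomposition (the paper itself only says this case is ``similar''), but as written the second half of your proof does not close, and the identity you invoke is the false step.
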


\begin{proof}
We prove the case for $R_+$, that is, $\prec \circ \mathrel{R_+}  \circ  \prec  \circ  \mathrel{R_+} \circ \prec \relsubseteq  \prec \circ  \mathrel{R_+}  \circ  \prec$, the case for $(R + \operatorname{id})_+$ being similar. 
To this end, fix $a,b,c_i, d_i, e_j, f_j \in S$ for $i = 1, \ldots, m$ and $j = 1, \ldots, n$ satisfying $a \prec \sum_{i'=1}^{m} c_{i'}$, $c_i \mathrel{R} d_i$, $\sum_{i'=1}^{m} d_{i'} \prec \sum_{j'=1}^{n} e_{j'}$, $e_j \mathrel{R} f_j$, and $\sum_{j'=1}^{n} f_{j'} \prec b$ for all $i$ and $j$. 
Applying \axiomW{4} and the left $\prec$-continuity of $R$ (twice), we obtain $c_i', c_i'', d_i', e_i', e_i'', f_i' \in S$ for $i = 1, \ldots, m$ such that $a \prec \sum_{i'=1}^{m} c_{i'}'$, $c_i' \prec c_i''$, $c_i'' \mathrel{R} d_i'$, $d_i' \prec d_i$, $\sum_{i'=1}^{m} d_{i'} \prec \sum_{j'=1}^{n} e_{j'}'$, $e_j' \prec e_j''$, $e_j'' \mathrel{R} f_j'$, and $f_j' \prec f_j$ for all $i$ and $j$. 

Combining that $S$ has almost refinement with a second usage of \axiomW{4}, we find $x_{ij}$ and $x_{ij}'$ for $i = 1, \ldots, m$ and $j = 1, \ldots, n$ such that $d_i' \prec \sum_{j' = 1}^{n} x_{ij'}'$, $x_{ij}' \prec x_{ij}$, and $\sum_{i' = 1}^{m} x_{i'j} \prec e_j'$. 

For $i = 1, \ldots, m$, since $c_i' \prec \circ \mathrel{R} \circ \prec \sum_{j' = 1}^{n} x_{ij'}'$ and $R$ has $\prec$-almost refinement, there are $y_{ij}$ and $z_{ij}$ for $i = 1, \ldots, m$ and $j = 1, \ldots, n$ such that $c_i' \prec \sum_{j' = 1}^{n} y_{ij'}$, $y_{ij} \prec \circ \mathrel{R} \circ \prec z_{ij}$, and $z_{ij} \prec x_{ij}'$. 

Next, for $j = 1, \ldots, n$, since $\sum_{i' = 1}^{m} x_{i'j} \prec \circ \mathrel{R} \circ \prec f_j$ and $R$ has $\prec$-almost refinement, there are $u_{ij}$ and $v_{ij}$ for $i = 1, \ldots, m$ and $j = 1, \ldots, n$ such that $x_{ij}' \prec u_{ij}$, $u_{ij} \prec \circ \mathrel{R} \circ \prec v_{ij}$, and $\sum_{i' = 1}^{m} v_{i'j} \prec f_j$. 

Also, for $i = 1, \ldots, m$ and $j = 1, \ldots, n$, since $y_{ij} \prec \circ\mathrel{R} \circ\prec \circ \mathrel{R} \circ \prec v_{ij}$ and $R$ is $\prec$-almost transitive, there are $w_{ij}$ and $z_{ij}$ such that $y_{ij} \prec w_{ij}$, $w_{ij} \mathrel{R} z_{ij}$, and $z_{ij} \prec v_{ij}$. 

Finally, since 
\[	
a \prec \sum_{i=1}^{m} c_{i}' \prec \sum_{i=1}^{m} \sum_{j = 1}^{n} y_{ij} \prec \sum_{i=1}^{m} \sum_{j = 1}^{n} w_{ij}
\text{ and }
\sum_{i=1}^{m} \sum_{j = 1}^{n} z_{ij} \prec \sum_{j = 1}^{n} \sum_{i=1}^{m} v_{ij} \prec \sum_{j = 1}^{n} f_j \prec b,
\]	  
we conclude that $a \prec \circ  \mathrel{R_+}  \circ  \prec b$, as desired. 
\end{proof} 
\begin{cor}
\label{cor:one-step} Let $(S, \prec)$ be a $\W$-semigroup, and let $R$ be a left $\prec$- continuous relation that has $\prec$-almost refinement and is $\prec$-almost transitive. Then, for any $a,b\in S$, we have
\[
a\leqr{R}b\iff \text{for any }c\in a^\prec, \text{ either }c\prec b\text{ or else }(c,b)\in {\prec_{\phantomtoggle{P}}}  \circ   (R+\operatorname{id})_+  \circ  {\prec_{\phantomtoggle{P}}}.
\]
\end{cor}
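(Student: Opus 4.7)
The plan is to assemble the claim from three ingredients proved earlier in the section, reducing it to a single application of \autoref{prp:one-step} to the additively closed relation $(R+\operatorname{id})_+$. First, I would invoke \autoref{lma:axiomatic-merging-additive-closure}, which identifies the preorder $\leqr{R}$ with $\lequpr{(R+\operatorname{id})_+}$, the $\prec$-prenormal preorder generated on the underlying $\W$-set $(S,0,\prec)$. This converts the statement about the normal preorder $\leqr{R}$ into a statement purely about prenormal generation, where \autoref{prp:one-step} can be applied directly.

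Next, I would verify that the hypotheses of \autoref{prp:one-step} are met by $(R+\operatorname{id})_+$: it must be left $\prec$-continuous and $\prec$-almost transitive. The first property is immediate from \autoref{lma:additive-closure-continuity}, which says that additive closures of left $\prec$-continuous relations remain left $\prec$-continuous (note that $\operatorname{id}$ is trivially left $\prec$-continuous, and the sum and additive closure of left $\prec$-continuous relations retain this property). The second property is exactly the conclusion of \autoref{prp:almost-refinement-transitivity}: under the assumption that $S$ has almost refinement (implicit in the setup, since $R$ has $\prec$-almost refinement and in the standing context of the corollary $\prec$ itself does) and that $R$ is left $\prec$-continuous, has $\prec$-almost refinement, and is $\prec$-almost transitive, the additive closure $(R+\operatorname{id})_+$ is itself $\prec$-almost transitive.

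With these two properties verified, \autoref{prp:one-step} applied to the relation $(R+\operatorname{id})_+$ yields precisely the claimed equivalence: $a \lequpr{(R+\operatorname{id})_+} b$ if and only if, for every $c\in a^{\prec}$, either $c\prec b$ or $(c,b)\in {\prec}\circ (R+\operatorname{id})_+\circ {\prec}$. Combining this with the identification $\leqr{R}=\lequpr{(R+\operatorname{id})_+}$ from the first step finishes the proof. The main obstacle has already been handled in the body of the section, namely establishing that $\prec$-almost transitivity survives taking additive closures under an almost refinement hypothesis, which is the content of \autoref{prp:almost-refinement-transitivity}; everything else in the proof of the corollary is a matter of routine bookkeeping. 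One might also cross-check by verifying the easier implication directly: if $(c,b)\in {\prec}\circ (R+\operatorname{id})_+\circ {\prec}$ for all $c\prec a$, then since $\prec$, $R$, and $\operatorname{id}$ are all contained in $\leqr{R}$ and $\leqr{R}$ is transitive and additively closed, we obtain $c\leqr{R}b$ for all such $c$, and then condition \ref{dfn:axiomatic-merging-additive::closure} in \autoref{dfn:axiomatic-merging-additive} gives $a\leqr{R}b$.
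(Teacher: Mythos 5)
Your proposal is correct and follows exactly the route the paper takes: identify $\leqr{R}$ with $\lequpr{(R+\operatorname{id})_+}$ via \autoref{lma:axiomatic-merging-additive-closure}, use \autoref{lma:additive-closure-continuity} and \autoref{prp:almost-refinement-transitivity} to check that $(R+\operatorname{id})_+$ satisfies the hypotheses of \autoref{prp:one-step}, and conclude. Your observation that the almost-refinement hypothesis on $S$ itself is needed to invoke \autoref{prp:almost-refinement-transitivity} (and is only implicit in the corollary's statement) is a fair and careful point, but it does not change the argument.
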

\begin{proof}
Use \autoref{prp:almost-refinement-transitivity}, \autoref{lma:axiomatic-merging-additive-closure} and \autoref{prp:one-step}.
\end{proof}	

\section{Dynamical $\CatW$-semigroups} \label{sec:dynamical_semigroups}
\label{sec:Dynsemigroup}

In this section, we discuss the main example of our construction of ideal-free quotient $\CatW$-semigroups, namely dynamical $\CatW$-semigroups, which are quotients associated to group actions on $\CatW$-semigroups. Throughout the section, the symbol $G$ always stands for a fixed (discrete) group. 

\begin{pgr}[$G$-actions on $\W$-semigroups]\label{dfn:actions-W-semigroups}
	Let $(S, \prec)$ be a $\W$-semigroup. A \emph{$G$-action} on $S$ is a map $\alpha \colon G\times S\to S$, denoted by $(g, a)\mapsto g a$, such that
	\beginEnumConditions
	\item If $a\prec b$ in $S$ and $g$ is an element of $G$, then $ga\prec gb$.
	\item $g(a+b)=ga+gb$, for $g\in G$ and $a,b\in S$.
	\item $ea=a$ for any $a\in S$, where $e\in G$ is the neutral element of $G$.
	\item $(gh)a=g(ha)$, for any $g,h\in G$ and any $a\in S$.
\end{enumerate}
A $\CatW$-semigroup (respectively, $\CatCu$-semigroup) $S$, together with an action $\alpha$ from a group $G$ as above will be called a $\CatGW$-semigroup (respectively, $\CatGCu$-semigroup). 
Observe that a $\CatW$-semigroup (respectively, $\CatCu$-semigroup) is just an $\{e\}\text{ \!-}\CatW$-semigroup (respectively, $\{e\}\text{ \!-}\CatCu$-semigroup) with the obvious action. 

Given $\CatGW$-semigroups $(S, \prec_S, G)$ and $(T, \prec_T, G)$, we say that a map $f \colon S \to T$ is an \emph{equivariant $\W$-morphism} if it is a $\W$-morphism and satisfies $f(g a) = g f(a)$ for any $a \in S$. 

If, in the above, the action on $T$ is trivial, then we say that $f$ is an \emph{invariant $\W$-morphism}. 
\end{pgr}
\begin{pgr}[The dynamical Cuntz semigroup]	
Let $(S,\prec, G)$ be a $\CatGW$-semigroup and let $\app$ be the \emph{orbit equivalence relation}, i.e., we write $a\app b$, for $a,b \in S$ if $a = g b$ for some $g \in G$. Notice that $\app$ is left $\prec$-continuous. Indeed, if $a,b,c\in S$ and $g\in G$ satisfy $a\prec b=gc$, then using \axiomW{1} to find $a'$ such that $a\prec a'\prec b$, we have $a\prec a'$, $a'\app g^{-1}a'$, and $g^{-1}a'\prec c$.

We write $\dyn$ for the normal closed preorder generated by $\app$ (c.f., ~\autoref{dfn:axiomatic-merging-additive}), i.e., it is the strongest preorder $\leq$ on $S$ satisfying that, for all $a, b \in S$, any of the following conditions implies $a \leq b$:
\beginEnumConditions
\item\label{dfn:actions-W-semigroups::P} $a^\prec \subseteq b^\prec$; 
\item\label{dfn:actions-W-semigroups::R} $a = g b$ for some $g \in G$; 
\item\label{dfn:actions-W-semigroups::closure} for any $c \in a^{\prec}$, we have $c \leq b$;
\item\label{dfn:actions-W-semigroups::additive} there are $a_1, a_2, b_1, b_2$ in $S$ such that $a = a_1 + a_2$, $b = b_1 + b_2$, and $a_k \leq b_k$ for $k = 1, 2$.
\end{enumerate}
Write $\alphar{G}=(\prec,\leqr{G})$, and $S/G$ for the quotient $\W$-semigroup $S  \slash  {\alphar{G}}$, which we term the \emph{dynamical $\CatW$-semigroup} associated to the action of $G$ on $S$. We write $\pi_G \colon S \to S / G$ for the canonical surjection. 

In the case of a C*-algebra $A$ together with an action of a group $G$ on $A$, we note that this action extends naturally to $\W(A)$ by setting $g\cdot[a]=[g\cdot a]$, and $\W(A)$ becomes in this fashion a $\CatGW$-semigroup, as is easy to verify. We set $\Dyn(A):=\W(A) / G$, and refer to is as the \emph{dynamical Cuntz semigroup} of $A$. 
\end{pgr}


\begin{cor}\label{cor:dynamical-subequivalence}
Let $(S, \prec, G)$ be a $\CatGW$-semigroup. 
Then for any $a, b \in S$, the following statements are equivalent: 
\beginEnumConditions
\item\label{cor:dynamical-subequivalence::axiomatic} $a \leqr{G} b$; 
\item\label{cor:dynamical-subequivalence::prec} for any $c \in a^{\prec}$,	we have $c \prec b$ or there are positive integers $m$ and $n$, elements $g_{ij}$ in $G$ and elements $d_{ij}$ in $S$, for $i = 1, \ldots, m$ and $j = 1, \ldots, n$, such that 
\[
c \prec \sum_{j=1}^{n} d_{1j} , \quad \sum_{j=1}^{n} g_{1j} d_{1j} \prec \sum_{j=1}^{n} d_{2j} , \quad \ldots \quad   \sum_{j=1}^{n} g_{m-1,j} d_{m-1,j} \prec \sum_{j=1}^{n} d_{mj} , \quad \sum_{j=1}^{n} g_{mj} d_{mj} \prec b \; . 
\]
\end{enumerate}
Moreover, if $S$ has almost refinement, we can always assume $m=1$ above, that is, the chain of relations may be replaced by
\[
c \prec \sum_{j=1}^{n} d_{1j} , \quad \sum_{j=1}^{n} g_{1j} d_{1j} \prec b \; .
\] 
\end{cor}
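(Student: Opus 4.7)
The strategy is to apply the general machinery of \autoref{sec:extendingnormal} to the orbit relation $R := \app$. By \autoref{lma:axiomatic-merging-additive-closure}, $\leqr{G}$ coincides with $\lequpr{(\app + \operatorname{id})_+}$; and since $a = e \cdot a$ for every $a \in S$, the identity relation is contained in $\app$, so \autoref{rmk:plus_identity_relation} collapses this to $(\app + \operatorname{id})_+ = \app_+$. Unfolding the additive closure (see \autoref{dfn:additive-closure}), together with symmetry of $\app$, yields: $u \mathrel{\app_+} v$ if and only if there are $n \geq 1$, elements $d_1, \dots, d_n \in S$, and $g_1, \dots, g_n \in G$ with $v = \sum_{j=1}^n d_j$ and $u = \sum_{j=1}^n g_j d_j$. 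Note also that $\app$ is left $\prec$-continuous (this is observed just before $\leqr{G}$ is introduced), and hence so is $\app_+$ by \autoref{lma:additive-closure-continuity}.

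For the main equivalence, I would feed $\app_+$ directly into \autoref{thm:concrete-merging}: this yields $a \leqr{G} b$ if and only if for every $c \in a^\prec$, either $c \prec b$ or there is a chain
\[
c \prec u_1 \mathrel{\app_+} v_1 \prec u_2 \mathrel{\app_+} v_2 \prec \cdots \prec u_m \mathrel{\app_+} v_m \prec b .
\]
Inserting the concrete description of $\app_+$ at each link and padding all decompositions to a common length $n$ (using $0 = e \cdot 0$), one relabels indices to obtain the chain displayed in (ii); the converse direction is immediate from the same unfolding.

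For the ``moreover'' clause, when $S$ has almost refinement I would invoke \autoref{cor:one-step} applied to $\app$, which requires $\prec$-almost transitivity and $\prec$-almost refinement of $\app$. Almost transitivity is immediate from equivariance: if $a \prec x_1 \app x_2 \prec x_3 \app x_4 \prec b$ with $x_1 = g_1 x_2$ and $x_3 = g_2 x_4$, then $a \prec g_1 x_2 \prec (g_1 g_2) x_4 \app x_4 \prec b$, so $(a,b) \in {\prec} \circ {\app} \circ {\prec}$. For $\prec$-almost refinement, suppose $a_i' \prec a_i$ and $\sum_i a_i \prec u \app v \prec \sum_j b_j$ with $u = g v$; then $\sum_i a_i \prec \sum_j g b_j$. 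Almost refinement of $S$ combined with \axiomW{4} produces $x_{ij} \prec z_{ij}$ with $a_i' \prec \sum_j x_{ij}$ and $\sum_i z_{ij} \prec g b_j$; set $y_{ij} := g^{-1} z_{ij}$, so $\sum_i y_{ij} \prec b_j$, and by density of $\prec$ pick $w_{ij}$ with $x_{ij} \prec w_{ij} \prec z_{ij} = g y_{ij}$, which gives $x_{ij} \prec w_{ij} \app g^{-1} w_{ij} \prec y_{ij}$, i.e., $x_{ij} \prec \circ \app \circ \prec y_{ij}$ as required. Then \autoref{cor:one-step} collapses the chain to length $m = 1$, and the same unfolding as before produces the simplified form. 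The main bookkeeping obstacle in the argument is the careful translation between $\app_+$ and the specific indexed sums displayed in (ii), particularly tracking which summand receives which group element after padding and reindexing.
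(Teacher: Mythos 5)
Your proposal is correct and follows essentially the same route as the paper's proof: the main equivalence via \autoref{lma:axiomatic-merging-additive-closure}, \autoref{rmk:plus_identity_relation}, \autoref{lma:additive-closure-continuity} and \autoref{thm:concrete-merging}, and the ``moreover'' clause by verifying that $\app$ has $\prec$-almost refinement and then collapsing the chain via \autoref{prp:almost-refinement-transitivity} and \autoref{prp:one-step} (packaged as \autoref{cor:one-step}). Your explicit verification that $\app$ is $\prec$-almost transitive is a useful addition, since the paper relies on this hypothesis of \autoref{prp:almost-refinement-transitivity} without spelling it out.
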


\begin{proof}
The equivalence between (i) and (ii) follows from~\autoref{thm:concrete-merging}, \autoref{lma:additive-closure-continuity}, \autoref{lma:axiomatic-merging-additive-closure}, and \autoref{rmk:plus_identity_relation}. 
For the last statement, assume that $S$ has almost refinement, and observe that the relation $\app$ satisfies $\prec$-almost refinement. To see this, assume that $a_i',a_i, b_j\in S$, for $i=1,\dots,n$, $j=1,\dots, m$ satisfy  $a_i'\prec a_i$,
\[
(a_1+\dots+a_n)\prec\circ\app\circ\prec (b_1+\dots+b_n).
\]
Thus, there are $c,d\in S$ and $g\in G$ such that $(a_1+\dots+a_m)\prec c$, $c=gd$, and $d\prec (b_1+\dots b_m)$. Therefore $g^{-1}a_1+\dots+g^{-1}a_m\prec d\prec (b_1+\dots+b_n)$, and the fact that $S$ has almost refinement, combined with \axiomW{4}, yields elements $y_{ij}'\prec y_{ij}$, for $i=1,\dots,m$, $j=1,\dots,n$ such that $g^{-1}a_i'\prec y_{i1}'+\dots+y_{in}'$ whereas $y_{1j}+\dots+y_{mj}\prec b_j$ for each $i, j$. Find by \axiomW{1} $y_{ij}''$ such that $y_{ij}'\prec y_{ij}''\prec y_{ij}$ and set $x_{ij}=gy_{ij}'$. Then it is easy to verify that 
\[
a_i'\prec x_{i1}+\dots+x_{in},\, x_{ij}\prec gy_{ij}''\app y_{ij}''\prec y_{ij}, \text{ and } y_{1j}+\dots y_{mj}\prec b_j,
\]
as required.

Now apply Propositions~\ref{prp:almost-refinement-transitivity},~\ref{prp:one-step}, and \autoref{lma:axiomatic-merging-additive-closure}. 
\end{proof}


\begin{cor}\label{cor:dynamical-Cuntz-subequivalence}
Let $A$ be a C*-algebra with an action by a group $G$.
Then the order on the dynamical Cuntz semigroup $\W(A)/G$ as defined in \autoref{dfn:actions-W-semigroups} is induced from the smallest preorder $\precsim$ on $M_\infty(A)_+$ such that for any $a, b \in M_\infty(A)_+$, any of the following conditions implies $a \precsim b$:
\beginEnumConditions
\item\label{cor:dynamical-Cuntz-subequivalence::Cu} $a \precsimr{\operatorname{Cu}} b$; 
\item\label{cor:dynamical-Cuntz-subequivalence::action} $a = g \cdot b$ for some $g \in G$; 
\item\label{cor:dynamical-Cuntz-subequivalence::closure} for any $c \in a^{\ll_{\operatorname{Cu}}}$, we have $c \precsim b$; 
\item\label{cor:dynamical-Cuntz-subequivalence::additive} there are $a_1, a_2, b_1, b_2$ in $M_\infty(A)_+$ such that $a = a_1 \oplus a_2$, $b = b_1 \oplus b_2$, and $a_k \precsim b_k$ for $k = 1, 2$.
\end{enumerate}
\end{cor}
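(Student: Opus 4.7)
The plan is to transfer the axiomatic characterization of $\leqr{G}$ on $\CatW(A)$ (from \autoref{dfn:axiomatic-merging-additive}) down to the level of $M_\infty(A)_+$ via the quotient map $q \colon M_\infty(A)_+ \to \CatW(A)$, $q(a) = [a]$. Write $\precsim$ for the smallest preorder on $M_\infty(A)_+$ satisfying \enumCondition{1}--\enumCondition{4} in the statement; the collection of such preorders is closed under arbitrary intersection, so $\precsim$ is well-defined. Define an auxiliary relation $\precsim'$ on $M_\infty(A)_+$ by $a \precsim' b$ iff $[a] \leqr{G} [b]$. The goal is to show $\precsim = \precsim'$.

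For the containment $\precsim \subseteq \precsim'$, I would verify that $\precsim'$ is a preorder satisfying each of \enumCondition{1}--\enumCondition{4}, and then invoke minimality of $\precsim$. Condition \enumCondition{1} uses the characterization of the order in $\CatW(A)$ via $\precsimr{Cu}$, together with the fact that the order on $\CatW(A)$ is contained in $\leqr{G}$ by clause \enumCondition{1} in the definition of $\mathcal{E}^+_{\CatW(A)}(\app)$. Condition \enumCondition{2} reduces to $[a] \app [b]$ being contained in $\leqr{G}$. Condition \enumCondition{3} uses that the pull-back to $M_\infty(A)_+$ of the auxiliary relation $\prec_{\operatorname{Cu}}$ on $\CatW(A)$ is precisely $\ll_{\operatorname{Cu}}$ (by \axiomW{2} on $\CatW(A)$ and continuity of $q$), so the closure clause \enumCondition{3} on the $M_\infty(A)_+$ side corresponds exactly to clause \enumCondition{3} of \autoref{dfn:axiomatic-merging-additive} on the $\CatW(A)$ side. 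Condition \enumCondition{4} follows because block-sum $a_1 \oplus a_2$ realizes $[a_1] + [a_2]$ in $\CatW(A)$ and $\leqr{G}$ is additively closed.

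For the reverse containment $\precsim' \subseteq \precsim$, I would first check that $\precsim$ descends to a well-defined preorder $\bar{\precsim}$ on $\CatW(A)$: if $a \simr{Cu} a'$ and $b \simr{Cu} b'$, then $a' \precsimr{Cu} a \precsim b \precsimr{Cu} b'$, and clause \enumCondition{1} together with transitivity forces $a' \precsim b'$. Then I would verify that $\bar{\precsim}$ lies in $\mathcal{E}^+_{\CatW(A)}(\app)$: it contains the order on $\CatW(A)$ by \enumCondition{1}, it contains the orbit relation because $[a] = g[b]$ gives $a \precsimr{Cu} g \cdot b$ and $g \cdot b \precsim b$ by \enumCondition{2}, it satisfies the closure property via \enumCondition{3} using the correspondence $\ll_{\operatorname{Cu}} \leftrightarrow \prec_{\operatorname{Cu}}$ again, and it is additively closed by \enumCondition{4} combined with the identification $[a_1]+[a_2] = [a_1 \oplus a_2]$. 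Minimality of $\leqr{G}$ then yields $\leqr{G} \subseteq \bar{\precsim}$, which unwinds to $\precsim' \subseteq \precsim$.

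The one subtlety I expect to dwell on is the mutual translation in clause \enumCondition{3}. On the $\CatW(A)$-side the clause quantifies over $[c] \prec_{\operatorname{Cu}} [a]$, while on $M_\infty(A)_+$ it quantifies over $c \in a^{\ll_{\operatorname{Cu}}}$; these match precisely because for any $[c]$ with $[c] \prec_{\operatorname{Cu}} [a]$ we may, using \axiomW{1} for $\CatW(A)$, interpolate and then lift a Cuntz-subequivalent representative $c'$ with $c' \ll_{\operatorname{Cu}} a$ and $[c'] = [c]$, after which well-definedness of $\bar{\precsim}$ handles the discrepancy between $c$ and $c'$. Modulo this bookkeeping, both inclusions reduce to matching the four axioms on the two sides, so the result is essentially an unpacking of the definition of $\leqr{G}$ through the quotient map $q$.
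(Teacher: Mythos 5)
Your proposal is correct, but it reaches the conclusion by a genuinely different route from the paper. The paper's proof is a one-line appeal to three pieces of machinery: the definition of the generated normal pair (\autoref{dfn:axiomatic-merging-additive}), the axiomatic characterization of $\precsimr{\operatorname{Cu}}$ as the preorder generated from $(\llher,\leqher)$ by the relation $a=xbx^*$ (\autoref{cor:precsimher-axiomatic}), and the contraction of a two-step generation into a single one (\autoref{cor:merging-associative}); in other words, the paper views conditions \enumCondition{1}--\enumCondition{4} as the second stage of an iterated generation whose first stage already produced $\precsimr{\operatorname{Cu}}$, and invokes associativity of the merging construction. You instead bypass \autoref{cor:precsimher-axiomatic} and \autoref{cor:merging-associative} entirely and match the two minimality characterizations directly across the quotient map $q\colon M_\infty(A)_+\to\W(A)$: the pullback of $\leqr{G}$ satisfies \enumCondition{1}--\enumCondition{4} (giving one containment by minimality of $\precsim$), and the descent $\bar{\precsim}$ of $\precsim$ is well defined and lies in $\mathcal{E}^+_{\W(A)}(\app)$ (giving the other by minimality of $\leqr{G}$). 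Your checks are sound; in particular the translation of the closure clause works because $c\ll_{\operatorname{Cu}}a$ means $c\precsimr{\operatorname{Cu}}(a-\delta)_+$ for some $\delta>0$, which is exactly $[c]\precr{\operatorname{Cu}}[a]$ and holds for every representative of the class, so the two quantifications agree with no interpolation needed. What each approach buys: yours makes explicit the $M_\infty(A)_+\leftrightarrow\W(A)$ bookkeeping (well-definedness on Cuntz classes, $[a_1]+[a_2]=[a_1\oplus a_2]$) that the paper leaves implicit, and it needs nothing beyond the standard structure of $\W(A)$ as a $\W$-semigroup; the paper's route is shorter given the general machinery and explains structurally why $\precsimr{\operatorname{Cu}}$ can legitimately replace the more primitive conditions of \autoref{cor:precsimher-axiomatic} in the list of generators.
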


\begin{proof}
This follows from the definition given in~\autoref{dfn:axiomatic-merging-additive},~\autoref{cor:precsimher-axiomatic}, and Corollary~\ref{cor:merging-associative}.  
\end{proof}

\begin{rmk}
In \autoref{cor:dynamical-Cuntz-subequivalence} (see also \autoref{rmk:precsimher-axiomatic}), we may obtain equivalent formulations of the order in $\Dyn(A)$ by the following replacements:
\begin{itemize}
\item Conditions (i) and (iii)
may be replaced by the combination of the following three conditions: 
\begin{itemize}
\item[(i')] $a \precsimher b$ as in Example~\ref{exa:precsimher};
\item[(ii')] there exists $x \in M_\infty(A)$ such that $a = x b x^*$.
\item[(iii')] for any $c \in a^{\llher}$, we have $c \precsim b$; 
\end{itemize}
\item Condition~(ii') may further be replaced by 
\begin{itemize}
\item[(ii'')] there exists $x \in M_\infty(A)$ such that $a = x x^*$ and $b = x^* x$.
\end{itemize}
\item Condition (iii)
may also be replaced by the following: 
\begin{itemize}
\item[(iii'')] there is an increasing sequence $([a_n])_{n \in \mathbb{N}}$ in $\W(A)$ whose supremum $\sup_n^{\precsimr{\operatorname{Cu}}} [a_n]$ with regard to $\precsimr{\operatorname{Cu}}$ exists and is equal to $[a]$, and $a_n \precsim b$ for every $n \in \mathbb{N}$.
\end{itemize}
\end{itemize}
\end{rmk}

The (dynamical) $\CatW$-semigroup $S / G$ enjoys the following universal property. 

\begin{thm}\label{thm:dynamical-universal-property}
Let $(S, \prec_S, G)$ be a $\CatGW$-semigroup and let $(T, \prec_T)$ be a $\W$-semigroup. Then for any invariant $\W$-morphism $f \colon S \to T$, there is a unique order-preserving $\W$-morphism $f_G \colon S / G \to T/{\alpha_T}$ such that $\pi_{\alpha_T}\circ f = f_G \circ \pi_G$, i.e., the following diagram commutes:  
\[
\xymatrix{
S \ar[d]_{\pi_G} \ar[r]^{f} & T\ar[d]^{\pi_{\alpha_T}} \\
S  /G \ar@{-->}[r]_{f_G} & T/{\alpha_T}
}
\]
Moreover, the pair $(S / G, \pi_G)$ is the unique pair of a $\W$-semigroup and a $\W$-morphism that satisfies the above universal property. 
\end{thm}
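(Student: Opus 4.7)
The plan is to deduce this as a direct consequence of \autoref{thm:quotient-universal-property} applied to the relation $R = \app$, the orbit equivalence relation on $S$. Recall from the paragraph defining $\dyn$ that $\app$ is left $\prec_S$-continuous, and by construction $\alphar{G}$ is precisely the normal closed pair generated by $\app$, so $S / G = S \doubleslash \app$ and $\pi_G = \pi_\app$.

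First I would observe that any invariant $\W$-morphism $f \colon S \to T$ satisfies $f(ga) = f(a)$ for all $g \in G$ and $a \in S$, and therefore for any $(a,b) \in {\app}$ we have $f(a) = f(b)$, which forces $f(a)^{\prec_T} = f(b)^{\prec_T}$, i.e., $a \leq_f b$ (and symmetrically $b \leq_f a$). In particular, ${\app} \relsubseteq {\leq_f}$, so $\ker(f)$ contains $\app$ in the sense of \autoref{pgr:kernel}. Invoking \autoref{thm:quotient-universal-property} now yields a unique $\W$-morphism $f_G \colon S / G \to T / \alpha_T$ making the required diagram commute. The fact that $f_G$ is order-preserving is automatic, since by \autoref{rmk:fundamental-theorem} (the antisymmetrized version of \autoref{thm:fundamental-theorem0}) the factoring map is always order-preserving whenever $\alphar{G} \leq \ker(f)$, which is precisely what we have established.

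For uniqueness of the pair $(S/G, \pi_G)$, I would give the standard category-theoretic argument. Suppose $(S', \pi')$ is another pair consisting of a $\W$-semigroup $S'$ and a $\W$-morphism $\pi' \colon S \to S' / \alpha_{S'}$ (composed with the antisymmetrization) satisfying the same universal property. Apply the universal property of $(S/G, \pi_G)$ to $\pi'$ (viewed as an invariant $\W$-morphism into $S'$) to obtain an order-preserving $\W$-morphism $h \colon S/G \to S'/\alpha_{S'}$ with $h \circ \pi_G = \pi'$; conversely apply the universal property of $(S', \pi')$ to $\pi_G$ to obtain $k \colon S' \to S/G$ with $k \circ \pi' = \pi_G$. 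The uniqueness clause in each universal property then forces $k \circ h = \mathrm{id}_{S/G}$ and $h \circ k = \mathrm{id}_{S'/\alpha_{S'}}$, giving the desired isomorphism intertwining $\pi_G$ and $\pi'$.

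The main (and only really non-trivial) obstacle is verifying that the existence part reduces cleanly to \autoref{thm:quotient-universal-property}; the heart of it is the implication ``$f$ is $G$-invariant $\implies$ ${\app} \relsubseteq {\leq_f}$,'' which is immediate once one unpacks the definitions. Every other step is either a formal consequence of the universal property of the quotient $S \doubleslash R$ already proved, or a standard uniqueness-from-universality argument. In particular, no further work on the structure of $\leqr{G}$ or $\precr{G}$ is needed here, since that structure has already been absorbed into \autoref{thm:quotient-universal-property}.
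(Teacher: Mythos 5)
Your proposal is correct and follows exactly the paper's route: the paper's entire proof is the single line ``This follows from Theorem~\ref{thm:quotient-universal-property},'' and you have simply made explicit the details it leaves implicit (that $S/G = S \doubleslash \app$, that invariance of $f$ forces ${\app} \relsubseteq {\leq_f}$ so that $\ker(f)$ contains $\app$, and the standard uniqueness argument already packaged in that theorem's ``Moreover'' clause). No gaps.
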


\begin{proof}
This follows from Theorem~\ref{thm:quotient-universal-property}. 
\end{proof}

\section{A categorical interlude} \label{sec:categorical}

For many purposes, it is more convenient to work with $\CatCu$-semigroups than $\CatW$-semigroups. Fortunately, \cite{AntPerThi14arX:TensorProdCu} provides a way to take a completion of a $\CatW$-semigroup and obtain a $\CatCu$-semigroup. In this section, we study how this $\CatCu$-completion behaves in relation with dynamical $\CatW$-semigroups. For this purpose, it is convenient to formulate things in a categorical language.

\begin{pgr}[Reflective subcategories]
	\label{rmk:categories-reflective}
	In the following, recall that a subcategory is \emph{full} if for any two objects in the subcategory, all morphisms between them in the ambient category are also contained in the subcategory. A subcategory is \emph{reflective} if the inclusion functor has a left adjoint, called the \emph{reflector}. 
	
	t follows from the theory of adjoint functors that 
	a subca\-tegory is \emph{reflective} if for every object $Y$ in the ambient category, there is a \emph{universal morphism} from $Y$ to the inclusion functor, 
	i.e., a unique pair $\left( \overline{Y}, \pi \right)$ of an object $\overline{Y}$ in the subcategory and a morphism $\pi \colon Y \to \overline{Y}$ in the ambient category with the following universal property: for any object $X$ in the subcategory and every morphism $f \colon Y \to X$ in the ambient category, there exists a unique morphism $\overline{f} \colon \overline{Y} \to X$ in the subcategory with $\overline{f} \circ \pi = f$, as indicated in the following commutative diagram: 
	\[
	\xymatrix{
		Y \ar[d]_{\pi} \ar[rd]^{f} &  \\
		\overline{Y} \ar@{-->}[r]_{\overline{f}} & X
	}
	\]
	where the bottom row is required to be in the subcategory. In this case, the reflector is a functor that sends every object $Y$ in the ambient category to $\overline{Y}$ in the subcategory, and every morphism $f \colon Y \to Y'$ in the ambient category to a morphism $\overline{f} \colon \overline{Y} \to \overline{Y'}$ in the subcategory determined by applying the above universal property to the composition $\pi \circ f$ as in the following commutative diagram: 
	\[
	\xymatrix{
		Y \ar[d]_{\pi} \ar[rd] \ar[r]^{f} & Y' \ar[d]_{\pi}  \\
		\overline{Y} \ar@{-->}[r]_{\overline{f}} & \overline{Y'}
	}
	\]
\end{pgr}


\begin{lma}\label{lma:categories-reflective-properties}
	Let $\mathcal{C}$ be a category, $\mathcal{D}$ a subcategory of $\mathcal{C}$, and $\mathcal{E}$ a subcategory of $\mathcal{D}$. Then the following holds: 
	\beginEnumConditions
	\item\label{lma:categories-reflective-properties::composition} If $\varphi \colon \mathcal{C} \to \mathcal{D}$ is a reflector of the inclusion $\mathcal{D} \hookrightarrow \mathcal{C}$ and $\psi \colon \mathcal{D} \to \mathcal{E}$ is a reflector of the inclusion $\mathcal{E} \hookrightarrow \mathcal{D}$, then $\varphi \circ \psi \colon \mathcal{C} \to \mathcal{E}$ is a reflector of the inclusion $\mathcal{E} \hookrightarrow \mathcal{C}$. 
	\item\label{lma:categories-reflective-properties::restriction} If $\tau \colon \mathcal{C} \to \mathcal{E}$ is a reflector of the inclusion $\mathcal{E} \hookrightarrow \mathcal{C}$, then the restriction $\tau \big|_{\mathcal{D}} \colon \mathcal{D} \to \mathcal{E}$ is a reflector  of the inclusion $\mathcal{E} \hookrightarrow \mathcal{D}$. 
	\item\label{lma:categories-reflective-properties::uniqueness} If $\theta, \zeta \colon \mathcal{C} \to \mathcal{D}$ are both reflectors of the inclusion $\mathcal{D} \hookrightarrow \mathcal{C}$, then there is a natural isomorphism between $\theta$ and $\zeta$. 
\end{enumerate}
\end{lma}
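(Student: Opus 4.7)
My plan is to derive each assertion directly from the universal-morphism characterization of reflectors recalled in \autoref{rmk:categories-reflective}, so that a reflector is specified by an assignment $Y \mapsto \overline{Y}$ together with a unit $\pi_Y \colon Y \to \overline{Y}$ enjoying the stated universal property. In each case, once the unit is identified, functoriality and naturality will follow from the uniqueness clause of the universal property, which is the only real tool needed.

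For \textbf{(i)}, fix an object $Y$ of $\mathcal{C}$ and let $\pi_Y^{\varphi} \colon Y \to \varphi(Y)$ and $\pi_{\varphi(Y)}^{\psi} \colon \varphi(Y) \to \psi(\varphi(Y))$ denote the units associated with $\varphi$ and $\psi$. I claim the composite $\pi_{\varphi(Y)}^{\psi} \circ \pi_Y^{\varphi}$ is a universal morphism from $Y$ to the inclusion $\mathcal{E} \hookrightarrow \mathcal{C}$. Given $X \in \mathcal{E}$ and a morphism $f \colon Y \to X$ in $\mathcal{C}$, viewing $X$ as an object of $\mathcal{D}$ and applying the universal property of $\pi_Y^{\varphi}$ yields a unique morphism $\tilde{f} \colon \varphi(Y) \to X$ in $\mathcal{D}$ with $\tilde{f} \circ \pi_Y^{\varphi} = f$. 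Applying the universal property of $\pi_{\varphi(Y)}^{\psi}$ to $\tilde{f}$ then produces a unique morphism $\bar{f} \colon \psi(\varphi(Y)) \to X$ in $\mathcal{E}$ with $\bar{f} \circ \pi_{\varphi(Y)}^{\psi} = \tilde{f}$. Chaining the two uniqueness statements gives uniqueness of $\bar{f}$ as a factorization of $f$ through $\pi_{\varphi(Y)}^{\psi} \circ \pi_Y^{\varphi}$.

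For \textbf{(ii)}, for every $Y \in \mathcal{D} \subseteq \mathcal{C}$ I would simply take the unit $\pi_Y \colon Y \to \tau(Y)$ provided by $\tau$; since $\tau(Y) \in \mathcal{E} \subseteq \mathcal{D}$, this is the candidate universal morphism relative to $\mathcal{E} \hookrightarrow \mathcal{D}$. Given any $X \in \mathcal{E}$ and any morphism $f \colon Y \to X$ in $\mathcal{D}$, viewing $f$ as a morphism of $\mathcal{C}$ and applying the universal property of $\pi_Y$ relative to the larger inclusion $\mathcal{E} \hookrightarrow \mathcal{C}$ furnishes the unique factorization. For \textbf{(iii)}, for each $Y \in \mathcal{C}$ the pairs $(\theta(Y), \pi_Y^{\theta})$ and $(\zeta(Y), \pi_Y^{\zeta})$ are both universal morphisms from $Y$ to $\mathcal{D} \hookrightarrow \mathcal{C}$; applying each universal property to the other unit produces morphisms $\eta_Y \colon \theta(Y) \to \zeta(Y)$ and $\eta_Y' \colon \zeta(Y) \to \theta(Y)$ in $\mathcal{D}$ which, by the uniqueness clause, must be mutually inverse. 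Naturality of $\eta = (\eta_Y)_Y$ in $Y$ is the standard diagram chase deduced again from uniqueness applied to the two ways of filling in the relevant square.

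The verifications are all routine diagram chases, and I do not anticipate any serious obstacles. The only point deserving care is in \textbf{(ii)}: the unit $\pi_Y$ furnished by $\tau$ is a priori only a morphism of $\mathcal{C}$, and one wants it to belong to $\mathcal{D}$ so that the restricted functor serves as a reflector of $\mathcal{E} \hookrightarrow \mathcal{D}$ in the strict sense. This is automatic in the situations of interest in this paper, where $\mathcal{D}$ is a full subcategory of $\mathcal{C}$ (for instance $\CatCu \subseteq \CatW$), so this subtlety will not cause any real difficulty.
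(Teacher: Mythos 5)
Your argument is correct and follows exactly the route the paper intends: the published proof consists of the single line that the claims ``follow directly from the comments in \autoref{rmk:categories-reflective}'', and your write-up is precisely the standard chain of universal-property verifications that this remark is gesturing at. Your observation in (ii) that the unit $\pi_Y$ must actually lie in $\mathcal{D}$ --- automatic here because the relevant subcategories ($\CatCu\subseteq\CatW$, $\CatGCu\subseteq\CatGW$, etc.) are full --- is a point the paper glosses over, and it is worth having made explicit.
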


\begin{proof}
These follow directly from the comments in \autoref{rmk:categories-reflective}. 
\end{proof}


Recall that the category $\CatW$ consists of, as objects, all $\CatW$-semigroups and, as morphisms, all $\CatW$-morphisms. It has a subcategory $\CatCu$ consisting of, as objects, all $\CatCu$-semigroups and, as morphisms, all $\CatCu$-morphisms. The exact relationship between $\CatW$ and $\CatCu$ was obtained in \cite{AntPerThi14arX:TensorProdCu}. We recall it here by completeness.
\begin{thm}[{\cite[{3.1.5}]{AntPerThi14arX:TensorProdCu}}]\label{thm:categories-Cu-full-reflective-in-W}
The category $\CatCu$ is a full reflective subcategory of $\CatW$. 
\end{thm}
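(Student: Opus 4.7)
The plan is to establish two separate claims: (a) that $\CatCu$ is a \emph{full} subcategory of $\CatW$, and (b) that the inclusion functor admits a left adjoint, the \emph{reflector} $\gamma \colon \CatW \to \CatCu$.

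For fullness, I would first verify that every $\CatCu$-semigroup is a $\CatW$-semigroup. Axiom \axiomO{2} directly yields a $\ll$-increasing sequence $(a_k)_k$ cofinal in $a^{\ll}$, which witnesses \axiomW{1}. Axioms \axiomO{3} and \axiomW{3} coincide. For \axiomW{4}, suppose $a \ll b + c$; applying \axiomO{2} to $b$ and $c$ to obtain $b_n \ll b$ and $c_n \ll c$ with $\sup_n b_n = b$, $\sup_n c_n = c$, then using \axiomO{3} and \axiomO{4} to get $b + c = \sup_n (b_n + c_n)$, one extracts $n$ with $a \leq b_n + c_n$ and sets $b' = b_{n+1}$, $c' = c_{n+1}$. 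The harder step is to show that any $\CatW$-morphism $f \colon S \to T$ between $\CatCu$-semigroups is a $\CatCu$-morphism. Since in a $\CatCu$-semigroup the auxiliary relation coincides with $\ll$ (using \axiomO{1}, \axiomO{2}), monotonicity of $f$ with respect to $\prec$ is preservation of $\ll$; continuity together with \axiomO{2} then forces preservation of suprema of increasing sequences.

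For the reflector, I would construct $\gamma(S)$, for a $\CatW$-semigroup $(S, \prec)$, as the set of \emph{rounded ideals} of $S$, namely nonempty subsets $I \subseteq S$ that are $\leq_\prec$-downward closed, upward-directed under $\leq_\prec$, and rounded in the sense that for every $a \in I$ there is $b \in I$ with $a \prec b$. Order $\gamma(S)$ by inclusion and define addition as the rounded ideal generated by pointwise sums, i.e.\ $I + J = \{c \in S : c \prec a + b \text{ for some } a \in I,\, b \in J\}$. The natural embedding $\iota \colon S \to \gamma(S)$ is $\iota(a) = a^{\prec}$; this is a $\CatW$-morphism by \axiomW{1} and \axiomW{3}. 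One verifies \axiomO{1} by taking unions of increasing chains, \axiomO{2} using that $\iota(a) \ll I$ in $\gamma(S)$ precisely when $a \in I$ and combining with \axiomW{1}, \axiomO{3} using \axiomW{3}, and \axiomO{4} using \axiomW{4} together with the explicit description of suprema as unions.

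For the universal property, given a $\CatW$-morphism $f \colon S \to T$ with $T \in \CatCu$, define $\overline{f}(I) = \sup_n f(a_n)$ where $(a_n)_n$ is any $\prec$-increasing cofinal sequence in $I$; existence of the supremum follows from \axiomO{1} in $T$, and well-definedness from cofinality plus $\CatW$-morphism properties of $f$. The map $\overline{f}$ preserves addition (from \axiomO{4} on $T$), order and suprema (from the construction), and $\ll$ (because $\iota(a) \ll I$ implies $a \in I$, so $f(a) \leq \overline{f}(I)$ combined with cofinality gives the required compact containment). Commutativity of the triangle $\overline{f} \circ \iota = f$ and uniqueness of $\overline{f}$ follow from $\iota(S)$ being sup-dense in $\gamma(S)$.

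The main obstacle I anticipate is the verification of \axiomO{4} for $\gamma(S)$, since suprema of increasing sequences in $\gamma(S)$ are unions of rounded ideals while sums are defined via a rounding operation; reconciling these two operations requires a careful application of \axiomW{4} in $S$ together with a cofinality argument. Beyond that, the categorical statement follows formally from the universal property together with \autoref{lma:categories-reflective-properties}.
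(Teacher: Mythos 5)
The paper offers no proof of this statement: it is quoted from \cite[3.1.5]{AntPerThi14arX:TensorProdCu}, and the relevant construction of the reflector $\gamma$ is only recalled in \autoref{pgr:completion}, where $\gamma(S)$ is built from $\prec$-\emph{increasing sequences} $(a_n)$ in $S$, ordered by $(a_n)\precsim(b_n)$ iff each $a_n$ satisfies $a_n\prec b_m$ for some $m$. Your fullness argument and your verification of \axiomW{1}--\axiomW{4} for a $\CatCu$-semigroup are fine and essentially standard. The problem is your construction of the reflector.

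You define $\gamma(S)$ as the set of \emph{all} rounded ideals of $S$. This object need not satisfy \axiomO{2}, so it need not be a $\CatCu$-semigroup. Axiom \axiomO{2} demands that every element be the supremum of a $\ll$-increasing \emph{sequence}, and suprema of increasing sequences in your $\gamma(S)$ are unions; but a rounded ideal need not admit a countable cofinal subset, and such an ideal cannot be exhausted by a countable $\ll$-increasing chain of smaller rounded ideals. Concretely, let $\Omega$ be uncountable and let $S$ be the monoid of finitely supported functions $\Omega\to\N$ with $\prec$ taken to be the pointwise order; this satisfies \axiomW{1}--\axiomW{4} (constant sequences witness \axiomW{1}), yet $S$ itself is a rounded ideal with no countable cofinal subset, so the corresponding element of your $\gamma(S)$ violates \axiomO{2}. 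Note that \axiomW{1} only guarantees countable cofinal chains in the \emph{principal} sets $a^\prec$, not in arbitrary directed downward-closed subsets. The repair is to restrict to rounded ideals admitting a countable cofinal $\prec$-increasing chain (these are closed under countable increasing unions by a diagonal argument), at which point your construction becomes the sequence-based completion of \autoref{pgr:completion} in different clothing, and the rest of your outline (the universal property, the identification of $\ll$ in $\gamma(S)$, and the verification of \axiomO{4}) goes through as you describe.
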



The reflector $\gamma \colon \CatW \to \CatCu$ was termed the \emph{$\CatCu$-completion functor} (see \cite{AntPerThi14arX:TensorProdCu}, and also \cite{AntBosPer11CompletionsCu}). Now we add group actions into the picture. 

\begin{pgr}[The categories $\CatGW$ and $\CatGCu$]
\label{rmk:categories-W-Cu-G-categorical}
Let $G$ be a group. We let $\CatGW$ be the category that consists of, as objects, all $\CatGW$-semigroups and, as morphisms, all $\rm G$-equivariant $\CatW$-morphisms. It has a subcategory $\CatGCu$ consisting of, as objects, all $\CatGCu$-semigroups and, as morphisms, all $\rm G$-equivariant $\CatCu$-morphisms. 
We naturally view $\CatW$ (respectively, $\CatCu$) as a subcategory of $\CatGW$ (respectively, $\CatGCu$) by equipping every $\CatW$-semigroup (respectively, $\CatCu$-semigroup) the trivial $G$-action. 

Note that the category $\CatGW$ can be constructed from $\CatW$ by means of general category theory. Indeed, an object in $\CatGW$ is nothing but a tuple $(S, \eta)$, where $S$ is an object in $\CatW$ and $\eta = \left( \eta_g \right)_{g \in G}$ is a homomorphism from $G$ to the group of automorphisms of $S$ in $\CatW$, while a morphism from $(S, \eta)$ to $(T, \mu)$ in $\CatGW$ is nothing but a morphism $f$ from $S$ to $T$ in $\CatW$ such that $f \circ \eta_g = \mu_g \circ f$ for any $g \in G$, that is, an equivariant $\W$-morphism.

With this point of view, we have a functor $\omega_G \colon \CatGW \to \CatW$ that sends every object $(S,\eta)$ in $\CatGW$ to $S/G$ (where the quotient is taken by the action induced by $\eta$). For a morphism $f$ in $\CatGW$,  write $\omega_G(f)=f_G$, as in \autoref{thm:dynamical-universal-property}. It follows from the universal property that for any morphism $f \colon (S, \eta) \to (T, \mu)$ in $\CatGW$, the morphism $f_G \colon S /G\to T / G$ in $\CatW$ is the unique one that makes the following diagram commute: 
\[
\xymatrix{
	S \ar[d]_{\pi_G} \ar[r]^{f} & T \ar[d]^{\pi_G} \\
	S / G \ar@{-->}[r]_{f_G} &  T/G
}
\]
\end{pgr}
\begin{thm}\label{thm:categories-Cu-G-full-reflective-in-W-G}
The category $\CatGCu$ is a full reflective subcategory of $\CatGW$, with reflector $\gamma_G\colon \CatGW\to\CatGCu$ referred as to the $\CatGCu$\emph{-completion functor}. 
\end{thm}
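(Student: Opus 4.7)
The plan is to construct the reflector $\gamma_G$ by applying the $\CatCu$-completion functor $\gamma \colon \CatW \to \CatCu$ from \autoref{thm:categories-Cu-full-reflective-in-W} object-wise and transferring the action via functoriality. Explicitly, given $(S,\eta)$ in $\CatGW$, I would set $\gamma_G(S,\eta) := (\gamma(S), \gamma\eta)$ where $(\gamma\eta)_g := \gamma(\eta_g)$. Since $\gamma$ is a functor, the assignment $g \mapsto \gamma(\eta_g)$ is a group homomorphism into the automorphism group of $\gamma(S)$ in $\CatCu$, so this is indeed a $\CatGCu$-semigroup. Writing $\iota_S \colon S \to \gamma(S)$ for the unit of the adjunction from \autoref{thm:categories-Cu-full-reflective-in-W}, the naturality square
\[
\xymatrix{
S \ar[r]^{\eta_g} \ar[d]_{\iota_S} & S \ar[d]^{\iota_S} \\
\gamma(S) \ar[r]_{\gamma(\eta_g)} & \gamma(S)
}
\]
(which holds because $\iota_S$ is the universal $\CatW$-morphism to a $\CatCu$-semigroup) shows that $\iota_S$ is an equivariant $\CatW$-morphism, i.e., a $\CatGW$-morphism $(S,\eta) \to \gamma_G(S,\eta)$.

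Next I would verify the universal property. Given $(T,\mu)$ in $\CatGCu$ and an equivariant $\CatW$-morphism $f \colon (S,\eta) \to (T,\mu)$, the reflectivity of $\CatCu$ in $\CatW$ yields a unique $\CatCu$-morphism $\overline{f} \colon \gamma(S) \to T$ with $\overline{f} \circ \iota_S = f$. It remains to check that $\overline{f}$ is $G$-equivariant. For each $g \in G$, both $\mu_g \circ \overline{f}$ and $\overline{f} \circ \gamma(\eta_g)$ are $\CatCu$-morphisms $\gamma(S) \to T$, and the computation
\[
(\mu_g \circ \overline{f}) \circ \iota_S = \mu_g \circ f = f \circ \eta_g = \overline{f} \circ \iota_S \circ \eta_g = \overline{f} \circ \gamma(\eta_g) \circ \iota_S
\]
(using equivariance of $f$ and naturality of $\iota$) exhibits both as $\CatCu$-liftings of the same $\CatW$-morphism $\mu_g \circ f$. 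The uniqueness clause of the reflector $\gamma$ then forces $\mu_g \circ \overline{f} = \overline{f} \circ \gamma(\eta_g)$, so $\overline{f}$ is a $\CatGCu$-morphism. Uniqueness of $\overline{f}$ in $\CatGCu$ is immediate since uniqueness already holds in $\CatCu$.

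Finally, fullness of $\CatGCu$ inside $\CatGW$ follows from fullness of $\CatCu$ in $\CatW$ (\autoref{thm:categories-Cu-full-reflective-in-W}): any $\CatGW$-morphism between two $\CatGCu$-semigroups is, by definition, an equivariant $\CatW$-morphism, and any $\CatW$-morphism between $\CatCu$-semigroups is automatically a $\CatCu$-morphism, hence an equivariant $\CatCu$-morphism. Together with the universal property established above, this provides all the data required by \autoref{rmk:categories-reflective} to conclude that $\CatGCu$ is a full reflective subcategory of $\CatGW$ with reflector $\gamma_G$.

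The argument is essentially a 2-categorical lift of the adjunction $\gamma \dashv \mathrm{incl}$ to categories of $G$-objects, so no hard analytical content arises. The only point requiring genuine care is the equivariance of $\overline{f}$, which is precisely where one must invoke uniqueness in the universal property of $\gamma$; this is the step I would expect to write out most carefully to avoid a circular invocation of the functoriality of $\gamma$ itself.
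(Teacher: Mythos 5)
Your proposal is correct and follows essentially the same route as the paper: the paper's proof simply declares the reflector to be $(S,\eta) \mapsto \left( \gamma(S), (\gamma(\eta_g))_{g \in G} \right)$ and defers the verification to the general categorical observations of \autoref{rmk:categories-W-Cu-G-categorical}, which is exactly the lift of the adjunction $\gamma \dashv \mathrm{incl}$ to $G$-objects that you carry out in detail. Your explicit check of the equivariance of $\overline{f}$ via the uniqueness clause is the content the paper leaves implicit.
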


\begin{proof}
This follows from \autoref{thm:categories-Cu-full-reflective-in-W} and \autoref{rmk:categories-W-Cu-G-categorical}, where the reflector $\gamma_G \colon \CatGW \to \CatGCu$ sends every object $(S, \eta)$ in $\CatGW$ to $\left( \gamma(S), \left( \gamma \left(\eta_g\right) \right)_{g \in G} \right)$ in $\CatGCu$ and sends every morphism $f$ in $\CatGW$, viewed as a morphism in $\CatW$, to $\gamma(f)$ in $\CatGCu$. 
\end{proof}

\begin{lma}\label{lma:categories-W-full-reflective-in-W-G}
For any object $(S, \eta)$ in $\CatGW$, the pair $(S / G, \pi_G)$ is a universal morphism from $(S, \eta)$ to the inclusion functor $\CatW \to \CatGW$.  
\end{lma}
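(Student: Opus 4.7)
The plan is to unwind the definition of a universal morphism and reduce to \autoref{thm:dynamical-universal-property}. Since the inclusion $\CatW \hookrightarrow \CatGW$ equips every $\CatW$-semigroup with the trivial $G$-action, establishing the universal property amounts to two verifications: first, that $\pi_G$ is an equivariant $\W$-morphism from $(S, \eta)$ to $S/G$ endowed with the trivial $G$-action; and second, that any equivariant $\W$-morphism $f$ from $(S, \eta)$ to a trivially acted-on $\CatW$-semigroup $X$ factors uniquely through $\pi_G$ as a $\W$-morphism $\overline{f}\colon S/G\to X$.

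For the first verification, I would invoke \autoref{prp:normal}(ii) to conclude that $\pi_G \colon S \to S/G$ is a $\W$-morphism. Equivariance relative to the trivial action on $S/G$ reduces to $\pi_G \circ \eta_g = \pi_G$ for every $g \in G$. This is immediate from the construction of $\leqr{G}$: the orbit relation $\app$ is contained in $\leqr{G}$ by condition (ii) of \autoref{dfn:actions-W-semigroups}, so $a \leqr{G} \eta_g a$ and $\eta_g a \leqr{G} a$, and therefore $a$ and $\eta_g a$ coincide in the antisymmetrized quotient $S/G$.

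For the second verification, an equivariant morphism $f$ from $(S,\eta)$ to $X$ with trivial action is exactly an invariant $\W$-morphism $f \colon S \to X$. Feeding this into \autoref{thm:dynamical-universal-property} produces a unique order-preserving $\W$-morphism $\overline{f} \colon S/G \to X/\alpha_X$ with $\pi_{\alpha_X} \circ f = \overline{f} \circ \pi_G$. Identifying the object $X$ of $\CatW$ with its antisymmetrization $X/\alpha_X$ (the convention operative in the paper, and the only sensible one under which the statement of the lemma is meaningful), this gives the required factorization $\overline{f} \colon S/G \to X$. For uniqueness, any other $\W$-morphism $h \colon S/G \to X$ with $h \circ \pi_G = f$ yields, after post-composing with $\pi_{\alpha_X}$, another map satisfying the defining property in \autoref{thm:dynamical-universal-property}, so $h$ must agree with $\overline{f}$.

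The only obstacle is the bookkeeping of identifying $X/\alpha_X$ with $X$ when viewing $X$ as an object of $\CatW$; once this convention is granted, the lemma is essentially a categorical repackaging of the dynamical universal property proved earlier, with no substantive new content required.
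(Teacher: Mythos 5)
Your proposal is correct and follows essentially the same route as the paper, whose proof is a one-line appeal to \autoref{thm:dynamical-universal-property} together with the description of universal morphisms in \autoref{rmk:categories-reflective}; you have simply unpacked that citation, including the observation that $\pi_G$ identifies orbits because the orbit relation is contained in $\leqr{G}$. Your flagging of the identification of a $\CatW$-semigroup $X$ with its antisymmetrization $X/\alpha_X$ is the right bookkeeping point to make explicit, and is consistent with the faithfulness convention used in the introduction's statement of the universal property.
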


\begin{proof}
This follows from \autoref{thm:dynamical-universal-property} and \autoref{rmk:categories-reflective}. 
\end{proof}


\begin{thm}\label{thm:categories-W-full-reflective-in-W-G}
The category $\CatW$ is a full reflective subcategory of $\CatGW$, with the orbit reflector being a reflector. 
\end{thm}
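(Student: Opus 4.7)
The plan is to combine the universal property of $S/G$ already established in \autoref{lma:categories-W-full-reflective-in-W-G} with the general categorical criterion for reflectivity recalled in \autoref{rmk:categories-reflective}. The argument splits into two parts: verifying fullness of the inclusion $\CatW \hookrightarrow \CatGW$, and then packaging the universal morphisms $\pi_G$ into a reflector functor.

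First I would check fullness. Given two $\CatW$-semigroups $S$ and $T$, viewed as objects of $\CatGW$ by equipping them with the trivial $G$-action, a morphism between them in $\CatGW$ is an equivariant $\CatW$-morphism $f \colon S \to T$. The equivariance condition $f \circ \eta_g = \mu_g \circ f$ is vacuous when both $\eta$ and $\mu$ are trivial, so every $\CatW$-morphism $S \to T$ is already a $\CatGW$-morphism. Hence the inclusion $\CatW \hookrightarrow \CatGW$ is full.

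Next, \autoref{lma:categories-W-full-reflective-in-W-G} provides, for each object $(S,\eta)$ of $\CatGW$, a universal morphism $\pi_G \colon (S,\eta) \to (S/G, \text{trivial})$ from $(S,\eta)$ to the inclusion functor; this is exactly the criterion of \autoref{rmk:categories-reflective} for the subcategory $\CatW$ to be reflective in $\CatGW$. The reflector is then the functor $\omega_G \colon \CatGW \to \CatW$ described in \autoref{rmk:categories-W-Cu-G-categorical}: on objects it sends $(S,\eta)$ to $S/G$, and on morphisms it sends $f \colon (S,\eta) \to (T,\mu)$ to the unique $\CatW$-morphism $f_G \colon S/G \to T/G$ obtained by applying the universal property to $\pi_G \circ f$, fitting into the commutative square displayed in \autoref{rmk:categories-W-Cu-G-categorical}. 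Functoriality of $\omega_G$ (preservation of identities and compositions) follows from the uniqueness clause in the universal property of $\pi_G$.

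There is no genuine obstacle here; the work has all been done in \autoref{thm:dynamical-universal-property} and the subsequent lemma. The only thing to be mildly careful about is to state explicitly that ``the orbit reflector'' refers to $\omega_G$, and that its left adjoint is the inclusion (so $\omega_G$ is left adjoint to the inclusion), which is immediate from the one-to-one correspondence between invariant $\CatW$-morphisms $S \to T$ and $\CatW$-morphisms $S/G \to T$ supplied by \autoref{thm:dynamical-universal-property}.
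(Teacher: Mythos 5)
Your proof is correct and follows essentially the same route as the paper: fullness because equivariance is vacuous for trivial actions, and reflectivity by combining the universal property of $(S/G,\pi_G)$ from \autoref{lma:categories-W-full-reflective-in-W-G} with the criterion in \autoref{rmk:categories-reflective}, with the orbit functor $\omega_G$ as the reflector. One small slip in your last paragraph: you first write that the \emph{inclusion} is the left adjoint before correcting yourself in the parenthetical --- per the paper's convention the reflector $\omega_G$ is the left adjoint of the inclusion, as your parenthetical correctly states.
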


\begin{proof}
Fullness follows from the fact that every $\CatW$-morphism is equivariant when the domain and the codomain are equipped with the trivial action. Reflectivity via the orbit functor follows from \autoref{lma:categories-W-full-reflective-in-W-G} and \autoref{rmk:categories-reflective}.  
\end{proof}


\begin{cor}\label{cor:categories-Cu-full-reflective-in-W-G}
The following statements are true: 
\beginEnumConditions
\item\label{cor:categories-Cu-full-reflective-in-W-G::composition} The category $\CatCu$ is a full reflective subcategory of $\CatGW$, with the composition $\gamma \circ \omega_G$ being a reflector. 
\item\label{cor:categories-Cu-full-reflective-in-W-G::restriction} The category $\CatCu$ is a full reflective subcategory of $\CatGCu$, with the composition $\gamma \circ \left( \omega_G \big|_{\CatGCu} \right)$ being a reflector. 
\item\label{cor:categories-Cu-full-reflective-in-W-G::uniqueness} There is a natural isomorphism between the functors $\gamma \circ \left( \omega_G \big|_{\CatGCu} \right) \circ \gamma_G$ and $\gamma \circ \omega_G$ from $\CatGW$ to $\CatCu$, i.e., the following diagram commutes up to a natural isomorphism. 
\[
\xymatrix@R-20pt{
	\CatGW \ar[dd]_{\gamma_G} \ar[r]^{\omega_G}  & \CatW \ar[dr]^{\gamma} \\
	& & \CatCu \\
	\CatGCu \ar[r]_{\omega_G} & \CatW \ar[ur]_{\gamma} \\
}
\]
\end{enumerate}
\end{cor}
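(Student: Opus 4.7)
The plan is to deduce all three parts by purely formal category-theoretic bookkeeping, appealing to the three general facts collected in \autoref{lma:categories-reflective-properties} and the previously established reflections. The key observation is that we have a commutative square of full subcategories
\[
\xymatrix{
\CatCu \ar@{^{(}->}[r] \ar@{^{(}->}[d] & \CatGCu \ar@{^{(}->}[d] \\
\CatW \ar@{^{(}->}[r] & \CatGW
}
\]
together with known reflectors $\gamma \colon \CatW \to \CatCu$ (\autoref{thm:categories-Cu-full-reflective-in-W}), $\omega_G \colon \CatGW \to \CatW$ (\autoref{thm:categories-W-full-reflective-in-W-G}), and $\gamma_G \colon \CatGW \to \CatGCu$ (\autoref{thm:categories-Cu-G-full-reflective-in-W-G}). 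Fullness of $\CatCu$ inside $\CatGW$ and inside $\CatGCu$ follows immediately from transitivity of fullness along $\CatCu \hookrightarrow \CatW \hookrightarrow \CatGW$ and $\CatCu \hookrightarrow \CatGCu \hookrightarrow \CatGW$ respectively.

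For part \enumCondition{1}, I would apply \autoref{lma:categories-reflective-properties}\enumCondition{1} to the chain $\CatCu \hookrightarrow \CatW \hookrightarrow \CatGW$, with the two reflectors being $\omega_G$ and $\gamma$. This immediately yields that $\gamma \circ \omega_G \colon \CatGW \to \CatCu$ is a reflector of the inclusion $\CatCu \hookrightarrow \CatGW$.

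For part \enumCondition{2}, I would apply \autoref{lma:categories-reflective-properties}\enumCondition{2} to the chain $\CatCu \hookrightarrow \CatGCu \hookrightarrow \CatGW$ with the reflector $\gamma \circ \omega_G$ just obtained in \enumCondition{1}. The restriction of this reflector to the intermediate full subcategory $\CatGCu$ is, by definition, precisely $\gamma \circ (\omega_G\big|_{\CatGCu})$, which is therefore a reflector of $\CatCu \hookrightarrow \CatGCu$.

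For part \enumCondition{3}, I would observe that two reflectors of the same inclusion have been produced. Indeed, on one hand $\gamma \circ \omega_G$ is a reflector of $\CatCu \hookrightarrow \CatGW$ by \enumCondition{1}. On the other hand, composing the reflector $\gamma_G \colon \CatGW \to \CatGCu$ of $\CatGCu \hookrightarrow \CatGW$ with the reflector $\gamma \circ (\omega_G\big|_{\CatGCu}) \colon \CatGCu \to \CatCu$ of $\CatCu \hookrightarrow \CatGCu$ from \enumCondition{2}, a second application of \autoref{lma:categories-reflective-properties}\enumCondition{1} shows that $\gamma \circ (\omega_G\big|_{\CatGCu}) \circ \gamma_G$ is also a reflector of $\CatCu \hookrightarrow \CatGW$. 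The desired natural isomorphism is then immediate from \autoref{lma:categories-reflective-properties}\enumCondition{3}, which asserts that any two reflectors of the same inclusion are naturally isomorphic. There is no substantive obstacle here: the entire argument is formal, with the only mild point being to verify that ``restriction of a reflector'' in part \enumCondition{2} coincides on the nose with the composition $\gamma \circ (\omega_G\big|_{\CatGCu})$, which is transparent from the definitions of $\gamma$ and $\omega_G$.
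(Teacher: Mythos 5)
Your proposal is correct and follows essentially the same route as the paper: part (1) via the composition property of reflectors applied to $\CatCu \hookrightarrow \CatW \hookrightarrow \CatGW$, part (2) via restriction of that reflector to $\CatGCu$, and part (3) by composing $\gamma_G$ with the reflector from (2) and invoking uniqueness of reflectors up to natural isomorphism. The identification $(\gamma \circ \omega_G)\big|_{\CatGCu} = \gamma \circ (\omega_G\big|_{\CatGCu})$ that you flag is exactly the point the paper also records.
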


\begin{proof}
Consider the commutative diagram of inclusions of categories
\[
\xymatrix{
\CatCu \ar@{^{(}->}[r] \ar@{^{(}->}[d] & \CatW  \ar@{^{(}->}[d] \\ 
\CatGCu \ar@{^{(}->}[r] & \CatGW 
}
\]
Then \ref{cor:categories-Cu-full-reflective-in-W-G::composition} follows from applying \autoref{lma:categories-reflective-properties}\ref{lma:categories-reflective-properties::composition} to the inclusions $\CatCu \hookrightarrow \CatW \hookrightarrow \CatGW$. Furthermore, by \autoref{lma:categories-reflective-properties}\ref{lma:categories-reflective-properties::restriction}, the restriction $\left( \gamma \circ \omega_G \right)\big|_{\CatGCu} = \gamma \circ \left( \omega_G \big|_{\CatGCu} \right)$ is a reflector of the inclusion $\CatCu \hookrightarrow \CatGCu$, i.e., \ref{cor:categories-Cu-full-reflective-in-W-G::restriction} holds. It then follows by  \autoref{lma:categories-reflective-properties}\ref{lma:categories-reflective-properties::composition} again that $\gamma \circ \left( \omega_G \big|_{\CatGCu} \right) \circ \gamma_G$ is also a reflector of the inclusion $\CatCu \hookrightarrow \CatGW$, which implies \ref{cor:categories-Cu-full-reflective-in-W-G::uniqueness} by \autoref{lma:categories-reflective-properties}\ref{lma:categories-reflective-properties::uniqueness}. 
\end{proof}


\begin{pgr}[The complete dynamical Cuntz semigroup]
\label{dfn:complete-dynamical-Cuntz}
Let $(S,\eta)$ be a $\CatGW$-semigroup. We refer to the $\CatCu$-semigroup  $\gamma \circ \omega_G(S,\eta)$ as the \emph{dynamical completion of $(S,\eta)$}. We have, by \autoref{cor:categories-Cu-full-reflective-in-W-G}\ref{cor:categories-Cu-full-reflective-in-W-G::uniqueness}, a natural isomorphism
\[
\gamma(S/{\eta})=\gamma \circ \omega_G(S,\eta)\cong \gamma\circ\omega_G\circ\gamma_G(S,\eta)=\gamma(\gamma_G(S,\eta)/G).
\]  
We remark that we do not have at present an example of a $\CatGCu$-semigroup $(S,\eta)$ such that $S/G$ is not a $\CatCu$-semigroup.	

If $A$ is a G-\ca{}, we shall write $\CuDyn(A):= \gamma (\CatW(A) /{G})$, and call it the \emph{complete dynamical Cuntz semigroup}.
\end{pgr}


We have the following natural identification:
\begin{cor}\label{cor:CuWidentifications}
Let $A$ be a $\mathrm{G}$-\ca. Then there is a natural isomorphism
\[
\CuDyn(A)=\gamma (\W(A)/{G})\cong\gamma(\Cu(A)/{G}).
\]
\end{cor}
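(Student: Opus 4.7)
The plan is to deduce this corollary as a direct application of the categorical framework built up in this section. Specifically, the key ingredient is \autoref{cor:categories-Cu-full-reflective-in-W-G}\ref{cor:categories-Cu-full-reflective-in-W-G::uniqueness}, which provides a natural isomorphism of functors $\gamma \circ \omega_G \cong \gamma \circ \left( \omega_G \big|_{\CatGCu} \right) \circ \gamma_G$ from $\CatGW$ to $\CatCu$. I will apply this natural isomorphism to the $\CatGW$-semigroup arising from the action of $G$ on $\W(A)$.

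First I would identify the relevant $\CatGW$-semigroup. The $G$-action on $A$ induces an action $\eta$ on $\W(A)$ by setting $g \cdot [a] = [g \cdot a]$, as explained in the definition of the dynamical Cuntz semigroup. Thus $(\W(A), \eta)$ is an object of $\CatGW$, and by construction $\omega_G(\W(A), \eta) = \W(A)/G$, so that $\gamma \circ \omega_G (\W(A), \eta) = \gamma(\W(A)/G) = \CuDyn(A)$.

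Next I would identify $\gamma_G(\W(A), \eta)$. By the proof of \autoref{thm:categories-Cu-G-full-reflective-in-W-G}, the functor $\gamma_G$ sends $(\W(A), \eta)$ to $(\gamma(\W(A)), (\gamma(\eta_g))_{g \in G})$, and since $\gamma(\W(A)) = \Cu(A)$ with the canonically induced $G$-action, we see that $\gamma_G(\W(A), \eta)$ is precisely $\Cu(A)$ endowed with its natural $G$-action. Consequently, $\omega_G \big|_{\CatGCu} \circ \gamma_G (\W(A), \eta) = \Cu(A)/G$, and applying $\gamma$ yields $\gamma(\Cu(A)/G)$.

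Finally, plugging the object $(\W(A), \eta)$ into the natural isomorphism from \autoref{cor:categories-Cu-full-reflective-in-W-G}\ref{cor:categories-Cu-full-reflective-in-W-G::uniqueness} directly produces the desired isomorphism $\gamma(\W(A)/G) \cong \gamma(\Cu(A)/G)$, with naturality inherited from that of the functorial isomorphism. There is no substantial obstacle here: the work was already done in establishing the reflectivity statements and the resulting commutative diagram of functors, so this corollary is essentially a specialization. The only minor point to double-check is that the $G$-action produced by applying $\gamma_G$ to $(\W(A), \eta)$ coincides with the action on $\Cu(A)$ induced directly from the action on $A$, which is immediate from functoriality of $\Cu = \gamma \circ \W$ applied to each automorphism $g \colon A \to A$.
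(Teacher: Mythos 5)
Your proposal is correct and follows essentially the same route as the paper: the paper also deduces the statement by specializing the natural isomorphism $\gamma\circ\omega_G\cong\gamma\circ(\omega_G|_{\CatGCu})\circ\gamma_G$ from \autoref{cor:categories-Cu-full-reflective-in-W-G} (via \autoref{dfn:complete-dynamical-Cuntz}) and then identifying $\gamma_G(\W(A))$ with $\Cu(A)$ carrying the induced action, using $\gamma(\W(A))\cong\Cu(A)$. No gaps.
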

\begin{proof}
We know from \autoref{dfn:complete-dynamical-Cuntz} that $\Cu_G(A)\cong \gamma (\gamma_G(\W(A))/{G})$. Now, $\gamma_G(\W(A))$ is just $\gamma(\W(A))$ with the induced action, according to \autoref{thm:categories-Cu-G-full-reflective-in-W-G}. As shown in \cite[Theorem 3.2.8]{AntPerThi14arX:TensorProdCu}, we have that $\gamma(\W(A))$ is naturally isomorphic to $\Cu(A)$.
\end{proof}

\section{Invariant closed ideals} \label{sec:invariant_ideals}
In this section we revisit the notion of ideal in the light of an action by a group $G$. We identify the $G$-invariant ideals of a $\CatGW$-semigroup with ideals those of its corresponding dynamical quotient. Using this, we conclude that, for a C*-dynamical system $(A,G)$ and a closed two-sided $G$-invariant ideal $I$, we have $\Cu_G(A)/\Cu_G(I)\cong\Cu_G(A/I)$; see \autoref{thm:dynquotients}.

\begin{pgr}[Simplicity and invariant ideals]
	Recall that, for a $\W$-semigroup $S$, we denote by $\Lat_\CatW(S)$ the lattice of closed ideals. For any element $a\in S$, put
	\[
	I(a):=\{b\in S\colon \text{for any }b'\prec b\text{ there is }n\in \N \text{ such that } b'\prec na\},
	\]
	which is the smallest closed ideal of $S$ that contains $a$.
	
	A non-zero element $a$ in a $\CatW$-semigroup $S$ is an \emph{order-unit} provided for any pair of elements $b,b'\in S$, with $b'\prec b$, there is $n\in\N$ such that $b'\prec na$. In other words, $a$ is an order-unit precisely when $I(a)=S$.
	
	If $S$ has no non-trivial closed ideals, then we say that $S$ is \emph{simple}. With this terminology and our observations above, the fact that $S$ is simple is  equivalent to the demand that every non-zero element is an order-unit. 
	
	Recall from \autoref{pgr:idealspairs} that, given a closed ideal $I$, we set $S/I=S/{\alpha_I}$, where $\alpha_I=(\prec,\leq_I)$ and $a\leq_I b$ provided that, whenever $x\prec a$, there  is $y\in I$ such that $x\prec b+y$. We also have shown that $S/I$ is a $\W$-semigroup and that the natural quotient map $\pi\colon S\to S/I$ is a $\W$-morphism. Recall that the relation $\prec_I$ in $S/I$ making it a $\W$-semigroup is given by $\pi(a)\prec_I\pi(b)$ whenever $a\mathrel{{\leq_I}\circ{\prec}}b$
	
	If $S$ is a $\CatGW$-semigroup, a closed ideal $I$ of $S$ is said to be a \emph{$\CatGW$-ideal} if, furthermore, $ga\in I$ whenever $g$ is an element of $G$ and $a$ is an element of $I$. (Equivalently, $GI=I$.) We shall denote the lattice of $\CatGW$-ideals of $S$ by $\Lat_{\CatW}^G(S)$. We will say that the action of $G$ on $S$ is \emph{minimal} in case $S$ does not have any non-trivial $\CatGW$-ideals. We show below that this notion agrees with the one given by Rainone in \cite[Definition 3.4]{Rai16PLonMathSoc}.
	Now, assume that $S$ is a $\CatGW$-semigroup $S$ and $a\in S$. Let us define
	\[
	I_G(a):=\{z\in S\mid \text{ for any }z'\prec z\text{ there exist }g_1,\dots, g_n\in G \text{ with }z'\prec \sum_{i=1}^ng_ia\}\,.
	\]
	It is easy to verify that $I_G(a)$ is the smallest $\CatGW$-ideal of $S$ that contains $a$. Note that, if $G$ is acting trivially on $S$, then $I_G(a)=I(a)$. In this language, we have that the action of $G$ is minimal if and only if $I_G(a)=S$ for any nonzero $a\in S$.
\end{pgr}
\begin{pgr}[Completions]
	\label{pgr:completion}
	We have already mentioned in \autoref{thm:categories-Cu-full-reflective-in-W} that $\Cu$ is a full, reflective subcategory of $\W$, with a reflector functor $\gamma\colon \W\to \Cu$, which we briefly recall as it will be used below.
	
	Let $(S,\prec)$ be a $\W$-semigroup. Consider $\bar{S}=\{(a_n)\colon a_n\prec a_{n+1} \text{ for all }n\}$, and define $(a_n)\precsim (b_n)$ if, and only if, for each $n$ there is $m$ such that $a_n\prec b_m$. Put $\gamma(S)=\bar{S}/{\sim}$, where $\sim$ is the antisymmetrization of $\precsim$. It was shown in \cite[Proposition 3.1.6]{AntPerThi14arX:TensorProdCu} that $\gamma(S)$ is a $\Cu$-semigroup with order $\leq$ induced by the relation $\precsim$. The way-below relation is given as follows: $[(a_n)]\ll [(b_n)]$ if and only if there is $m$ such that $a_n\prec b_m$ for all $n$. 
	
	Given $a\in S$, choose by \axiomW{1} a countable cofinal subset $(a_n)$ in $a^\prec$. This defines a $\W$-semigroup map $\gamma\colon S\to \gamma(S)$ by $\gamma(a)=[(a_n)]$. This map is an order-embedding in the sense that $a\prec b$ if and only if $\gamma(a)\ll\gamma(b)$, and is dense in the sense that, if $b'\ll b$ in $\gamma(S)$, then there is $a\in S$ with $b'\leq \gamma(a)\leq b$.
	
	If $S$ is a $\Cu$-semigroup,we shall denote the lattice of $\CatCu$-ideals by $\Lat_\CatCu(S)$, and if $G$ is a group that acts on $S$, then we denote the lattice of $\CatGCu$-ideals by $\Lat_\CatCu^G(S)$. 	
\end{pgr}	


We will need the following lemma:

\begin{lma}
	\label{lma:lifting} Let $(S,\prec)$ be a $\W$-semigroup and let $I$ be a closed ideal of $S$. Let $a,b,c,d\in S$ be such that $a\prec b$, $c\prec d$, and $\pi(b)\prec \pi (c)$, where $\pi\colon S\to S/I$ is the quotient map. Then, there are elements $b',c', d'\in S$ with $a\prec b'\prec c'\prec d'$ and $\pi(c')=\pi(c), \pi(d')=\pi(d)$
\end{lma}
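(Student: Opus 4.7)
The plan is to unpack the hypothesis $\pi(b) \prec \pi(c)$ into its concrete meaning in $S$, namely that there is an element $e \in S$ with $b \leq_I e \prec c$ (this is the definition of the relation on the prequotient from \autoref{pgr:quotient}). One then produces the required chain in $S$ by introducing elements of $I$ as "buffers" whose sizes are staggered so that \axiomW{3} applies at each step.

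First I would apply \axiomW{1} to insert $a \prec a_1 \prec a_2 \prec b$. Since $a_2 \prec b$ and $b \leq_I e$, the definition of $\leq_I$ yields $y \in I$ with $a_2 \prec e + y$. Applying \axiomW{4} produces $e_2 \prec e$ and $y_2 \prec y$ with $a_2 \prec e_2 + y_2$; because $I$ is an ideal and $y_2 \prec y \in I$, we have $y_2 \in I$. Next, $a_1 \prec a_2 \prec e_2 + y_2$, and a second application of \axiomW{4} gives $e_1 \prec e_2$ and $y_1 \prec y_2$ with $a_1 \prec e_1 + y_1$ and $y_1 \in I$. This produces the crucial chain $y_1 \prec y_2 \prec y$ of elements of $I$.

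The key move is to set $b' := e_1 + y_1$, $c' := c + y_2$, and $d' := d + y$, with the three buffers staggered across the three positions. Then $a \prec a_1 \prec b'$; by transitivity $e_1 \prec c$, so \axiomW{3} applied to $e_1 \prec c$ and $y_1 \prec y_2$ gives $b' \prec c'$; and \axiomW{3} applied to $c \prec d$ and $y_2 \prec y$ gives $c' \prec d'$. Finally, $\pi(c') = \pi(c)$ and $\pi(d') = \pi(d)$, since adding an element of $I$ does not change the class in $S/I$, an equivalence that is immediate from the definition of $\leq_I$ together with \axiomW{3} and \axiomW{4}.

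I do not foresee a serious obstacle beyond discovering the correct staggered assignment of the three $I$-buffers, which is in fact forced once one tries to satisfy the chain $b' \prec c' \prec d'$ via \axiomW{3}: the "second coordinate" must strictly increase along each step, so the three available buffers $y_1 \prec y_2 \prec y$ must be distributed one to each position in increasing order.
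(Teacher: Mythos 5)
Your argument is correct and follows essentially the same route as the paper's proof: unpack $\pi(b)\prec\pi(c)$ as $b\leq_I e\prec c$, produce a $\prec$-chain of three elements of $I$, and distribute them as staggered buffers onto $b'$, $c'$, $d'$ so that \axiomW{3} gives each step of the chain. The only (immaterial) difference is bookkeeping: the paper obtains its chain $y'\prec y''\prec y$ with one application of \axiomW{4} followed by a \axiomW{1}-interpolation, whereas you interpolate $a\prec a_1\prec a_2\prec b$ first and apply \axiomW{4} twice.
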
	
\begin{proof}
	By assumption, $\pi(b)\prec\pi(c)$, hence we have $b\leq_I b_0$ with $b_0\prec c$.  Since $a\prec b$, there is by definition of $\leq_I$ an element $y\in I$ such that $a\prec b_0+y$. Using \axiomW{4}, we find elements $b_0'\prec b_0$ and $y'\prec y$ such that $a\prec b_0'+y'$. Also, using \axiomW{1}, choose $y''\in S$ with $y'\prec y''\prec y$.
	
	Since $b_0'\prec b_0\prec c$ and $y'\prec y''$, we have by \axiomW{3} that $b_0'+y'\prec c+y''$. Now let $b'=b_0'+y',c'=c+y''$, and $d'=d+y$.
\end{proof}
\begin{prp}
	\label{prp:WidealsvsCuideals} Let $S$ be a $\CatW$-semigroup. The completion functor $\gamma$ induces an ideal lattice isomorphism
	\[
	\gamma\colon\Lat_\CatW(S)\to\Lat_\CatCu(\gamma(S))\text{ given by } I\mapsto\gamma(I).
	\]
	This isomorphism respects quotients in the sense that, for any $I\in\Lat_\W(S)$ we have a $\CatCu$-isomorphism $\gamma(S)/\gamma(I)\cong\gamma(S/I)$.
	
	Furthermore, if $G$ acts on $S$ (and therefore also on $\gamma(S)$; see \autoref{cor:categories-Cu-full-reflective-in-W-G}), this isomorphism restricts to an isomorphism
	\[
	\Lat_\CatW^G(S)\cong\Lat_\CatCu^G(\gamma(S))
	\]
	that also respects quotients.
\end{prp}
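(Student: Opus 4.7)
The plan is to construct explicit mutually inverse maps between the two lattices and then upgrade to the equivariant setting. In one direction, for each closed $\W$-ideal $I$ of $S$, I would set $\gamma(I) = \{[(a_n)] \in \gamma(S) : a_n \in I \text{ for all } n\}$. This is well-defined on equivalence classes because $I$ is $\prec$-downward hereditary, and it is a $\CatCu$-ideal: additive closure comes from that of $I$; hereditariness under $\leq$ in $\gamma(S)$ follows from the definition of $\leq$ via sequences and hereditariness of $I$; closure under suprema of increasing sequences is handled by a standard diagonal argument, producing from an increasing sequence in $\gamma(I)$ a single $\prec$-increasing sequence in $I$ representing the supremum. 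In the other direction, for each $\CatCu$-ideal $J$ of $\gamma(S)$, I would set $\varphi(J) = \{a \in S : \gamma(a) \in J\}$. Using that $\gamma$ is an additive order-embedding and that $J$ is an order-hereditary subsemigroup closed under suprema of increasing sequences, it is routine to check that $\varphi(J)$ is a closed $\W$-ideal (for closedness: if $a^\prec \subseteq \varphi(J)$, then $\gamma(a)$ is, by \axiomW{1}, the supremum of a sequence $\gamma(a_n)$ with $a_n \in \varphi(J)$, so $\gamma(a) \in J$). Both assignments visibly preserve inclusion.

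Next, I would verify that the two maps are mutually inverse. The inclusion $I \subseteq \varphi(\gamma(I))$ is immediate; conversely, if $a \in \varphi(\gamma(I))$ and $(a_n)$ is a cofinal sequence in $a^\prec$ given by \axiomW{1}, then $\gamma(a) = [(a_n)] \in \gamma(I)$, so by passing to a cofinal subsequence we may arrange $a_n \in I$ for all $n$, hence $a^\prec \subseteq I$ and closedness yields $a \in I$. For $\gamma(\varphi(J)) = J$, the inclusion $\subseteq$ is clear; for the reverse, any $x = [(b_n)] \in J$ satisfies $\gamma(b_n) \ll x$ by density of $\gamma$, so hereditariness gives $\gamma(b_n) \in J$, whence $b_n \in \varphi(J)$ and $x \in \gamma(\varphi(J))$. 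For quotient compatibility, I would apply $\gamma$ to $\pi_I \colon S \to S/I$ to obtain a $\CatCu$-morphism $\gamma(\pi_I) \colon \gamma(S) \to \gamma(S/I)$ annihilating $\gamma(I)$; by the universal property of $\Cu$-quotients by closed ideals (\autoref{cor:closed-ideal-quotients-generalized}) this factors through a surjection $\Phi \colon \gamma(S)/\gamma(I) \to \gamma(S/I)$. For the inverse, the composition $S \xrightarrow{\gamma} \gamma(S) \twoheadrightarrow \gamma(S)/\gamma(I)$ is a $\W$-morphism annihilating $I$, so by the universal property of $S/I$ it factors through a $\W$-morphism $S/I \to \gamma(S)/\gamma(I)$, which by reflectivity of $\gamma$ extends to a $\CatCu$-morphism $\Psi \colon \gamma(S/I) \to \gamma(S)/\gamma(I)$. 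The compositions $\Phi \circ \Psi$ and $\Psi \circ \Phi$ agree with the identity on the dense images of $S/I$ and of $S$ respectively, hence on the whole completion by continuity.

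For the equivariant version, recall from \autoref{cor:categories-Cu-full-reflective-in-W-G} that a $G$-action on $S$ induces a $G$-action on $\gamma(S)$ via $g \cdot \gamma(a) = \gamma(g \cdot a)$. Both maps $I \mapsto \gamma(I)$ and $J \mapsto \varphi(J)$ then manifestly preserve $G$-invariance from their sequence-level descriptions, yielding the restricted lattice isomorphism $\Lat_\CatW^G(S) \cong \Lat_\CatCu^G(\gamma(S))$. The quotient compatibility goes through unchanged because $\Phi$ and $\Psi$ are $G$-equivariant by functoriality of $\gamma$ and naturality of the universal properties. The main obstacle I expect will be showing that $\Phi$ is not just surjective but an order-embedding, i.e.\ that no spurious identifications occur when passing from $\gamma(S)/\gamma(I)$ to $\gamma(S/I)$; this is where \autoref{lma:lifting} plays the decisive role, allowing one to lift a way-below relation in $\gamma(S/I)$ to a chain in $S$ whose defect lies in $I$, matching precisely the auxiliary relation $\leq_I$ used to define the $\W$-quotient $S/I$.
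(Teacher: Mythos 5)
Your treatment of the lattice isomorphism itself (the two assignments $I\mapsto\gamma(I)$ and $J\mapsto\{a\in S\colon\gamma(a)\in J\}$, and the verification that they are mutually inverse order-preserving maps) coincides with the paper's argument essentially step for step, including the use of closedness of $I$ and of \axiomW{1} at exactly the same points. Where you genuinely diverge is the quotient compatibility $\gamma(S)/\gamma(I)\cong\gamma(S/I)$: the paper proves this by hand, first establishing surjectivity of $[(a_n)]\mapsto[(\pi(a_n))]$ through an iterated application of \autoref{lma:lifting}, and then directly verifying that $[(a_n)]\leq_{\gamma(I)}[(b_n)]$ if and only if $[(\pi(a_n))]\leq[(\pi(b_n))]$ by explicitly assembling a witness $x=\sum_n[(c_{n,k})_k]\in\gamma(I)$. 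You instead build mutually inverse $\CatCu$-morphisms $\Phi$ and $\Psi$ from the universal properties (\autoref{thm:fundamental-theorem0}, \autoref{rmk:fundamental-theorem}, \autoref{cor:closed-ideal-quotients-generalized}, and reflectivity of $\CatCu$ in $\CatW$) and conclude by uniqueness of extensions from dense images; this works and is arguably cleaner, since once $\Psi$ is a two-sided inverse the surjectivity and order-embedding of $\Phi$ are automatic and \autoref{lma:lifting} is not needed at all (your closing worry about order-embedding is therefore moot). Two small points of precision you should tighten: ``annihilating $I$'' (resp.\ $\gamma(I)$) is not literally the hypothesis of the factorization theorems --- what is required is $\alpha_I\leq\ker(q\circ\gamma)$, i.e.\ ${\leq_I}\relsubseteq{\leq_f}$, which does follow from annihilation of the ideal together with additivity, continuity and \axiomW{2}-type suprema, but deserves the one-line check; and the ``agree on dense images, hence everywhere'' step should be phrased via the uniqueness clause of the reflector's universal property (every element of the completion is a supremum of an increasing sequence from the image, and $\CatCu$-morphisms preserve such suprema). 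The trade-off is that the paper's computational route yields an explicit description of the isomorphism on representing sequences, which it then reuses in the $G$-equivariant statements, whereas your route gets equivariance for free from functoriality.
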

\begin{proof}
	Let $I$ be a $\CatW$-ideal. Notice first that an element $[(a_n)]$ in $\gamma(S)$ belongs to $\gamma(I)$ if and only if $a_n$ belongs to $I$ for all $n\in\N$. Using this, it is easy to conclude that $\gamma(I)$ is a $\CatCu$-ideal of $\gamma(S)$. Observe also that, in case $I$ is a $\CatGW$-ideal, its image $\gamma(I)$ is clearly also a $\CatGCu$-ideal.
	
	For $\CatW$-ideals $I$ and $J$ in $S$, we have $\gamma(I)\subseteq\gamma(J)$ if and only if $I\subseteq J$. For, assume that $\gamma(I)\subseteq\gamma(J)$ and $a$ is an element in $I$. Choose by \axiomW{1} a countable cofinal $\prec$-increasing sequence $(a_n)$ in $a^\prec$. Then $[(a_n)]$ belongs to $\gamma(I)$, and thus by assumption $(a_n)\subseteq J$. Since $J$ is a closed ideal, we have $a\in J$, and thus $I\subseteq J$.
	
	Next, let $K$ be an ideal of $\gamma(S)$. Define $I:=\{a\in S\colon \gamma(a)\in K\}$, where $\gamma\colon S\to\gamma(S)$ is the natural map. We claim that $I$ is a closed $\CatW$-ideal of $S$ and $\gamma(I)=K$. By definition, $\gamma(I)\subseteq K$, and it is also clear that $I$ is a $\CatW$-subsemigroup. If $a\prec b$ in $S$ and $b\in I$, then $\gamma(a)\ll\gamma (b)$ and thus $\gamma(a)$ belongs to $K$, that is, $a$ belongs to $I$. Suppose that $a^\prec\subseteq I$ for some $a\in S$. Let $x\in \gamma(S)$ be such that $x\ll\gamma(a)$. Then, there is $a'\prec a$ such that $x\ll\gamma(a')\ll\gamma(a)$. Since $a'\in I$, we have that $x\in K$. Thus $\gamma(a)^\ll\subseteq K$, and this implies that $a\in I$.
	
	Any element in $K$, viewed in $\gamma(S)$, is a supremum of an increasing sequence of elements of the form $\gamma(a)$, for $a\in S$. Hence, all such elements belong to $K$, that is, $K\subseteq\gamma(I)$, and consequently $K=\gamma(I)$.
	
	Let us finally check that the functor $\gamma$ respects quotients. Let $I\in\Lat_\W(S)$. Since $\gamma$ is a functor, we have a $\CatCu$-morphism
	\[
	\gamma(S)\to\gamma(S/I),\,\, [(a_n)]\mapsto [(\pi(a_n))],
	\]
	where $\pi\colon S\to S/I$ is the quotient map.
	
	We claim it is surjective. Let $x=[(\pi(a_n))]\in\gamma(S/I)$ be any element. Since $\pi(a_1)\prec \pi(a_2)$, there is an element $a_2'$ such that $a_2'\prec a_2$ and $\pi(a_1)\prec\pi(a_2')\prec \pi(a_2)$. Also, there is an element $a_3'\in S$ with $a_3'\prec a_3$, and such that $\pi(a_2)\prec \pi(a_3')\prec  \pi(a_3)$. Thus, for each $n$ we find elements $a_{n}'\prec a_{n}$ with $\pi(a_{n-1})\prec \pi(a_{n}')\prec \pi(a_{n})$. Clearly, the sequence 
	\[
	(\pi(a_2'),\pi(a_2),\pi(a_3'),\pi (a_3),\pi(a_4'),\pi(a_4),\dots)
	\]
	also represents $x$. Therefore, changing notation, we may assume at the outset that $a_{2n-1}\prec a_{2n}$ for each $n\geq 1$.
	
	Now apply \autoref{lma:lifting} to $a_1,\dots,a_4$ to find elements $b_1,b_2,b_3,b_4'$ with $b_1=a_1,b_1\prec b_2\prec b_3\prec b_4'$, and $\pi(b_3)=\pi(a_3), \pi(b_4')=\pi(a_4)$. A second application of \autoref{lma:lifting}, to $b_3,b_4',a_5,a_6$ yields elements $b_4, b_5, b_6'$ with $b_3\prec b_4\prec b_5\prec b_6'$, and $\pi(b_5)=\pi(a_5),\pi(b_6')=\pi(a_6)$. Continuing in this way, we get a $\prec$-increasing sequence $(b_n)$ such that $[(\pi(b_n))]=[(\pi(a_n))]=x$, hence the map $\gamma(S)\to\gamma(S)/\gamma(I)$ is surjective, as claimed.
	
	Finally, to prove that $\gamma(S)/\gamma(I)\cong\gamma(S/I)$, we need to verify that $[(a_n)]\leq_{\gamma(I)}[(b_n)]$ in $\gamma(S)$ if, and only if, $[(\pi(a_n))]\leq[(\pi(b_n))]$ in $\gamma(S/I)$.
	
	First assume that  $[(a_n)]\leq_{\gamma(I)}[(b_n)]$. Then, there is $[(c_n)]\in\gamma(I)$ such that $[(a_n)]\leq [(b_n)]+[(c_n)]$ in $\gamma(S)$. Applying $\pi$ we obtain $[(\pi(a_n))]\leq [(\pi(b_n))]$. 
	
	Conversely, by a standard argument we may assume that $[(\pi(a_n))]\ll [(\pi(b_n))]$, and we need to show that $[(a_n)]\leq_{\gamma(I)} [(b_n)]$. In this case, there is $m$ such that $\pi(a_n)\prec\pi(b_m)$ for all $n$. Since, for a given $n$, we have $a_n\prec a_{n+1}$, there is $c_n\in I$ such that $a_n\prec b_m+c_n$. Using \axiomW{1}, choose a $\prec$-increasing sequence $(a_{n,i})$ in $a_n^\prec$ such that $a_{n-1}\prec a_{n,1}$.
	As $a_{n,1}\prec a_n$, there is $c_{n,1}\prec c_n$ with $a_{n,1}\prec b_m+c_{n,1}$. Likewise, there is $c_{n,2}'\prec c_n$ with $a_{n,2}\prec b_m+c_{n,2}'$. By using \axiomW{1}, we may choose $c_{n,2}\prec c_n$ such that $c_{n,1},c_{n,2}'\prec c_{n,2}$. We thus find a $\prec$-increasing sequence $(c_{n,k})$ in $I$ with $a_{n,k}\prec b_m+c_{n,k}$ for all $k$. Set $x_n=[(c_{n,k})_k]\in \gamma(I)$, and $x=\sum_{n=1}^{\infty}x_n\in\gamma(I)$.
	
	It was shown in \cite[Proposition 3.1.6]{AntPerThi14arX:TensorProdCu} that $[(a_k)_k]=\sup [a^{(n)}]$, where $a^{(n)}=(a_1,\dots,a_{n-1},a_{n,1},a_{n,2},\dots)$. By our construction above, we have 
	\[
	[a^{(n)}]\leq [(b_k)_k]+[(c_{n,k})_k]\leq [(b_k)_k]+x,
	\]
	and this implies that $[(a_k)]\leq [(b_k)]+x$, as desired.
\end{proof}
It is clear from the above that the action of a group $G$ on a $\CatW$-semigroup $S$ is minimal if and only if that is the case for $\gamma(S)$.

\begin{prp}
	\label{prp:Gideals} 
	Let $G$ be a group, let $(S,\prec)$ be a $\CatGW$-semigroup, and denote by $\pi_G\colon S\to S/G$ the natural map. Then
	\beginEnumConditions
	\item If $I$ is a closed $\CatGW$-ideal $I$ of $S$, then $\pi_G(I)=I/G$ is naturally identified with a closed ideal of $S/G$.
	\item  $\pi_G$ induces a lattice isomorphism $\Lat_\W^G(S)\cong \Lat_\W(S/G)$. In particular, the action of $G$ is minimal if and only if $S/G$ is simple.
	\item For $I$ as in (i), the quotient map $\pi\colon S\to S/I$ induces an isomorphism
	\[
	(S/G)/(I/G)\cong (S/I)/G.
	\]
\end{enumerate}
\end{prp}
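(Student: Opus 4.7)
The plan is to first establish the following key fact underlying (i) and (ii): if $a \leqr{G} c$ for some $c$ in a closed $\CatGW$-ideal $I$, then $a \in I$. I will prove this using the concrete chain description of $\leqr{G}$ from \autoref{cor:dynamical-subequivalence}: given any $a' \prec a$, either $a' \prec c$ (so $a' \in I$ directly), or else there is a chain
\begin{equation*}
a' \prec \sum_j d_{1j}, \quad \sum_j g_{1j} d_{1j} \prec \sum_j d_{2j}, \quad \ldots, \quad \sum_j g_{mj} d_{mj} \prec c \; .
\end{equation*}
Working backwards from $c \in I$, one uses hereditariness of $I$ (sums dominated by elements of $I$ lie in $I$, and then each summand lies in $I$ because $x \leq_\prec x + y$) together with $G$-invariance (so $g_{ij} d_{ij} \in I$ forces $d_{ij} \in I$), successively concluding that every $d_{ij}$ and finally $a'$ lies in $I$. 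Closedness of $I$ then yields $a \in I$. With this in hand, $\pi_G(I) = \{[a] \in S/G : a \in I\}$, and verifying that it is a closed ideal of $S/G$ with respect to $\prec_{\alphar{G}} = \mathrel{\leqr{G} \circ \prec}$ is a direct check, settling (i).

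For (ii), I will construct the inverse map $\Lat_\W(S/G) \to \Lat_\W^G(S)$ by $J \mapsto \pi_G^{-1}(J)$. Routine verification shows that $\pi_G^{-1}(J)$ is a closed $\CatGW$-ideal, with $G$-invariance automatic because $g \cdot a \simdyn a$ in $S/G$. The two assignments are mutually inverse: $\pi_G(\pi_G^{-1}(J)) = J$ by surjectivity of $\pi_G$, while $\pi_G^{-1}(\pi_G(I)) = I$ is exactly the key fact from (i). Both assignments preserve inclusions, so this is a lattice isomorphism, and the ``minimal iff simple'' equivalence then follows at once.

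For (iii), I will invoke universal properties in both directions. Since $I$ is $G$-invariant, the action of $G$ descends to $S/I$, making it a $\CatGW$-semigroup. The composition $S \to S/I \to (S/I)/G$ is a $G$-invariant $\W$-morphism, so by \autoref{thm:dynamical-universal-property} it factors through $S/G$; the resulting map vanishes on $\pi_G(I) = I/G$ and thus further factors through $(S/G)/(I/G)$, producing $\bar\Psi \colon (S/G)/(I/G) \to (S/I)/G$. Symmetrically, $S \to S/G \to (S/G)/(I/G)$ vanishes on $I$, so factors through $S/I$, and the resulting map is $G$-invariant (since $a$ and $g \cdot a$ have the same image in $(S/G)/(I/G)$), hence further factors through $(S/I)/G$, producing $\bar\Phi \colon (S/I)/G \to (S/G)/(I/G)$. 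Both compositions $\bar\Phi \circ \bar\Psi$ and $\bar\Psi \circ \bar\Phi$ agree with the identity on the images of the canonical surjections from $S$, and since those images are the entire codomains, both are the identity; thus $\bar\Phi$ and $\bar\Psi$ are mutually inverse $\W$-isomorphisms.

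The main obstacle will be the key lemma powering (i): one must carefully unpack the chain description from \autoref{cor:dynamical-subequivalence} and execute the backwards induction, juggling the hereditary structure of $I$ with its $G$-invariance. Everything else reduces to formal diagram chases exploiting the universal properties developed in \autoref{sec:Dynsemigroup} and the ideal-pair correspondence from \autoref{sec:ideals}.
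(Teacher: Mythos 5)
Your proof is correct. Parts (i) and (ii) follow essentially the same route as the paper: the same key fact (that $a \leqr{G} c$ with $c$ in a closed $\CatGW$-ideal $I$ forces $a \in I$), proved by the same backwards induction along the chain description of \autoref{cor:dynamical-subequivalence} using hereditariness, $G$-invariance, and closedness, and the same inverse assignment $J \mapsto \pi_G^{-1}(J)$ for the lattice isomorphism. Part (iii) is where you genuinely diverge. The paper works at the level of elements: it takes the functorially induced surjection $(\pi_I)_G \colon S/G \to (S/I)/G$ and verifies directly that $x \leq_{I/G} y$ if and only if $(\pi_I)_G(x) \leq (\pi_I)_G(y)$, which requires a fairly lengthy manipulation of chains from \autoref{cor:dynamical-subequivalence} together with \axiomW{1} and \axiomW{4} to lift and match the witnessing elements. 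Your two-sided universal-property argument avoids all of that, at the price of having to justify the two factorization steps of the form ``a $\W$-morphism that kills a closed ideal factors through the quotient by that ideal.'' This is not literally \autoref{prp:idealsnormal}(iii) (whose hypothesis $\mathrel{{\prec}\circ{\leq}}\relsubseteq{\prec}$ need not hold for a kernel pair), so you should spell it out once: if $f(I) \subseteq \{w \colon w \leq 0\}$ and $a \leq_I b$, then for $z \prec_T f(a)$ continuity gives $x \prec a$ with $z \prec_T f(x) \prec_T f(b) + f(y)$ for some $y \in I$; \axiomW{4} splits this as $z \prec_T u + v$ with $u \prec_T f(b)$ and $v \prec_T f(y)$, whence $v \prec_T 0$ and $z \prec_T f(b)$ by \axiomW{3}, so $\leq_I \relsubseteq \leq_f$ and \autoref{lma:auxiliary}(i) plus \autoref{rmk:fundamental-theorem} give the factorization. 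There is also a small point you gloss over: \autoref{thm:dynamical-universal-property} lands in $T/{\alpha_T}$ rather than $T$, so one must note that for $T = (S/I)/G$ (respectively $(S/G)/(I/G)$) the pair $\alpha_T$ induces the partial order already present, which follows from the minimal admissibility established in \autoref{thm:merging-W-preorder}. With those two points made explicit, your argument for (iii) is a clean and arguably preferable alternative to the paper's computation.
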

\begin{proof}
(i): We clearly have a well defined map $I/G\to S/G$ given by $[a]\mapsto [a]$. We need to show that $[a]\prec [b]$ in $S/G$ with $b\in I$ implies $a\in I$ and that $[a]^\prec\subseteq I/G$ implies $a\in I$.

If $[a]\prec [b]$ with $b\in I$, then by definition $a\leqr{G} b'\prec b$ for some $b'\in I$. Let $a'\prec a$. Then, either $a'\prec b'$, in which case $a'\in I$, or else there are $n,m$, and elements $x_0,x_0'\dots,x_n, x_n'\in S$ with $x_i=\sum_{j=1}^m d_{ij}, x_i'=\sum_{j=1}^{m}g_{ij}d_{ij}$, for some elements $g_{ij}\in G$, and such that $x_i'\prec x_{i+1}$ for each $i<n$, $a'\prec x_0$, and $x_n'\prec b'$; see \autoref{cor:dynamical-subequivalence}. Since $b'\in I$, we have that $x_n'\in I$, hence $g_{nj}\in I$ for all $j$. Using that $I$ is a $\CatGW$-ideal, we get $d_{nj}\in I$ for all $j$, whence $x_n\in I$. Thus by a recursive argument we arrive at $a'\in I$. This shows that $a^\prec\subseteq I$, hence $a\in I$ since $I$ is closed.

Now suppose that $[a]^\prec\subseteq I/G$. If $a'\prec a$, then $[a']\prec [a]$, hence $[a']=[b]$ for $b\in I$, and thus in particular $a'\leqr{G}b'$, for some $b'\in I$. As in the paragraph above, this implies that $a'\in I$, whence $a\in I$.

(ii): Using (i), we have a map $\Lat_\W^G(S)\to \Lat_\W(S/G)$ given by $I\mapsto \pi_G(I)$. The inverse is given by $K\mapsto\pi_G^{-1}(K)$, for any closed ideal $K$ of $S/G$. It is routine to verify that these assignments are inverse for one another.

(iii): By functoriality applied to the natural map $\pi_I\colon S\to S/I$ (see \autoref{rmk:categories-W-Cu-G-categorical}), we have a $\CatW$-morphism $(\pi_I)_G\colon S/G\to(S/I)/G$, given by $(\pi_I)_G([a])=[\pi_I(a)]$, which is clearly surjective. Put $J=I/G$. We claim that, for $x,y\in S/G$, we have $x\leq_J y$ if and only if $(\pi_I)_G(x)\leq(\pi_I)_G(y)$ in $(S/I)/G$. This clearly implies that $(\pi_I)_G$ induces an order-isomorphism. 

Write $x=[a]$, $y=[b]$ for $a,b\in S$, and assume first that $[a]\leq_J [b]$. Let $z\prec [\pi_I(a)]$ in $(S/I)/G$. Then, there are elements $a', a''\in S$ such that $a'\prec a''\prec a$ and $z\prec [\pi_I(a')]$.

Since also $[a'']\prec [a]$ in $S/G$ and $[a]\leq_J [b]$, there is an element $b'\in S$ with $b'\prec b$, and an element $c\in I$ such that $[a'']\leq [b']+[c]$ in $S/G$, that is, $a''\leqr{G} b'+c$. Arguing as in the paragraph above, and using \autoref{cor:dynamical-subequivalence}, since $a'\prec a''$, we have that either $a'\prec b'+c$, and thus $\pi_I(a')\prec \pi_I(b)$ or else there are $n,m$ and elements, denoted again by $x_0,x_0'\dots,x_n, x_n'\in S$, with $x_i=\sum_{j=1}^m d_{ij}, x_i'=\sum_{j=1}^{m}g_{ij}d_{ij}$, for some elements $g_{ij}\in G$, and such that $x_i'\prec x_{i+1}$ for each $i<n$, $a'\prec x_0$, while $x_n\prec b'$. This clearly implies that $\pi_I(a)\leqr{G}\pi_I(b)$, hence $(\pi_I)_G([a])\leq (\pi_I)_G([b])$ in $(S/I)/G$.

Conversely, suppose that $\pi_I(a)\leqr{G}\pi_I(b)$, and let 
$z\prec [a]$ in $S/G$. Choose elements $a', a''\in S$ such that $a'\prec a''\prec a$. We proceed as above, using \autoref{cor:dynamical-subequivalence} and assuming $m=1$ (the general case follows applying induction on $m$), hence we assume that there are elements $\pi_I(d_1),\dots,\pi_I(d_n)\in S/I$ and $g_1,\dots,g_n\in G$ such that $\pi_I(a'')\prec\sum_j \pi_I(d_j)=\pi_I(\sum_j d_j)$, and $\sum_jg_j\pi_I(d_i)=\pi_I(\sum_j g_jd_i)\prec \pi(b)$. In particular this implies that, in $S$, we have $a''\leq_I\sum_j d_j'$ for some $d_j'\prec d_j$ and $\sum_jg_jd_j\leq_I b'$, for $b'\prec b$.

Now, by definition of $\leq_I$, since $a'\prec a''$ and abusing notation, there are elements $x'\prec x$ in $I$ with
\[
a'\prec\sum_j d_j'+x'.
\]
Choose $x'', x'''$ with $x'\prec x''\prec x'''\prec x$. Choose $d_j''$ such that $\sum_j g_jd_j'\prec \sum_j g_jd_j''\prec\sum_j g_jd_j$. Since $\sum_j g_jd_j''\prec\sum_j g_jd_j$, there is $b''\in S$ with $b''\prec b'$ and elements $y',y\in I$ with $y'\prec y$ such that $\sum_j g_jd_j''\prec b''+y'$. Let $c=x+y$, which is an element in $I$. 

We claim that $a'\leqr{G} b'+c$. Indeed, if $\tilde{a}\prec a'$, then 
\[
\tilde{a}\prec \sum_{j=1}^n d_j'+x'.
\] 
Also, with $g_{n+1}=e\in G$, 
\[
\sum_j g_jd_i'+g_{n+1}x'\prec\sum_j g_jd_j''+x''\prec b''+x'''+y'\prec b'+c.
\]
Therefore $[a']\leq [b']+[c]$ in $S/G$, and consequently $[a']\leq_J[b]$. This implies, as $a'\prec a$ is arbitrary, that $[a]\leq_J[b]$.
\end{proof}
\begin{cor}
\label{lma:ppalideal} Let $S$ be a $\CatGW$-semigroup, and let $\pi_G\colon S\to S / G$ denote the natural map. For any $a\in S$, we have
\[
\pi_G(I_G(a))=I([a]).
\]
\end{cor}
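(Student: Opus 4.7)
The plan is to leverage the lattice isomorphism $\Lat_\W^G(S) \cong \Lat_\W(S/G)$ established in \autoref{prp:Gideals}(ii), which is given by $I \mapsto \pi_G(I)$ with inverse $K \mapsto \pi_G^{-1}(K)$. With this tool in hand, the equality $\pi_G(I_G(a)) = I([a])$ becomes a simple consequence of the respective minimality properties of $I_G(a)$ and $I([a])$.

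First I would prove $I([a]) \subseteq \pi_G(I_G(a))$. By definition $I_G(a)$ is a closed $G$-invariant ideal of $S$ that contains $a$, so by \autoref{prp:Gideals}(ii) the image $\pi_G(I_G(a))$ is a closed ideal of $S/G$, and it obviously contains $\pi_G(a) = [a]$. Since $I([a])$ is by definition the smallest closed ideal of $S/G$ containing $[a]$, the inclusion follows.

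For the reverse inclusion, I would use the other direction of the Galois-type correspondence. The set $\pi_G^{-1}(I([a]))$ is a closed $G$-invariant ideal of $S$ (again by \autoref{prp:Gideals}(ii)), and it contains $a$ because $[a] \in I([a])$. By the minimality of $I_G(a)$ among closed $G$-invariant ideals containing $a$, we obtain $I_G(a) \subseteq \pi_G^{-1}(I([a]))$. Applying $\pi_G$ to both sides and using the surjectivity of $\pi_G$ yields $\pi_G(I_G(a)) \subseteq I([a])$, completing the proof.

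The argument is entirely formal and the main (and only) work is already done in \autoref{prp:Gideals}(ii); no obstacle is expected here beyond citing that result correctly.
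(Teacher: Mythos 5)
Your proof is correct, and it agrees with the paper on one of the two inclusions: for $I([a])\subseteq\pi_G(I_G(a))$ the paper argues exactly as you do, namely that $\pi_G(I_G(a))$ is a closed ideal of $S/G$ containing $[a]$ (by \autoref{prp:Gideals}) and $I([a])$ is the smallest such. Where you diverge is the inclusion $\pi_G(I_G(a))\subseteq I([a])$: the paper proves this by a direct element-wise computation, taking $b\in I_G(a)$ and $x\prec[b]$, unfolding the definition of $I_G(a)$ to get $b'\prec\sum_i g_i a$ for some $g_1,\dots,g_n\in G$, and observing that in the quotient the translates collapse so that $x\prec[b']\leq n[a]$, whence $[b]\in I([a])$. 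You instead pull $I([a])$ back through the inverse of the lattice isomorphism of \autoref{prp:Gideals}(ii): $\pi_G^{-1}(I([a]))$ is a closed $G$-invariant ideal containing $a$, so it contains $I_G(a)$ by minimality, and applying $\pi_G$ gives the inclusion (note that $\pi_G(\pi_G^{-1}(K))\subseteq K$ holds for any map, so even surjectivity is not strictly needed for this direction). Your route is shorter and entirely formal, at the cost of leaning on the (stated but not spelled out) fact that $K\mapsto\pi_G^{-1}(K)$ lands in $\Lat^G_\W(S)$; the paper's computation makes the mechanism concrete, exhibiting explicitly how a finite sum of $G$-translates of $a$ becomes a multiple of $[a]$ in $S/G$. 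There is no circularity in your citation, since \autoref{prp:Gideals} is proved independently of this corollary.
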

\begin{proof}
Let $b$ be an element in $I_G(a)$ and let $x\in S / G$ be such that $x\prec [b]$. Then there is $b'\in S$ with $b'\prec b$ and $x\prec [b']$. By definition of $I_G(a)$, there are elements $g_1,\dots,g_n\in G$ such that $b'\prec \sum_ig_ia$. Then $x\prec [b']\leq n[a]$, by condition (ii) in the second part of \autoref{dfn:actions-W-semigroups}. This implies that $[b]\in I([a])$.

Conversely, notice that $[a]\in\pi_G(I_G(a))$ as $a\in I_G(a)$ and since $\pi_G(I_G(a))$ is an ideal of $S / G$ by \autoref{prp:Gideals}, it follows that $I([a])\subseteq \pi_G(I_G(a))$.
\end{proof}


\begin{rmk}
\label{rmk:quotients}
It is easy to adapt the argument used in \cite[Proposition 1]{CiuRobSan10CuIdealsQuot} to show that, if $A$ is a \ca{} and $I$ a closed, two-sided ideal, then $\W(A)/\W(I)\cong \W(A/I)$ as $\CatW$-semigroups.
\end{rmk}

\begin{thm}
\label{thm:dynquotients}
Let $A$ be a \ca{}, and let $G$ be a discrete group acting on $A$. For any $G$-invariant ideal $I$ of $A$, we have that $\Dyn(I)$ and $\CuDyn(I)$ are (isomorphic to) ideals of $\Dyn(A)$ and $\CuDyn(A)$, respectively. Moreover,
\[
\Dyn(A)/\Dyn(I)\cong\Dyn(A/I) \text{ and }\CuDyn(A)/\CuDyn(I)=\CuDyn(A/I).
\]
\end{thm}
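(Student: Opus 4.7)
The plan is to assemble the statement by combining three ingredients already established in the paper: the $\CatW$-level analog of the $\Cu$-short exact sequence (Remark~\ref{rmk:quotients}), Proposition~\ref{prp:Gideals} on invariant ideals and dynamical quotients, and Proposition~\ref{prp:WidealsvsCuideals} on the interaction of the completion functor $\gamma$ with ideals and quotients. None of these steps requires fresh ideas; the work lies in fitting them together in the right order.

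First I would check that $\W(I)$ is a $\CatGW$-ideal of $\W(A)$. That $\W(I)$ is a closed $\W$-ideal of $\W(A)$ is well known and is recorded (for the surjective half) in the discussion around Theorem~\ref{thm:Cu-short-exact-sequence} and Remark~\ref{rmk:quotients}. The $G$-invariance of $I\subseteq A$ passes to $\W(I)\subseteq \W(A)$ because the $G$-action on $\W(A)$ is defined by $g\cdot [a] = [g\cdot a]$, and $g\cdot a\in I$ whenever $a\in I$. Hence $\W(I)\in\Lat_{\W}^{G}(\W(A))$.

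Next, Proposition~\ref{prp:Gideals}(i)–(ii) applied to $(\W(A),G)$ shows that the image $\pi_G(\W(I))=\W(I)/G$ is canonically a closed ideal of $\W(A)/G=\Dyn(A)$, and by construction this image is $\Dyn(I)$. Proposition~\ref{prp:Gideals}(iii) then gives the natural $\CatW$-isomorphism
\[
\Dyn(A)/\Dyn(I) \;=\; (\W(A)/G)\big/(\W(I)/G) \;\cong\; (\W(A)/\W(I))\big/G.
\]
Combining with Remark~\ref{rmk:quotients}, which provides a natural $\CatW$-iso\-mor\-phism $\W(A)/\W(I)\cong \W(A/I)$ (and which is $G$-equivariant, since all maps involved are induced by the equivariant quotient $A\to A/I$), we conclude that
\[
\Dyn(A)/\Dyn(I) \;\cong\; \W(A/I)/G \;=\; \Dyn(A/I),
\]
establishing the first isomorphism.

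Finally, I would apply the completion functor $\gamma$ to the picture above. By Corollary~\ref{cor:CuWidentifications} we have $\CuDyn(\cdot)=\gamma(\W(\cdot)/G)$, so $\CuDyn(I)=\gamma(\Dyn(I))$ and $\CuDyn(A)=\gamma(\Dyn(A))$. Proposition~\ref{prp:WidealsvsCuideals} then tells us two things: that $\gamma$ sends closed $\W$-ideals to closed $\Cu$-ideals, so $\CuDyn(I)$ is an ideal of $\CuDyn(A)$; and that $\gamma$ respects quotients, yielding
\[
\CuDyn(A)/\CuDyn(I) \;=\; \gamma(\Dyn(A))\big/\gamma(\Dyn(I)) \;\cong\; \gamma\bigl(\Dyn(A)/\Dyn(I)\bigr) \;\cong\; \gamma(\Dyn(A/I)) \;=\; \CuDyn(A/I).
\]
The only mildly technical point is verifying that the isomorphism produced in the middle paragraph is truly natural, so that applying $\gamma$ is justified; but this is automatic because every map in sight is a canonically induced $\CatW$-morphism, and $\gamma$ is a functor. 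Thus the proof reduces to invoking the preceding structural results in sequence, with no substantive new argument required.
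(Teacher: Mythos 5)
Your proposal is correct and follows essentially the same route as the paper's own proof: both establish that $\W(I)$ is a $\CatGW$-ideal, then chain Proposition~\ref{prp:Gideals}, Remark~\ref{rmk:quotients}, and Proposition~\ref{prp:WidealsvsCuideals} (via Corollary~\ref{cor:CuWidentifications}) in exactly the same order. Your added remarks on the $G$-equivariance of the isomorphism $\W(A)/\W(I)\cong\W(A/I)$ and on naturality are worthwhile explicit checks that the paper leaves implicit, but they do not change the argument.
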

\begin{proof}
We know that $\W(I)$ is a $\CatGW$-ideal of $\W(A)$, and thus $\Dyn(I)$ is a $\CatW$-ideal of $\Dyn(A)$, by \autoref{prp:Gideals}. On the other hand $\CuDyn(I)=\gamma(\Dyn(I))$, and by \autoref{prp:WidealsvsCuideals} is a $\Cu$-ideal of $\CuDyn(A)$.

Using \autoref{prp:Gideals} at the first step and \autoref{rmk:quotients} at the second step, we obtain:
\[
\Dyn(A)/\Dyn(I)\cong (\W(A)/\W(I))/G\cong (\W(A/I))/G=\Dyn(A/I).
\]
Finally, applying \autoref{prp:WidealsvsCuideals} at the second step, we obtain
\begin{align*}
	\CuDyn(A)/\CuDyn(I) &  =\gamma(\Dyn(A))/\gamma(\Dyn(I))\cong \gamma(\Dyn(A)/\Dyn(I))\\ & \cong\gamma(\Dyn(A/I))
	\cong\CuDyn(A/I).\qedhere
\end{align*}
\end{proof}

\section{Dynamical strict comparison} \label{sec:comparison}

In this section, we introduce the notion of dynamical strict comparison for abstract $\CatGW$-semigroups and the corresponding notion for C*-algebras acted upon by a group.

\begin{pgr}[States and functionals]
	\label{pgr:functionals}
	Given a preordered semigroup $(S,\leq)$, we shall denote by $\mathrm{St}(S)$ the set of \emph{states} on $S$, that is, the set of those additive, zero- preserving, and order-preserving maps $\lambda\colon S\to [0,\infty]$. If $(S,\prec)$ is a $\CatW$-semigroup, we equip it with the induced preorder $a\leq_\prec b$ (defined by $a^\prec \subset b^\prec$), and it is natural to consider those states $\lambda\colon S\to [0,\infty]$ which satisfy the condition that $\lambda(a)=\sup\limits_{a'\prec a}\lambda(a')$. We shall refer to these states as \emph{$\CatW$-functionals}, and denote this set by $\fun{\!\W}{}(S)$. In general functionals are not required to preserve the relation $\prec$ and thus are not necessarily $\W$-morphisms.
	
	Given a state $\lambda$ on a $\CatW$-semigroup $S$, we may define $\bar{\lambda}$ by $\bar{\lambda}(a)=\sup\limits_{a'\prec a}\lambda(a')$. It is then easy to verify that $\bar{\lambda}$ belongs to $\fun{\!\W}{}(S)$ and $\bar{\lambda}\leq \lambda$, with equality precisely when $\lambda\in \fun{\!\W}{} (S)$; see the arguments in \cite[Proposition 4.1]{Ror92StructureUHF2}.
	
	If now $G$ is a group acting on a preordered semigroup $S$, we define the set of \emph{$G$-invariant states} as 
	\[
	\mathrm{St}^G(S)=\{\lambda\in \mathrm{St}(S)\mid \lambda(ga)=\lambda(a)\text{ for all }a\in S\}\,.
	\]
	When $S$ is a $\CatGW$-semigroup, we shall use the notation $\fun{\!\W}{G}(S)$ to refer to the subset of $\fun{\!\W}{}(S)$ consisting of those $\W$-functionals that are moreover $G$-invariant.
	
	Next, let $S$ be a $\Cu$-semigroup. Recall that, in this context, a \emph{functional} on $S$ is a state $\lambda\colon S\to [0,\infty]$ that further preserves suprema of increasing sequences. We denote, as customary, the set of all functionals on $S$ by $\fun{}{}(S)$, which is a compact convex Hausdorff space, as shown in \cite[Theorem 4.8]{EllRobSan11Cone} (see also \cite{Rob13Cone}), with the following topology: a net $(\lambda_i)_{i\in I}$ in $\fun{}{}(S)$ converges to $\lambda$ in $\fun{}{}(S)$ provided
	\[
	\limsup\limits_{i\in I}\lambda_i(a')\leq\lambda(a)\leq\liminf\limits_{i\in I}\lambda_i(a)
	\]
	for any elements $a',a\in S$ with $a'\ll a$.
	
	If $S$ is a $\CatW$-semigroup, the precise relationship between $\fun{\!\W}{}(S)$ and $\fun{}{}(\gamma(S))$ is given via the natural completion morphism $\gamma\colon S\to\gamma(S)$, as shown below. (See \autoref{pgr:completion})
\end{pgr}

\begin{prp}
	\label{prp:WfunCufun} Let $(S,\prec)$ be a $\W$-semigroup. Then:
	\beginEnumConditions
	\item There is an affine homeomorphism $\fun{\!\W}{}(S)\cong\fun{}{}(\gamma(S))$.
	\item If, further, $S$ is a $\CatGW$-semigroup and $\pi_G\colon S \to S / G$ is the natural map, then the assignment $\lambda\mapsto\lambda\circ\pi_G$ defines an affine homeomorphism $\fun{\!\W}{}(S / G)\cong \fun{\!\W}{G}(S)$. In particular, if $S$ is a $\CatGCu$-semigroup, then $\fun{}{}(\gamma(S/G))\cong\fun{}{G}(S)$.
\end{enumerate}
\end{prp}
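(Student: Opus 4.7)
For part (i), the plan is to set up the bijection via the canonical completion map $\gamma \colon S \to \gamma(S)$. Given $\mu \in \fun{}{}(\gamma(S))$, set $\Phi(\mu) = \mu \circ \gamma$. Since $\gamma$ is monotone and additive, $\Phi(\mu)$ is a state, and for any $a\in S$ we have $\gamma(a)=\sup_n\gamma(a_n)$ for any $\prec$-increasing cofinal sequence in $a^\prec$, which together with $\gamma$ being an order-embedding with respect to $\prec$ and $\ll$ gives $\Phi(\mu)(a) = \sup_{a'\prec a}\Phi(\mu)(a')$. Conversely, given $\lambda \in \fun{\!\W}{}(S)$, define $\Psi(\lambda)([(a_n)]) := \sup_n \lambda(a_n)$. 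The key steps are to verify that $\Psi(\lambda)$ is well-defined (if $(a_n)\precsim(b_n)$, then for each $n$ there is $m$ with $a_n\prec b_m$, giving $\lambda(a_n)\le\lambda(b_m)$; hence the suprema coincide by symmetry), additive, order-preserving, and preserves suprema of increasing sequences. The last point uses the double sup trick coupled with \axiomW{1}. Then $\Phi\circ\Psi=\mathrm{id}$ and $\Psi\circ\Phi=\mathrm{id}$ follow from the fact that each element of $\gamma(S)$ is a supremum of elements of the form $\gamma(a)$. Both spaces carry the topology of $\limsup\le\lambda(a)\le\liminf$ along the relevant auxiliary relation, which corresponds to $\prec$ on $S$ and $\ll$ on $\gamma(S)$; since $a'\prec a$ in $S$ corresponds to $\gamma(a')\ll\gamma(a)$ in $\gamma(S)$, the bijection is continuous in both directions. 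Affinity is immediate.

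For part (ii), I would use Theorem~\ref{thm:dynamical-universal-property} in spirit: given $\lambda\in\fun{\!\W}{G}(S)$, I want to descend it to a $\W$-functional on $S/G$. Define $\widetilde\lambda([a]):=\lambda(a)$ and verify that the relation $\{(a,b)\colon\lambda(a)\le\lambda(b)\}$ lies in $\mathcal{E}_S^+(\app)$ (\autoref{dfn:axiomatic-merging-additive}). The four defining conditions translate as follows: (i) $a^\prec\subseteq b^\prec$ implies $\lambda(a)=\sup_{a'\prec a}\lambda(a')\le\lambda(b)$; (ii) $a=gb$ is handled by $G$-invariance; (iii) is precisely the $\W$-functional property $\lambda(a)=\sup_{c\prec a}\lambda(c)$; and (iv) is additivity. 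Hence $\leqr{G}$ is contained in this relation, so $\widetilde\lambda$ is well-defined and additive on $S/G$. To check the $\W$-functional property on $S/G$, recall that the relation on $S/G$ is $\prec_G={\leqr{G}}\circ{\prec}$ (\autoref{pgr:quotient}); so $\sup_{[c]\prec_G[a]}\widetilde\lambda([c])\ge \sup_{c\prec a}\lambda(c)=\lambda(a)=\widetilde\lambda([a])$, while the reverse inequality follows because $c\leqr{G}c'\prec a$ gives $\lambda(c)\le\lambda(c')\le\lambda(a)$. Conversely, for $\mu\in\fun{\!\W}{}(S/G)$ the composition $\mu\circ\pi_G$ is automatically $G$-invariant (since $a\simr{G}ga$) and a $\W$-functional by a similar cofinality argument using that $\pi_G$ is a $\W$-morphism.

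For the topological statement, both $\fun{\!\W}{}(S/G)$ and $\fun{\!\W}{G}(S)$ are given the subspace topology inherited from pointwise convergence on elements under the respective auxiliary relations, and the assignment $\lambda\mapsto\lambda\circ\pi_G$ is continuous by definition; its inverse is continuous since a net $\widetilde\lambda_i\to\widetilde\lambda$ in $\fun{\!\W}{}(S/G)$ is tested on pairs $[c]\prec_G[a]$, which can always be arranged with $c\prec a$ in $S$. Affinity is obvious. The final ``in particular'' assertion then follows by combining (i) and (ii): $\fun{}{}(\gamma(S/G))\cong\fun{\!\W}{}(S/G)\cong\fun{\!\W}{G}(S)$, and when $S$ is a $\CatGCu$-semigroup one identifies $\fun{\!\W}{G}(S)$ with $\fun{}{G}(S)$ using that $\W$-functionals on a $\CatCu$-semigroup automatically preserve suprema of increasing sequences (via \axiomO{2} and the $\W$-functional condition).

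The main technical obstacle will be verifying the continuity of the inverse maps, since the topologies are defined via limsup/liminf along auxiliary relations and one must track how these relations interact under $\gamma$ and $\pi_G$. All other verifications are fairly direct consequences of the axiomatic characterizations of $\leqr{G}$ in \autoref{dfn:axiomatic-merging-additive} together with the universal properties already established.
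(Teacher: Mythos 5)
Your proposal is correct and follows essentially the same route as the paper: part (i) via the mutually inverse assignments $\mu\mapsto\mu\circ\gamma$ and $\lambda\mapsto\bigl([(a_n)]\mapsto\sup_n\lambda(a_n)\bigr)$, and part (ii) via $\lambda\mapsto\lambda\circ\pi_G$ with inverse $[a]\mapsto\bar\lambda(a)$, followed by the chain of identifications for the ``in particular'' claim. The one point where you diverge is the well-definedness step in part (ii): you verify that the pullback preorder $\{(a,b)\colon\lambda(a)\leq\lambda(b)\}$ belongs to $\mathcal{E}_S^+(\app)$ and invoke the minimality of $\leqr{G}$ from \autoref{dfn:axiomatic-merging-additive}, whereas the paper unwinds the concrete chain characterization of $\leqr{G}$ from \autoref{cor:dynamical-subequivalence} and uses $G$-invariance plus additivity along the chain; both arguments are valid, and yours is arguably the cleaner of the two since it avoids the combinatorics of the chains entirely.
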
	
\begin{proof}
(i): 	
For $\lambda\in\fun{\!\W}{} (S)$, define $\tilde{\lambda}\in \fun{}{}(S)$ as follows: given $a\in\gamma(S)$, choose a $\prec$-increasing sequence $(a_n)$ such that $a=\sup\gamma(a_n)$, and set
\[
\tilde\lambda(\sup\gamma(a_n))=\sup\lambda(a_n). 
\]
This is well defined, for if $a=\sup\gamma(a_n)=\sup\gamma(b_n)$, for $\prec$-increasing sequences $(a_n)$ and $(b_n)$, then for each $n$, there is $m$ such that $\gamma(a_n)\ll\gamma(b_m)$, whence $a_n\prec b_m$. This implies that $\lambda(a_n)\leq \lambda(b_m)$, and thus $\sup_n\lambda(a_n)\leq\sup_m\lambda(b_m)$. By symmetry we have equality.

It is clear that $\tilde{\lambda}$ is additive and the argument above shows it is also order-preserving. 
To show that it preserves suprema of increasing sequences, we may use the argument in \cite[Proposition 3.5]{AntAraBosPerVil23}, based in turn on how suprema are constructed in $\gamma(S)$; see \cite[Proposition 3.1.6]{AntPerThi14arX:TensorProdCu} for further details.

Conversely, given $\mu\in \fun{}{}(\gamma(S))$, define $r(\mu)$ on $S$ by restriction, that is, $r(\mu)(a)=\mu(\gamma(a))$. We clearly have that $r(\mu)$ is additive. Observe that, by construction of $\gamma(S)$, we have $\gamma(a)=\sup_{a'\prec a}\gamma(a')$ for all $a\in S$. This implies that $r(\mu)$ is order-preserving and, since $\mu$ preserves suprema, we also have 
\[
r(\mu)(a)=\mu(\sup\limits_{a'\prec a}\gamma(a'))=\sup\limits_{a'\prec a}\mu(\gamma(a'))=\sup\limits_{a'\prec a}r(\mu)(a')
\]
and thus $r(\mu)\in\fun{\!\W}{}(S)$.

Now, if $\lambda\in\fun{\!\W}{}(S)$ and $\tilde{\lambda}$ is as in the first part of the proof we have, for $a\in S$, that 
\[
r(\tilde{\lambda})(a)=\tilde{\lambda}(\gamma(a)))=\tilde{\lambda}(\sup\limits_{a'\prec a}\gamma(a'))=\sup\limits_{a'\prec a}\lambda(a')=\lambda(a),
\]
and hence $r(\tilde{\lambda})=\lambda$.

Conversely, given $\mu\in\fun{}{}(\gamma(S))$, write $\lambda:=r(\mu)$ and then, for each $a\in \gamma(S)$ written as $a=\sup\gamma(a_n)$ for a $\prec$-increasing sequence in $S$, we have 
\[
\tilde{\lambda}(a)=\sup\lambda(a_n)=\sup r(\mu)(a_n)=\sup\mu(\gamma(a_n))=\mu(\sup\gamma(a_n))=\mu(a).
\]
This shows that the assignments $\lambda\mapsto\tilde{\lambda}$ and $\mu\mapsto r(\mu)$ are inverses for one another. It is easy to verify that both assignments are also affine. 

Finally, if we equip $\fun{\!\W}{}(S)$ with the topology under which a net $(\lambda_i)_{i\in I}$ converges to $\lambda$ precisely when 
\[
\limsup_{i\in I}\lambda_i(a')\leq\lambda(a)\leq \liminf_{i\in I}\lambda_i(a)
\]
for any elements $a',a\in S$ with $a'\prec a$, it follows that what we have defined is a homeomorphism.

(ii):  Since $\pi_G$ is a $\CatW$-morphism, given $\lambda\in \fun{\!\W}{}(S/G)$, we have that $\lambda\circ\pi_G\in \fun{\!\W}{}(S)$. Next, take $a\in S$ and $g\in G$. Then by construction we have $[ga]=[a]$ in $S/G$, and thus $\lambda(\pi_G(ga))=\lambda(a)$. This shows that $\lambda\circ\pi_G$ belongs to $\fun{\!\W}{G}(S)$ and therefore the map $\Lambda\colon \fun{\!\W}{}(S/G)\to \fun{\!\W}{G}(S)$ given by $\Lambda(\lambda)=\lambda\circ\pi_G$ is well defined.

Now, let $\bar{\lambda}\in \fun{\!\W}{G}(S)$ be given. For $a\in S$, put $\lambda([a])=\bar{\lambda}(a)$. Let us check that $\lambda\in\fun{\!\W}{}(S/G)$. First, if $[a]\leq [b]$, that is, $a\leqr{G}b$, and $c\prec a$, then by \autoref{cor:dynamical-Cuntz-subequivalence} either $c\prec b$ or else there are $n,m$ and elements $x_0,x_0'\dots,x_n, x_n'\in S$ with $x_i=\sum_{j=1}^m d_{ij}, x_i'=\sum_{j=1}^{m}g_{ij}d_{ij}$, for some elements $g_{ij}\in G$, and such that $x_i'\prec x_{i+1}$ for each $i<n$, $c\prec x_0$, and $x_n'\prec b$. Since $\bar{\lambda}$ is $G$-invariant, this clearly implies that $\bar{\lambda}(x_i')=\bar{\lambda}(x_i)$ for all $i$ and thus $\bar{\lambda}(c)\leq \bar{\lambda}(b)$. Now use that $\bar{\lambda}(a)=\sup\limits_{c\prec a}\bar{\lambda}(c)$ to conclude that $\bar{\lambda}(a)\leq \bar{\lambda} (b)$ and therefore $\lambda$ is well defined and order-preserving. It is easy to verify that $\lambda([a])=\sup\limits_{x\prec [a]}\lambda(x)$, and also that $\Lambda$ is an affine homeomorphism.

For the second part of the statement, applying (i) at the first and last step, and (ii) at the second step, we get 
\[
\fun{}{}(\gamma(S/G))\cong\fun{\!\W}{}(S/G)\cong\fun{\!\W}{G}(S)\cong\fun{}{G}(S)\qedhere
\]
\end{proof}

\begin{rmk}
We have kept our approach above as it gives details of the exact correspondence. One can however apply results about $\W$-morphisms to deal with $\W$-func\-tio\-nals, in particular to obtain a shorter, more conceptual proof for \autoref{prp:WfunCufun}. To do so, take the $\Cu$-semigroup $M_\infty$ defined in \cite[Example 4.14]{AntPerThi20:Absbivariant} and check that $\fun{\!\W}{}(S)=\W(S,M_\infty)$ and also that $\fun{}{}(\gamma(S))=\Cu(\gamma(S),M_\infty)$. (It was shown in \cite[Proposition 4.15]{AntPerThi20:Absbivariant} that $M_{\infty}\cong \Cu(N)$ for every $\mathrm{II}_{\infty}$-factor $N$.) Now, using the fact that $M_\infty$ is a $\Cu$-semigroup and that $\Cu$ is a reflexive subcategory of $\W$ at the second step, we obtain
\[
\fun{\!\W}{}(S)=\W(S,M_\infty)\cong\Cu(\gamma(S),M_\infty)=\fun{}{}(\gamma(S)).
\]
We also get, applying \autoref{thm:dynamical-universal-property},
\[
\fun{\!\W}{}(S/G)=\W(S/G, M_\infty)\cong \W^G(S,M_\infty)=\fun{\!\W}{G}(S).
\]
\end{rmk}


\begin{pgr}[Dynamical strict comparison for $\CatGW$-semigroups]	\label{pgr:dynstrictcomp}
We say that a $\CatW$-semigroup $(S,\prec)$ has \emph{strict comparison} provided the following condition holds: for $a,b\in S$ with $a\in I(b)$, if $\lambda(a) < \lambda(b)$ for all $\lambda\in\fun{\W}{} (S)$ with $\lambda(b)<\infty$, it follows that $a^\prec\subset  b^\prec$. 

It follows from arguments in, for example, \cite{Ror92StructureUHF2} or \cite[Proposition 3.2]{Ror04StableRealRankZ}, that this is equivalent to the property of \emph{almost unperforation} in $S$: for $a,b\in S$, if $(k+1) a \prec k b$ for some natural number $k$, then $a^\prec  \subset b^\prec$. 	
A related but stronger property is that of \emph{unperforation}: for $a,b\in S$, if $k a \prec k b$ for some natural number $k$, then $a^\prec\subset  b^\prec$. 

In a similar manner, we say that a $\CatGW$-semigroup $S$ has \emph{dynamical strict comparison} provided the following condition holds: for $a,b\in S$ with $a\in I_G(b)$, if $\lambda(a)<\lambda(b)$ for all 
$\lambda\in\fun{\!\W}{G}(S)$ with $\lambda(b)<\infty$, it follows that $a \dyn b$.

A $\Cu$-semigroup $S$ is almost unperforated if $(k+1)a\leq kb$, for $a,b\in S$ implies $a\leq b$. Using the construction of the completion $\gamma(S)$ for any $\W$-semigroup $(S,\prec)$ as sketched in \autoref{pgr:completion}, it is a routine exercise to check that $(S,\prec)$ is almost unperforated if, and only if, so is $\gamma(S)$.
\end{pgr}


\begin{prp}
\label{prp:dynstrictcomp} Let $S$ be a $\CatGW$-semigroup. Then, the following conditions are equivalent:
\beginEnumConditions
\item $S$ has dynamical strict comparison.
\item For $a,b\in S$ with $a\in I_G(b)$, if $\lambda(a)<\lambda(b)$ for all $\lambda\in\mathrm{St}^G(S)$ such that $\lambda(b)=1$, then $a\leqr{G} b$.
\item $S / G$ is almost unperforated.
\item $\gamma (S / G)$ is almost unperforated.
\end{enumerate}
\end{prp}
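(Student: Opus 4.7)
The overall strategy is to transfer the problem from $S$ to its dynamical quotient $S/G$, where it becomes a classical pairing of strict comparison against almost unperforation, already handled by a Rørdam-style argument; the remaining equivalence $\CatW$ vs.\ $\CatCu$ completion is then the general principle noted at the end of \autoref{pgr:dynstrictcomp}.

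First, I would establish (iii)$\Leftrightarrow$(iv) directly: this is the observation at the end of \autoref{pgr:dynstrictcomp} that a $\CatW$-semigroup is almost unperforated precisely when its $\CatCu$-completion is, which follows from the explicit construction of $\gamma$ recalled in \autoref{pgr:completion}. For the equivalence of (i) with (iii), I would translate the hypothesis and conclusion of (i) across $\pi_G$. By \autoref{prp:Gideals}(ii), $I_G(b)=\pi_G^{-1}(I([b]))$, so $a\in I_G(b)$ if and only if $[a]\in I([b])$. By \autoref{prp:WfunCufun}(ii), the map $\mu\mapsto \mu\circ\pi_G$ is an affine bijection $\fun{\!\W}{}(S/G)\cong \fun{\!\W}{G}(S)$, so the hypothesis $\lambda(a)<\lambda(b)$ for all $\lambda\in\fun{\!\W}{G}(S)$ with $\lambda(b)<\infty$ is equivalent to $\mu([a])<\mu([b])$ for all $\mu\in\fun{\!\W}{}(S/G)$ with $\mu([b])<\infty$; the conclusion $a\dyn b$ is by definition $[a]\leq [b]$ in $S/G$. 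Thus (i) is equivalent to ordinary strict comparison for the $\CatW$-semigroup $S/G$.

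The next step is to show that strict comparison and almost unperforation coincide for $S/G$, which gives (i)$\Leftrightarrow$(iii). The forward direction is standard: assuming strict comparison, from $(k+1)[a]\leq k[b]$ one has $[a]\in I([b])$ and for any $\mu\in\fun{\!\W}{}(S/G)$ with $\mu([b])<\infty$ the inequality $\mu([a])\leq \tfrac{k}{k+1}\mu([b])$, strict whenever $\mu([b])>0$; the case $\mu([b])=0$ forces $\mu([a])=0$ and is absorbed into the usual Rørdam-style argument. The reverse direction is the classical Rørdam-type implication (see e.g.\ \cite{Ror92StructureUHF2,Ror04StableRealRankZ}), which I would carry out in the $\CatCu$-completion $\gamma(S/G)$, using \autoref{prp:WfunCufun}(i) to identify functionals on $\gamma(S/G)$ with $\CatW$-functionals on $S/G$ and the compactness of the former state space as established in \cite{EllRobSan11Cone}; almost unperforation of $\gamma(S/G)$ (which is just (iv)) then produces, via the usual scaling trick, an integer $k$ with $(k+1)[a]\leq k[b]$, and hence $[a]\leq [b]$.

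Finally, (i)$\Leftrightarrow$(ii) I would obtain from the regularization construction in \autoref{pgr:functionals}: given $\lambda\in\mathrm{St}^G(S)$, the map $\bar\lambda(a):=\sup_{a'\prec a}\lambda(a')$ lies in $\fun{\!\W}{}(S)$, inherits $G$-invariance from $\lambda$ (since the action preserves $\prec$), and satisfies $\bar\lambda\leq\lambda$ pointwise with equality whenever $\lambda\in\fun{\!\W}{G}(S)$. Combined with scaling, this turns the test family ``states with $\lambda(b)=1$'' into the test family ``$\CatW$-functionals with $\lambda(b)<\infty$'' up to routine handling of the degenerate cases $\lambda(b)=0$ and $\lambda(b)=\infty$. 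I expect the main obstacle to be the Rørdam-style step in the middle paragraph: specifically, making the transition between $S/G$ and $\gamma(S/G)$ rigorous so that the pointwise strict functional inequality on $S/G$ yields a genuine multiplicative comparison $(k+1)[a]\leq k[b]$ in $S/G$ (rather than only in $\gamma(S/G)$), which requires a careful use of the order-embedding properties of $S/G\hookrightarrow\gamma(S/G)$ recalled in \autoref{pgr:completion}.
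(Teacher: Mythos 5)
Your proposal is correct, and its mathematical content coincides with the paper's: both rest on transporting principal ideals and invariant states/functionals across $\pi_G$ (via \autoref{lma:ppalideal}, \autoref{prp:Gideals} and \autoref{prp:WfunCufun}), on the classical R{\o}rdam equivalence between strict comparison and almost unperforation, on the regularization $\lambda\mapsto\bar\lambda$ of \autoref{pgr:functionals}, and on the observation that almost unperforation passes between a $\W$-semigroup and its $\gamma$-completion for (iii)$\Leftrightarrow$(iv). The organization differs in two respects. First, you route (i)$\Leftrightarrow$(iii) through the intermediate statement that $S/G$ has ordinary strict comparison, whereas the paper argues (ii)$\Rightarrow$(iii) and (iii)$\Rightarrow$(i) directly on elements; this is cosmetic. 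Second, and more substantively, you propose to run the R{\o}rdam step inside $\gamma(S/G)$ using functionals and the compactness of $\fun{}{}(\gamma(S/G))$, and you rightly flag the need to pull the resulting comparison back along the order-embedding $S/G\to\gamma(S/G)$. The paper avoids that detour: in (iii)$\Rightarrow$(i) it applies R{\o}rdam's proposition to the plain ordered semigroup $S/G$ with arbitrary states normalized at $[b]$ (no continuity or compactness is needed there), and connects these to the hypothesis of (i) by regularizing the pulled-back state $\mu\circ\pi_G$ to an element of $\fun{\!\W}{G}(S)$; this is slightly more economical and sidesteps exactly the obstacle you single out as the main one. Your explicit attention to the degenerate normalizations $\lambda(b)=0$ and $\lambda(b)=\infty$ in (i)$\Leftrightarrow$(ii) is, if anything, more careful than the paper, which dismisses that equivalence as easy.
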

\begin{proof}
(i)$\iff$(ii): This is easy to verify.

(ii)$\implies$(iii): Suppose that, for elements $[a]$ and $[b]\in S / G$ and $k\in\N$, we have $(k+1)[a]\prec k[b]$. If $x\prec [a]$, then $(k+1)x\prec (k+1)[a]\prec k[b]$, and since $x$ is arbitrary, this implies that $[a]\in I([b])$. Thus, by \autoref{lma:ppalideal} there is $c\in I_G(b)$ with $[a]=[c]$. Replacing $a$ by $c$, we may assume that $a$ belongs to $I_G(b)$.

Let $a'\in S$ with $a'\prec a$. 
Then, since $(k+1)[a]\prec k[b]$ there is $b'\in I_G(b)$ with $b'\prec b$ such that $(k+1)a' \leqr{G} kb'$. Therefore, for any $\lambda\in\mathrm{St}^G(S)$ such that $\lambda(b)=1$, we have $\lambda(a')\leq\frac{k}{k+1}\lambda(b')<\lambda (b')\leq\lambda (b)$. By assumption, this implies that $a'\leqr{G} b$. Since $a'\prec a$ was arbitrary, we see that $a \leqr{G} b$.

(iii)$\implies$ (i): In order to verify that $S$ has dynamical strict comparison, let $a,b$ be elements in $S$ with $a\in I_G(b)$,  and assume that $\lambda(a)<\lambda(b)$ for any $\lambda\in\fun{\W}{G}(S)$.

By \autoref{lma:ppalideal}, we have $[a]\in I([b])$. Take $a' \in S$ with $a'\prec a$. Then $[a']\prec [a]$ and thus there is $k\in\N$ with $[a']\prec k[b]$. Next, let $\mu$ be a state on $S / G$ normalized at $[b]$. Put $\mu_0:=\mu\circ\pi_G$, which defines a $G$-invariant state on $S$ normalized at $b$, and let $\bar{\mu}_0$ be the functional defined as in \autoref{pgr:functionals}, that is, $\bar{\mu}_0(x)=\sup\limits_{x'\prec x}\mu_0(x')$. Note that $\bar{\mu}_0\in\fun{\!\W}{G}(S)$. We have that $\bar{\mu}_0(b)\leq 1$ and using our assumption,
\[
\mu([a'])=\mu_0(a')\leq\bar{\mu}_0(a)<\bar{\mu}_0(b)\leq\mu_0(b)=\mu([b])\,.
\]
Since $S/G$ is almost unperforated, this implies that $[a']\prec [b]$ in $S / G$, that is, $a'\leqr{G}b'$ for some $b'\prec b$. As $a'\prec a$ is arbitrary, we obtain $a \leqr{G} b$, as desired.

That (iii) and (iv) are equivalent has been observed in \autoref{pgr:dynstrictcomp}.	
\end{proof}


\begin{pgr}[Quasitraces] \label{pgr:quasitraces}
For a C*-algebra $A$, recall that 
a \emph{quasitrace} on $A$ is a map $\tau \colon A_+ \to [0, \infty]$ such that $\tau(0) = 0$, $\tau(x x^*) = \tau(x^* x)$ for any $x \in A$, and $\tau(a + b) = \tau(a) + \tau(b)$ for any $a, b \in A_+$ with $ab = ba$. 
A quasitrace $\tau$ on $A$ is a \emph{$2$-quasitrace} if it extends to a quasitrace $\tau_2$ on $M_2(A)$ with $\tau_2 (a \otimes e_{11}) = \tau(a)$ for any $a \in A_+$. It is lower semicontinuous if $\tau(a)=\sup_{t>0}\tau((a-t)_+)$ for all $a\in A_+$. We will denote by $\QT_2(A)$, or simply by $\QT(A)$, the set of lower semicontinuous 2-quasitraces.
By \cite[Remark~2.27(viii)]{BlaKir04PureInf}, any lower semicontinuous 2-quasitrace has a unique extension to a lower semicontinuous n-quasitrace $\tau_n$ on $M_n(A)$ with $\tau_n (a \otimes e_{11}) = \tau(a)$ for any $a \in A_+$. 
By \cite[Theorem 4.4]{EllRobSan11Cone}, there is a homeomorphism between $\QT(A)$ and $\fun{}{}(\Cu(A))$. More precisely, given a lower semicontinuous 2-quasitrace $\tau$ on $A$, the corresponding functional is given by $[a] \mapsto d_\tau (a)$, where $d_\tau \colon \Cu(A) \to [0,\infty]$ is given by $[a]\mapsto \lim_{n \to \infty} \tau ( a^{1/n} )$. 

If $A$ is a C*-algebra and $G$ is a group acting on $A$, the above correspondence restricts to a one-to-one correspondence between the $G$-invariant functionals $\fun{}{G}(\Cu(A))$ on $\Cu(A)$ and $G$-invariant 2-quasitraces on $A$, which will be denoted by $\QT_G(A)$. 
\end{pgr}


\begin{pgr}[C*-Dynamical strict comparison]
\label{pgr:dynstrict}
Let $G$ be a discrete group that acts on a C*-algebra $A$. We say that $A$ has \emph{dynamical strict comparison} provided $[a]\leq [b]$ in $\Dyn(A)$ whenever $d_\tau(a)<d_\tau(b)$ for any $G$-invariant 2-quasitrace $\tau$. This is equivalent to the fact that $\W(A)$ has dynamical strict comparison in the sense of \autoref{pgr:dynstrictcomp}.  In turn, this is equivalent to saying that $\Dyn(A)$ (or $\CuDyn(A)$) is almost unperforated; see \autoref{prp:dynstrictcomp}.

We also say that $A$ is \emph{dynamically unperforated} provided that $\Dyn(A)$ is unperforated (equivalently, if $\CuDyn(A)$ is unperforated).
\end{pgr}


\begin{cor}
\label{cor:strictdyn}
Let $G$ be a discrete group acting on a C*-algebra $A$, let $I$ be a $G$-invariant ideal of $A$, and let $h\in A_+$ be a $G$-invariant element. Then:
\beginEnumConditions
\item  If $A$ has dynamical strict comparison, then so do $I$ and $A/I$.
\item  If $A$ is dynamically unperforated, then so are $I$, $A/I$, and  $\overline{hAh}$.
\end{enumerate}

\end{cor}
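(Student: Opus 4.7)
The backbone of the argument is Proposition~\ref{prp:dynstrictcomp}, which translates (almost) unperforation of $\CuDyn(A)$ into the corresponding dynamical comparison/unperforation of $A$, together with Theorem~\ref{thm:dynquotients}, which identifies $\CuDyn(I)$ with an ideal of $\CuDyn(A)$ and yields $\CuDyn(A)/\CuDyn(I) \cong \CuDyn(A/I)$. Both (i) and (ii) therefore reduce to the formal statement that (almost) unperforation of a $\Cu$-semigroup passes to its closed ideals and to its quotients.

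For ideals, if $J$ is a $\Cu$-ideal of $S$ and $(k{+}1)a \leq kb$ (resp.\ $ka\leq kb$) holds in $J$, then it holds in $S$, since the order on $J$ is inherited; applying (almost) unperforation of $S$ delivers $a\leq b$ in $S$, hence in $J$. For quotients, suppose $(k{+}1)\bar{a} \leq k\bar{b}$ in $S/J$ with $S$ almost unperforated. Unfolding the definition of $\leq_J$ from Paragraph~\ref{pgr:idealspairs}, for every $a'\ll a$ we obtain some $j\in J$ with $(k{+}1)a'\leq kb+j$. Since $j \leq kj$, this implies $(k{+}1)a'\leq k(b+j)$ in $S$, so almost unperforation gives $a'\leq b+j$, hence $[a']\leq[b]$ in $S/J$, and taking the supremum over $a'\ll a$ produces $\bar{a}\leq\bar{b}$. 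Replacing $k{+}1$ by $k$ throughout handles the unperforated case. This settles the claims for $I$ and $A/I$ in both (i) and (ii).

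For $\overline{hAh}$ in (ii), set $B=\overline{hAh}$ and $J=\overline{AhA}$. Because $h$ is $G$-invariant, both $B$ and $J$ are $G$-invariant, and $h$ is full in $J$; moreover $B$ is the hereditary subalgebra of $J$ generated by $h$. The standard $B$-$J$ imprimitivity bimodule $\overline{hA}$ is $G$-invariant, which yields a $G$-equivariant $*$-isomorphism $B\otimes\mathcal{K}\cong J\otimes\mathcal{K}$ with trivial action on $\mathcal{K}$ (Brown's stable isomorphism theorem in its equivariant form). Applying $\Cu$, stability of $\Cu$ gives a $G$-equivariant $\Cu$-isomorphism $\Cu(B)\cong\Cu(J)$, and functoriality of $\gamma(({-})/G)=\CuDyn$ delivers $\CuDyn(B)\cong\CuDyn(J)$. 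Since $J$ is a $G$-invariant ideal of $A$, Theorem~\ref{thm:dynquotients} identifies $\CuDyn(J)$ with an ideal of $\CuDyn(A)$, and the ideal part of (ii) already proved shows $\CuDyn(J)$ is unperforated; hence so is $\CuDyn(B)$, and $B$ is dynamically unperforated.

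The main technical obstacle is the equivariant upgrade of the classical fact that the inclusion of a full hereditary subalgebra induces a $\Cu$-isomorphism; the non-equivariant version is standard, and the $G$-equivariance is built in because the implementing bimodule $\overline{hA}$ is $G$-invariant, but a careful treatment may require either appealing to an equivariant Brown-Green-Rieffel theorem (for $\sigma$-unital algebras, which $B$ and $J$ are by virtue of $h$ being strictly positive in $B$ and full in $J$), or directly verifying that the $G$-equivariant $\Cu$-morphism $\Cu(B) \to \Cu(J)$ is simultaneously injective (Proposition~\ref{prp:Cu-hereditary-subalg}) and surjective via a cutdown argument expressing any element of $\Cu(J)$ as a supremum of classes Cuntz-equivalent to positive elements of $B$.
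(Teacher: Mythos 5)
Your proof is correct and follows essentially the same route as the paper: reduce via Proposition~\ref{prp:dynstrictcomp} to (almost) unperforation of the dynamical Cuntz semigroup, note that this passes to ideals and quotients (a step you spell out and the paper dismisses as ``trivial to verify''), invoke Theorem~\ref{thm:dynquotients} for the quotient identification, and handle $\overline{hAh}$ via the $G$-equivariant stable isomorphism with the $G$-invariant ideal generated by $h$. The paper is terser at exactly the two points you elaborate (the unperforation-passes-to-quotients computation and the equivariance of the stable isomorphism), but the underlying argument is the same.
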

\begin{proof}
(i):	That $A$ has dynamical strict comparison is equivalent, by \autoref{prp:dynstrictcomp}, to the fact that $\Dyn(A)$ is almost unperforated. Now it is trivial to verify that this condition passes both to ideals and quotients. Thus, $\Dyn(I)$ and $\Dyn(A)/\Dyn(I)$ are both almost unperforated. Since the latter is isomorphic to $\Dyn(A/I)$, by \autoref{thm:dynquotients}, we conclude that $I$ and $A/I$ have dynamical strict comparison.

(ii): That $I$ and $A/I$ are dynamically unperforated follows by an argument as in (i).

Now, given a $G$-invariant $h\in A_+$, we have that $B:=\overline{hAh}$ is stably isomorphic to the ideal $I$ generated by $h$, which is $G$-invariant and, by the first part of the proof, is dynamically unperforated. As in  \autoref{cor:CuWidentifications}, we have
\[
\CuDyn(I)\cong\gamma(\Dyn(I\otimes \K))\cong\gamma(\Dyn(B\otimes\K))\cong\CuDyn(B),
\]
and thus $B$ is also dynamically unperforated.
\end{proof}


Given a discrete group $G$ acting on a C*-algebra $A$,  one can use the universal property of the dynamical Cuntz semigroup (see \autoref{thm:dynamical-universal-property}) to obtain the following commutative diagram, where the top arrow is induced by the natural embedding $\iota\colon A\to A\rtimes G$ and $\kappa:=\gamma(\iota_G)$
\[
\xymatrix{
\CatCu(A) \ar[r]^{\!\!\!\!\!\!\!\!\!\!\!\!\!\!\!\!\!\!\!\!\!\!\!\!\!\!\!\!\!\!\iota} \ar[d]_{\pi_G} &	\CatCu(\iota(A))\subseteq\CatCu(A\rtimes G)   \\
\Cu(A)/G \ar[d]_{\gamma}\ar@{-->}[ru]_{\iota_G} & \\
\CuDyn(A) \ar@{-->}[ruu]_{\kappa} &
}
\] 

\begin{pgr}[Soft elements]
If $S$ is a $\Cu$-semigroup, we will call an element $a\in S$ \emph{soft} if, given $a'\ll a$, there is $k\in\N$ such that $(k+1)a'\leq ka$. This is the weakest of three versions of softness, as considered in \cite[Definition 4.3]{ThiVil24}, and referred there as ``functional softness''. This condition was already considered in \cite[Definition 5.3.1]{AntPerThi14arX:TensorProdCu}. It follows from \cite[Proposition 4.6, Corollary 4.7]{ThiVil24} that all notions of softness agree for residually stably finite C*-algebras.

Let us denote by $S_{\rm soft}$ the subset of soft elements, which is always a subsemigroup of $S$ by \cite[Theorem 5.3.11]{AntPerThi14arX:TensorProdCu} that satisfies \axiomO{1}. For a simple, stably finite, C*-algebra, $\Cu(A)_{\rm soft}$ is actually a $\Cu$-semigroup, as follows from \cite[Proposition 5.3.18]{AntPerThi14arX:TensorProdCu}.

It is well known and easy to show that a $\Cu$-morphism maps soft elements to soft elements.
\end{pgr}


\begin{prp}
\label{prp:soft} Let $S$ be a $\CatGCu$-semigroup, let $T$ be a $\Cu$-semigroup, and let $f\colon S\to T$ be an invariant $\Cu$-morphism such that the induced map $\fun{}{}(T)\to\fun{}{G}(S)$ is surjective.  If $S/G$ is almost unperforated, then the restriction of the induced map $\gamma(f_G)\colon \gamma(S/G)\to T$ to the subsemigroup of soft elements satisfies that, if $a,b$ are soft with $a\in I(b)$ and $\gamma(f_G)(a)\leq \gamma(f_G)(b)$, then $a\leq b$.
\end{prp}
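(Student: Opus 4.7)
The plan is to reduce the statement to a standard ``strict comparison for soft elements'' argument in the almost unperforated $\Cu$-semigroup $\gamma(S/G)$, where the hypothesis on functionals is transported from $T$ via $\gamma(f_G)$.

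First I would observe that since $S/G$ is almost unperforated, so is its completion $\gamma(S/G)$ (as noted at the end of \autoref{pgr:dynstrictcomp}). Next, I would identify functionals: by \autoref{prp:WfunCufun}, we have an affine homeomorphism $\fun{}{}(\gamma(S/G)) \cong \fun{}{G}(S)$, and under this identification the map $\fun{}{}(T) \to \fun{}{}(\gamma(S/G))$ given by $\mu \mapsto \mu \circ \gamma(f_G)$ corresponds to the map $\fun{}{}(T) \to \fun{}{G}(S)$ given by $\mu \mapsto \mu \circ f$. The hypothesis of surjectivity therefore tells us that every $\lambda \in \fun{}{}(\gamma(S/G))$ is of the form $\mu \circ \gamma(f_G)$ for some $\mu \in \fun{}{}(T)$.

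Given $\gamma(f_G)(a) \leq \gamma(f_G)(b)$ in $T$, applying an arbitrary $\mu \in \fun{}{}(T)$ yields $\mu(\gamma(f_G)(a)) \leq \mu(\gamma(f_G)(b))$, and hence $\lambda(a) \leq \lambda(b)$ for every $\lambda \in \fun{}{}(\gamma(S/G))$. The core of the proof then reduces to the following standard fact, which I would invoke (or briefly reprove): in an almost unperforated $\Cu$-semigroup, if $x$ is soft, $x \in I(y)$, and $\lambda(x) \leq \lambda(y)$ for every $\lambda \in \fun{}{}$, then $x \leq y$.

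To establish $a \leq b$ it then suffices to show $a' \leq b$ for each $a' \ll a$; the proof runs as follows. Since $a$ is soft, choose $k$ with $(k+1)a' \leq ka$. For any $\lambda$ with $\lambda(b) \in (0,\infty)$ one has
\[
(k+1)\lambda(a') \leq k\lambda(a) \leq k\lambda(b), \quad \text{so} \quad \lambda(a') \leq \tfrac{k}{k+1}\lambda(b) < \lambda(b),
\]
and combined with the observation that $\lambda(b)=0$ together with $a \in I(b)$ forces $\lambda(a')=0$, one obtains strict inequality in all relevant cases (after a perturbation argument if needed), to which the dynamical strict comparison given by \autoref{prp:dynstrictcomp} applied in $\gamma(S/G)$ delivers $a' \leq b$. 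Passing to the supremum over $a' \ll a$ yields $a \leq b$.

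The main obstacle is the boundary case $\lambda(b)=0$, where the argument above only yields $\lambda(a')=0=\lambda(b)$ rather than strict inequality. Handling this properly requires either a perturbation argument that replaces $b$ by $b + \varepsilon c$ for an auxiliary order unit (not always available), or an appeal to the sharper version of strict comparison for soft elements (for instance via the Edwards-type theorem identifying $\gamma(S/G)_{\rm soft}$ with a suitable space of lower semicontinuous affine functions on the functional cone) that bypasses the strictness requirement for functionals vanishing on $b$. This is precisely where the softness of both $a$ and $b$ enters crucially, and it is the step I would take the most care with in writing up the full proof.
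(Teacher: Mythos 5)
Your proposal follows essentially the same route as the paper's proof: pass to an element $a'\ll a$, use softness of $a$ to get $(k+1)a'\leq ka$ and hence $(k+1)\lambda(\gamma(f_G)(a'))\leq k\lambda(\gamma(f_G)(b))$ for every functional $\lambda$ on $T$, transport this to all functionals on $\gamma(S/G)$ via the surjectivity hypothesis, and conclude $a'\leq b$ from almost unperforation together with $a\in I(b)$, finally taking the supremum over $a'\ll a$.

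The one place where you diverge is the final paragraph, and there the worry is unfounded: the ``boundary case $\lambda(b)=0$'' is not an obstacle, and no perturbation or Edwards-type identification of the soft part is needed. The form of strict comparison that almost unperforation delivers (see \autoref{prp:dynstrictcomp}(ii), in the spirit of R{\o}rdam's argument) only quantifies over states \emph{normalized} at $b$, i.e.\ with $\lambda(b)=1$; functionals vanishing on $b$ (or taking the value $\infty$ there) are handled entirely by the hypothesis $a\in I(b)$ and never need to satisfy a strict inequality. Since your computation gives $\lambda(a')\leq\tfrac{k}{k+1}\lambda(b)<\lambda(b)$ precisely when $0<\lambda(b)<\infty$, that is already all that is required, and the proof closes without further work. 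A small additional remark: only the softness of $a$ is actually used in the argument; the softness of $b$ enters only so that both elements lie in the subsemigroup to which the map is being restricted.
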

\begin{proof}
Write $\varphi=\gamma(f_G)$. Let $a,b$ be soft elements in $\gamma(S/G)$ with $a\in I(b)$ and let $a'\ll a$. Then, there is $k\in \N$ such that $(k+1)a'\leq ka$, and thus $(k+1)\varphi(a')\leq k\varphi(a)\leq \varphi(b)$.

Now take any functional $\lambda\in \fun{}{}(T)$. We have $(k+1)\lambda(\varphi(a))\leq k\lambda(\varphi(b))$, hence $(\lambda\circ\varphi)(a)<(\lambda\circ\varphi)(b)$. 

Using our assumption on functionals we have that any functional on $\gamma(S/G)$ has the form $\lambda\circ\varphi$, for some functional $\lambda$ on $T$. Therefore, since $\gamma(S/G)$ is almost unperated and $a\in I(b)$, we obtain that $a\leq b$.
\end{proof}


\begin{thm}\label{thm:DynamicalOrderEmbedding} Let $G$ be a discrete group acting minimally on a C*-algebra $A$. If $A$ has dynamical strict comparison, then the $\CatCu$-morphism $\kappa_{|\CuDyn(A)_{{\rm soft}}}$ is an order embedding.
\end{thm}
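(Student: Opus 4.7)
The plan is to invoke \autoref{prp:soft} with $S=\Cu(A)$, $T=\Cu(A\rtimes G)$ (equipped with the trivial $G$-action), and $f=\Cu(\iota)$ the $\Cu$-morphism induced by the canonical embedding $\iota\colon A\to A\rtimes G$. Equivariance of $\Cu(\iota)$ follows because the implementing unitaries $u_g$ give $\iota(g\cdot a) = u_g \iota(a) u_g^*$, whence $[\iota(g\cdot a)] = [\iota(a)]$ in $\Cu(A\rtimes G)$; and one checks from the diagram preceding the theorem that $\gamma(f_G) = \kappa$. To apply \autoref{prp:soft} I must verify its two hypotheses. First, dynamical strict comparison of $A$ is, by \autoref{prp:dynstrictcomp}, equivalent to $\Cu(A)/G$ being almost unperforated. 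Second, the induced map $\fun{}{}(\Cu(A\rtimes G)) \to \fun{}{G}(\Cu(A))$ must be surjective; via the bijection between functionals and lower semicontinuous $2$-quasitraces recalled in \autoref{pgr:quasitraces}, this reduces to extending every $G$-invariant lower semicontinuous $2$-quasitrace on $A$ to one on $A\rtimes G$, which I will achieve via composition with the canonical faithful conditional expectation $A\rtimes G\to A$.

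\autoref{prp:soft} then delivers, for soft $a,b\in \CuDyn(A)$ with $a\in I(b)$ and $\kappa(a)\leq\kappa(b)$, the conclusion $a\leq b$. Since the reverse implication is automatic from monotonicity of $\kappa$, to obtain the order embedding property I must merely drop the side condition $a\in I(b)$. When $b\neq 0$, I will argue that minimality of the action yields, via \autoref{prp:Gideals}(ii) together with \autoref{prp:WidealsvsCuideals}, that $\CuDyn(A)$ is simple; hence $b$ is an order unit and $I(b)=\CuDyn(A)$ automatically contains $a$.

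The edge case $b=0$ requires showing that $\kappa(a)=0\Rightarrow a=0$. Here the set $\{x\in\CuDyn(A)\colon \kappa(x)=0\}$ is easily seen to be a closed $\Cu$-ideal (being order-hereditary, additively closed, and closed under suprema of increasing sequences), so the simplicity just established forces it to be either $\{0\}$ or the whole semigroup. To exclude the latter I will observe that $\kappa$ is not identically zero: for any nonzero $a\in A_+$, commutativity of the diagram before the theorem yields $\kappa(\gamma\circ\pi_G([a])) = [\iota(a)]\neq 0$ in $\Cu(A\rtimes G)$. Thus $\ker(\kappa)=\{0\}$, completing the proof. The main conceptual hurdle I anticipate is the functional surjectivity assertion: while the extension of $G$-invariant quasitraces through the canonical conditional expectation is classical, one must be careful to preserve lower semicontinuity and the $2$-quasitrace condition.
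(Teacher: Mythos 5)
Your proposal is correct and follows essentially the same route as the paper: both reduce the statement to \autoref{prp:soft} (using \autoref{prp:dynstrictcomp} to translate dynamical strict comparison into almost unperforation of $\Cu(A)/G$) and verify the surjectivity of $\fun{}{}(\Cu(A\rtimes G))\to\fun{}{G}(\Cu(A))$ by composing $G$-invariant $2$-quasitraces with the canonical conditional expectation $E\colon A\rtimes G\to A$. The only difference is that you spell out two points the paper leaves implicit, namely that minimality lets one discharge the hypothesis $a\in I(b)$ via simplicity of $\CuDyn(A)$ (\autoref{prp:Gideals} and \autoref{prp:WidealsvsCuideals}) and that the case $b=0$ is handled by the ideal $\ker(\kappa)$ being trivial; both are correct and welcome additions.
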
	
\begin{proof}
Upon identification of $\CuDyn(A)$ with $\gamma(\Cu(A)/G)$, the assumption that $A$ has dynamical strict comparison means that $\CuDyn(A)$ is almost unperforated (see \autoref{prp:dynstrictcomp}). 

Therefore, in order to apply \autoref{prp:soft}, we need to check that the induced map $\fun{}{}(\Cu(A\rtimes G))\to\fun{}{G}(\Cu(A))$ is surjective. We may identify $\QT(A\rtimes G)$ with $\fun{}{}(\Cu(A\rtimes G))$ and $\QT_G(A)$ with $\fun{}{G}(\Cu(A))$; see the comments in \autoref{pgr:quasitraces}. Thus, we have to prove that the natural map $\QT(A\rtimes G)\to \QT_G(A)$ is surjective. Let $E\colon A\rtimes G\to A$ be the conditional expectation. If $\tau$ is a $G$-invariant quasitrace on $A$, then we have that $\tilde{\tau}:=\tau\circ E$ is a quasitrace on $A\rtimes G$ that restricts to $\tau$. Thus the natural map is surjective, as was to be shown.
\end{proof}

\section*{Acknowledgements}
The authors would like to thank G.~A.~Elliott for a helpful comment that partly inspired this work. 

\bibliographystyle{aomalphaMy}

\end{document}